\documentclass[12pt]{amsart}
\usepackage[utf8]{inputenc}
\usepackage[left=2.5cm,right=2.5cm, top=1.5cm,bottom=2.3cm,bindingoffset=0cm, marginparwidth=58pt]{geometry}
\usepackage{ytableau}
\usepackage{hyperref}
\usepackage{marginnote}
\usepackage{wrapfig}
\usepackage{mathrsfs}
\usepackage{url}
\hypersetup{
    colorlinks=true,
    linkcolor=blue,
    filecolor=magenta,      
    urlcolor=cyan,
    pdftitle={Overleaf Example},
    pdfpagemode=FullScreen,
    }
\usepackage{xcolor}

\usepackage{amscd, amsmath, amsfonts, amssymb, amsthm, fancyhdr, epsfig}
\usepackage[all,cmtip]{xy}
\usepackage{tikz}
\usetikzlibrary{decorations.pathreplacing}

\usepackage{bbm}
\usepackage{xy}
\usepackage{graphicx}
\usepackage{mathtools}
\usepackage{relsize}
\usetikzlibrary{patterns}
\usepackage[T3,T1]{fontenc}
\DeclareSymbolFont{tipa}{T3}{cmr}{m}{n}
\DeclareMathAccent{\caap}{\mathalpha}{tipa}{16}

\newcommand{\g}{{\mathfrak g}}
\newcommand{\n}{{\mathfrak n}}
\newcommand{\h}{{\mathfrak h}}

\newcommand{\gl}{{\mathfrak {gl}}}
\newcommand{\Z}{{\mathbb Z}}

\newcommand{\C}{{\mathbb C}}

\newcommand{\cc}{\mathcal C}
\newcommand{\co}{\mathcal O}
\newcommand{\ck}{\mathbbm{k}}
\newcommand{\on}{\mathbbm{1}}

\newcommand{\Rep}{\operatorname{Rep}}
\newcommand{\Res}{\operatorname{Res}}
\newcommand{\Ker}{\operatorname{Ker}}

\renewcommand{\dim}{\operatorname{dim}}

\newcommand{\coev}{\operatorname{coev}}
\newcommand{\En}{\operatorname{End}}
\newcommand{\Hom}{\operatorname{Hom}}
\newcommand{\Ind}{\operatorname{Ind}}

\newcommand{\tens}[1]{%
  \mathbin{\mathop{\otimes}\displaylimits_{#1}}%
}

\newcommand{\ssl}{{\mathfrak {sl}}}

\newcommand{\cp}{\mathscr P}

\newcommand{\cw}{\mathsmaller >}
\newcommand{\ccw}{\mathsmaller <}
\newcommand{\krug}{\mathlarger \circ}

\renewcommand{\dim}{\operatorname{dim}}

\renewcommand{\l}{\lambda}
\newcommand{\bl}{{\boldsymbol\l}}
\newcommand{\br}{{\boldsymbol\rho}}

\newcommand{\bm}{{\boldsymbol\mu}}
\newcommand{\bn}{{\boldsymbol\nu}}
\newcommand{\ba}{{\boldsymbol\alpha}}
\newcommand{\bb}{{\boldsymbol\beta}}

\newcommand{\onode}{ \path (O) ++(\x:\radius) coordinate (A);
\draw[very thick,black, fill=white] (A) circle[radius=4pt] ++(\x:1em);}

\newcommand{\genode}{
\path (O) ++(\x:\radius) coordinate (A);
   \path (O) ++(\x:\radius) coordinate (B);
   \fill[black] (A) circle[radius=1pt];
    \draw[very thick, rotate=\x+130,color=black] (A) -- ++ (0.4,0) coordinate (A);
    \draw[very thick, rotate=\x+60,color=black] (B) -- ++ (0.4,0) coordinate (B);
    }
\newcommand{\lenode}{
\path (O) ++(\x:\radius) coordinate (A);
   \path (O) ++(\x:\radius) coordinate (B);
    \fill[black] (A) circle[radius=1pt];
    \draw[very thick, rotate=\x+300,color=black] (A) -- ++ (0.4,0) coordinate (A);
    \draw[very thick, rotate=\x+230,color=black] (B) -- ++ (0.4,0) coordinate (B);
    }
\newcommand{\dotnode}{
\path (O) ++(\x:\radius) coordinate (A);
 \fill[black] (A) circle[radius=1pt];
}
\newcommand{\xnode}{
\path (O) ++(\x:\radius) coordinate (A);
   \path (O) ++(\x:\radius) coordinate (B);
     \path (O) ++(\x:\radius) coordinate (C);
       \path (O) ++(\x:\radius) coordinate (D);
    \fill[black] (A) circle[radius=1pt];
    \draw[very thick, rotate=\x+135,color=black] (A) -- ++ (0.3,0) coordinate (A);
    \draw[very thick, rotate=\x+45,color=black] (B) -- ++ (0.3,0) coordinate (B);
    \draw[very thick, rotate=\x+315,color=black] (C) -- ++ (0.3,0) coordinate (C);
    \draw[very thick, rotate=\x+225,color=black] (D) -- ++ (0.3,0) coordinate (D);
    }

 \newcommand{\nodelabel}[1]{ \path (O) ++(\x:\radius) coordinate (A);
    \draw[black ] (A)  ++(\x:\y) node {#1};}

\def\blacksquare{\hbox{\vrule width 5pt height 5pt depth 0pt}}

\theoremstyle{plain}
\newtheorem{theorem}{Theorem}[section]

\newtheorem{lemma}[theorem]{Lemma}
\newtheorem{definition}[theorem]{Definition}
\newtheorem{question}[theorem]{Question}
\newtheorem{corollary}[theorem]{Corollary}

\theoremstyle{definition}
\newtheorem{remark}[theorem]{Remark}

\newtheorem{ex}[theorem]{Example}

\numberwithin{equation}{theorem}

\title[The highest weight theory for Representations of $GL(X)$ in $Ver_p$]{The highest weight theory for Representations of General Linear groups in the Verlinde categories in positive characteristic}

\author{Alexandra Utiralova}
\begin{document}

\begin{abstract}
Following the work of Venkatesh (\cite{V24}), we study further the categories of representations of the general linear groups $GL(X)$ in the Verlinde category $Ver_p$ in characteristic $p$.
The main question we answer is how to translate between highest weight labelings for different choices of the Borel subgroup $B(X)\subset GL(X)$. We do this by reducing the general case to the study of representations of the group $GL(X)$ for $X=L_m\oplus L_{n}$ using the method of odd reflections.

On the category of representations of $GL(L_m\oplus L_{n})$ we introduce the structure of a highest weight category, as well as a categorical action of $\widehat\ssl_p$ through translation functors. It allows us to understand projective and injective objects, BGG reciprocity,  duality and lowest weights for simple modules, and standard filtration multiplicities for projective objects.
\end{abstract}

\maketitle

\tableofcontents

\section{Introduction}
The famous reconstruction theorem by Deligne \cite{deltan} says that any symmetric tensor category $\cc$ admitting a fiber functor (i.e. a symmetric tensor functor) to some category $\mathcal D$ can be reconstructed as a category of representations of some affine group scheme in $\mathcal D$. Thus, it is interesting to study group schemes in categories that appear as targets of fiber functors.

Let $\ck$ be an algebraically closed field. In characteristic zero, Deligne (\cite{deltens}) proved that any $\ck$-linear symmetric tensor category (STC for short) $\cc$ of moderate growth (such that the length of $X^{\otimes n}$ grows at most exponentially with $n$ for any $X\in \cc$) admits a fiber functor to the category $sVec$ of finite dimensional super vector spaces. In characteristic$p$, however, the situation is more complicated and this result no longer holds verbatim. One of the simplest counterexamples is the Verlinde category $Ver_p$ defined as the semisimplification of the category of representations of $\Z/p\Z$ over the field $\ck$ of characteristic $p>0$. It is known (see for example \cite{BEO23}) that $Ver_p$ is incompressible, so it cannot be mapped to a smaller category like $sVec$. The theorem of Deligne was partially generalized for the characteristic$p$ case by Coulembier, Etingof and Ostrik in \cite{CEO22}, who proved that when char $\ck = p$, any $\ck$-linear \textit{Frobenius exact} STC $\cc$ of moderate growth admits a fiber functor to $Ver_p$. The additional condition of being Frobenius exact is necessary and is specific to positive characteristic. As a consequence of this result and the reconstruction theorem, we get that any such $\cc$ is tensor-equivalent to the category of representations of some affine group scheme in $Ver_p$, which motivates our study of representation theory in the Verlinde categories.

In his thesis \cite{Vthesis}, and later in \cite{V23}, Venkatesh laid some ground work in representation theory in the Verlinde categories. In \cite{V24}, he also proved the classification result about representations of the general linear group $G=GL(X)$ for $X\in Ver_p$. His classification runs roughly as follows. Let $X=X_1\oplus\ldots\oplus X_k$, where $X_i$'s are simple objects in $Ver_p$. Define the maximal torus $T=GL(X_1)\times\ldots\times GL(X_k)\subset G$ and the Borel subgroup $B$ with $Lie(B)=\bigoplus_{i\le j} X_i\otimes X_j^*\subset \gl(X)$. The category $\Rep_{Ver_p}(T)$ of representations of $T$ is equivalent to $Ver_p(GL_{n_1})\boxtimes \ldots \boxtimes Ver_p(GL_{n_k})$, where $n_i=\dim X_i$, and $Ver_p(GL_n)$ is the semisimplification of the category of tilting modules for $GL_n$. Let us call simple objects in $\Rep_{Ver_p}(T)$ \textbf{weights}. For any weight $\bl$ one can define a generalized Verma module $M(\bl)$ as $Dist(G)\tens{Dist(B)} \bl$, where $Dist(G)$ is the algebra of distributions at the identity (hyperalgebra) of $G$. Then $M(\bl)$, as a $Dist(G)$-module, has a unique simple quotient $L(\bl)$, with $L(\bl)\not\simeq L(\bm)$ if $\bl\not\simeq\bm$. Under some conditions on $\bl$, the action of $Dist(G)$ on $L(\bl)$ integrates to the action of $G$, and we get that the collection of all $L(\bl)$'s with $\bl$ integrable is in bijection with the set of isomorphism classes of simple objects in $\Rep_{Ver_p}(G)$.

In this paper we study in more detail the highest weight theory for representations of $GL(X)$. The main question we answer is how to translate between highest weight labels for simple modules for different choices of the Borel subgroup $B$. In the above notations, different orderings (i.e. permutations) of simple summands $X_1,\ldots, X_k$ provide different choices of the Borel subgroup. Some of the permutations of these summands, namely those that only permute isomorphic $X_i$'s, can be realized through the action of some subgroup of $G(\ck)$ (the $\ck$-points of $G$) which we call the classical Weyl group of $G$. The resulting Borel subgroups are then conjugate to each other, just like for classical reductive groups. We describe what happens in this case in Section \ref{s_conj_Borels}. However, for permutations that swap nonisomorphic $X_i$'s we get Borel subgroups which are not conjugate to each other. This situation is similar to that for quasi-reductive supergroups. We adapt Serganova's method of odd reflections to our case and reduce the question to studying what happens when we permute just two consequent nonisomorphic simple summands $X_i\simeq L_m$ and $X_{i+1}\simeq L_r$ with $m<r$, where $L_n$ denotes the unique $n$-dimensional simple object in $Ver_p$ (see Section \ref{s_odd_refl}). The whole question is then reduced to the study of lowest weights for representations of the group $GL(L_m\oplus L_r)$.

The remainder of the paper is dedicated to the study of the category $$\cc=\Rep_{Ver_p}(GL(L_m\oplus L_r)),$$ which turns out to be of interest on its own. The main idea for working with it is to make use of the subcategory of super vector spaces inside $Ver_p$ (it is generated by $L_1$ and $L_{p-1}$). One can then consider $L_r$ as an odd version of $L_n$ for $n= p-r$, since $L_r=L_n\otimes L_{p-1}$, and thus the group $GL(L_m\oplus L_r)$ can be thought of as a super group scheme $GL(L_m|L_n)$ (see Section \ref{sec_super}). It turns out that there are multiple similarities between the representation theory of $GL(L_m|L_n)$ and the representation theory of the supergroup $GL(m|n)$. Getting inspiration from works about $GL(m|n)$, especially \cite{B03} and \cite{BS12} (but also \cite{Z96}, \cite{S11}, \cite{GS11}), we use the highest weight structure and the action of translation functors to study $\cc$.

We first reprove in Theorem \ref{reps_of_GL(L_n)}
the result of Venkatesh (\cite{V24}) stating that the category of representations of $GL(L_n)$ is equivalent to the category $Ver_p(GL_n)$ (see Definition \ref{def_Ver_p(G)}). We then study the categories $\cc_n = Ver_p(GL_n)$ through the categorical action of $\widehat\ssl_p$ on them via translation functors. We prove in Theorem \ref{thm_sl_p_act} that $\cc_n$ categorifies the loop module $(\Lambda^n \C^p)^{loop}$ for $\widehat\ssl_p$ (where $\C^p$ is the tautological representation of $\ssl_p$) and that simple objects in $\cc_n$ correspond to the standard basis vectors. We then use this result to prove that $\cc=\Rep_{Ver_p}(GL(L_m|L_n))$ categorifies the tensor product
$$
(\Lambda^m \C^p)^{loop}\otimes ((\Lambda^n \C^p)^*)^{loop},
$$
see Section \ref{sec_catact}.

In Section \ref{s_hw_cat}, we prove that $\cc$ is a highest weight category in the sense of Cline, Parshall and Scott \cite{CPS88}, where the role of standard objects is played by the generalized Verma modules $M(\bl)$. We prove that, just like for $GL(m|n)$, projective objects are also injective, and irreducible standard objects are projective. We then prove an irreducibility criterion for standard objects, that is identical to that for Kac modules for $GL(m|n)$ (see Section \ref{s_irrKac}). In Section \ref{s_wd}, we introduce the language of weight diagrams to describe the action of translation functors on $\cc$. The weight diagrams we obtain are a \textbf{circular} version of the weight diagrams for $GL(m|n)$. Those were introduced by Brundan and Stroppel in \cite{BS08} and \cite{BS12} in a slightly more general setting. It is more common now to use a slightly modified version of these diagrams in the literature on $GL(m|n)$ (see for example \cite{GS11}), which we also do. In Section \ref{s_act_on_ipm} we describe the action of translation functors on simple and indecomposable projective modules (IPMs) using the language of weight diagrams. We deduce from this the computations of multiplicities in the standard filtration of IPMs (see Theorem \ref{thm_compos_series}). Finally, in Section \ref{s_lowest} we use this to compute the lowest weight in $L(\bl)$.

This paper has the following layout:
\begin{itemize}
    \item In Section \ref{s_not}, we provide the necessary definitions and preliminaries.

    \item In Section \ref{ver}, we give the definition and state some results about the Verlinde categories $Ver_p$.

    \item In Section \ref{s_ver(G)} we define the categories $Ver_p(G)$ for arbitrary reductive group $G$. We also give an explicit description of the level-rank duality, i.e. the equivalence between $Ver_p(PGL_n)$ and $Ver_p(PGL_{p-n})$.

    \item In Sections \ref{GL(L_n)} and \ref{section_simple_reps_of_GL(X)} we review the classification results of Venkatesh \cite{V24} of simple representations of $GL(X)$. More specifically, in Section \ref{GL(L_n)} we discuss representations of $GL(L_n)$ for simple object $L_n$. Whereas, in Section \ref{section_simple_reps_of_GL(X)} we look at $GL(X)$ for non-simple object $X$.

    \item In Section \ref{catact} we study the categorical $\widehat\ssl_p$-action on $\Rep_{Ver_p}(GL(L_n))$ through translation functors.

    \item In Section \ref{s_8} we introduce the problem of translating between different Borel subgroups, give the answer for conjugate Borels, and reduce the general situation to the study of representations of $GL(L_m\oplus L_r)$.

    \item Section \ref{s_9} is dedicated to the study of representations of $GL(L_m\oplus L_r)$ using the techniques of highest weight categories, translation functors and weight diagrams.
\end{itemize}

\subsection*{Acknowledgments}
We would like to thank Pavel Etingof for suggesting the project to us, as well as multiple useful discussions. We are especially grateful to Vera Serganova, who shared her knowledge of the techniques used in the study of representations of supergroups and asked all the right questions, which turned out to be the crucial inspiration for the proofs in this paper.

\section{Notations and preliminaries}\label{s_not}
\subsection{Notations and assumptions}\label{notations}
\begin{itemize}
    \item The base field $\ck$ is assumed to be algebraically closed.

    \item Starting from section \ref{ver} we will assume that char $\ck$ is a prime number $p$; throughtout most of the paper we will assume $p>2$.

    \item We will use the conventions and definitions from \cite{EGNO}, in particular of symmetric tensor categories\footnote{In particular, for us all tensor categories are rigid, locally finite, with $\En(\on)=\ck$. }, symmetric tensor functors,  Karoubi subcategories, and the Deligne tensor product of categories.

    \item We denote the monoidal unit in a monoidal category by $\on$.

    \item If $\cc$ is a symmetric tensor category and $X,Y\in \cc$, we denote by $\tau_{X,Y}$ the braiding isomorphism:
    $$
    \tau_{X,Y}:X\otimes Y\to Y\otimes X.
    $$

    \item $Gps$ denotes the category of groups; $Vec$ and $sVec$ denote the categories of \textit{finite dimensional} vector spaces and super vector spaces over $\ck$ respectively, and for a group (or a group scheme) $G$ we denote its category of \textit{finite dimensional} representations by $\Rep G$.

    \item When $G$ is a super group and $z\in G$, we denote by $\Rep(G,z)$ the category of super representations of $G$, on which $z$ acts via the \textit{parity map} (trivially on the even part and via multiplication by $-1$ on the odd part).
    
\end{itemize}

\subsection{Affine group schemes in symmetric tensor categories} 

Let $\cc$ be a symmetric tensor category.

\begin{definition}
    An \textbf{affine group scheme} $G$ in $\cc$ is an object in $$GpSch(\cc):=CommHopfAlg(\cc)^{op},$$ where $CommHopfAlg(\cc)$ is the category of commutative Hopf algebras in the ind-completion $\Ind(\cc)$ of $\cc$. The corresponding Hopf algebra is called the \textbf{algebra of functions} on $G$ and is denoted by $\co(G)$.

    For any $G\in GpSch(\cc)$ we can define the \textbf{functor of points}
    $$
    G: CommAlg(\cc) \to Gps,
    $$
    which sends a commutative algebra $A$ in $\Ind(\cc)$ to
    $$
    G(A)=\Hom_{CommAlg(\cc)}(\co(G),A).
    $$
\end{definition}

\begin{ex}\begin{enumerate}
    \item As $Vec$ is naturally a tensor subcategory (spanned by $\on$) in any symmetric tensor category $\cc$, one can consider any affine group scheme $G$ as an affine group scheme in $\cc$.

    \item An affine super group scheme is an affine group scheme in $sVec$.
\end{enumerate}
\end{ex}
\medskip
Sometimes we will need to consider affine group schemes of \textbf{finite type} in $\cc$. Similarly to the classical case, $G\in GpSch(\cc)$ is said to be of \textbf{finite type} if $\co(G)$ is \textit{finitely generated}, i.e. if there exists $X\in \cc$ (as opposed to $\Ind(\cc)$), for which $\co(G)$ is a quotient of the symmetric algebra $S(X)$. We denote the corresponding category by $GpSch(\cc)^{ft}$.

\medskip

For any $G\in GpSch(\cc)$ one can define the corresponding \textbf{Lie algebra} object in the pro-completion of $\cc$ as follows:
$$
\g = Lie(G) = (I/I^2)^*,
$$
where $I$ is the augmentation ideal in $\co(G)$.

\begin{remark}
 The definition of a Lie algebra object in a STC in characteristic $p$ is subtle and can be found for example in \cite{V23},  Section 5.1. It is easy to see that $Lie(G)$ is an \textit{operadic Lie algebra} object, i.e. an object with a skew-symmetric bracket, satisfying the Jacobi identity. However, operadic Lie algebras are not well-behaved, and one should impose additional conditions to ensure nice properties like Poincare-Birkhoff-Witt (PBW) theorem. By a \textbf{Lie algebra object} we will mean an operadic Lie algebra object for which PBW theorem holds. 

Etingof proved in \cite{E18}, Lemma 4.2, that any subobject of an associative algebra object closed under the commutator is a Lie algebra object. This implies that $Lie(G)$ is an actual Lie algebra object, as it is a Lie subalgebra of the algebra of distributions on $G$ constructed below.
\end{remark}

The Lie algebra object $\g = Lie(G)$ can also be described through the functor of points. Let us consider $\g$ as an affine scheme whose algebra of functions is $S(\g^*)$. For any $A\in CommAlg(\cc)$ put
$$
\g(A) = \Hom_{CommAlg(\cc)}(S(\g^*), A) = \Hom_\cc(\g^*, A) = \Hom_\cc(I/I^2,A),
$$
where $I$ is the augmentation ideal in $\co(G)$.

Thus similarly to the classical case, we deduce that
$$
\g(A) = \Ker(G(A[\epsilon])\to G(A)),
$$
where $\epsilon^2=0$.

 \begin{remark}
     Let $G$ be of finite type so that $\co(G)$ is a quotient of $S(X)$ for some $X\in \cc$. Then  $\g = Lie(G)$ is an object of $\cc$ (rather than of the pro-completion of $\cc$),  since $\g$ is a quotient of $X$.
 \end{remark}

Similarly to the classical case, one defines the \textbf{algebra of distributions} on $G$ as:
$$
Dist(G) = \varinjlim_k (\co(G)/I^k)^*.
$$
This algebra is filtered with
$$
Dist(G)^{k-1} = (\co(G)/I^k)^*
$$
and naturally acts on every $\co(G)$-comodule object.

The Lie algebra $\g$ is a subobject of $Dist(G)^1$ closed under the commutator.

\begin{definition}
    A $Dist(G)$-module $M\in \Ind(\cc)$ is called \textbf{integrable} if it admits a compatible $\co(G)$-comodule structure.
\end{definition}

\subsection{The fundamental group of a symmetric tensor category}

Let us consider the (dual of) internal hom bifunctor on $\cc$:
$$
h: \cc^{op}\times \cc\to \cc,
$$
$$
~~(X,Y)\mapsto Y\otimes X^*.
$$

In \cite{deltan} Deligne defined the Hopf algebra object in $\Ind(\cc)$ 
$$
H_\cc = \int^{X\in \cc} h(X,X) = \int^{X\in \cc} X\otimes X^*,
$$
where for any bifunctor $F:\cc^{op}\times \cc\to \mathcal D$ the \textbf{coend} of $F$, denoted
$$
\int^{X\in C} F(X,X),
$$
is the coequalizer of
$$
\prod_{X,Y\in \cc} \prod_{f:X\to Y} F(Y,X) \rightrightarrows \prod_{X\in \cc} F(X,X)
$$
in $\mathcal D$, with one arrow sending $F(Y,X)$ to $F(X,X)$ via $F(f\times id_X)$, and the second one sending $F(Y,X)$ to $F(Y,Y)$ via $F(id_Y\times f)$. 

\begin{remark}
    If $\cc$ is semisimple then 
    $$
    H_\cc = \bigoplus_{X\in sOb(\cc)} X\otimes X^*,
    $$
    where $sOb(\cc)$ is the set of isomorphism classes of simple objects in $\cc$.
\end{remark}

\begin{description}
    \item[$\bullet$ Multiplication] The multiplication on $H_\cc$ is inherited from the multiplication map on $\prod_{X\in\cc} X\otimes X^*$ which sends $(X\otimes X^*)\otimes (Y\otimes Y^*)$ to $(X\otimes Y)\otimes (X\otimes Y)^*$.

    \item[$\bullet$ Unit] The unit map on $H_\cc$ is inherited from the unit map
    $$
    \eta = \prod_{X\in \cc} coev_X: \on\to \prod_{X\in\cc} X\otimes X^*.
    $$

    \item[$\bullet$ Comultiplication] The comultiplication on $H_\cc$ is inherited from $\prod_{X\in \cc} X\otimes X^*$, on which it is defined via
    $$
    \Delta|_{X\otimes X^*} = id_X\otimes coev_X\otimes X: X\otimes X^*\to (X\otimes X^*)\otimes (X\otimes X^*).
    $$

    \item[$\bullet$ Counit] The counit map is inherited from the evaluation map $\prod_{X\in\cc} ev_{X}$ on \\$\prod_{X\in \cc} X\otimes X^*$.

    \item[$\bullet$ Antipode] The antipode is inherited from the antipode on $\prod_{X\in\cc} X\otimes X^*$
    $$
    S|_{X\otimes X^*} =\tau_{X,X^*}: X\otimes X^*\to X^*\otimes X.
    $$

\end{description}

\begin{definition}
    The \textbf{fundamental group} of a symmetric tensor category $\cc$ is the affine group scheme $\pi_\cc$ in $\cc$ with 
    $$
    \co(\pi_\cc) = H_\cc.
    $$
\end{definition}

\begin{remark}\label{fundgppoints}
    For any $A\in CommAlg(\cc)$ the $A$-points of $\pi_\cc$ are given by
    $$
    \pi_\cc(A)=\mathrm{Aut}^{\otimes}(I_A),
    $$
    where $I_A: \cc\to A$-$mod_\cc$ is the functor sending $X\in \cc$ to a free $A$-module generated by $X$, i.e. $A\otimes X$.

    In particular the $\ck$-points of $\pi_\cc$ are the tensor automorphisms of the identity functor $id_\cc$.
\end{remark}

Any object $X\in \cc$ is naturally an $H_\cc$-comodule (and thus a $\pi_\cc$-module). The coaction map
$$
X\to X\otimes H_\cc
$$
is dual to the natural map
$$
X^*\otimes X\to H_\cc.
$$

\begin{definition}
    For any $G\in GpSch(\cc)$ and any homomorphism
    $$
    \varepsilon: \pi_\cc\to G,
    $$
    define the category $\Rep_\cc(G,\varepsilon)$ to be the category of $G$-modules $M$ in $\cc$ (i.e. $\co(G)$-comodules), whose restriction to $\pi_\cc$ along $\varepsilon$ coincides with the natural action of $\pi_\cc$ on the object $M$.
\end{definition}

Let $F:\cc\to \mathcal D$ be a tensor functor. There is a natural group scheme homomorphism
$$
\varepsilon: \pi_{\mathcal D}\to F(\pi_\cc)
$$
corresponding to the Hopf algebra object map
$$
\varepsilon^*: F(H_\cc)\to H_{\mathcal D},
$$
which in turn is induced from the map
$$
F(X^*\otimes X)=F(X)^*\otimes F(X)\to H_{\mathcal D}.
$$

\begin{theorem}[\cite{deltan}, Theorem 8.17] \label{reconstr_thm}
    The functor $F$ induces an equivalence of categories
    $$
    \cc \to \Rep_{\mathcal D}(F(\pi_\cc),\varepsilon).
    $$
\end{theorem}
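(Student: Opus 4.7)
The plan is to produce a lifted functor $\tilde F : \cc \to \Rep_{\mathcal D}(F(\pi_\cc),\varepsilon)$ whose composition with the forgetful functor to $\mathcal D$ recovers $F$, and then show $\tilde F$ is an equivalence by verifying fully-faithfulness and essential surjectivity. For the construction of $\tilde F$, any $X \in \cc$ carries the natural $H_\cc$-coaction $X \to X \otimes H_\cc$ obtained by duality from the universal map $X^* \otimes X \to H_\cc$ into the coend. Applying $F$ yields a coaction $F(X) \to F(X) \otimes F(H_\cc)$, giving $F(X)$ the structure of an $F(\pi_\cc)$-module; on morphisms $\tilde F$ acts via $F$, and naturality of the coaction in $X$ ensures equivariance. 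Compatibility with $\varepsilon$ is then essentially tautological: post-composing with $\mathrm{id} \otimes \varepsilon^*$ produces precisely the canonical map $F(X)^* \otimes F(X) \to H_{\mathcal D}$, which is the defining map of the natural $\pi_{\mathcal D}$-coaction, because $\varepsilon^*$ is defined so that these maps factor through $F(H_\cc)$.

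For fully-faithfulness, morphisms in $\Rep_{\mathcal D}(F(\pi_\cc),\varepsilon)$ from $F(X)$ to $F(Y)$ are those $f \in \Hom_{\mathcal D}(F(X),F(Y))$ that intertwine both coactions. I would express both $\Hom_\cc(X,Y)$ and this subspace as equalizers of pairs of arrows built from the coends $H_\cc$ and $H_{\mathcal D}$, using that every morphism in $\cc$ automatically commutes with the natural $\pi_\cc$-coaction by functoriality. Exactness of the tensor functor $F$, combined with the compatibility of $F$ with the coend construction up to $\varepsilon^*$, then identifies the two equalizers and yields the required bijection.

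The main obstacle, and the heart of the proof, is essential surjectivity. Given $(M,\mu) \in \Rep_{\mathcal D}(F(\pi_\cc),\varepsilon)$, one must exhibit $X \in \cc$ with $\tilde F(X) \simeq (M,\mu)$. The strategy is to recover $X$ from the coaction $\mu$: the compatibility with $\varepsilon$ forces $\mu$ to refine the natural $\pi_{\mathcal D}$-coaction through $F(H_\cc)$, so $M$ must be assembled from $F$-images of objects of $\cc$. Concretely, one filters $M$ by ``finite-type'' subobjects $M_Y$ on which $\mu$ factors through $M_Y \otimes F(Y^* \otimes Y) \subset M_Y \otimes F(H_\cc)$ for a chosen $Y \in \cc$, and realizes each $M_Y$ as $F$ applied to a suitable subobject constructed from $Y$ and its tensor powers. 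Writing $M$ as the colimit of such pieces and checking that the resulting object lives in $\cc$ rather than merely in $\Ind(\cc)$ is the technical crux; it requires Deligne's careful coend bookkeeping and essential use of the exactness of $F$ to ensure that the comonadic descent produced by the $F(\pi_\cc)$-coaction on $M$ indeed terminates in an object of $\cc$.
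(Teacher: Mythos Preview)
The paper does not give its own proof of this statement: it is quoted verbatim as Theorem~8.17 of Deligne's \cite{deltan} and used as a black box, with no argument supplied. So there is nothing in the paper to compare your proposal against.

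That said, your sketch follows the broad outline of Deligne's original argument and is structurally sound: construct the lifted functor via the natural coaction, then verify fully-faithfulness and essential surjectivity. A couple of points are worth flagging. For fully-faithfulness, the equalizer language is fine heuristically, but the clean statement is that $\Hom_\cc(\on, Z) \to \Hom_{F(\pi_\cc)}(\on, F(Z))$ is an isomorphism for all $Z\in\cc$ (then set $Z = X^*\otimes Y$); this is where one genuinely uses that $F$ is a tensor functor and hence exact and faithful. Your description of essential surjectivity is honest about being the crux, but ``filters $M$ by finite-type subobjects $M_Y$'' and ``comonadic descent \ldots terminates in an object of $\cc$'' are placeholders rather than arguments: Deligne's proof works by exhibiting $M$ as a subquotient of a finite sum of objects $F(Y)$ using the coaction and the coend presentation of $H_\cc$, and the rigidity/finiteness hypotheses on $\cc$ are what guarantee the preimage lands in $\cc$ rather than $\Ind(\cc)$. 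None of this is wrong in your outline, but the actual execution is the content of several pages in \cite{deltan}, so if you intend to include a proof you should either cite Deligne directly (as the paper does) or be prepared to fill in those details.
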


\begin{remark}
    For any $A\in CommAlg(\mathcal D)$ the $A$-points of $F(\pi_\cc)$ are given by 
    $$
    F(\pi_\cc)(A)=\mathrm{Aut}^\otimes(I_A\circ F),
    $$
    where $I_A:\mathcal D\to A$-$mod_{\mathcal D}$ is the functor from Remark \ref{fundgppoints}.

    Thus, the $\ck$-points are
    $$
    F(\pi_\cc)(\ck)=\mathrm{Aut}^\otimes(F).
    $$
\end{remark}

\begin{ex}
    \begin{enumerate}
        \item For $\cc=Vec$ the fundamental group $\pi_\cc$ is trivial, so we recover the standard Tannakian reconstruction theorem: If $\cc$ is a Tannakian category, i.e. a symmetric tensor category admitting a fiber functor
        $$
        F:\cc\to Vec,
        $$
        then $\cc$ is equivalent to the category $\Rep(G)$, where $G$ is an affine group scheme with
        $$
        G(A)=\mathrm{Aut}^\otimes(I_A\circ F)
        $$
        for any commutative $\ck$-algebra $A$.

        \item For $\cc = sVec$ the fundamental group is $\Z/2\Z$, which acts trivially on even vector spaces and via multiplication by $-1$ on odd vector spaces.

        If $\cc$ is a super Tannakian category, i.e. a symmetric tensor category admitting a super fiber functor:
        $$
        F:\cc\to sVec,
        $$
        then $G=F(\pi_\cc)$ is an affine super group scheme receiving a group scheme homomorphism
        $$
        \varepsilon: \Z/2\Z\to G.
        $$
        Let $z=\varepsilon(\overline 1)$, then $\cc$ is equivalent to $\Rep(G,z)$ (see the last notation in Section \ref{notations}).

        \item If $\cc=\Rep G$ for some affine (super) group scheme $G$ then $\pi_\cc = G$ and $H_\cc=\co(G)$ is an ind Hopf algebra object in $\Rep(G)$ via the conjugation action of $G$ on it.

        For any other affine (super) group scheme $K$ we get a bijection
        $$
        \{\text{Tensor functors } F:\Rep G\to \Rep K\} \leftrightarrow \{\text{Group homomorphisms } \varepsilon: K\to G\}.
        $$
        
    \end{enumerate}
\end{ex}

Recall that the Lie algebra object $\g = Lie(\pi_\cc)$ can also be described through the functor of points:
$$
\g(A) = \Ker(\pi_\cc(A[\epsilon])\to \pi_\cc(A)),
$$
where $\epsilon^2=0$.

\begin{theorem}\label{fundamental_Lie_alg}
    As an affine scheme in $\cc$, the Lie algebra object $\g=Lie(\pi_\cc)$ represents the functor of points
    $$
    A\mapsto Der^\otimes(I_A),
    $$
    where $Der^\otimes(I_A)$ are ``tensor derivations'' of $I_A$, that is, natural transformations $\beta: I_A\to I_A$ satisfying
    $$
    \beta_{X\otimes Y} = \beta_X\otimes I_A(id_{Y}) + I_A(id_X)\otimes \beta_Y.
    $$
\end{theorem}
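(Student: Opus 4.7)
The plan is to unwind the definition $\g(A) = \Ker(\pi_\cc(A[\epsilon]) \to \pi_\cc(A))$ by applying Remark \ref{fundgppoints}, which identifies $\pi_\cc(B)$ with the group $\mathrm{Aut}^\otimes(I_B)$ of tensor automorphisms for any $B \in CommAlg(\cc)$, and then to show that the resulting kernel is naturally in bijection with $Der^\otimes(I_A)$, functorially in $A$.

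First I would write $A[\epsilon] \cong A \oplus \epsilon A$ as an $A$-module, so that for every $X \in \cc$
$$
I_{A[\epsilon]}(X) = A[\epsilon]\otimes X \cong I_A(X) \oplus \epsilon I_A(X),
$$
with $\epsilon$ acting by sending the first summand to the second and annihilating the second. A tensor automorphism $\phi$ of $I_{A[\epsilon]}$ that reduces modulo $\epsilon$ to the identity of $I_A$ is then necessarily of the form $\phi_X = id_{I_{A[\epsilon]}(X)} + \epsilon\, \beta_X$ for a unique $A$-linear endomorphism $\beta_X$ of $I_A(X)$; invertibility is automatic with inverse $id - \epsilon\, \beta_X$. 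Naturality of $\phi$ in $X$ is equivalent to naturality of $\beta$, so $\beta$ becomes a natural transformation $I_A \to I_A$.

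Next I would translate the tensor (monoidal) axiom on $\phi$ into a condition on $\beta$. The $A[\epsilon]$-linear coherence isomorphism $I_{A[\epsilon]}(X) \otimes_{A[\epsilon]} I_{A[\epsilon]}(Y) \cong I_{A[\epsilon]}(X\otimes Y)$ decomposes into the $A$-linear isomorphism $I_A(X)\otimes_A I_A(Y) \cong I_A(X\otimes Y)$ in even degree and its shift in odd degree. Then $\phi_X \otimes_{A[\epsilon]} \phi_Y = (id + \epsilon \beta_X)\otimes(id + \epsilon \beta_Y)$, expanded using $\epsilon^2 = 0$, equals
$$
id + \epsilon\bigl(\beta_X \otimes id_{I_A(Y)} + id_{I_A(X)} \otimes \beta_Y\bigr),
$$
so the equality $\phi_{X\otimes Y} = \phi_X \otimes \phi_Y$ becomes exactly the Leibniz rule $\beta_{X\otimes Y} = \beta_X\otimes id + id\otimes \beta_Y$ in the definition of $Der^\otimes(I_A)$. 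Compatibility with the unit automorphism forces $\beta_{\on} = 0$, as required for a derivation.

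The one step requiring care is the bookkeeping for tensor products over $A[\epsilon]$: one must check that the monoidal structure on $A[\epsilon]$-modules in $\Ind(\cc)$ decomposes the tensor product of two modules of the form $M\oplus \epsilon M$ and $N \oplus \epsilon N$ into $(M\otimes_A N) \oplus \epsilon(M\otimes_A N)$ in such a way that the coherence isomorphism of $I_{A[\epsilon]}$ restricts to that of $I_A$ on the even part and becomes the expected shift on the $\epsilon$-part; once this is in place, the Leibniz identity drops out as computed above. Functoriality in $A$ is immediate from the fact that all constructions above are natural in $A$, which gives the desired representability of $A\mapsto Der^\otimes(I_A)$ by the affine scheme $\g = Lie(\pi_\cc)$.
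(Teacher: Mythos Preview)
Your proposal is correct and follows essentially the same approach as the paper: both identify $\g(A)$ with the kernel of $\mathrm{Aut}^\otimes(I_{A[\epsilon]})\to\mathrm{Aut}^\otimes(I_A)$, write an element of this kernel as $\phi_X = id + \epsilon\beta_X$, and then read off the Leibniz rule from the tensor-automorphism condition $\phi_{X\otimes Y}=\phi_X\otimes\phi_Y$ using $\epsilon^2=0$. You simply spell out more of the bookkeeping (the $A[\epsilon]$-module decomposition, the unit condition $\beta_\on=0$, and functoriality in $A$) than the paper does.
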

\begin{proof}
    Any automorphism $\phi$ in $\pi_\cc(A[\epsilon])=\mathrm{Aut}^\otimes(I_{A[\epsilon]})$ is of the form 
    $$
    \phi_X = \alpha_X+\epsilon\beta_X, \text{ for } X\in \cc,
    $$
    where $\alpha, \beta\in \En(I_A)$ with $\alpha_X, \beta_X: A\otimes X\to A\otimes X$. The natural map $\pi_\cc(A[\epsilon])\to \pi_\cc(A)$ sends $\alpha+\epsilon\beta$ to $\alpha$.  Therefore, if $\phi\in \Ker(\pi_\cc(A[\epsilon])\to \pi_\cc(A))$ then $\phi_X=I_A(Id_X)+\epsilon\beta_X$.

     The condition that $\phi$ is a tensor automorphism implies that
    $$
    \phi_{X\otimes Y} = I_A(id_{X\otimes Y})+\epsilon \beta_{X\otimes Y} = I_A(id_X\otimes id_Y) + \epsilon(I_A(id_X)\otimes \beta_Y + \beta_X\otimes I_A(id_Y)) = \phi_X\otimes \phi_Y.
    $$

\end{proof}

\subsection{General linear groups in symmetric tensor categories}

Let $\cc$ be a symmetric tensor category linear over $\ck$. 

In view of the reconstruction theorem (Theorem \ref{reconstr_thm}), we are interested in studying group schemes $G$ in $\cc$ together with a homomorphism 
$$
\varepsilon: \pi_\cc\to G.
$$

The first natural example of such a group scheme is the \textbf{general linear 
group} $GL(X)$ defined for any object $X\in \cc$.

First, one can define the matrix algebra $M(X)$ on $X$ as $X\otimes X^*$ with multiplication map
$$
m = id_X\otimes ev_{X}\otimes id_{X^*},
$$
$$
m: (X\otimes X^*)\otimes (X\otimes X^*)\to X\otimes X^*,
$$
and the unit map (the ``identity matrix")
$$
\eta = \tau_{X^*, X}\circ \coev_X: \on\to X\otimes X^*.
$$

We can then define the Lie algebra object $\gl(X)$ as $M(X)=X\otimes X^*$ with the bracket given by the commutator map:
$$
c =  m \circ (1- \tau_{X\otimes X^*, X\otimes X^*} ),
$$
$$
c: \gl(X)\otimes \gl(X)\to \gl(X).
$$

\begin{definition}
Let $X\in \cc$, define $\co(GL(X))$ to be the commutative Hopf algebra object in $\Ind \cc$ obtained as a quotient of the symmetric algebra
$$
S (M(X)_1^*\oplus M(X)_2^*) = S(M(X)_1^*)\otimes S(M(X)_2^*)
$$
on the direct sum of two copies of $M(X)^*$,  by the  ideal generated by the images of the maps
$$
m^*_{12}-\eta^*: M(X)^*\to \on\oplus (M(X)_1^*\otimes M(X)_2^*)\to
$$
$$
\to \on \oplus S^2(M(X)_1^*\oplus M(X)_2^*)\hookrightarrow S(M(X)_1^*\oplus M(X)_2^*)
$$
and
$$
 m^*_{21}-\eta^*: M(X)^* \to \on\oplus (M(X)_2^*\otimes M(X)_1^*)\to
$$
$$
\to \on \oplus S^2(M(X)_1^*\oplus M(X)_2^*)\hookrightarrow S(M(X)_1^*\oplus M(X)_2^*).
$$
  
\end{definition}
\begin{remark}\label{intuitive_GL(X)}
   Intuitively speaking, $GL(X)$ is an affine scheme cut out in the affine space $M(X)_1\oplus M(X)_2$ by the equations 
$$
AB = BA= 1.\footnote{Venkatesh proved in \cite{V24} that we only need the relation $AB=1$ when $X$ is of finite length (see Remark on p.5).}
$$ 
\end{remark}

\begin{remark}
    The coalgebra structure on $\co(GL(X))$ is inherited from the coalgebra algebra structure on $S(M(X)_1^*\oplus M(X)_2^*)=S(M(X)^*_1)\otimes S(M(X)_2^*)$. The coalgebra structure on $S(M(X)^*)$ is, in turn, induced from the comultiplication $m^*$ on $M(X)^*$.

    The antipode is induced from the map
    $$
    \begin{pmatrix}
        0&1\\1&0
    \end{pmatrix}: M(X)_1\oplus M(X)_2\to M(X)_1\oplus M(X)_2,
    $$
    ``swapping $A$ and $B$'' from Remark \ref{intuitive_GL(X)}.
\end{remark}

\begin{remark}
    By definition, for any $X\in \cc$ the group scheme $GL(X)$ is of finite type.
\end{remark}

Just as in the classical case, one can prove the following:

\begin{theorem}
\begin{itemize}
    \item \label{points_of_GL(X)}  The group scheme $GL(X)$ represents the functor of points
    $$
    A\mapsto \mathrm{Aut}_{A-mod_\cc}(A\otimes X) = \mathrm{Aut}_{A-mod_\cc}(I_A(X)),
    $$
    where $A\in CommAlg(\cc)$.
    
    That is, $GL(X)(A)$ is the $A$-linear automorphisms of the ``free" $A$-module $A\otimes X$.

    \item The corresponding Lie algebra object $Lie(GL(X))$ is $\gl(X)$.
\end{itemize}

\end{theorem}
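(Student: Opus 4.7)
The plan is to unwrap the definition of $\co(GL(X))$ via the universal property of the symmetric algebra, translate the two relation ideals into the equations ``$AB=1$'' and ``$BA=1$'' of Remark \ref{intuitive_GL(X)}, and identify ordered pairs of mutually inverse $A$-linear endomorphisms of $I_A(X)$ with invertible endomorphisms. First, since $X$ is dualizable, so is $M(X)=X\otimes X^*$, and for any $A\in CommAlg(\cc)$ duality together with the free-forgetful adjunction yields natural bijections
$$
\Hom_\cc(M(X)^*,A)\;\cong\;\Hom_\cc(\on,M(X)\otimes A)\;\cong\;\Hom_\cc(X,A\otimes X)\;\cong\;\En_{A\mathrm{-}mod_\cc}(I_A(X)).
$$
By the universal property of the symmetric algebra, a $CommAlg(\cc)$-morphism $S(M(X)_1^*\oplus M(X)_2^*)\to A$ is therefore the same as an ordered pair $(\alpha,\beta)$ of $A$-linear endomorphisms of $I_A(X)$.

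Next I would verify that the ideal defining $\co(GL(X))$ cuts out precisely the pairs with $\alpha\circ\beta=\beta\circ\alpha=\on_{I_A(X)}$. Unpacking the construction, the map $m^*$ is the dual of the multiplication on $M(X)$; under the adjunction above $m^*_{12}$ corresponds to the composition of endomorphisms in one order and $\eta^*$ to the identity endomorphism, and symmetrically for $m^*_{21}$. Hence the generators $m^*_{12}-\eta^*$ and $m^*_{21}-\eta^*$ translate exactly into the relations $\alpha\beta=\on=\beta\alpha$, giving the desired natural isomorphism $GL(X)(A)\cong \mathrm{Aut}_{A\mathrm{-}mod_\cc}(I_A(X))$.

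For the Lie algebra statement I would invoke the functor-of-points description $Lie(GL(X))(A)=\Ker(GL(X)(A[\epsilon])\to GL(X)(A))$. Using the first part, an element of the kernel is an $A[\epsilon]$-linear automorphism of $I_{A[\epsilon]}(X)=I_A(X)\oplus\epsilon I_A(X)$ of the form $\on+\epsilon\gamma$; any such morphism is automatically invertible with inverse $\on-\epsilon\gamma$, so the kernel is naturally in bijection with
$$
\En_{A\mathrm{-}mod_\cc}(I_A(X))\;\cong\;\Hom_\cc(\on,A\otimes X\otimes X^*)\;=\;\gl(X)(A).
$$
Expanding the group law of $GL(X)(A[\epsilon^2=0,\epsilon'^2=0,\epsilon\epsilon'])$ modulo higher-order terms then shows that the bracket induced on $Lie(GL(X))$ matches the commutator $c = m\circ(1-\tau_{M(X),M(X)})$ defined on $\gl(X)$.

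The main obstacle is not conceptual but bookkeeping: one must check that the chain of identifications through duals, the symmetric algebra, and the free-forgetful adjunction is compatible with the categorically defined multiplication, unit, and commutator on $M(X)$, with the braiding appearing correctly wherever objects are interchanged. In particular, since $A$ is only required to be a commutative algebra in $\cc$, the relevant module structures must be tracked carefully; but once the dictionary between morphisms $\on\to M(X)\otimes A$ and $A$-linear endomorphisms of $I_A(X)$ is fixed, the rest is a direct transcription of the classical argument for $GL_n$ in $Vec$.
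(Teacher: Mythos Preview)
Your proposal is correct and follows the standard approach. The paper itself does not give a proof of this theorem; it simply refers the reader to \cite{V24}, Proposition 3.2 and Corollary 3.4, so there is no in-paper argument to compare against. Your sketch---identifying $\Hom_\cc(M(X)^*,A)$ with $\En_{A\text{-}mod_\cc}(I_A(X))$ via duality and the free-forgetful adjunction, translating the quotient relations into $\alpha\beta=\beta\alpha=\on$, and then reading off $Lie(GL(X))$ from the dual-numbers description---is precisely the kind of argument one expects and is essentially what the cited reference does.
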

\begin{proof}
    For proof see for example \cite{V24}, Proposition 3.2 and Corollary 3.4.
\end{proof}

The group scheme $GL(X)$ is equipped with a natural homomorphism 
$$
\varepsilon: \pi_\cc\to GL(X).
$$
On the level of Hopf algebras we have:
$$
\varepsilon^*: \co(GL(X))\to H_\cc,
$$
$$
\varepsilon^*|_{M(X)_1}:M(X)_1 \xrightarrow{\sim} X\otimes X^*\to  H_\cc,
$$
$$
\varepsilon^*|_{M(X)_2}:M(X)_2 \xrightarrow{\tau_{X,X^*}} X^*\otimes X\to H_\cc.
$$

On the level of functor of points the homomorphism $\varepsilon(A)$ is the natural map
$$
\varepsilon(A): \pi_\cc(A)\to GL(X)(A),
$$
$$
\mathrm{Aut}^\otimes(I_A)\rightarrow \mathrm{Aut}_{A-mod_\cc}(I_A(X)),
$$
for any $A\in CommAlg(\cc)$.

\begin{theorem}\label{fund_gp_is_subgp_of_GL(X)}
    Suppose $\cc$ is tensor generated by an object $X\in \cc$ (and its dual), i.e. every object of $\cc$ is isomorphic to a subquotient in $X^{\otimes k} \otimes (X^*)^{\otimes l}$ for some $k,l\ge 0$. Then the homomorphism $\varepsilon$ is injective, i.e. $\pi_\cc$ is a subgroup scheme in $GL(X)$.
\end{theorem}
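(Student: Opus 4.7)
The plan is to prove the equivalent statement that the dual Hopf algebra homomorphism
$$
\varepsilon^*:\co(GL(X))\to H_\cc
$$
is surjective. Writing $\pi_Y:Y\otimes Y^*\to H_\cc$ for the canonical structure map from the coend, this amounts to showing that $\Im(\pi_Y)\subset \Im(\varepsilon^*)$ for every $Y\in \cc$, since by the defining coequalizer $H_\cc$ is covered by these subobjects.

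The first step is to notice that by the very definition of $\varepsilon^*$ on $M(X)_1$ and $M(X)_2$, the image $\Im(\varepsilon^*)$ contains both $\Im(\pi_X)$ and $\Im(\pi_{X^*})$. Since $\Im(\varepsilon^*)$ is closed under the multiplication of $H_\cc$, and since this multiplication satisfies $\mu\circ (\pi_Y\otimes \pi_Z)=\pi_{Y\otimes Z}$ (up to the canonical braiding identifying $(Y\otimes Z)\otimes (Y\otimes Z)^*$ with the tensor product $(Y\otimes Y^*)\otimes (Z\otimes Z^*)$), it follows that $\Im(\pi_W)\subset \Im(\varepsilon^*)$ for every tensor monomial $W=X^{\otimes k}\otimes (X^*)^{\otimes l}$.

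The core step is to propagate this inclusion to subquotients. For a subobject $\iota:Y\hookrightarrow W$, the coequalizer relation in the coend gives
$$
\pi_W\circ(\iota\otimes id_{W^*})\;=\;\pi_Y\circ(id_Y\otimes \iota^*)\;:\;Y\otimes W^*\longrightarrow H_\cc.
$$
Because duality is exact in a rigid STC, $\iota^*:W^*\to Y^*$ is epic, hence so is $id_Y\otimes \iota^*$; therefore $\Im(\pi_Y)=\Im(\pi_Y\circ (id_Y\otimes \iota^*))=\Im(\pi_W\circ (\iota\otimes id_{W^*}))\subset \Im(\pi_W)$. A dual argument applied to a surjection $W\twoheadrightarrow Y$ (whose dual is injective) handles quotients, and composing the two proves $\Im(\pi_Y)\subset \Im(\pi_W)$ for any subquotient $Y$ of $W$. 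Combined with the tensor-generation hypothesis, this yields $\Im(\pi_Y)\subset \Im(\varepsilon^*)$ for every $Y\in \cc$, giving the desired surjectivity.

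The main obstacle is working cleanly with the coend in a possibly non-semisimple STC: one has to resist decomposing objects as sums of simples and instead rely purely on the universal property of the coequalizer, on the exactness of duality, and on the fact that tensoring with a fixed object preserves surjections. Once those categorical ingredients are in place, the argument reduces to the bookkeeping of the two coend identifications displayed above.
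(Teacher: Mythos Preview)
Your argument is correct. You prove directly that the Hopf algebra map $\varepsilon^*:\co(GL(X))\to H_\cc$ is surjective, using the algebra structure on the coend together with the dinaturality relations to propagate $\Im(\pi_X)$ and $\Im(\pi_{X^*})$ first to tensor monomials and then to their subquotients. The key inputs---that $\varepsilon^*$ is an algebra map so its image is multiplicatively closed, that $\mu\circ(\pi_Y\otimes\pi_Z)=\pi_{Y\otimes Z}$, and that duality sends monos to epis so the coend relation forces $\Im(\pi_Y)\subset\Im(\pi_W)$ for any subquotient $Y$ of $W$---are all used correctly.

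The paper takes a different, much shorter route: it works on the functor-of-points side, observing that $\varepsilon(A):\mathrm{Aut}^\otimes(I_A)\to\mathrm{Aut}_{A\text{-}mod_\cc}(I_A(X))$ is injective because a tensor automorphism of $I_A$ is determined by its component at $X$ when $X$ tensor-generates $\cc$. Your approach has the advantage of yielding the stronger conclusion (surjectivity of $\varepsilon^*$, hence a genuine closed immersion) without invoking any general fact that monomorphisms of affine group schemes in $\cc$ are closed immersions; the paper's one-line proof is more conceptual but leaves that passage implicit.
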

\begin{proof}
    If $X$ tensor generates $\cc$ then any tensor automorphism of the functor $I_A$ is uniquely determined by its value on $X$.
\end{proof}

\subsection{Semisimplification of tensor categories}
In this section we will give a brief description of the semisimplification construction. 
We refer the reader to \cite{EO18} for more details and proofs.  

Throughout this section let $\cc$ be a Karoubian rigid symmetric monoidal category, which admits an embedding into a symmetric tensor category.

\begin{definition}
    For $X,Y\in \cc$ a morphism $f:X\to Y$ is called \textbf{negligible} if for any morphism $g:Y\to X$ the trace $tr(f\circ g)=0$.

    Let $\mathcal N_\cc(X,Y)\subset \Hom_\cc(X,Y)$ denote the set of all negligible morphisms from $X$ to $Y$. 
\end{definition}

Consider the category $\overline \cc$ whose objects are the same as in $\cc$ and morphisms are defined as the quotient
$$
\Hom_{\overline \cc}(X,Y)=\Hom_\cc(X,Y)/\mathcal N(X,Y).
$$

\begin{definition}
    The category $\overline \cc$ is called the \textbf{semisimplification} of $\cc$.
\end{definition}

One can prove that $\overline\cc$ inherits the monoidal structure from $\cc$. Moreover:

\begin{theorem}[\cite{EO18}, Theorem 2.6]
    The category $\overline \cc$ is a semisimple symmetric tensor category, whose simple objects are the indecomposable objects of $\cc$ of nonzero dimension.
\end{theorem}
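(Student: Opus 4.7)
The plan is to establish the three things the statement packages together: that the negligible ideal is compatible with the monoidal structure (so that $\overline{\cc}$ is at least a rigid symmetric monoidal category), that $\overline{\cc}$ is semisimple, and that its isomorphism classes of nonzero simple objects are exactly the isomorphism classes of indecomposables of $\cc$ of nonzero categorical dimension.

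First I would show that $\mathcal N_\cc$ is a tensor ideal. Closure under pre- and post-composition with arbitrary morphisms is immediate from the cyclicity of the trace: if $f \in \mathcal N_\cc(X,Y)$, $h: Y \to Z$ and $k: W \to X$, then for any $g: Z \to W$ one has $\tr((h\circ f \circ k)\circ g) = \tr(f\circ (k\circ g\circ h))$, which vanishes by negligibility of $f$. Closure under tensor product with arbitrary morphisms uses rigidity: if $f \in \mathcal N_\cc(X,Y)$ and $\phi: A \to B$ are given, then for any $g: Y\otimes B \to X\otimes A$ one can use the duality $(-) \otimes B^*$ and the evaluation/coevaluation maps to rewrite $\tr((f\otimes\phi)\circ g) = \tr(f \circ \tilde g)$, where $\tilde g: Y \to X$ is built from $g$, $\phi$, $\coev_B$ and $\ev_B$. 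Since $\cc$ embeds into a symmetric tensor category, this rewriting is legitimate and the trace vanishes. Consequently the quotient $\overline{\cc}$ inherits $\otimes$, duals, and the symmetric braiding from $\cc$.

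Next I would analyze indecomposable objects. Since $\cc$ embeds into an STC, $\End_\cc(X)$ is a finite-dimensional $\ck$-algebra, and for indecomposable $X$ it is local with nilpotent maximal ideal $\mathfrak m$. Every nilpotent endomorphism has trace $0$ (its image in the ambient STC is nilpotent, and the trace of a nilpotent endomorphism vanishes there), so $\tr(\mathfrak m) = 0$. It follows that $\End_\cc(X) = \ck\cdot \mathrm{id}_X \oplus \mathfrak m$ with $\tr(\lambda\,\mathrm{id}_X + m) = \lambda\dim(X)$. Two cases arise. If $\dim(X) \neq 0$, then $\mathrm{id}_X$ is not negligible and $\mathcal N_\cc(X,X) = \mathfrak m$, giving $\End_{\overline{\cc}}(X) = \ck$; hence $X$ is simple in $\overline{\cc}$. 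If $\dim(X) = 0$, then $\tr(g) = 0$ for every $g \in \End_\cc(X)$, so $\mathrm{id}_X \in \mathcal N_\cc(X,X)$ and $X \cong 0$ in $\overline{\cc}$.

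Finally I would put the pieces together. Since $\cc$ is Karoubian and Krull--Schmidt (inherited from the ambient STC, where Hom-spaces are finite-dimensional), every $X\in\cc$ decomposes as $X = \bigoplus_i X_i$ with $X_i$ indecomposable; in $\overline{\cc}$ the zero-dimensional summands vanish and we obtain a decomposition into simples. For non-isomorphic indecomposables $X \not\cong Y$ of nonzero dimension, any $f: X\to Y$ and $g: Y\to X$ give $g\circ f \in \mathfrak m \subset \End_\cc(X)$, hence $\tr(g\circ f) = \tr(f\circ g) = 0$, so $\Hom_\cc(X,Y) \subseteq \mathcal N_\cc(X,Y)$ and there are no nonzero morphisms between distinct simples in $\overline{\cc}$. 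Combined with $\End_{\overline{\cc}}(\on) = \ck$ (the unit is indecomposable of dimension $1$), this shows $\overline{\cc}$ is a semisimple symmetric tensor category with the claimed simples. The main technical obstacle is the tensor-ideal property, specifically the manipulation reducing $\tr((f\otimes\phi)\circ g)$ to a trace involving $f$ alone; everything else flows from this plus the classical local/nilpotent structure of endomorphism rings of indecomposables.
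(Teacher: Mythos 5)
This theorem is not proved in the paper --- it is quoted from \cite{EO18} (Theorem 2.6), with the reader referred there for the proof --- so there is nothing internal to compare against; your argument is essentially the standard one from that reference and is correct. The only load-bearing step you assert without justification is that nilpotent endomorphisms in the ambient symmetric tensor category have vanishing trace: this is true but not formal, resting on additivity of the trace along an invariant filtration (which in turn uses biexactness of the tensor product), and it is precisely the hypothesis under which \cite{EO18} prove the general semisimplification theorem. Everything else --- the tensor-ideal property of $\mathcal N_\cc$ via cyclicity and partial traces, the local/nilpotent analysis of $\End_\cc(X)$ for indecomposable $X$, and the Krull--Schmidt assembly --- is the same route and is sound.
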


The natural functor $\mathbf{S}:\cc\to\overline \cc$ is monoidal (by construction) and is called the \textit{semisimplification functor}.

\section{Verlinde category \texorpdfstring{$Ver_p$}{Ver p}}\label{ver}

\subsection{Definition and properties}
Starting from this section let us assume that $\mathrm{char}~\ck = p>0$.

\begin{definition}
    The \textbf{Verlinde category} $Ver_p$ is the semisimplification of the category of representations of $\Z/p\Z$ over $\ck$.
\end{definition}

It turns out that this category plays an important role in the theory of symmetric tensor categories over fields of positive characteristics. Let us describe it in more detail.

It is not difficult to prove that indecomposable representations of $\Z/p\Z$ over $\ck$ whose dimensions are nonzero modulo $p$ are the representations on which the generator $\overline 1$ acts via a Jordan block of size at most $p-1$. Thus we get $p-1$ simple objects in $Ver_p$:
$$
L_1,L_2,\ldots,L_{p-1}
$$
of dimensions $1,2,\ldots , p-1$ respectively.

The fusion rules are given by the modified Clebsch-Gordan rule:
$$
L_i\otimes L_j = \bigoplus_{k=1}^{
\min(i,j , p-i, p-j)} L_{|i-j|+2k-1}.
$$

By looking closely at the formula above as well as at the explicitly defined tensor product on $\Rep(\Z/p\Z)$, one can deduce the following:
\begin{itemize}
    \item The $1$-dimensional object $L_1$ is the monoidal unit $\on$.

    \item For $p>2$, the $(-1)$-dimensional object $L_{p-1}$ is invertible and the subcategory spanned by $L_1$ and $L_{p-1}$ is a tensor subcategory of $Ver_p$ equivalent to $sVec$.

    \item $L_i\otimes L_{p-1}=L_{p-i}$.

    \item Each $L_i$ is self-dual:
    $$
    L_i=L_i^*.
    $$

    \item $Ver_p$ is tensor generated by $L_2$ with $L_n=S^{n-1}L_2$.

    \item The subcategory spanned by objects $L_{2i+1}$ of odd dimension is closed under tensor products. We denote this tensor subcategory $Ver_p^+$.

    \item $L_3$ tensor generates $Ver_p^+$.

\end{itemize}

Ostrik proved in \cite{O15} that 
\begin{theorem}[\cite{O15}, Proposition 3.3]
  If char $\ck>2$, we have  $Ver_p = sVec\boxtimes Ver_p^+.$
\end{theorem}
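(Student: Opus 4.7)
The plan is to construct an explicit symmetric tensor functor
\[
F\colon sVec \boxtimes Ver_p^+ \longrightarrow Ver_p
\]
and verify that it is a bijection on simples, from which equivalence follows by semisimplicity. The excerpt already exhibits $sVec$ and $Ver_p^+$ as symmetric tensor subcategories of $Ver_p$. Because $Ver_p$ is symmetric, the braiding automatically supplies the commutativity datum required to apply the universal property of the Deligne tensor product, yielding a symmetric tensor functor $F$ whose value on pure tensors is $V \boxtimes W \mapsto V \otimes W$, with the tensor product taken inside $Ver_p$.

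Next, I would enumerate the simple objects on both sides. The simples of $sVec$ are $\on$ and $L_{p-1}$; the simples of $Ver_p^+$ are $L_1, L_3, \dots, L_{p-2}$, a total of $(p-1)/2$ objects (using that $p$ is odd). Since the Deligne tensor product of two semisimple tensor categories over $\ck$ is semisimple with simples being pure tensors of simples, the left-hand side has exactly $2\cdot(p-1)/2 = p-1$ isomorphism classes of simples, matching $Ver_p$. Using the rule $L_{p-1}\otimes L_j = L_{p-j}$ stated in the excerpt, one computes
\[
F(\on \boxtimes L_{2i+1}) = L_{2i+1}, \qquad F(L_{p-1}\boxtimes L_{2i+1}) = L_{p-2i-1},
\]
so as $i$ ranges over $0,1,\ldots,(p-3)/2$, the first family hits every odd-dimensional simple and the second hits every even-dimensional simple of $Ver_p$. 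Hence $F$ induces a bijection on isomorphism classes of simples.

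Finally, I would upgrade this to an equivalence. Any symmetric tensor functor between semisimple symmetric tensor categories over $\ck$ is exact and faithful; since $F$ sends simples to simples and the induced map $\En(L)\to\En(F(L))$ is a nonzero ring map $\ck\to\ck$, hence the identity, $F$ is fully faithful on simples. By additivity and semisimplicity, $F$ is fully faithful on all objects. Combined with essential surjectivity from the bijection of simples above, $F$ is a tensor equivalence.

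The only step that is not entirely formal is the construction of $F$ via the universal property, where one must invoke the symmetric-monoidal version of Deligne's universal property (any pair of symmetric tensor functors into a common symmetric target automatically satisfies the commutativity required of a functor out of $\boxtimes$). This is standard under the conventions of \cite{EGNO}, so I expect no genuine obstacle; all other steps reduce to the explicit fusion rule $L_j\otimes L_{p-1}=L_{p-j}$ and a simple count.
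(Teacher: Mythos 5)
Your proposal is correct, but note that the paper does not prove this statement at all: it is imported verbatim from Ostrik's work (\cite{O15}, Proposition 3.3), so there is no in-paper argument to compare against. Your route --- build $F\colon sVec\boxtimes Ver_p^+\to Ver_p$ from the two inclusions via the universal property of the Deligne product (with the symmetric braiding supplying the required commutativity datum), then check bijectivity on simples using $L_j\otimes L_{p-1}=L_{p-j}$, and upgrade to an equivalence by the automatic faithfulness/exactness of tensor functors plus a dimension count on Hom spaces between simples --- is a clean, self-contained and essentially elementary proof. It is somewhat more hands-on than the argument usually given in the literature, which tends to invoke a general decomposition criterion for (braided) fusion categories: two fusion subcategories that centralize each other, intersect trivially, and whose dimensions multiply to that of the ambient category span it as a Deligne product. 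Your explicit check of the fusion rule replaces that general machinery; the only point worth being careful about, which you correctly flag, is that the monoidal structure on $F$ uses the braiding of $Ver_p$ to interchange factors, and the coherence of this is exactly the symmetric-monoidal universal property of $\boxtimes$. No gap.
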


\begin{corollary}
   The fundamental group $\pi_{Ver_p}$ of $Ver_p$ has a subgroup $\Z/2\Z$, such that the generator acts by $1$ on all simple objects in $Ver_p^+$ and by $-1$ on all simple objects not in $Ver_p^+$. 

   Moreover, we get a decomposition
   $$
   \pi_{Ver_p} = \Z/2\Z\times \pi_{Ver_p^+}.
   $$
\end{corollary}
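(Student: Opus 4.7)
The plan is to derive both claims directly from the Deligne decomposition $Ver_p = sVec \boxtimes Ver_p^+$ provided by the preceding theorem, combined with the product behavior of the fundamental group under Deligne tensor products.

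First, I would establish the general principle that for any pair of symmetric tensor categories $\cc_1, \cc_2$ one has $\pi_{\cc_1 \boxtimes \cc_2} \simeq \pi_{\cc_1} \times \pi_{\cc_2}$. Using the functor-of-points description from Remark \ref{fundgppoints}, a tensor automorphism of $I_A$ on $\cc_1 \boxtimes \cc_2$ restricts along the full tensor subcategory inclusions $\cc_1 \boxtimes \on \hookrightarrow \cc_1 \boxtimes \cc_2$ and $\on \boxtimes \cc_2 \hookrightarrow \cc_1 \boxtimes \cc_2$ to a pair of tensor automorphisms of $I_A$ on each factor. Conversely, since every simple of $\cc_1 \boxtimes \cc_2$ is of the form $X_1 \boxtimes X_2 \simeq (X_1 \boxtimes \on) \otimes (\on \boxtimes X_2)$, any such pair determines a tensor automorphism on external products, which extends uniquely to all of $\cc_1 \boxtimes \cc_2$ by naturality (every object is a subquotient of an external product).

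Next, I would recall the classical computation $\pi_{sVec} = \Z/2\Z$: a tensor automorphism of $\mathrm{id}_{sVec}$ is determined by the scalar $\lambda$ by which it acts on the odd line $\ck^{0|1}$, and the relation $\ck^{0|1} \otimes \ck^{0|1} \simeq \on$ forces $\lambda^2 = 1$. Combining this with Step 1 yields $\pi_{Ver_p} \simeq \Z/2\Z \times \pi_{Ver_p^+}$ as desired.

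Finally, I would verify the action of the $\Z/2\Z$ factor on simples by tracking the equivalence $sVec \boxtimes Ver_p^+ \simeq Ver_p$. The odd-dimensional simples $L_{2i+1}$ correspond to $\on \boxtimes L_{2i+1}$, on which the parity element acts trivially. The even-dimensional simples are precisely those obtained as $L_{p-1} \otimes L_{2i+1}$ via the identity $L_i \otimes L_{p-1} = L_{p-i}$, and so correspond to $\ck^{0|1} \boxtimes L_{2i+1}$, on which the parity element acts by $-1$. This gives exactly the claimed action. The main obstacle is Step 1: making rigorous the factorization of $\pi$ over a Deligne product, since not every object of $\cc_1 \boxtimes \cc_2$ is an external product — one must argue that the tensor automorphism axioms pin down the behavior on arbitrary objects once the behavior on external products is known, which can equivalently be phrased on the Hopf algebra side as $H_{\cc_1 \boxtimes \cc_2} \simeq H_{\cc_1} \boxtimes H_{\cc_2}$.
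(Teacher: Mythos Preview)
Your proposal is correct and follows precisely the approach the paper has in mind: the Corollary is stated without proof, immediately after Ostrik's decomposition $Ver_p = sVec \boxtimes Ver_p^+$, so the intended argument is exactly the one you outline --- multiplicativity of the fundamental group under Deligne tensor product, together with the standard computation $\pi_{sVec} = \Z/2\Z$. Your identification of the potential subtlety (that $H_{\cc_1\boxtimes\cc_2} \simeq H_{\cc_1}\boxtimes H_{\cc_2}$, or equivalently the extension of tensor automorphisms from external products to all objects) is apt; in the semisimple case at hand this is straightforward since every object is a direct sum of external products of simples.
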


\begin{ex}
    \begin{enumerate}
        \item We have equivalences of categories $Ver_2\simeq Vec$, $Ver_3\simeq sVec$.

        \item $Ver_5^+$ has two simple objects: $L_1$ and $L_3$ with
        $$
        L_3\otimes L_3  = L_1\oplus L_3.
        $$
    \end{enumerate}
\end{ex}

It is known that $Ver_p$ is \textit{incompressible}, i.e any symmetric tensor functor $Ver_p\to \cc$ to a symmetric tensor category $\cc$ is fully faithful (see for instance \cite{BEO23}).

In \cite{CEO22} the authors proved that $Ver_p$ is the target of the fiber functor from a large class of moderate growth symmetric tensor categories in characteristic $p$. More specifically:
\begin{theorem}[\cite{CEO22}, Theorem 1.1]\label{frobexact}
    A symmetric tensor category $\cc$ linear over a field $\ck$ of characteristic $p$ admits a fiber functor
    $$
    F: \cc\to Ver_p
    $$
    if and only if it has moderate growth and is \textit{Frobenius exact}\footnote{See Definition 3.5 in \cite{CEO22}.}. Moreover such a functor $F$ is unique up to isomorphism when it exists.
\end{theorem}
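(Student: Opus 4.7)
The proof follows the strategy of \cite{CEO22}. The \emph{only if} direction is comparatively direct: any symmetric tensor functor $F:\cc\to Ver_p$ is faithful (since $\on$ is simple and $F(\on)=\on$), so the length of $X^{\otimes n}$ in $\cc$ is bounded by that of $F(X)^{\otimes n}$ in $Ver_p$, which grows at most polynomially; this gives moderate growth. Frobenius exactness is then transported along $F$ because the Frobenius endofunctor is natural with respect to symmetric tensor functors, and $Ver_p$, being semisimple, is manifestly Frobenius exact.

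For the \emph{if} direction, the central tool is the Frobenius functor
\[
Fr_\cc: \cc\to \cc\boxtimes Ver_p,
\]
defined by sending $X$ to $X^{\otimes p}$ equipped with its natural cyclic $\Z/p\Z$-action, viewed as an object of $\cc\boxtimes \Rep(\Z/p\Z)$ and then passed through the semisimplification of the group factor. Frobenius exactness of $\cc$ is by definition the exactness of $Fr_\cc$, which together with its canonical monoidality makes it a symmetric tensor functor. One then iterates
\[
Fr^{(n)}: \cc\xrightarrow{Fr_\cc} \cc\boxtimes Ver_p\xrightarrow{Fr_\cc\boxtimes \mathrm{id}} \cc\boxtimes Ver_p^{\boxtimes 2}\to\cdots\to \cc\boxtimes Ver_p^{\boxtimes n},
\]
and the plan is to show that, as $n$ grows, the ``$\cc$-factor'' of the image of any fixed object stabilizes to a multiple of $\on_\cc$, so that projecting to the $Ver_p^{\boxtimes n}$ factor yields a symmetric tensor functor with target $Ver_p^{\boxtimes n}$, which can be further reduced to $Ver_p$ by a final semisimplicity argument (choosing a simple summand, and using incompressibility of $Ver_p$ to see that the resulting functor is a bona fide fiber functor).

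The key technical input is a Frobenius--Perron dimension estimate: iteration of $Fr_\cc$ drives the FP-dimensions of the $\cc$-components of $Fr^{(n)}(X)$ toward $1$, and the moderate growth hypothesis is exactly what is needed to make this limit behavior uniform and to force the stabilization. The uniqueness clause then reduces to showing that any two fiber functors $F_1,F_2:\cc\to Ver_p$ are related by a tensor isomorphism, which follows by expressing both as pullbacks along maps $\pi_{Ver_p}\to F_i(\pi_\cc)$ and using the rigidity of torsors over $Ver_p$ that comes from incompressibility together with the semisimple structure of $Ver_p$.

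The main obstacle is the stabilization step: one must control the $\cc$-factor of $Fr^{(n)}(X)$ \emph{uniformly} in $n$, which demands a delicate interplay between the cyclic symmetry of iterated $p$-th tensor powers, the FP-dimension calculus, and the precise form of the fusion rules in $Ver_p$ listed earlier in this excerpt. All other ingredients (the iteration, the projection, the uniqueness) are fairly formal once this stabilization is in hand.
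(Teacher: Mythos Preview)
The paper does not prove this theorem: it is stated with attribution to \cite{CEO22}, Theorem 1.1, and is used as a black box thereafter. So there is no ``paper's own proof'' to compare against.

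That said, your sketch is a reasonable outline of the strategy actually used in \cite{CEO22}. The Frobenius functor $Fr_\cc:\cc\to\cc\boxtimes Ver_p$ and its iteration are indeed the central mechanism, and the moderate growth hypothesis is what forces the stabilization. A few points are imprecise: the reduction of the target from $\cc\boxtimes Ver_p^{\boxtimes n}$ to $Ver_p$ is not done by ``choosing a simple summand'' but rather by showing the $\cc$-component eventually lands in the Tannakian part (the subcategory generated by $\on$), and the uniqueness argument in \cite{CEO22} proceeds via a cohomology vanishing for the relevant torsor rather than the rigidity statement you describe. Your acknowledgment that the stabilization step is the genuine technical core is accurate.
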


We will not use the definition of Frobenius exactness in this paper but it should be noted that semisimple categories are Frobenius exact and thus admit a fiber functor to $Ver_p$.

\begin{remark}
    Theorem \ref{frobexact} is the analog of the famous theorem by Deligne, which asserts that any symmetric tensor category $\cc$ of moderate growth over a field of characteristic zero admits a super fiber functor $F:\cc\to sVec$.

    Note that in characteristic $p$ this breaks as $Ver_p$ admits no tensor functors to $sVec$.
 \end{remark}

 An immediate corollary of Theorem \ref{frobexact} and the reconstruction theorem is that any Frobenius exact symmetric tensor category of moderate growth is equivalent to the category $\Rep_{Ver_p}(G,\varepsilon)$ for some affine groups scheme $G$ in $Ver_p$ and a homomorphism
 $$
 \varepsilon: \pi_{Ver_p}\to G.
 $$

 The rest of this paper will be dedicated to the study of representations of groups schemes in $Ver_p$.

 \subsection{Harish-Chandra pairs for group schemes in \texorpdfstring{$Ver_p$}{Ver p}} \label{section_HC}
 In \cite{V23} Venkatesh proved a remarkable result about affine group schemes in $Ver_p$, which showed that they behave in some ways similarly to super group schemes,\footnote{Recall that affine group schemes in $Ver_p$ are a generalization of affine super group schemes as $sVec$ is a tensor subcategory in $Ver_p$.} and one could apply similar methods to study them.

 Let $G$ be an affine group scheme in $Ver_p$. Define $J\subset \co(G)$ to be an ideal generated by all simple subobjects in $\co(G)$ not isomorphic to $\on=L_1$. It is easy to see that $J$ is a Hopf ideal of $\co(G)$. 

 \begin{definition}
     The \textbf{classical subgroup} scheme $G_{0}$ is cut out in $G$ by the ideal $J$. That is
     $$
     \co(G_{0})=\co(G)/J.
     $$
 \end{definition}
 
 \begin{remark}
     The classical subgroup scheme $G_{0}$ in $G$ is an ordinary affine group scheme in $Vec$.

     When $G$ is of finite type then $G_{0}$ is of finite type.
 \end{remark}

 \begin{ex}
     \begin{enumerate}
         \item Let $G$ be an affine group scheme in $Ver_p$ and let $A$ be a commutative $\ck$-algebra in $Vec$ (that is, $A$ considered as an algebra object in $Ver_p$ has no simple subobjects, not isomorphic to $\on$). Then
         $$
         G_{0}(A) = G(A),
         $$
         in particular
         $$
         G_{0}(\ck)=G(\ck).
         $$
         \item If $X\in Ver_p$ with $[X:L_n]=k_n$  for each $n=1,\ldots, p-1$ then
         $$
         (GL(X))_{0} = \prod_{n=1}^{p-1} GL_{k_n} = \mathrm{Aut}_{Ver_p}(X).
         $$
     \end{enumerate}
     \end{ex}

For any object $X\in Ver_p$ define its classical part $X_{0}$ as the subobject of $X$ spanned by all copies of $\on$ in it:
$$
X_{0}=\Hom_{Ver_p}(\on, X)\otimes \on.
$$

 If $\g$ is a Lie algebra object in $Ver_p$, its classical part $\g_{0}$ is a Lie subalgebra of $\g$.

 \begin{definition}
     A \textbf{Harish-Chandra pair} in $Ver_p$ is a pair $(G_0, \g)$, where $G_0$ is a (classical) affine group scheme of finite type in $Vec$, $\g$ is a Lie algebra object in $Ver_p$ with an $G_0$-module structure, such that the bracket on $\g$ is a $G_0$-module map and $\g_0$ is a submodule of $\g$. Together with an isomorphism
     $$
     \g_{0}\simeq Lie(G_0)
     $$
     of $G_0$-modules and a compatibility condition: the action of $\g_0 \simeq Lie(G_0)$ on $\g$ induced from the $G_0$-module structure on it must coincide with the adjoint action  of $\g_0$.

     Let us denote the category of Harish-Chandra pairs in $Ver_p$ by $\mathcal{HC}(Ver_p)$.     
 \end{definition}

 \begin{theorem}[ \cite{V23}, Theorem 1.2]\label{thm_HC}
 The natural functor 
 $$
 \mathbf{HC}: GpSch(Ver_p)^{ft} \to \mathcal{HC}(Ver_p),
 $$
 sending an affine group scheme $G$ of finite type in $Ver_p$ to the Harish-Chandra pair $(G_0,\g)=(G_{0}, Lie(G))$ is an equivalence of categories. 
 
 Functor $\mathbf{HC}$ is called the Harish-Chandra functor.  
 \end{theorem}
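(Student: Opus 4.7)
The plan is to follow the standard Harish--Chandra recipe for super group schemes, adapted to $Ver_p$, by constructing an explicit quasi-inverse $\mathbf{F}: \mathcal{HC}(Ver_p) \to GpSch(Ver_p)^{ft}$ to the functor $\mathbf{HC}$. Given a Harish--Chandra pair $(G_0, \g)$, I would decompose $\g$ as an object of $Ver_p$ into its classical part $\g_0 \simeq Lie(G_0)$ and its non-classical complement $\g_+$ coming from the $L_n$-isotypic pieces for $n \geq 2$. Then I would form the universal enveloping algebra $U(\g)$ in $Ver_p$, which contains $U(\g_0) = U(Lie(G_0))$, and amalgamate with $Dist(G_0)$ along this common subalgebra:
$$
Dist(G) := Dist(G_0) \otimes_{U(\g_0)} U(\g),
$$
with multiplication dictated by the smash-product relation $d \cdot x = \operatorname{Ad}(d)(x) \cdot d$ for $d \in Dist(G_0)$, $x \in \g$, using the given $G_0$-module structure on $\g$. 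The compatibility condition in the definition of a Harish--Chandra pair (that the $\g_0$-action induced from $G_0$ agrees with the adjoint action) is exactly what makes this multiplication well defined on the amalgamated tensor product.

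Next, I would equip $Dist(G)$ with a Hopf algebra structure in $\Ind(Ver_p)$ by transporting the coproduct, counit and antipode from $Dist(G_0)$ and $U(\g)$, verifying they glue along $U(\g_0)$. The associated group scheme $G$ is then recovered by taking $\co(G)$ to be the finite dual of the filtered Hopf algebra $Dist(G)$. Finite generation of $\co(G)$, and hence the finite-type property, follows from the finite type of $G_0$ together with the PBW isomorphism $\gr U(\g) \simeq S(\g)$ of objects in $Ver_p$. To check $\mathbf{HC} \circ \mathbf{F} \simeq \mathrm{id}$, I would verify that the defining ideal $J \subset \co(G)$ cuts out precisely $\co(G_0)$ (the non-classical part of $\g$, and hence of $Dist(G)$, is annihilated modulo $J$ on the dual side) and that $(I/I^2)^* \simeq \g$ from the PBW decomposition. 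For $\mathbf{F} \circ \mathbf{HC} \simeq \mathrm{id}$, I would construct a natural Hopf algebra map from the amalgamated $Dist$ into the original $Dist(G)$, surjective because $\g$ and $Dist(G_0)$ together generate $Dist(G)$ for $G$ of finite type, and injective by PBW comparison of the two sides.

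The main obstacle is the PBW theorem for operadic Lie algebra objects in $Ver_p$, which is subtle in characteristic $p$ since the usual antisymmetrization does not apply; this is precisely why PBW is built into the definition of a Lie algebra object used in the paper (see the remark after Definition of Lie algebra object). A secondary difficulty is carefully tracking braiding isomorphisms $\tau_{X,Y}$ when establishing associativity and coassociativity in $Dist(G)$, because reordering factors in $U(\g)$ or in the smash product introduces non-trivial signs and, more generally, the $Ver_p$-braiding on objects with $L_n$-isotypic components for $n > 1$. Once PBW and the braided bookkeeping are under control, the rest of the argument is the standard translation between the geometric data $(G_0, \g)$ and the Hopf algebraic data $\co(G)$, and the equivalence follows formally.
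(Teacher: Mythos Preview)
The paper does not actually prove this theorem: it is quoted as Theorem~1.2 of \cite{V23} and used as a black box throughout, so there is no ``paper's own proof'' to compare against. Your outline is therefore not a comparison but an independent sketch of how one might prove the result.

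That said, your strategy is the right shape and matches the standard Harish--Chandra recipe that Venkatesh adapts in \cite{V23}: build the quasi-inverse by amalgamating $Dist(G_0)$ with $U(\g)$ over $U(\g_0)$ using the $G_0$-action on $\g$. One point to be careful about: recovering $\co(G)$ as the ``finite dual'' of $Dist(G)$ is delicate even classically, and in $Ver_p$ the notion of finite dual needs a definition. The cleaner route (and the one Venkatesh takes) is to construct $\co(G)$ directly as an object, using that $\g_+$ contains no copies of $\on$, so $S(\g_+^*)$ has finite length in $Ver_p$; one then sets $\co(G) \simeq \co(G_0)\otimes S(\g_+^*)$ as a coalgebra in $\Ind(Ver_p)$ and builds the Hopf structure by hand, rather than dualizing a distribution algebra. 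This sidesteps the question of which dual to take and makes finite type immediate. Your identification of PBW as the crux is correct: it is exactly what lets you compare the two sides in the check $\mathbf{F}\circ\mathbf{HC}\simeq\mathrm{id}$, and it is the reason the paper's working definition of ``Lie algebra object'' bakes PBW in.
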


 \begin{ex}
     For $X\in Ver_p$ with $[X:L_n]=k_n$ for each $n=1,\ldots, p-1$, we have
     $$
    \mathbf{HC}(GL(X)) = (\prod_{n=1}^{p-1} GL_{k_n}, \gl(X)) = \left(\prod_{n=1}^{p-1} GL(W_n), \bigoplus_{1\le a,b \le p-1} (W_a\otimes W_b^*)\otimes (L_a\otimes L_b^*)\right),
     $$
     where $W_n=\Hom_{Ver_p}(L_n, X) = \ck^{\oplus k_n}$.
 \end{ex}

 \begin{corollary}[\cite{V23}, Corollary 1.3]
     Let $G$ be an affine group scheme of finite type in $Ver_p$ and let $(G_0, \g)$ be the corresponding
Harish-Chandra pair in $Ver_p$. The Harish-Chandra functor establishes a bijection between the set of
closed subgroup schemes of $G$ and the set
$$
\{(H_0, \mathfrak h)~|~ H_0 \text{ a closed subgroup scheme of } G_0, \mathfrak h \text{ a Lie subalgebra object of } \g; \text{ with } Lie(H_0) = \mathfrak h_{0}\}.
$$
 \end{corollary}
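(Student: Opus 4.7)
The plan is to deduce this corollary directly from the Harish-Chandra equivalence of Theorem \ref{thm_HC}. Since $\mathbf{HC}$ is an equivalence of categories, it is enough to translate ``closed subgroup scheme of $G$'' into the language of Harish-Chandra pairs and verify that this translation produces precisely the set described in the statement.

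In the forward direction, I would start with a closed subgroup scheme $H \hookrightarrow G$. Applying $\mathbf{HC}$ gives a morphism of Harish-Chandra pairs $(H_{0}, Lie(H)) \to (G_{0}, \g)$. Setting $H_0 := H_{0}$ and $\mathfrak h := Lie(H)$, one checks: (i) the induced map $\co(G)/J_G \twoheadrightarrow \co(H)/J_H$ is surjective, so $H_0 \hookrightarrow G_0$ is a closed subgroup scheme in $Vec$; (ii) the map $\mathfrak h \to \g$ is injective because passing to $(I/I^2)^*$ is exact on surjections of augmented algebras, so $\mathfrak h$ sits as a Lie subalgebra object of $\g$; and (iii) the condition $Lie(H_0) = \mathfrak h_{0}$ is simply the compatibility condition built into the Harish-Chandra pair $(H_0, \mathfrak h)$ for $H$ itself.

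In the backward direction, given a pair $(H_0, \mathfrak h)$ with $H_0 \subset G_0$ closed, $\mathfrak h \subset \g$ a Lie subalgebra object, and $Lie(H_0) = \mathfrak h_{0}$, I would first verify that $(H_0, \mathfrak h)$ is itself a Harish-Chandra pair in $Ver_p$. The action of $H_0 \subset G_0$ on $\g$ preserves $\mathfrak h$ (this must be imposed or argued from the fact that $\mathfrak h_{0} = Lie(H_0)$ acts on $\mathfrak h$ via the bracket in $\g$ and integrates to the $H_0$-action because $H_0$ has the same Lie algebra $\mathfrak h_{0}$), the bracket restricted from $\g$ is a module map, and the compatibility between the $H_0$-action and the adjoint action of $\mathfrak h_{0}$ is inherited from that of $(G_0, \g)$. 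Then Theorem \ref{thm_HC} produces a unique $H \in GpSch(Ver_p)^{ft}$ with $\mathbf{HC}(H) = (H_0, \mathfrak h)$, together with a morphism $H \to G$ coming from $(H_0, \mathfrak h) \hookrightarrow (G_0, \g)$.

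The main obstacle is showing that this morphism $H \to G$ is actually a closed immersion, i.e. that $\co(G) \twoheadrightarrow \co(H)$ is surjective, and conversely that the two constructions are mutually inverse. I would handle this by using the filtration of a Hopf algebra $\co(G)$ by powers of the augmentation ideal: the associated graded pieces are determined by $S(\g^*)$ and the $\co(G_0)$-comodule structure, both of which are determined by the Harish-Chandra data by the proof of Theorem \ref{thm_HC}. Surjectivity on each associated graded piece follows from surjectivity of $\co(G_0) \twoheadrightarrow \co(H_0)$ and of $S(\g^*) \twoheadrightarrow S(\mathfrak h^*)$, and then surjectivity of $\co(G) \twoheadrightarrow \co(H)$ itself follows since both are obtained as inverse limits / colimits of their augmentation-filtered pieces. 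This shows the constructions are inverse bijections, completing the proof.
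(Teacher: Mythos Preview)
The paper does not give its own proof of this corollary: it is stated with a citation to \cite{V23}, Corollary 1.3, and no argument is supplied beyond that reference. So there is no ``paper's own proof'' to compare your attempt against; the result is simply imported as a black box from Venkatesh's work.

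That said, your outline is the natural one and is essentially how such a statement is deduced from an equivalence like Theorem~\ref{thm_HC}. One genuine gap you should be aware of: in the backward direction you need $(H_0,\mathfrak h)$ to actually be a Harish-Chandra pair, which in particular requires $\mathfrak h$ to be an $H_0$-submodule of $\g$. You note this yourself and propose to argue it by integrating the $\mathfrak h_0$-action, but that only works if $H_0$ is connected (or at least if the adjoint action of $H_0$ is determined by its Lie algebra), which is not assumed. In fact, as stated the set in the corollary does not explicitly impose $H_0$-stability of $\mathfrak h$, so either this condition is implicitly part of ``Lie subalgebra object'' in context, or it is a hypothesis that should be added; either way your integration argument does not close the gap in general. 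The filtration argument for surjectivity of $\co(G)\to\co(H)$ is also only sketched and would need the structural input from the actual proof of Theorem~\ref{thm_HC} in \cite{V23} to be made precise.
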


 \begin{lemma}[\cite{V24}, Section 7.1]\label{lemma_gps_with_no_even_part}
     Let $G$ be an affine group scheme of finite type in $Ver_p$ with $G_{0}=1$ and let $\g = Lie(G)$. Then $\co(G)$ has finite length in $Ver_p$ and
     $$
     \co(G) = S(\g^*) = U(\g)^* = Dist(G)^*.
     $$
 \end{lemma}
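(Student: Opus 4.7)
The plan is to proceed in three stages: identify $Dist(G)$ with the enveloping algebra $U(\g)$ via Harish-Chandra, prove $\co(G)$ has finite length in $Ver_p$ by analyzing its augmentation filtration, and then dualize to obtain the chain $\co(G)=S(\g^{*})=U(\g)^{*}=Dist(G)^{*}$.

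First I would identify $Dist(G)$ with $U(\g)$. Since $G_{0}=1$, the HC pair of $G$ is $(1,\g)$ with $\g_{0}=Lie(G_{0})=0$. The natural embedding $\g\hookrightarrow Dist(G)^{1}$ is closed under the commutator, so by the universal property it extends to a filtered Hopf algebra morphism $U(\g)\to Dist(G)$. Running the construction in reverse, the group scheme with coordinate Hopf algebra $U(\g)^{*}$ has HC pair $(1,\g)$ as well, so by Theorem \ref{thm_HC} the two group schemes are isomorphic; in particular $U(\g)\xrightarrow{\sim} Dist(G)$.

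Next, to establish finite length: the augmentation ideal $I\subset\co(G)$ is $\ker\varepsilon$, and $G_{0}=1$ gives $\co(G)/J=\co(G_{0})=\on$ where $J$ is the Hopf ideal generated by the non-$\on$ simple subobjects of $\co(G)$. Since $J$ is killed by $\varepsilon$ and both $I$ and $J$ have quotient $\on$, we get $I=J$. Because $G$ is of finite type, $\g^{*}=I/I^{2}$ lies in $Ver_p$, and the multiplication on $\co(G)$ yields a surjection $S(\g^{*})\twoheadrightarrow \gr_{I}\co(G)$, so it is enough to prove that $S(\g^{*})$ has finite length in $Ver_p$. The key input I would use is the more general claim that any $X\in Ver_p$ with $X_{0}=0$ has $S(X)$ of finite length. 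Using $Ver_p\simeq sVec\boxtimes Ver_p^{+}$, split $X$ along the $sVec$-grading: the $sVec$-odd component contributes a truncated exterior-type factor, while the $sVec$-even component decomposes into simples $L_n\in Ver_p^{+}$ with $n\neq 1$, whose symmetric powers become negligible and are killed in the semisimplification after finitely many steps, by the tilting-module description of $Ver_p^{+}$. Hence $\gr\co(G)$, and therefore $\co(G)$ itself, is of finite length, so $I^{N}=0$ for $N\gg 0$.

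Finally, finite length gives $\co(G)=\varprojlim_{k}\co(G)/I^{k}=Dist(G)^{*}$, and combining with the first step yields $\co(G)=U(\g)^{*}$. The PBW theorem (valid by hypothesis for our class of Lie algebra objects) provides an object isomorphism $U(\g)\cong S(\g)$, and dualizing gives $U(\g)^{*}\cong S(\g^{*})$, closing the chain. The main obstacle is the finite-length claim for $S(\g^{*})$: unlike in a classical symmetric tensor category where $S(X)$ is polynomial and infinite, the phenomenon that symmetric algebras on objects with no $\on$-summand genuinely truncate is specific to $Ver_p$, and its proof requires the explicit tilting/semisimplification description of the Verlinde category rather than any purely formal STC argument.
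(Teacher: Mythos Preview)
The paper does not give its own proof of this lemma: it is quoted verbatim from \cite{V24}, Section~7.1, and the text moves directly to Example~\ref{ex_SL(X)}. So there is no ``paper's proof'' to compare against.

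That said, your outline is essentially the right one and matches how this result is typically established. Two comments:

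\textbf{Ordering.} In your Step~1 you invoke Theorem~\ref{thm_HC} on the group scheme ``with coordinate Hopf algebra $U(\g)^{*}$'' to conclude $Dist(G)\cong U(\g)$. But Theorem~\ref{thm_HC} is stated for affine group schemes \emph{of finite type}, and you only know $U(\g)^{*}$ is finitely generated (indeed, even that $U(\g)^{*}$ is a genuine Hopf algebra in $\Ind(Ver_p)$ rather than some completion) once you know $U(\g)$ has finite length. That is precisely your Step~2 conclusion (via PBW and the finite length of $S(\g^{*})$). So Step~2 should come first; the argument then goes through cleanly.

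\textbf{PBW and the object isomorphism.} You write ``PBW provides an object isomorphism $U(\g)\cong S(\g)$.'' Strictly, PBW gives $\gr U(\g)\cong S(\g)$; the passage to an isomorphism of objects (not algebras) uses that $Ver_p$ is semisimple, so every filtered object is isomorphic to its associated graded. This is harmless, but worth saying explicitly, since the usual symmetrization splitting $S(\g)\to U(\g)$ is not available in characteristic $p$.

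The crucial nontrivial input, as you correctly flag, is that $S(X)$ has finite length whenever $X\in Ver_p$ has $X_{0}=0$. This is exactly the phenomenon the paper invokes elsewhere (e.g.\ equation~\eqref{sym_powers_are_zero} and the proof of Theorem~\ref{thm_enough_proj}), and is specific to the Verlinde setting.
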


 \begin{ex}\label{ex_SL(X)}
 Suppose $X\in Ver_p$ and $[X:L_n]=k_n$ for each $n=1,\ldots, p-1$.

     Let $H_0$ be the subgroup of $GL(X)_{0}=\prod_{k=1}^{p-1} GL_{k_n}$ of tuples $(A_1,\ldots, A_{p-1})$ of matrices with the property 
     $$
     \det A_1\cdot\ldots\cdot \det A_{p-1} = 1.
     $$

     Let $\h=\ssl(X)$ be the Lie subalgebra in $\gl(X)$ defined as the kernel of the evaluation map $ev_X: \gl(X)\to \on$.

     We can define the subgroup scheme $SL(X)$ in $GL(X)$ by putting
     $$
     \mathbf{HC}(SL(X)) = (H_0,\h).
     $$
 \end{ex}

 \begin{remark}\label{rem_SL}
     When $X=L_n$ is simple, we get that $SL(L_n)_{0}$ is trivial. We can then use Lemma \ref{lemma_gps_with_no_even_part} to see that $\co(SL(L_n))=S(\ssl(L_n)^*)=U(\ssl(L_n))^*$.
     
     \end{remark}

 \begin{definition}
     A representation of a Harish-Chandra pair $(G_0, \g)$ in $Ver_p$ is a $G_0$-module $X$ in $Ver_p$ with a Lie algebra action map
     $$
     a:\g\otimes X\to X,
     $$
     which is $G_0$-equivariant. Such that the restriction of $a$ to $\g_{0}$ coincides with the $Lie(G_0)$-action on $X$ after the identification $\g_{0}\simeq Lie(G_0)$.
 \end{definition}

 \begin{corollary}[\cite{V23}, Corollary 1.4.]
     Let $G$ be an affine group scheme of finite type in $Ver_p$ and let $(G_0, \g)$ be the corresponding
Harish-Chandra pair in $Ver_p$. The category of $G$-modules (i.e. $\co(G)$-comodules) in $Ver_p$ is equivalent to the category of representations of $(G_0,\g)$ in $Ver_p$.
 \end{corollary}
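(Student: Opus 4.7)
The plan is to prove this as a representation-theoretic upgrade of the Harish-Chandra equivalence of Theorem \ref{thm_HC}. The forward direction is straightforward: given a $G$-module $X$ in $Ver_p$ (an $\co(G)$-comodule in $\mathrm{Ind}(Ver_p)$), restriction along the Hopf quotient $\co(G)\twoheadrightarrow \co(G)/J=\co(G_0)$ produces a $G_0$-module structure, while the algebra of distributions $Dist(G)$ acts on any $\co(G)$-comodule, yielding in particular a $\g$-action through the inclusion $\g\hookrightarrow Dist(G)^{1}$. One then checks the two compatibility axioms of a $(G_0,\g)$-representation: the $\g$-action is $G_0$-equivariant because $G_0$ acts on $\g=Lie(G)$ via the adjoint action inside $G$, and the restriction of the $\g$-action to $\g_{0}\simeq Lie(G_0)$ coincides with the differential of the $G_0$-action since both arise from the same $\co(G)$-coaction.

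For the inverse direction, I would reconstruct the $\co(G)$-coaction from the data $(G_0, \g)$ acting on $X$. The crucial structural input is that, by Theorem \ref{thm_HC}, the Hopf algebra $\co(G)$ (equivalently, $Dist(G)$) can be recovered from the Harish-Chandra pair; concretely, one expects a PBW/smash-product-type decomposition realizing $Dist(G)$ as an extension of $Dist(G_0)$ by the universal enveloping algebra of the ``non-classical'' directions in $\g$, glued along $\g_{0}\simeq Lie(G_0)$, in the spirit of the analogous decomposition for super group schemes. Given such a decomposition, a representation of $(G_0,\g)$ assembles unambiguously into a $Dist(G)$-action: the compatibility conditions in the definition of $\mathcal{HC}(Ver_p)$-representation are precisely what is needed for the $Dist(G_0)$-action and $U(\g)$-action to glue. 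Integrability to a $G$-comodule structure (rather than merely a $Dist(G)$-module) follows because the classical component is already assumed integrable to $G_0$, while the non-classical directions are automatically algebraic since $\g$ has finite length in $Ver_p$ when $G$ is of finite type.

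The functors are mutually inverse essentially by construction, and fully faithful since a morphism of $G$-modules is the same as an $\co(G_0)$-colinear, $Dist(G)$-equivariant morphism, which by the smash-product description reduces to being $G_0$-equivariant and $\g$-equivariant with matching actions on $\g_{0}$ — exactly a morphism of $(G_0,\g)$-representations.

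The main obstacle is the structural step above: establishing the right smash-product/PBW description of $Dist(G)$ in terms of $Dist(G_0)$ and $U(\g)$, which uses the full strength of Venkatesh's Harish-Chandra equivalence in the $Ver_p$ setting together with Lemma \ref{lemma_gps_with_no_even_part} applied to the ``purely non-classical'' quotient of $G$. Once this is in place, all compatibility checks are formal, and the equivalence follows immediately.
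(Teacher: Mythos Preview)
The paper does not prove this statement at all: it is quoted verbatim as Corollary 1.4 of \cite{V23} and no argument is supplied. So there is nothing to compare your proposal against in this paper.

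That said, your outline is the standard Harish-Chandra-pair argument and is the approach taken in \cite{V23}. The forward functor is exactly as you describe. For the inverse, the structural input you flag is indeed the crux: Venkatesh proves a PBW-type isomorphism $Dist(G)\simeq U(\g_{>0})\otimes Dist(G_0)$ (where $\g_{>0}$ is the non-classical part of $\g$), which is precisely the ``smash-product description'' you anticipate, and from there the reconstruction of the $G$-module structure proceeds as you sketch. One small point worth tightening: you should be explicit that the non-classical part $\g_{>0}$ is a $G_0$-stable complement to $\g_0$ (this uses semisimplicity of $Ver_p$), and that $U(\g_{>0})$ has finite length in $Ver_p$ (this is Lemma~\ref{lemma_gps_with_no_even_part} applied to the quotient $G/G_0$), so that the resulting $Dist(G)$-module is automatically a finite-length object on which integrability reduces to integrability over $G_0$. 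With those clarifications your sketch matches the original argument.
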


\subsection{Triangular decomposition for \texorpdfstring{$GL(X)$ in $Ver_p$}{GL(X) in Ver p}}

Let $X\in Ver_p$ with $[X:L_n] = k_n$ for $n=1, \ldots, p-1$. Let $k=\sum k_n$ and let us choose a decomposition of $X$ into the direct sum of irreducible components
$$
X=X_1\oplus X_2\oplus\ldots\oplus X_k,
$$
where each $X_i$ is simple and if for some $1\le i\le k-1$ we have $X_i\simeq L_a$, $X_{i+1}\simeq L_b$ then $a\le b$.

Denote $G=GL(X)$. Recall that $G_{0}= \prod_{n=1}^{p-1} GL_{k_n}$ and 
$$
\g:=Lie(G)=\gl(X)=X\otimes X^* = \bigoplus_{1\le i,j,\le k} X_i\otimes X_j^*.
$$

For each $k\in\mathbb N$ denote by $B_{k}$ the subgroup of upper-triangular matrices in $GL_{k}$ (i.e. the standard Borel subgroup); and let $T_{k}\simeq \mathbb G_m^k$ denote the subgroup of diagonal matrices in $GL_{k}$ (i.e. the standard maximal torus).

\begin{definition}
    Define the Lie subalgebra $\mathfrak t\subset \g$ as
    $$
    \mathfrak t = \bigoplus_{i=1}^k \gl(X_i) = \bigoplus_{i=1}^k X_i\otimes X_i^*.
    $$
    
    The \textbf{maximal torus} $T=T(X)$ is defined as the subgroup scheme of $G=GL(X)$ corresponding to the Harish-Chandra pair $(\prod_{k=1}^{p-1} T_{k_n}, \mathfrak t)$.

    In other words, $T = \prod_{i=1}^k GL(X_i)$.
\end{definition}
\begin{remark}
    We will refer to $T$ as a torus in $G$ although it is not an actual torus. In fact, it rather resembles a more general Levi subgroup, as each $X_i$ is not necessarily $1$-dimensional.

    However, $T$ will play the role of a maximal torus in our theory. Moreover, we note that $T_{0}=\mathbb G_m^k$ is an actual torus.
\end{remark}

\begin{remark}
    By construction, the natural homomorphism
    $$
    \varepsilon: \pi_{Ver_p} \to GL(X)
    $$
    factors through $T(X)$. We will therefore use the same letter
    $$
    \varepsilon: \pi_{Ver_p}\to T(X)
    $$
    for the natural homomorphism to $T(X)$.
\end{remark}

\begin{definition}
    Define the Lie subalgebra $\mathfrak b\subset \g$ as
    $$
    \mathfrak b = \bigoplus_{1\le i\le j\le k} X_i\otimes X_j^*,
    $$
    the ``upper triangular" subalgebra of $\gl(X)$.

    The (standard) \textbf{Borel subgroup} $B=B(X)$ of $G=GL(X)$ is defined as the subgroup scheme corresponding to the Harish-Chandra pair $(\prod_{n=1}^{p-1} B_{k_n}, \mathfrak b).$
\end{definition}

\begin{remark}
    Note that our choice of Borel subgroup $B$ in $G$ really depends on the ordering of the simple summands in $X$ that we previously chose.
\end{remark}

We can similarly define the subgroups $N^\pm = N^\pm(X)\subset G$ with
$$
\mathbf{HC}(N^+)=(\prod_{n=1}^{p-1} N^+_{k_n}, \bigoplus_{1\le i<j\le k} X_i\otimes X_j^*),
$$
$$
\mathbf{HC}(N^-)=(\prod_{n=1}^{p-1} N^-_{k_n}, \bigoplus_{1\le j<i\le k} X_i\otimes X_j^*),
$$
where $N^+_{n}$ (resp. $N^-_{n}$) denotes the subgroup of strictly upper-triangular (resp. strictly lower-triangular) matrices in $GL_{n}$.

Put $\n^\pm = Lie(N^\pm)$. We get a decomposition
$$
\g = \n^-\oplus(\mathfrak t \oplus \n^+) = \n^-\oplus \mathfrak b.
$$

Moreover, in \cite{V24}, Venkatesh proves the following:
\begin{theorem}[\cite{V24}, Proposition 5.1] \label{thm_PBW}
    In $\Ind(Ver_p)$ we have the isomorphism
    $$
    Dist(G) \simeq Dist(N^-)\otimes Dist(B)
    $$
    of right $Dist(B)$-module algebras.
\end{theorem}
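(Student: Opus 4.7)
My plan is to construct the multiplication map explicitly and then show it is an isomorphism by passing to associated graded algebras, using the additive decomposition $\g = \n^- \oplus \b$ together with the PBW property of Lie algebra objects in $Ver_p$.

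First, the closed subgroup inclusions $N^- \hookrightarrow G$ and $B \hookrightarrow G$ induce surjections $\co(G) \twoheadrightarrow \co(N^-)$ and $\co(G) \twoheadrightarrow \co(B)$ that are compatible with the augmentation ideals; dualizing gives inclusions of filtered algebra objects $Dist(N^-), Dist(B) \hookrightarrow Dist(G)$ in $\Ind(Ver_p)$. Composing with the multiplication on $Dist(G)$ produces the candidate map
$$
\mu: Dist(N^-) \otimes Dist(B) \longrightarrow Dist(G),
$$
which is right $Dist(B)$-linear by construction (since right multiplication by $Dist(B)$ factors through $Dist(B) \hookrightarrow Dist(G)$).

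Next, I equip each $Dist(H)$ ($H = G, N^-, B$) with its natural filtration $Dist(H)^{\le k} = (\co(H)/I_H^{k+1})^*$ and equip the left-hand side with the tensor-product filtration. The map $\mu$ is filtration-preserving because it is the composition of filtration-preserving maps. The key input is the PBW-type identification of the associated graded: since the paper adopts the convention that a Lie algebra object in $Ver_p$ is one for which PBW holds, one has a natural isomorphism $\gr Dist(H) \simeq S(\h)$ in $\Ind(Ver_p)$, compatible with the embeddings coming from Lie subalgebra inclusions. Applying this to all three algebras and using that $\g = \n^- \oplus \b$ as objects of $Ver_p$, the associated graded map
$$
\gr(\mu): S(\n^-) \otimes S(\b) \longrightarrow S(\g)
$$
is the canonical multiplication isomorphism $S(\n^-) \otimes S(\b) \xrightarrow{\sim} S(\n^- \oplus \b) = S(\g)$.

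Finally, since both filtrations are exhaustive and bounded below, and $\gr(\mu)$ is an isomorphism in each degree, $\mu$ itself is an isomorphism in $\Ind(Ver_p)$. The main obstacle is really packaged into the PBW identification $\gr Dist(H) \simeq S(\h)$: in positive characteristic divided powers complicate the classical hyperalgebra picture, but for group schemes of finite type in $Ver_p$ one can split this using the Harish-Chandra equivalence of Theorem \ref{thm_HC}, reducing to the classical PBW decomposition for the reductive group $G_0 = \prod GL_{k_n}$ along the subgroups $N^\pm_0, B_0$ (a standard big-cell statement) and the PBW theorem for the Lie algebra object $\g$ relative to its subalgebras $\n^-, \b$. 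Once these two pieces are glued via Harish-Chandra, the associated-graded argument above concludes.
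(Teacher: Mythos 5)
The paper does not actually prove this statement---it is quoted from \cite{V24}, Proposition 5.1---so I can only assess your argument on its own terms. Your overall strategy (build the multiplication map $\mu$, filter everything by order of distributions, and check that $\gr(\mu)$ is an isomorphism, reducing via the Harish--Chandra splitting to a classical big-cell statement for $G_0=\prod GL_{k_n}$ plus PBW for the Lie algebra object) is the right one and matches the standard route to such triangular decompositions.

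However, the step you advertise as "the key input" is misstated. The identification $\gr Dist(H)\simeq S(\mathfrak h)$ does \emph{not} follow from the paper's convention that Lie algebra objects satisfy PBW: that convention governs $U(\mathfrak h)$, whereas $Dist(H)$ coincides with $U(\mathfrak h)$ only when $H_{0}=1$ (Lemma \ref{lemma_gps_with_no_even_part}). For the groups at hand, $N^-$, $B$ and $G$ all have nontrivial classical parts, and already for $H=\mathbb G_m$ or $GL_k$ in characteristic $p$ the hyperalgebra $Dist(H)$ is not generated in filtration degree one; its associated graded is the divided power algebra $\Gamma(\mathfrak h)=\bigoplus_k (S^k(\mathfrak h^*))^*$, not $S(\mathfrak h)$ as an algebra. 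The associated-graded argument can still be run---$\Gamma(\mathfrak n^-\oplus\mathfrak b)\simeq\Gamma(\mathfrak n^-)\otimes\Gamma(\mathfrak b)$ canonically, and one must check that $\gr(\mu)$ realizes this map, which amounts to the differential of $N^-\times B\to G$ at the identity being the isomorphism $\mathfrak n^-\oplus\mathfrak b\to\g$---but as written your middle paragraph asserts a false identification and then claims $\gr(\mu)$ is "the canonical multiplication isomorphism of symmetric algebras," which is not what it is in the classical directions. Your final paragraph does flag the divided-power issue and proposes the correct repair (split $Dist(G)\simeq Dist(G_0)\otimes_{U(\g_0)}U(\g)$ via Theorem \ref{thm_HC}, use Kostant-type triangular decomposition of the hyperalgebra of $\prod GL_{k_n}$ for the classical part, and genuine PBW for the remaining directions), so the proof is salvageable; but the repair should replace, not supplement, the $\gr Dist(H)\simeq S(\mathfrak h)$ claim, and the classical big-cell decomposition of the hyperalgebra should be cited or proved rather than treated as a corollary of PBW for Lie algebra objects.
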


The isomorphism in Theorem \ref{thm_PBW} is an isomorphism of left $Dist(N^-)$-module algebras for similar reasons and we have
$$
Dist(G)\simeq Dist(N^-)\otimes Dist(T)\otimes Dist(N^+).
$$

\section{Categories \texorpdfstring{$Ver_p(G)$}{Ver p(G)}} \label{s_ver(G)}
In this section we assume char $\ck >2$.

\subsection{Definition and properties}

Let $G$ be a connected reductive algebraic group over $\ck$ with the maximal torus $T$ and the Borel subgroup $B\supset T$ corresponding to some choice of positive roots for $G$. Let $R^+$ denote the set of positive roots and $P^+$ denote the set of dominant integral weights, let $\rho$ denote the half-sum of all positive roots, let $h=\max \{\langle \rho, \beta^{\vee}\rangle~|~ \beta\in R^+\}$ be the Coxeter number and let 
$$
C_p = \{\lambda\in P^+~|~\langle \lambda+\rho, \beta^{\vee} \rangle < p~\text{for all } \beta\in R^+\}
$$
be the (integral part of) the fundamental alcove for the affine Weyl group. Note that $C_p\neq\emptyset$ if and only if $h\le p$.

\begin{definition} \label{def_Ver_p(G)} The Verlinde category for $G$ is defined as the semisimplification of the category of tilting modules for $G$:
$$
Ver_p(G)=\overline{\mathcal{T}(G)}.
$$
\end{definition}

Fusion rules and classification of simple objects for $Ver_p(G)$ were determined in \cite{GM}. 

Recall that for each $\lambda\in P^+$ there exists a unique indecomposable tilting module $T_\lambda$ with the highest weight $\lambda$. It was proved in \cite{GM} that $\dim T_\lambda$ is divisible by $p$ if and only if $\lambda\not\in C_p$. Simultaneously, for each $\lambda\in C_p$ we have the isomorphism between $T_\lambda$, the Weyl module $V_\lambda$, and the simple module $L_\lambda$. For this reason we will use the notation $V_\lambda$ (slightly abusing it) for the corresponding simple object in $Ver_p(G)$. Combining this we get the following description for the set of simple objects in $Ver_p(G)$:
$$
sOb(Ver_p(G))=\{V_\lambda~|~\lambda\in C_p\}.
$$

\begin{ex}\label{ex_fund_alc}
    \begin{enumerate}
        \item When $G=SL_2$, we have $P^+=\Z_{\ge 0}$, 
        $$
        C_p=\{k\in \Z_{\ge 0}~|~k+1<p\}.
        $$
        We have the equivalence of tensor categories $Ver_p(SL_2)\simeq Ver_p$ (see \cite{O15}) that sends $V_k$ to $L_{k+1}$ (as $\dim V_k=k+1$).

        \item When $G=GL_n$ with $n<p$ we get:
        $$
        P^+=\{\lambda=(\lambda_1,\ldots, \lambda_n)\in \Z^n~|~\lambda_1\ge \lambda_2\ge\ldots\ge \lambda_n\},
        $$
        $$
        C_p=\{\lambda\in P^+~|~\lambda_1-\lambda_n\le p-n\}.
        $$

        When $\lambda=(1,0,\ldots, 0)$ we denote $V_{\lambda}$ simply by $V$ when there is no ambiguity. The object $V$ has dimension $n$ and tensor generates $Ver_p(GL_n)$.

        Note that in $\Rep(GL_n)$ for each $k< p$ the symmetric power $S^kV$ is a direct summand in $V^{\otimes k}$, and therefore is a tilting module. When $k=p-n+1$ we have
        $$
        \dim S^kV = \binom{p}{n-1} = 0\in \ck.
        $$
        Therefore, after taking semisimplification we get
        \begin{equation}\label{sym_powers_are_zero}
            S^{p-n+1}V=0\in Ver_p(GL_n).
        \end{equation}

        The decomposition of the tensor product of $V_\lambda$ with $V$  stays the same as in the classical case, except for the case when $\lambda_1-\lambda_n=p-n$. We have
        \begin{equation}
              V_\lambda\otimes V = \bigoplus_{\lambda+e_i\in C_p} V_{\lambda+e_i},
        \end{equation}
          where $e_1,\ldots,e_n$ is the standard basis in $\Z^n$. Note that when $\lambda\in C_p$, we have $\lambda+e_i\in C_p$ if and only if $i>1$ and
        $$
        \lambda_{i-1}>\lambda_{i},
        $$
        or $i=1$ and
        $$
        \lambda_1-\lambda_n<p-n.
        $$
        We will explore these conditions further in Section \ref{catact}.
    \end{enumerate}
\end{ex}

The (unique) fiber functor 
$$
F: Ver_p(G)\to Ver_p
$$
can be easily understood through the equivalence $Ver_p\simeq Ver_p(SL_2)$. The restriction functor
$$
\Res: \mathcal T(G)\to \mathcal T(SL_2)
$$
to a \textit{principal} $SL_2$-subgroup in $G$ is well-defined as it sends tilting modules to tilting modules. Moreover, it sends negligible morphisms to negligible morphisms (see \cite{EO18}), and thus descends to a tensor functor between the semisimplified categories.

\begin{ex}
    When $G=GL_n$ we have $F(V)=L_n$.
\end{ex}

\subsection{Tensor subcategories}\label{sect_tensor_subcat}
Let $Z$ denote the center of $G$ and let $G_{ad}$ be the corresponding adjoint group $G/Z$. Then $Ver_p(G_{ad})$ can naturally be identified with a tensor subcategory in $Ver_p(G)$ (coming from tilting modules on which the center acts trivially), which we denote by $Ver_p^+(G)$.

So, for $n< p$ the category $Ver_p(PGL_n)$ is  a subcategory in both $Ver_p(SL_n)$ and $Ver_p(GL_n)$. In particular for $SL_2$ we have
$$
Ver_p(PGL_2)=Ver_p^+(SL_2)=Ver_p^+.
$$
The category $Ver_p(GL_n)$ is naturally $\Z$-graded with degree of $V_{\lambda}$ given by $$|\lambda|=\lambda_1+\ldots+\lambda_n.$$ The subcategory $Ver_p(PGL_n)$ is identified with the degree zero part. 

It can be proved that there is a decomposition 
$$
Ver_p(G)\simeq \Rep(Z,z)\boxtimes Ver_p^+(G),
$$
where $z\in Z$ is the image of $-1$ under the principal map $SL_2\to G$ (see \cite{CEO22}). 

Let us concentrate on type $A$ from now on. We get a decomposition
\begin{equation}\label{decomp}
 Ver_p(GL_n)=\mathcal{D}\boxtimes Ver_p(PGL_n),   
\end{equation}
where $\mathcal{D}=\Rep(\mathbb G_m,z)$ is a pointed tensor category freely generated by an invertible object $\psi$ of infinite order. The dimension of $\psi$ is $(-1)^{n-1}$, so when $n$ is odd, the category $\mathcal D$ is equivalent to the category $Vec_\Z$ of $\Z$-graded vector spaces; whereas when $n$ is even, $\mathcal D$ is equivalent to the subcategory of $sVec_\Z$ generated by the $(0|1)$-dimensional super vector space, sitting in degree $1$. 
Let us describe the decomposition \ref{decomp} in more detail below.

There is an invertible object in $Ver_p(GL_n)$ coming from the determinant representation of $GL_n$. Put $det = \Lambda^nV = V_{\nu}$ with $\nu = (1,\ldots, 1)$.

Now put $\lambda = (p-n, 0,\ldots, 0)$ and note that $\mu =(p-n,p-n,0,\ldots,0)$ is the only highest weight in $C_p$ for which $T_\mu$ appears as a summand in $T_\l\otimes T_\l$ in $\mathcal{T}(GL_n)$. Similarly, the only weight in $C_p$, that appears in the decomposition of $(T_\l)^{\otimes k}$ for $k\le n$ is $$(p-n,p-n,\ldots, p-n,0,\ldots, 0)$$ ($(p-n)$ repeated $k$ times). Thus, the object $\chi=V_{\lambda}=S^{p-n}V$ is invertible with 
\begin{equation}\label{inveq}
   \chi^n = det^{p-n}. 
\end{equation}

We construct the invertible object $\psi$ of degree one in the form $det^a\otimes \chi^b$, where $an+b(p-n)=1$ (it is possible since $p-n$ and $n$ are coprime). Note also that because of equation $\ref{inveq}$, we can always assume that $0\le b<n$, so $\psi$ is uniquely defined.

\begin{ex}
        Let $p=7, n=3$ then $\psi=det^{-1}\otimes \chi = V_{\pi}$, where $\pi=(3,-1,-1)$. We have $det=\psi^3, \phi=\psi^4$.
\end{ex}

Let $\mathcal D_0 =\Rep(\mu_n,z)$ be the quotient of $\mathcal D$ corresponding to the subgroup  $\mu_n\subset \mathbb G_m$ and  sending $det=\psi^n$ to $\on$ (so $\mathcal D_0$ has $n$ simple objects). We get the decomposition
$$
Ver_p(SL_n) = \mathcal D_0\boxtimes Ver_p(PGL_n)= \mathcal D_0\boxtimes Ver_p^+(SL_n).
$$

\begin{ex}
    For $n=2$ this decomposition yields 
    $$
    Ver_p=sVec\boxtimes Ver_p^+.
    $$
\end{ex}

\begin{corollary}
    We get a decomposition
    $$
    \pi_{Ver_p(SL_n)}=\mu_n\times \pi_{Ver_p^+(SL_n)}.
    $$
\end{corollary}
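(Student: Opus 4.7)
The plan is to deduce the corollary from the decomposition
$$
Ver_p(SL_n) = \mathcal D_0 \boxtimes Ver_p^+(SL_n)
$$
established just above, combined with the general principle that the fundamental group of a Deligne tensor product of symmetric tensor categories splits as a product of fundamental groups of the factors.

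First I would verify that for any two symmetric tensor categories $\cc_1, \cc_2$ one has $\pi_{\cc_1 \boxtimes \cc_2} \simeq \pi_{\cc_1} \times \pi_{\cc_2}$. The cleanest route is via the coend formula for $H_\cc$: in the semisimple setting the simples of $\cc_1 \boxtimes \cc_2$ are external products $X_1 \boxtimes X_2$ of simples of the factors, and $(X_1 \boxtimes X_2) \otimes (X_1 \boxtimes X_2)^* \simeq (X_1 \otimes X_1^*) \boxtimes (X_2 \otimes X_2^*)$. Summing these contributions gives an isomorphism $H_{\cc_1 \boxtimes \cc_2} \simeq H_{\cc_1} \boxtimes H_{\cc_2}$ of commutative Hopf algebra objects in $\mathrm{Ind}(\cc_1 \boxtimes \cc_2)$, and dualizing yields the product decomposition of the corresponding affine group schemes. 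Equivalently, one can use the functor-of-points description of Remark \ref{fundgppoints} together with Theorem \ref{fundamental_Lie_alg}: a tensor automorphism of $I_A$ on $\cc_1 \boxtimes \cc_2$ is determined by its restrictions to the two tensor-generating subcategories $\cc_1 \boxtimes \on$ and $\on \boxtimes \cc_2$, and any compatible pair extends uniquely by the tensor axiom.

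Granting this, applying it to $Ver_p(SL_n) = \mathcal D_0 \boxtimes Ver_p^+(SL_n)$ gives $\pi_{Ver_p(SL_n)} \simeq \pi_{\mathcal D_0} \times \pi_{Ver_p^+(SL_n)}$. To finish, I identify $\pi_{\mathcal D_0} = \mu_n$: since $\mathcal D_0 = \Rep(\mu_n, z)$ by construction, this is precisely the content of the third example listed after Theorem \ref{reconstr_thm}, which asserts that for any affine (super) group scheme $G$ one has $\pi_{\Rep G} = G$. Assembling the two identifications yields the stated decomposition.

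The only genuinely non-trivial point is the first step: compatibility of the Hopf algebra structures under the Deligne tensor product. This amounts to a routine but somewhat tedious check that the multiplication, unit, comultiplication, counit and antipode on $H_{\cc_1 \boxtimes \cc_2}$ each factor as the external tensor product of the corresponding structures on $H_{\cc_1}$ and $H_{\cc_2}$, using the explicit formulas listed in Section \ref{s_not}. No new ideas are required, so the argument is essentially formal; the same reasoning justifies the earlier unproved decomposition $\pi_{Ver_p} = \Z/2\Z \times \pi_{Ver_p^+}$ coming from $Ver_p = sVec \boxtimes Ver_p^+$, and our statement is its natural analog one level up.
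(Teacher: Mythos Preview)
Your proposal is correct and matches the paper's intent. The paper states this corollary without proof, deducing it immediately from the decomposition $Ver_p(SL_n)=\mathcal D_0\boxtimes Ver_p^+(SL_n)$ with $\mathcal D_0=\Rep(\mu_n,z)$; your argument simply spells out the general principle $\pi_{\cc_1\boxtimes\cc_2}\simeq\pi_{\cc_1}\times\pi_{\cc_2}$ (via the coend/Hopf-algebra description) and the identification $\pi_{\mathcal D_0}=\mu_n$, which is exactly what the paper leaves implicit. One small nitpick: the example you cite treats $\Rep G$ rather than $\Rep(G,z)$, but the same conclusion holds in the latter case (as the paper itself uses tacitly for $sVec=\Rep(\Z/2\Z,z)$), so this does not affect the argument.
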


We conclude this section by describing
the \textbf{level-rank duality} for the Verlinde categories in positive characteristic.

There is a well-known equivalence of categories (see for example \cite{CEO22}):
\begin{equation}\label{level_rank}
\mathbf{D}: Ver_p(PGL_n) \xrightarrow{\sim} Ver_p(PGL_{p-n}).   
\end{equation}

This equivalence will also become apparent in Section \ref{GL(L_n)}.

Let us give some explicit description of this equivalence on the level of simple objects. For any $GL_n$-weight $\l\in C_p$ of degree $0$ (i.e. a $PGL_n$-weight) define $D(\l)$ to be the $PGL_{p-n}$-weight, for which $V_{D(\l)}=\mathbf{D}(V_\l)$.

First note that if $V_\l$ corresponds to a polynomial representation of $GL_n$ (i.e. all $\l_i\ge 0$) with $\l_n=0$ then we can assign a Young diagram $Y(\l)$ to $\l$, and the condition $\l\in C_p$ translates to saying that $\l_1\le p-n$, or equivalently, that $Y(\l)$ fits into the $(n-1)$-by-$(p-n)$ rectangle. Simultaneously any polynomial weight $\l$, such that $Y(\l)$ fits into the $n$-by-$(p-n)$ rectangle, is in $C_p$.

For a polynomial $GL_n$-weight $\l$ (or a partition $\l$ of some positive integer $N$) let $\l^t$ denote the \textit{transpose} of $\l$ (that is $Y(\l^t)$ is obtained by reflecting $Y(\l)$ around the $x=y$ line on he plane). Note that if $\l_1\le p-n$,  partition  $\l^t$ has at most $p-n$ parts, and thus can be thought of as a weight for $GL_{p-n}$. Moreover, in this case $\l^t\in C_p$, since $Y(\l^t)$ fits into the $(p-n)$-by-$n$ rectangle.

Roughly speaking, the correspondence $\l\mapsto D(\l)$ is transposition, but one needs to be careful defining it. 

Let $\l$ be a $GL_n$-weight. Let $\l_1,\ldots,\l_k$ be the \textit{positive} parts, and $\l_{n-l+1},\ldots, \l_n$ be the \textit{negative} parts of $\l$. Put
$$
\alpha=(\alpha_1,\ldots, \alpha_k)=(\l_1,\ldots,\l_k),
$$
$$
\beta=(\beta_1,\ldots,\beta_l)=(-\l_n, \ldots, -\l_{n-l+1}).
$$
This defines a bijection between $GL_n$-weights of degree zero and pairs of partitions $(\alpha, \beta)$ with length($\alpha) + $ length$(\beta)\le n$, and $\sum_i \alpha_i = \sum_j \beta_j$. We have $\l\in C_p$ if and only if $\alpha_1+\beta_1\le p-n$.

Now for any $GL_n$-weight $\l\in C_p$ of degree zero define $D(\l)$ to be the $GL_{p-n}$-weight corresponding to the pair $(\alpha^t,\beta^t)$. Note that since length$(\alpha^t) + $ length$(\beta^t)$  $= \alpha_1+\beta_1\le p-n$,  this weight is well-defined. Similarly, since $\alpha^t_1+\beta^t_1 = $ length$(\alpha) + $ length$(\beta)$  $\le n=p-(p-n)$, we have $D(\l)\in C_p$. 

Note that the correspondence $\l\to D(\l)$ is a bijection on degree zero weights in $C_p$ for $GL_n$ and for $GL_{p-n}$.

\begin{remark}
    This construction can also be obtained as follows:
    \begin{enumerate}
        \item Take a weight $\l\in C_p$ for $GL_n$ of degree zero.
        \item Turn $V_\l$ into a polynomial representation of degree $-n\cdot\l_n$ by tensoring it with $det^{-\l_n}$. Call the corresponding polynomial weight $\mu$.
        \item The condition $\l_1-\l_n\le p-n$ translates to the condition $\mu_1\le p-n$ for $\mu$.
        \item The transpose $\mu^t\in C_p$ is a weight for $GL_{p-n}$. Tensor $V_{\mu^t}$ with $\chi^{\l_n}$ to turn it into an object of degree zero (here we work in the category $Ver_p(PGL_{p-n})$).
        \item This object is $\mathbf{D}(V_\l)$.
    \end{enumerate}
    \end{remark}
    This approach makes the combinatorial description of the bijection on simple objects less evident. However, it allows us to work only with polynomial representations of $GL_n$, which can be understood through Schur functors.
\begin{remark}\label{rem_level_rank_over_sVec}
    The equivalence $\mathbf{D}$ can be extended to an equivalence of $sVec$-module categories
    $$
    \mathbf{D}:Ver_p(GL_n)\boxtimes sVec\xrightarrow{\sim}Ver_p(GL_{p-n})\boxtimes sVec.
    $$
    In particular, we can extend the bijection $\l\leftrightarrow D(\l)$ to weights of arbitrary degree. 

    For now, let us avoid ambiguity by denoting $V\in Ver_p(GL_k)$ by $V^{(k)}$ for each $k<p$, similarly, object $V_\l$ will be denoted by $V^{(k)}_\l$.

    We start with the object $V^{(n)}_\l = V^{(n)}_\mu\otimes det^{\l_n} =  \mathbb S_\mu V^{(n)}\otimes det^{\l_n}$, where $\mathbb S_\mu$ is the Schur functor corresponding to $\mu$, and $\det = \Lambda^n V^{(n)} = \mathbb S_{\nu}V^{(n)}$ for $\nu=(1,\ldots, 1)$. Note here that $\nu^t=(n, 0,\ldots, 0)$, so $\mathbb S_{\nu^t}V^{(p-n)}=\chi$ in $Ver_p(GL_{p-n})$. 

    Let $I$ be the purely odd super vector space of super dimension $(0|1)$. We define $\mathbf{D}(V^{(n)})$ to be $V^{(p-n)}\otimes I$. Note for instance that this map behaves well on dimensions: $$\dim V^{(p-n)}\otimes I = -(p-n) = n \in \ck.$$
    
    Then we have
    $$\mathbf{D}(V^{(n)}_\l)=
    \mathbf{D}(V^{(n)}_\mu\otimes det^{\l_n})=
    $$
    
    $$=\mathbb S_\mu(V^{(p-n)}\otimes I)\otimes (\mathbb S_\nu(V^{(p-n)}\otimes I))^{\otimes \l_n}=
    $$
    $$
    =\mathbb S_{\mu^t}V^{(p-n)}\otimes I^{\otimes \sum \mu_i}\otimes (\mathbb S_{\nu^t}V^{(p-n)})^{\otimes \l_n}\otimes I^{\otimes \l_n}=
    $$
    $$
    = V^{(p-n)}_{\mu^t}\otimes \chi^{\l_n}\otimes I^{\sum \l_i}.
    $$

    When $\sum \l_i= 0$,
    $$
    \mathbf{D}(V_\l^{(n)}) = V_{\mu^t}^{(p-n)}\otimes \chi^{\l_n}
    $$
    has degree zero and lies in the subcategory $Ver_p(PGL_{p-n})$.
    \end{remark}
\begin{ex}
    Suppose $p=7, n = 3$ and $\l=(6,5,2)\in C_p$ for $GL_3$. Then $\mu = (4,3,0)$ and
    $$
    V^{(3)}_{\l} = det^{2}\otimes V^{(3)}_{\mu}.
    $$
    Therefore,
    $$
    \mathbf{D}(V_\l^{(3)}) = V^{(4)}_{\mu^t}\otimes \chi^{2}\otimes I^{|\l|}= V^{(4)}_{(2,2,2,1)}\otimes V^{(4)}_{(3,0,0,0)} \otimes V^{(4)}_{(3,0,0,0)}\otimes I= 
    $$
    $$
    = V^{(4)}_{(1,1,1,1)}\otimes V^{(4)}_{(1,1,1,0)}\otimes V^{(4)}_{(3,0,0,0)}\otimes \otimes V^{(4)}_{(3,0,0,0)} I = V^{(4)}_{(1,1,1,1)} \otimes V^{(4)}_{(3,1,1,1)} \otimes V^{(4)}_{(3,0,0,0)} = 
    $$
    $$
    = V^{(4)}_{(2,2,2,2)}  \otimes V^{(4)}_{(2,0,0,0)}\otimes V^{(4)}_{(3,0,0,0)}\otimes I = V^{(4)}_{(2,2,2,2)}  \otimes V^{(4)}_{(3,2,0,0)} = V^{(4)}_{(5,4,2,2)}\otimes I.
    $$
    So, $D(\l)= (5,4,2,2)$.
\end{ex}

\section{Representations of \texorpdfstring{$GL(L_n)$ for simple objects $L_n$}{GL(L n) for simple objects L n}}\label{GL(L_n)}

In this section let us assume char $\ck > 2$.

\subsection{Classification of \texorpdfstring{$GL(L_n)$}{GL(L n)}-modules via fiber functors}
Let us start our discussion of representations of $GL(X)$ in $Ver_p$ by describing the category of representations of $GL(L_n)$ for some $n=1,\ldots, p-1$. Moreover, as $L_1$ and $L_{p-1}$ span the subcategory equivalent to super vector spaces, we already know the representation theory of $GL(L_1)$ and $GL(L_{p-1})$. Let us assume therefore that $2\le n\le p-2$.

In \cite{V24}, Venkatesh proved the following
\begin{lemma}[\cite{V24}, Corollary 4.3]
There is an isomorphism of affine group schemes of finite type in $Ver_p$:
$$
GL(L_n)\simeq GL(L_n)_{0}\times SL(L_n)=\mathbb G_m\times SL(L_n),
$$
where $SL(L_n)$ is a group scheme defined in Example \ref{ex_SL(X)}.
\end{lemma}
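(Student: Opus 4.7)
The plan is to invoke the Harish-Chandra equivalence of Theorem \ref{thm_HC} and compare Harish-Chandra pairs on both sides. Since $L_n$ is simple, $\En_{Ver_p}(L_n)=\ck$, so $GL(L_n)_0=\mathrm{Aut}_{Ver_p}(L_n)=\mathbb{G}_m$; and $SL(L_n)_0=1$ by Example \ref{ex_SL(X)}. Thus
$$
\mathbf{HC}(GL(L_n))=(\mathbb{G}_m,\gl(L_n)),\qquad \mathbf{HC}(\mathbb{G}_m\times SL(L_n))=(\mathbb{G}_m,\ck\oplus\ssl(L_n)).
$$
The task reduces to producing a $\mathbb{G}_m$-equivariant decomposition $\gl(L_n)\simeq\ck\oplus\ssl(L_n)$ as Lie algebra objects, with the $\ck$-factor central and identified with $\mathrm{Lie}(GL(L_n)_0)$.

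First I would split the trace map $\ev:\gl(L_n)=L_n\otimes L_n^*\to\on$. The composition of the unit $\eta:\on\to\gl(L_n)=M(L_n)$ of the matrix algebra with $\ev$ equals $\dim L_n\cdot\mathrm{id}_{\on}=n\cdot\mathrm{id}_{\on}$, which is invertible in $\ck$ since $1\le n\le p-1$. Hence $s:=\tfrac{1}{n}\eta$ is a section of $\ev$, giving a direct sum decomposition $\gl(L_n)=s(\on)\oplus\ssl(L_n)$ in $Ver_p$. Because $\eta$ is the unit of the associative algebra $M(L_n)$, it is central for the commutator bracket $c=m\circ(1-\tau_{M(L_n),M(L_n)})$, so $s(\on)$ is a central Lie subalgebra and the decomposition is a direct product of Lie algebra objects.

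Next I would verify the compatibilities required by Theorem \ref{thm_HC}. The $\mathbb{G}_m=GL(L_n)_0$-action on $\gl(L_n)$ is by conjugation; it fixes $\eta$ and preserves $\ker(\ev)=\ssl(L_n)$ (since $\ev$ is $\mathbb{G}_m$-equivariant), so the splitting is $\mathbb{G}_m$-equivariant. Under the canonical identification $\mathrm{Lie}(\mathbb{G}_m)=\ck$, the induced embedding $\mathrm{Lie}(GL(L_n)_0)\hookrightarrow\gl(L_n)$ lands in the scaling-matrix line, which coincides with $s(\on)$; this supplies the classical-part identification demanded by the definition of a Harish-Chandra pair.

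Assembling these pieces, $\mathbf{HC}(GL(L_n))\simeq\mathbf{HC}(\mathbb{G}_m)\times\mathbf{HC}(SL(L_n))$ in $\mathcal{HC}(Ver_p)$, and since $\mathbf{HC}$ is an equivalence of categories it preserves products, so the desired isomorphism $GL(L_n)\simeq\mathbb{G}_m\times SL(L_n)$ of affine group schemes in $Ver_p$ follows. The only delicate ingredient is the existence of the trace splitting, which rests on $\dim L_n=n\ne 0$ in $\ck$; this is the main potential obstacle, but it is harmless in the stated range.
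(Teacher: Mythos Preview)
Your proof is correct and follows essentially the same approach as the paper: both use the Harish-Chandra equivalence and the Lie algebra splitting $\gl(L_n)=\on\oplus\ssl(L_n)$ obtained from the unit map $\eta=\tau_{L_n^*,L_n}\circ\coev_{L_n}$ (which is a section of $\ev$ after dividing by $\dim L_n=n\ne 0$). Your argument is more detailed in verifying the $\mathbb{G}_m$-equivariance and the classical-part compatibility, but the core idea is identical.
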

\begin{proof}
    It follows immediately from the observation that, since $\dim L_n\neq 0$, we have a Lie algebra decomposition
    $$
    \gl(L_n)=\on\oplus \ssl(L_n)
    $$
    (the splitting map $\on\to\gl(L_n)$ is given by $\tau_{L_n^*,L_n}\circ coev_{L_n}$). 

    Now $\mathbf{HC}(SL(L_n))=(1, \ssl(L_n)), ~\mathbf{HC}(\mathbb G_m)=(\mathbb G_m, \on)$, and
    $$
    \mathbf{HC}(GL(L_n))=(\mathbb G_m, \gl(L_n))=(\mathbb G_m\times 1, \on\oplus\ssl(L_n)).
    $$
\end{proof}

\begin{remark}
    As $SL(L_n)$ is simultaneously a subgroup and a quotient of $GL(L_n)$, we might as well denote it by $PGL(L_n)$. This notation is more suitable in light of the results of this section. 
\end{remark}

\begin{remark}
    Note that we ought to keep track of what happens to the homomorphism $$\varepsilon: \pi_{Ver_p}\to GL(L_n).$$

    Let $p: GL(L_n)\to \mathbb G_m$ be the projection. Then
    $$
    p\circ\varepsilon: \pi_{Ver_p} \to \mathbb G_m
    $$
    is not always trivial.
    
    Recall that $\pi_{Ver_p}=\mu_2\times \pi_{Ver_p^+}$. Then $p\circ\varepsilon$ factors through $\mu_2$ and sends the generator $z\in \mu_2$ to $(-1)^{n-1}$.

    \end{remark}

    \begin{corollary}
    We have an equivalence of categories
        $$
        \Rep_{Ver_p}(GL(L_n),\varepsilon)=\Rep(\mathbb G_m, z)\boxtimes \Rep_{Ver_p}(SL(L_n),\varepsilon'),
        $$
            where $\varepsilon'$ is the composition of the natural map $\varepsilon:\pi_{Ver_p}\to GL(L_n)$ with the projection $GL(L_n)\to SL(L_n)$. 
    \end{corollary}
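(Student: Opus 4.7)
The plan is to translate the group-scheme isomorphism $GL(L_n) \simeq \mathbb G_m \times SL(L_n)$ of the preceding lemma into a decomposition of representation categories, carefully tracking the $\varepsilon$-compatibility.

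First, I would use that $\co(GL(L_n)) = \co(\mathbb G_m) \otimes \co(SL(L_n))$ as Hopf algebras in $Ver_p$, so a $GL(L_n)$-module in $Ver_p$ is equivalent to an object equipped with commuting $\mathbb G_m$- and $SL(L_n)$-module structures. This yields
$$
\Rep_{Ver_p}(GL(L_n)) \simeq \Rep_{Ver_p}(\mathbb G_m) \boxtimes \Rep_{Ver_p}(SL(L_n)).
$$
Under this decomposition, $\varepsilon$ becomes a pair $(\varepsilon_1, \varepsilon')$, where $\varepsilon_1$ and $\varepsilon'$ are the compositions of $\varepsilon$ with the two projections. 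Since the natural $\pi_{Ver_p}$-action on a tensor product is the tensor product of the natural actions, the $\varepsilon$-compatibility splits into $\varepsilon_1$-compatibility on the $\mathbb G_m$-factor and $\varepsilon'$-compatibility on the $SL(L_n)$-factor, giving
$$
\Rep_{Ver_p}(GL(L_n), \varepsilon) \simeq \Rep_{Ver_p}(\mathbb G_m, \varepsilon_1) \boxtimes \Rep_{Ver_p}(SL(L_n), \varepsilon').
$$

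Next, I would identify $\Rep_{Ver_p}(\mathbb G_m, \varepsilon_1) \simeq \Rep(\mathbb G_m, z)$. An object of the former is a $\Z$-graded $V = \bigoplus_d V_d$ in $Ver_p$. By the preceding remark, $\varepsilon_1$ factors through $\mu_2$ with $z \mapsto (-1)^{n-1}$ and kills the complementary factor $\pi_{Ver_p^+}$. The $\varepsilon_1$-compatibility therefore forces $\pi_{Ver_p^+}$ to act trivially on each $V_d$, placing $V_d$ in the $\pi_{Ver_p^+}$-trivial subcategory $sVec \subset Ver_p$; it further forces the natural $z$-action (i.e.\ the super parity) on $V_d$ to equal $(-1)^{d(n-1)}$. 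These are precisely the data of an object of the pointed category $\mathcal D = \Rep(\mathbb G_m, z)$ described in Section \ref{sect_tensor_subcat}, whose degree-one generator has dimension $(-1)^{n-1}$.

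The main obstacle will be rigorously establishing the splitting of the $\varepsilon$-compatibility in the first step: one must check that an arbitrary $GL(L_n)$-representation arises as a Deligne product of $\mathbb G_m$- and $SL(L_n)$-pieces in a way that the natural $\pi_{Ver_p}$-action separates cleanly. This requires some care since $\pi_{Ver_p}$ is a non-trivial affine group scheme in $Ver_p$ (as opposed to $Vec$), but leveraging the explicit description of $\varepsilon$ from the preceding remark, together with the Hopf-algebra identification $\co(GL(L_n)) = \co(\mathbb G_m) \otimes \co(SL(L_n))$, should suffice.
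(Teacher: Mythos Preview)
Your identification $\Rep_{Ver_p}(\mathbb G_m,\varepsilon_1)\simeq\Rep(\mathbb G_m,z)=\mathcal D$ in the second paragraph is correct and is the heart of the matter; the paper itself states the corollary without proof, regarding it as immediate from the product decomposition $GL(L_n)\simeq\mathbb G_m\times SL(L_n)$ together with the preceding remark. However, your first displayed equation
\[
\Rep_{Ver_p}(GL(L_n)) \;\simeq\; \Rep_{Ver_p}(\mathbb G_m) \boxtimes \Rep_{Ver_p}(SL(L_n))
\]
is false, and the error is not merely one of rigor. The sentence before it is right --- a $GL(L_n)$-module in $Ver_p$ is an object of $Ver_p$ with commuting $\mathbb G_m$- and $SL(L_n)$-actions --- but that category is \emph{not} the Deligne product over $\ck$. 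Both factors on the right contain a copy of $Ver_p$ (objects with trivial group action), and the Deligne product double-counts it: the tensor-product functor sends both $L_2\boxtimes L_2$ and $\on\boxtimes(L_1\oplus L_3)$ (trivial actions throughout) to the same trivial $GL(L_n)$-module $L_1\oplus L_3$, so no such equivalence can exist. Consequently your ``splitting'' of the $\varepsilon$-condition into two independent conditions is ill-posed: on a single object $M$ there is one equation, namely that the natural $\pi_{Ver_p}$-action equals the \emph{product} of the $\varepsilon_1$- and $\varepsilon'$-actions, not two separate equalities.

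The fix is to drop the false intermediate and build the equivalence directly. Define
\[
\Phi:\mathcal D\boxtimes\Rep_{Ver_p}(SL(L_n),\varepsilon')\longrightarrow\Rep_{Ver_p}(GL(L_n),\varepsilon),\qquad \psi^d\boxtimes N\longmapsto \psi^d\otimes N,
\]
with $\mathbb G_m$ acting in degree $d$ and $SL(L_n)$ acting through $N$. Full faithfulness is immediate because each $\psi^d$ is invertible in $Ver_p$. For essential surjectivity, write $M=\bigoplus_d M_d$ and set $N_d:=\psi^{-d}\otimes M_d$; then $N_d\in\Rep_{Ver_p}(SL(L_n),\varepsilon')$ precisely because tensoring by $\psi^{-d}$ multiplies the natural $\pi_{Ver_p}$-action by the inverse of the character $\chi_d$ you computed in your second paragraph, and this cancels the $\varepsilon_1$-contribution in the single $\varepsilon$-equation above. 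This is exactly the computation you flag as the ``main obstacle'' in your last paragraph; the point is that it \emph{replaces} the first step rather than justifying it.
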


    Thus, we will focus on studying the representations of $SL(L_n)$. Let us reprove here the following result of Venkatesh:

\begin{theorem}[\cite{V24}, Corollary 4.10]\label{reps_of_GL(L_n)}
    The fiber functor $$F^+: Ver_p^+(SL_n)=Ver_p(PGL_n)\to Ver_p$$ induces an equivalence of categories
    $$
     Ver_p(PGL_n)=Ver_p^+(SL_n)\xrightarrow{\sim} \Rep_{Ver_p}(SL(L_n),\varepsilon').
    $$ 
\end{theorem}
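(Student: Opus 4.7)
The plan is to apply Deligne's reconstruction theorem (Theorem~\ref{reconstr_thm}) to $F^+$ and then identify the resulting dual group scheme with $SL(L_n)$ via the Harish--Chandra equivalence (Theorem~\ref{thm_HC}). First, $F^+$ is a genuine symmetric tensor functor, being the restriction to the degree-zero subcategory $Ver_p^+(SL_n)=Ver_p(PGL_n)$ of the fiber functor $F:Ver_p(SL_n)\to Ver_p$ recalled earlier (restriction along a principal $SL_2\hookrightarrow SL_n$ followed by semisimplification; in particular $F(V)=L_n$). Theorem~\ref{reconstr_thm} then produces an equivalence
$$
Ver_p(PGL_n)\xrightarrow{\sim}\Rep_{Ver_p}(G',\varepsilon'),\qquad G':=F^+(\pi_{Ver_p(PGL_n)}),
$$
where $\varepsilon'$ is the canonical homomorphism $\pi_{Ver_p}\to G'$. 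It remains to construct a $\pi_{Ver_p}$-equivariant isomorphism $G'\simeq SL(L_n)$ in $GpSch(Ver_p)^{ft}$.

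To do this, I would first embed $G'$ into $SL(L_n)$, then match Harish--Chandra pairs. Since $V$ tensor-generates $Ver_p(SL_n)$, Theorem~\ref{fund_gp_is_subgp_of_GL(X)} gives a closed embedding $F(\pi_{Ver_p(SL_n)})\hookrightarrow GL(L_n)$, and the identity $\Lambda^n V\simeq\on$ in $Ver_p(SL_n)$ forces the image to preserve the induced trivialization of $\Lambda^n L_n$. Under the matched decompositions $\pi_{Ver_p(SL_n)}=\mu_n\times\pi_{Ver_p(PGL_n)}$ and $GL(L_n)=\mathbb G_m\times SL(L_n)$, the $\mu_n$-factor goes to $\mu_n\subset\mathbb G_m$ via the natural inclusion, while $G'$ embeds as a closed subgroup of $SL(L_n)$. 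By Theorem~\ref{thm_HC}, it then suffices to check that the Harish--Chandra pairs agree. From Example~\ref{ex_SL(X)} and Remark~\ref{rem_SL}, $\mathbf{HC}(SL(L_n))=(1,\ssl(L_n))$; the classical part of $G'$ is then automatically trivial since $SL(L_n)_0=1$. By Theorem~\ref{fundamental_Lie_alg}, $Lie(G')(A)$ consists of tensor derivations of $I_A\circ F^+$, which restrict injectively to derivations on the tensor generator $V\otimes V^*\in Ver_p(PGL_n)$ and thereby into $\ssl(L_n)\subseteq\gl(L_n)=L_n\otimes L_n^*$ (the image lies in $\ker(ev)=\ssl(L_n)$ because tensor derivations annihilate the unit).

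The main obstacle is the reverse inclusion $\ssl(L_n)\subseteq Lie(G')$, i.e.\ realizing every element of $\ssl(L_n)$ as a tensor derivation coming from $Ver_p(PGL_n)$. I would handle this by constructing a quasi-inverse $\Rep_{Ver_p}(SL(L_n),\varepsilon')\to Ver_p(PGL_n)$ to the reconstruction equivalence directly: by Remark~\ref{rem_SL} we have $\co(SL(L_n))=U(\ssl(L_n))^*$, so an $SL(L_n)$-module in $Ver_p$ is precisely an integrable $\ssl(L_n)$-module; since $\ssl(L_n)$ acts tautologically on $L_n=F(V)$ through its embedding in $\gl(L_n)$, this extends by the Leibniz rule to tensor products and their summands, equipping every $F^+(V_\lambda)$ with an $SL(L_n)$-action whose restriction to $\pi_{Ver_p}$ matches $\varepsilon'$. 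Checking that the resulting functor inverts reconstruction (via the observation that its composition with the forgetful functor back to $Ver_p$ is naturally isomorphic to $F^+$) both completes the identification $G'\simeq SL(L_n)$ and forces $Lie(G')=\ssl(L_n)$, finishing the proof.
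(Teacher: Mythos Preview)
Your overall strategy---apply reconstruction to $F^+$ and then identify the resulting group $G'=F^+(\pi_{Ver_p(PGL_n)})$ with $SL(L_n)$ via Harish--Chandra---is the same as the paper's, and your argument that $G'$ embeds in $SL(L_n)$ (hence $G'_0=1$ and $Lie(G')\subseteq\ssl(L_n)$) is fine. The genuine gap is the reverse inclusion $\ssl(L_n)\subseteq Lie(G')$.

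Your proposed fix is to build a tensor functor $\Phi:Ver_p(PGL_n)\to\Rep_{Ver_p}(SL(L_n),\varepsilon')$ by extending the tautological $\ssl(L_n)$-action on $L_n$ via Leibniz, and then claim that ``$\mathrm{Forget}\circ\Phi\simeq F^+$'' forces $\Phi$ to invert reconstruction. It does not. That observation only says that $\Phi$, postcomposed with the restriction functor $\Rep_{Ver_p}(SL(L_n))\to\Rep_{Ver_p}(G')$ along your inclusion $G'\hookrightarrow SL(L_n)$, recovers the reconstruction equivalence. From this you can deduce $\Phi$ is fully faithful, but \emph{not} essentially surjective: a priori there could be $SL(L_n)$-modules whose restriction to $G'$ lies in the image while the module itself does not. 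Equivalently, you get a surjection $SL(L_n)\twoheadrightarrow G'$ and an injection $G'\hookrightarrow SL(L_n)$, but no argument that these are mutually inverse; closing this would require something like simplicity of $\ssl(L_n)$ as a Lie algebra in $Ver_p$, which you have not established and which is not obvious.

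The paper avoids this entirely by working upstairs in $Ver_p(GL_n)$ \emph{before} applying $F$. There $Lie(\pi^+)$ is a subobject of $\gl(V)=\on\oplus\ssl(V)$, and the key point is that $\ssl(V)=V_{(1,0,\ldots,0,-1)}$ is a \emph{simple object} of the semisimple category $Ver_p(GL_n)$. Hence $Lie(\pi^+)\in\{0,\on,\ssl(V),\gl(V)\}$. The options $0$ and $\on$ are excluded because they would make $F(\pi^+)$ an ordinary group scheme in $Vec$, whereas $F(\co(\pi^+))$ visibly contains $F(V\otimes V^*)=L_n\otimes L_n^*$, which has non-trivial summands outside $Vec$ for $2\le n\le p-2$. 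To exclude $\gl(V)$ one shows $F(\pi^+)_0=1$ by computing $\mathrm{Aut}^\otimes(F^+)=1$: any tensor automorphism acts by a scalar on $F^+(\widetilde V)$ for $\widetilde V=V\otimes\psi^{-1}\in Ver_p(PGL_n)$ (since $F^+(\widetilde V)$ is simple in $Ver_p$), hence trivially on $\ssl(V)\subset\widetilde V\otimes\widetilde V^*$, hence trivially everywhere. This forces $Lie(\pi^+)=\ssl(V)$ and therefore $F(\pi^+)=SL(L_n)$. The missing idea in your argument is precisely this simplicity-of-$\ssl(V)$ step carried out before passing to $Ver_p$.
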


Let us denote the fundamental group of the category $Ver_p(GL_n)$ by $\pi$ and let us denote the fundamental group of the subcategory $Ver_p^+(GL_n)=Ver_p(PGL_n)$ by $\pi^+$. Note that $\pi^+$ is naturally a quotient of $\pi$.

Consider the fiber functor 
$$
F: Ver_p(GL_n)\to Ver_p,
$$
and denote by $F^+$ its restriction to the subcategory $Ver^+_p(GL_n)$.

\subsection{Proof of Theorem \ref{reps_of_GL(L_n)}}
\begin{itemize}
    \item First, let us note that the category $Ver_p(GL_n)$ is tensor generated by the object $V$. The object $\ssl(V) = V_{(1,0,\ldots,0,-1)}$ is simple and tensor generates the subcategory $Ver_p(PGL_n)$.

    \item By Theorem \ref{fund_gp_is_subgp_of_GL(X)}, we get that $\pi$ is a subgroup of $GL(V)$. The Lie algebra object $Lie(\pi)$ is then a subalgebra of $\gl(V)=\on\oplus\ssl(V)$.

\item Note that $Lie(\pi)$ cannot be trivial or isomorphic to $\on$, because this would imply that $F(\pi)$ is an classical group scheme in $Vec\subset Ver_p$ (due to Harish-Chandra correspondence, see Theorem \ref{thm_HC}). The latter, however, is impossible since $F(\gl(V)) = L_n\otimes L_n^*$ is a subobject in $F(\co(\pi))$ (here we need the assumption that $2\le n\le p-2$).

\item Therefore, $Lie(\pi)$ is either isomorphic to $\gl(V)$ or to $\ssl(V)$.

Now, $Lie(\pi^+)$ is a quotient of $Lie(\pi)$, and for similar reasons it is neither trivial, nor isomorphic to $\on$. Therefore, again, there are only two possible options for it: $\gl(V)$ or $\ssl(V)$.

\item Consider now $F(\pi^+)$ as a group scheme in $Ver_p$. We have
$$
F(\pi^+)_{0}(\ck) = F(\pi^+)(\ck) = \mathrm{Aut}^\otimes(F^+).
$$

\item Let us prove that there are no nontrivial tensor automorhisms of the functor $F^+$. This would force the classical part $F(\pi^+)_{0}$ of $F(\pi^+)$ to be trivial.

Let $\alpha\in \mathrm{Aut}^\otimes(F^+)$. Consider the object $\widetilde V = V\otimes \psi^{-1}$ in $Ver_p(GL_n)$, where $\psi$ is the degree one invertible object of dimension $(-1)^{n-1}$ in $Ver_p(GL_n)$ constructed in Section \ref{sect_tensor_subcat}. Since $V$ has degree one, we get $\widetilde V\in Ver_p(PGL_n)$ (as $Ver_p(PGL_n)$ is precisely the degree zero part of $Ver_p(GL_n)$). Then we get
$$
\ssl(V)=\ssl(\widetilde V)\subset \widetilde V\otimes \widetilde V^*.
$$
Moreover, $F^+(\widetilde V)=F(V)\otimes F(\psi) = L_n\otimes (L_{p-1})^{\otimes(n-1)}$ is a simple object of $Ver_p$. Therefore, $\alpha_{\widetilde V}$ is a constant. We have
$$
\alpha_{\widetilde V\otimes \widetilde V^*} = \alpha_{\widetilde V}\cdot \alpha_{\widetilde V}^{-1}=1,
$$
since $\alpha$ is a tensor automorphism. We get that
$$
\alpha_{\ssl(V)}  = 1.
$$
As $\ssl(V)$ tensor generates the category $Ver_p(PGL_n)$, we get that $\alpha_X=1$ for each $X\in Ver_p(PGL_n)$.

\item We deduce that $\mathbf{HC}(F(\pi^+)) = (1, F(Lie(\pi^+)))$ with $F(Lie(\pi^+))$ isomorphic to either $\gl(F(V)) = \gl(L_n)$ or $\ssl(F(V))=\ssl(L_n)$. Since we must have $F(Lie(\pi^+))_{0}=Lie(1)=0$, we obtain the desired result:
$$
\mathbf{HC}(F(\pi^+)) = (1, \ssl(L_n)),
$$
and thus
$$
F(\pi^+) = SL(L_n) = PGL(L_n).
$$

\item The statement of the theorem follows from the reconstruction theorem.
\end{itemize}
    \blacksquare

 \begin{corollary}
     Since $L_n = L_{p-1}\otimes L_{p-n}$, and the Lie algebra objects $\ssl(L_n)$ and $\ssl(L_{p-n})$ are isomorphic in $Ver_p$ (as $L_{p-1}$ is invertible), we also get the isomorphism of group schemes $SL(L_n)\simeq SL(L_{p-n})$ (together with the homomorphism out of the fundamental group). Therefore, we obtain the level-rank duality \ref{level_rank}:
     $$
     Ver_p(PGL_n)\simeq \Rep_{Ver_p}(SL(L_n))\simeq \Rep_{Ver_p}(SL(L_{p-n}))\simeq Ver_p(PGL_{p-n}).
     $$
 \end{corollary}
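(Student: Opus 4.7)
The plan is to establish the Lie algebra isomorphism first, promote it to an isomorphism of group schemes via Harish-Chandra, verify this isomorphism is compatible with the natural homomorphisms from $\pi_{Ver_p}$, and then chain Theorem \ref{reps_of_GL(L_n)} applied to both $n$ and $p-n$.

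\textbf{Step 1 (Matrix algebra / Lie algebra isomorphism).} Since $L_{p-1}$ is invertible with $L_{p-1}\otimes L_{p-1}^{*}\simeq\on$ (it generates the $sVec$ subcategory), the identification $L_n\simeq L_{p-1}\otimes L_{p-n}$ yields an algebra isomorphism
\[
\gl(L_n)=L_n\otimes L_n^{*}\simeq (L_{p-1}\otimes L_{p-n})\otimes (L_{p-1}\otimes L_{p-n})^{*}\simeq L_{p-n}\otimes L_{p-n}^{*}=\gl(L_{p-n}),
\]
where one checks directly that this isomorphism intertwines the matrix multiplication (by the same cancellation $L_{p-1}\otimes L_{p-1}^{*}\simeq\on$ applied inside the map $m=\mathrm{id}\otimes ev\otimes\mathrm{id}$) and hence the commutator. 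Because the evaluation (trace) map $\gl(L_k)\to\on$ is intrinsic to the matrix algebra structure, this isomorphism sends kernel to kernel, yielding $\ssl(L_n)\simeq\ssl(L_{p-n})$ as Lie algebra objects in $Ver_p$.

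\textbf{Step 2 (Harish-Chandra promotion).} By Remark \ref{rem_SL} and the construction in Example \ref{ex_SL(X)}, both $SL(L_n)$ and $SL(L_{p-n})$ have trivial classical part, so
\[
\mathbf{HC}(SL(L_n))=(1,\ssl(L_n)),\qquad \mathbf{HC}(SL(L_{p-n}))=(1,\ssl(L_{p-n})).
\]
The Lie algebra isomorphism of Step 1 is therefore an isomorphism of Harish-Chandra pairs (the compatibility with $\g_{0}\simeq Lie(G_0)$ holds trivially, since both sides vanish), and Theorem \ref{thm_HC} provides the desired isomorphism of affine group schemes $SL(L_n)\simeq SL(L_{p-n})$ in $Ver_p$.

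\textbf{Step 3 (Compatibility with $\varepsilon$).} I need to verify that the induced isomorphism intertwines the two natural maps $\varepsilon'\colon \pi_{Ver_p}\to SL(L_n)$ and $\varepsilon'\colon \pi_{Ver_p}\to SL(L_{p-n})$. Since both group schemes have trivial classical part, by the Harish-Chandra equivalence these homomorphisms are uniquely determined by the induced $\pi_{Ver_p}$-module structure on $\ssl(L_n)$ and $\ssl(L_{p-n})$ respectively. But the $\pi_{Ver_p}$-action on $\gl(L_n)=L_n\otimes L_n^{*}$ is just the natural action inherited from the object $L_n$ (as on every object of $Ver_p$), and likewise for $\gl(L_{p-n})$. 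The isomorphism constructed in Step 1 is a morphism in $Ver_p$ built from the braiding and rigidity structure applied to $L_{p-1}\otimes L_{p-1}^{*}\simeq\on$, and is therefore a morphism of $\pi_{Ver_p}$-modules by naturality. This gives the required compatibility.

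\textbf{Step 4 (Conclusion).} Combining Theorem \ref{reps_of_GL(L_n)} applied to $n$ and to $p-n$ with the group-scheme isomorphism of Steps 2--3 produces the chain
\[
Ver_p(PGL_n)\simeq \Rep_{Ver_p}(SL(L_n),\varepsilon')\simeq \Rep_{Ver_p}(SL(L_{p-n}),\varepsilon')\simeq Ver_p(PGL_{p-n}),
\]
which is the level-rank duality \eqref{level_rank}. The only real content is Step 1 (pushing the invertibility of $L_{p-1}$ through the matrix algebra structure); everything else is formal given Theorem \ref{thm_HC} and Theorem \ref{reps_of_GL(L_n)}. The mildly subtle point, and probably the place to be most careful, is Step 3: one must confirm that the specific $Ver_p$-morphism realizing $\gl(L_n)\simeq\gl(L_{p-n})$ is natural in $L_n$, so that it commutes with every tensor automorphism of the identity functor on $Ver_p$.
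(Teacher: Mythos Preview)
Your proposal is correct and follows exactly the same approach as the paper. The paper's argument is embedded in the statement of the corollary itself and is very terse (invertibility of $L_{p-1}$ gives $\ssl(L_n)\simeq\ssl(L_{p-n})$, hence $SL(L_n)\simeq SL(L_{p-n})$ via Harish-Chandra since both have trivial classical part, then apply Theorem \ref{reps_of_GL(L_n)} twice); you have simply unpacked each of these steps, including the compatibility with $\varepsilon'$, which the paper asserts parenthetically without justification.
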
   

\begin{corollary}\label{cor_reps_of_GL(L_n)}
    The fiber functor
    $$
    F: Ver_p(GL_n)\to Ver_p
    $$
    induces an equivalence of categories 
    $$
    Ver_p(GL_n)\xrightarrow{\sim} \Rep_{Ver_p}(GL(L_n),\varepsilon).
    $$
\end{corollary}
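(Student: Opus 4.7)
The plan is to reduce the corollary to Theorem \ref{reps_of_GL(L_n)} by invoking the two compatible factorizations on source and target. On the source side, Section \ref{sect_tensor_subcat} gives the tensor decomposition
$$
Ver_p(GL_n) \simeq \mathcal{D} \boxtimes Ver_p(PGL_n),
$$
where $\mathcal{D} = \Rep(\mathbb{G}_m, z)$ is the pointed subcategory generated by the invertible object $\psi$ of degree one and dimension $(-1)^{n-1}$. On the target side, the isomorphism $GL(L_n) \simeq \mathbb{G}_m \times SL(L_n)$ and the corollary stated right after it yield
$$
\Rep_{Ver_p}(GL(L_n),\varepsilon) \simeq \Rep(\mathbb{G}_m, z) \boxtimes \Rep_{Ver_p}(SL(L_n),\varepsilon').
$$

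First, I would check that the fiber functor $F$ respects these two factorizations. Its restriction to $Ver_p(PGL_n)$ is exactly the functor $F^+$ of Theorem \ref{reps_of_GL(L_n)}, which is already known to be an equivalence onto $\Rep_{Ver_p}(SL(L_n),\varepsilon')$. So the only new ingredient is to handle the pointed factor $\mathcal{D}$: I would show that $F(\psi)$ is an invertible object of $Ver_p$ acted on by $\pi_{Ver_p}$ through the character $z \mapsto (-1)^{n-1}$, so that $F$ restricts to a tensor functor $\mathcal{D} \to \Rep(\mathbb{G}_m, z)$. Both categories are freely generated, as symmetric tensor categories, by one invertible object of infinite order with dimension $(-1)^{n-1}$, and $F$ matches the generators; hence this restriction is an equivalence.

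Second, to assemble these two equivalences into one, I would invoke the universal property of the Deligne tensor product: a Deligne product of two tensor equivalences is a tensor equivalence, so
$$
F = F|_{\mathcal{D}} \boxtimes F^+ : \mathcal{D} \boxtimes Ver_p(PGL_n) \xrightarrow{\sim} \Rep(\mathbb{G}_m, z) \boxtimes \Rep_{Ver_p}(SL(L_n),\varepsilon')
$$
is an equivalence, and identifying both sides with the corresponding unboxed categories gives the claim.

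The main (minor) obstacle is the bookkeeping around the homomorphism $\varepsilon$: one must verify that under the product decomposition $GL(L_n) \simeq \mathbb{G}_m \times SL(L_n)$ the map $\varepsilon$ factors as the pair $(p \circ \varepsilon, \varepsilon')$, with $p \circ \varepsilon$ factoring through $\mu_2 \subset \pi_{Ver_p}$ via $z \mapsto (-1)^{n-1}$. This is precisely the content of the remark preceding the statement, so once this is in place no further computation is needed.
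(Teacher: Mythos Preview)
Your proposal is correct and follows precisely the approach the paper intends: the corollary is stated without proof because it is meant to be read off from the two product decompositions you cite (Section~\ref{sect_tensor_subcat} for the source, the corollary preceding Theorem~\ref{reps_of_GL(L_n)} for the target) together with Theorem~\ref{reps_of_GL(L_n)} on the $PGL_n$-factor, and your write-up spells this out accurately.
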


\begin{corollary}
    The fundamental group scheme $\pi$ of the category $Ver_p(GL_n)$ is isomorphic to $GL(V)$. Thus, every object in $Ver_p(GL_n)$ enjoys the natural action of the group scheme $GL(V)$ and the Lie algebra object $\gl(V)$.
\end{corollary}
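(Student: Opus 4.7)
The plan is to identify $\pi$ inside $GL(V)$ by combining the reconstruction theorem with Corollary~\ref{cor_reps_of_GL(L_n)}, and then descend the resulting identification on fiber functors back to the original group schemes using exactness and faithfulness of $F$.

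Since $V$ tensor generates $Ver_p(GL_n)$ (Example~\ref{ex_fund_alc}), Theorem~\ref{fund_gp_is_subgp_of_GL(X)} first yields a closed embedding of affine group schemes $\iota\colon\pi\hookrightarrow GL(V)$ in $Ver_p(GL_n)$, equivalently a surjection $\iota^*\colon\co(GL(V))\twoheadrightarrow\co(\pi)$ of commutative Hopf algebras in $\Ind(Ver_p(GL_n))$, compatible with the natural homomorphism from $\pi_{Ver_p}$. Next, the Tannakian reconstruction theorem (Theorem~\ref{reconstr_thm}) gives an equivalence $Ver_p(GL_n)\xrightarrow{\sim}\Rep_{Ver_p}(F(\pi),\varepsilon)$ induced by $F$, while Corollary~\ref{cor_reps_of_GL(L_n)} gives the equivalence $Ver_p(GL_n)\xrightarrow{\sim}\Rep_{Ver_p}(GL(L_n),\varepsilon)$ also induced by $F$. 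Comparing these two descriptions of the same category and tracking the natural homomorphism from $\pi_{Ver_p}$ produces an isomorphism of group schemes $F(\pi)\xrightarrow{\sim}GL(L_n)=GL(F(V))$ in $Ver_p$, which must coincide with $F(\iota)$.

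To finish, let $K=\ker\iota^*$, an ind-object of $Ver_p(GL_n)$ fitting into the short exact sequence $0\to K\to\co(GL(V))\to\co(\pi)\to 0$. Applying the exact fiber functor, $F(K)$ is the kernel of $F(\iota^*)$, which is an isomorphism by the previous step, so $F(K)=0$. As $F$ is faithful and exact on ind-completions, this forces $K=0$, hence $\iota^*$ is an isomorphism and $\pi\simeq GL(V)$. The second assertion of the corollary is then immediate: the tautological $\pi$-coaction on any $X\in Ver_p(GL_n)$ becomes a $GL(V)$-action under the identification, with the Lie algebra object $\gl(V)=Lie(GL(V))$ acting through its inclusion into $Dist(GL(V))$. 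The main technical point is the last step, where one uses that fiber functors of STCs extend to faithful exact functors on ind-completions and thus reflect the vanishing of subobjects; this is standard but is the only place where one must work beyond $\cc$ itself.
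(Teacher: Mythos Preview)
Your argument is correct and follows the natural line of reasoning; the paper leaves this corollary unproved, and the implicit argument is exactly the one you give (embed $\pi$ in $GL(V)$, push forward along $F$, identify $F(\pi)$ with $GL(L_n)$, then pull the isomorphism back using faithfulness and exactness of $F$).

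One step deserves an extra word: the assertion that the comparison isomorphism $F(\pi)\simeq GL(L_n)$ ``must coincide with $F(\iota)$'' is not automatic from merely having two equivalences with the same source. The clean way to close this is to observe that Corollary~\ref{cor_reps_of_GL(L_n)} is itself established via reconstruction (Theorem~\ref{reps_of_GL(L_n)} identifies $F(\pi^+)=SL(L_n)$ and then one adds the $\mathbb G_m$ factor), so the equivalence there \emph{is} the reconstruction equivalence of Theorem~\ref{reconstr_thm} and the identification $F(\pi)=GL(L_n)$ is literally built into its statement; under that reading your two equivalences are the same functor and $F(\iota^*)$ becomes the identity. Alternatively, one can bypass this entirely: once you know $F(\pi)\simeq GL(L_n)$ abstractly, the closed embedding $F(\iota)$ induces inclusions on both the classical part ($\mathbb G_m\hookrightarrow\mathbb G_m$) and the Lie algebra ($\gl(L_n)\hookrightarrow\gl(L_n)$, a finite-length object), and each is forced to be an isomorphism; then Theorem~\ref{thm_HC} gives that $F(\iota)$ itself is an isomorphism.
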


\begin{corollary} The category $\Rep_{Ver_p}(GL(L_n),\varepsilon)$ of representations of $GL(L_n)$ in $Ver_p$ is semisimple, and simple objects are in bijection with weights $\l=(\l_1,\ldots, \l_n)$ with
    $$
    \l_1\ge\l_2\ge\ldots\ge\l_n,~\text{ and } \l_1-\l_n\le p-n.
    $$
    The bijection sends $\l$ to $F(V_\l)$, where $V_\l\in Ver_p(GL_n)$.
\end{corollary}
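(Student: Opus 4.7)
The plan is to deduce this corollary directly from Corollary \ref{cor_reps_of_GL(L_n)} (and hence ultimately from Theorem \ref{reps_of_GL(L_n)}), which establishes that the fiber functor $F$ provides a tensor equivalence
$$
F: Ver_p(GL_n) \xrightarrow{\sim} \Rep_{Ver_p}(GL(L_n),\varepsilon).
$$
Since equivalences preserve semisimplicity and induce bijections on isomorphism classes of simple objects, it suffices to analyze these two properties on the source category $Ver_p(GL_n)$ and then transport the resulting description across the equivalence.

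First I would note that $Ver_p(GL_n)$ is semisimple by construction (it is defined as the semisimplification $\overline{\mathcal T(GL_n)}$ of the category of tilting modules, cf.\ Definition \ref{def_Ver_p(G)}), and recall from the general classification recalled just after Definition \ref{def_Ver_p(G)} that the simple objects of $Ver_p(G)$ are precisely the $V_\lambda$ with $\lambda \in C_p$. Specializing to $G = GL_n$ with $n<p$, Example \ref{ex_fund_alc}(2) identifies this alcove explicitly:
$$
C_p = \{\lambda = (\lambda_1,\ldots,\lambda_n) \in \Z^n \,\mid\, \lambda_1 \geq \lambda_2 \geq \cdots \geq \lambda_n,\ \lambda_1 - \lambda_n \leq p - n\}.
$$
These are exactly the weights appearing in the statement of the corollary.

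Transporting along the equivalence $F$ from Corollary \ref{cor_reps_of_GL(L_n)}, the category $\Rep_{Ver_p}(GL(L_n),\varepsilon)$ inherits semisimplicity, and its isomorphism classes of simple objects are in bijection with those of $Ver_p(GL_n)$, namely with the set of weights $\lambda$ as above, the bijection being $\lambda \mapsto F(V_\lambda)$. This finishes the argument.

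There is essentially no obstacle here: the entire content of the corollary has already been packaged into Corollary \ref{cor_reps_of_GL(L_n)} together with the standard description of the fundamental alcove from Example \ref{ex_fund_alc}. The only thing to verify is that no objects get identified or split under $F$, but this is automatic for an equivalence of categories, so the proof is essentially a two-line invocation of the preceding results.
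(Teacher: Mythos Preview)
Your proposal is correct and matches the paper's intended argument: the corollary is stated without proof in the paper, as it follows immediately from Corollary \ref{cor_reps_of_GL(L_n)} together with the description of $C_p$ for $GL_n$ in Example \ref{ex_fund_alc}(2), exactly as you outline.
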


\section{Categorical \texorpdfstring{$\widehat{\mathfrak{sl}}_p$-action on $Ver_p(GL_n)$}{action on Ver p(GL n)}}\label{catact}
In this section we describe in more detail the combinatorics of the ``highest weights" for representations of $GL(L_n)$ for simple object $L_n\in Ver_p$. We will do so by studying the natural categorical type $A$ action on $\Rep_{Ver_p}(GL(L_n)) = Ver_p(GL_n)$ induced from tensoring with the generating object $V$.

\subsection{Translation functors}\label{section_transl_fun}

Let $2\le n\le p-2$ and let $V$ be the image of the tautological $n$-dimensional representation of $GL_n$ under the semisimplification functor $\mathbf{S}: \mathcal T(GL_n)\to Ver_p(GL_n)$. Let us fix $n$ for now and denote the category $Ver_p(GL_n)$ by $\cc$.

Define functors $F, E:\cc\to \cc$ via
$$
F(M)=V\otimes  M;
$$
$$
E(M)=V^*\otimes M.
$$
These functors are exact and biadjoint.

Recall from the previous section that every object $M\in \cc$ is equipped with a natural action of the Lie algebra object $\gl(V)$, let us denote the corresponding action map by $a_M: V\otimes V^*\otimes M\to M$. Define the \textbf{tensor Casimir} operator $x\in \En(F)$ with $$x_M:V\otimes M\to V\otimes M$$ corresponding to $a_M$ under the identification $$\Hom_\cc(V\otimes M,V\otimes M)=\Hom_\cc(V\otimes V^*\otimes M, M).$$ 

Note that if $M=\mathbf{S}(\tilde M)$ with $\tilde M\in \mathcal T(GL_n)$ then 
$x_M$ is obtained from the usual tensor Casimir acting on $\tilde V\otimes \tilde M$ via $\sum_{i,j\le n} E_{ij}\otimes E_{ji}$ (where $E_{ij}$ with $i,j\in\{1,\ldots ,n \}$ form the standard basis in $\gl_n$). Thus we obtain the direct sum decomposition:
$$
F=\bigoplus_{c\in\mathbb F_p} F_c,
$$
where $F_{c}(M)$ is the eigenspace of $x_M$ with eigenvalue $c\in \mathbb F_p$. One can define $F_{c}$ directly on simple objects. Recall that simple objects of $\cc$ are labeled by integral weights $\lambda=(\lambda_1,\ldots, \lambda_n)\in \Z^n$ such that
$$
\lambda_i\ge \lambda_{i+1} \text{ for } i=1,\ldots, n-1, \text{ and }
$$
$$
\lambda_1-\lambda_n\le p-n
$$
 (see Example \ref{ex_fund_alc}).
 
We will call $n$-tuples $\l\in \Z^n$ satisfying the relations above \textbf{admissible weights}.

Then $F_c(V_\l)$ is a subobject of $V\otimes V_\l$. We have
$$
V\otimes V_\l = \bigoplus_{i=1}^n V_{\l+e_i},
$$
where $V_{\l+e_i}$ is $V_{\mu}$ for 
$$
\mu=(\l_1,\ldots, \l_{i-1},\l_i+1,\l_{i+1},\ldots, \l_n)
$$
either if $i>1$ and $\l_{i-1}>\l_i$, or if $i=1$ and $(\l_1+1)-\l_n\le p-n$ (that is, if $\mu$ is an admissible weight). Otherwise, we put $V_{\l+e_i}=0$.

Let us denote the set of all admissible weights of the form $\l+e_i$ for some $i\le n$ by $\l+\square$. Let $\mu\in \l+\square$. We write $\mu\in \l+\square_c$ if $\mu=\l+e_i$, where 
$$
c = \l_i+1-i \in \mathbb F_p.
$$
\begin{remark}
    Here $c$ generalizes the concept of the \textbf{content} of the box added to the Young diagram of $\l$.
\end{remark}
\begin{lemma}\label{lemma_content}
    For each $c\in \mathbb F_p$ and each admissible weight $\l$, the set $\l+\square_c$ contains at most one element.
\end{lemma}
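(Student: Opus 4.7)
The plan is to argue by contradiction using the admissibility bound $\l_1 - \l_n \le p-n$. Suppose we have two distinct elements $\l + e_i$ and $\l + e_j$ in $\l + \square_c$, and assume without loss of generality that $1 \le i < j \le n$. Both are admissible weights by definition of $\l + \square$, and the content condition gives
$$
\l_i + 1 - i \equiv c \equiv \l_j + 1 - j \pmod{p},
$$
hence $\l_i - \l_j \equiv -(j - i) \pmod p$.

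The next step is to bound the integer $\l_i - \l_j$ from both sides. Since $i < j$ and $\l$ is weakly decreasing, $\l_i - \l_j \ge 0$. Since $\l$ is admissible, $\l_i - \l_j \le \l_1 - \l_n \le p - n$. Thus the residue of $\l_i - \l_j$ modulo $p$ lies in the interval $\{0, 1, \ldots, p-n\}$.

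On the other hand, $1 \le j - i \le n - 1$, so the residue $-(j-i) \pmod p$ lies in the interval $\{p - (n-1), \ldots, p - 1\} = \{p - n + 1, \ldots, p - 1\}$. These two sets of residues are disjoint in $\mathbb{F}_p$, so the congruence $\l_i - \l_j \equiv -(j-i) \pmod p$ cannot hold — contradiction.

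There is no real obstacle here; the only thing to watch is that the admissibility condition $\l_1 - \l_n \le p - n$ is precisely what forces the residues $\l_i - i \pmod p$ for $i = 1, \ldots, n$ to be all distinct, so the argument is essentially a clean pigeonhole/disjointness check. Note that we did not need to invoke admissibility of $\l + e_i$ or $\l + e_j$ beyond what is packaged into $\l + \square_c$; the admissibility of $\l$ alone suffices.
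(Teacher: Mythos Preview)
Your proof is correct and follows essentially the same approach as the paper: both reduce to showing that the congruence $\l_i-\l_j\equiv -(j-i)\pmod p$ is incompatible with the bounds $0\le \l_i-\l_j\le p-n$ and $1\le j-i\le n-1$. The paper phrases this via an explicit inequality $\l_i-\l_j\ge p+1-n$, while you phrase it as disjointness of residue intervals, but the content is identical.
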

\begin{proof}
    Suppose $\l+e_i$ and $\l+e_j$ are both admissible weights and
    $$
    \l_i+1-i = \l_j+1-j \mod p.
    $$
    Without loss of generality assume $i< j$. Then $\l_i\ge \l_j$ and $-i> -j$ as integers. If the equality holds modulo $p$ then 
    $$
    \l_i-\l_j -i+j = p\cdot k
    $$
    for some $k>0$. Thus
    $$
    \l_i-\l_j \ge p + i-j \ge p+1-n.
    $$
    This implies that 
    $$
    \l_1-\l_n\ge 
    \l_i-\l_j\ge p+1-n,
    $$
    which contradicts the assumption that $\l_1-\l_n\le p-n$.
 \end{proof}

Thus $F(V_\l) = \bigoplus_{\mu\in \l+\square} V_\mu$, and it is easy to check (using, for example, the explicit formula for the action of the tensor Casimir before the semisimplification) that
$$
F_c(V_\l) = \begin{cases}
    V_\mu, \text{ if } \mu\in \l+\square_c\neq \emptyset,\\
    0, \text{ if } \l+\square_c = \emptyset.
\end{cases}
$$

We get the decomposition
$$
E=\bigoplus_{c\in \mathbb F_p} E_c,
$$
where $E_c$ is the functor left adjoint to $F_c$.

Further, let us define the operator $t\in \En(F^2)$ via
$$
t_M = \tau_{V,V}\otimes id_M: V\otimes V\otimes M\to V\otimes V\otimes M.
$$

\subsection{Definition of categorical action}

\begin{definition}\label{def_daHa}
    The \textbf{degenerate affine Hecke algebra} $\mathcal H_N$ is defined by generators and relations. $\mathcal H_N$ is generated by elements $x_1,\ldots, x_N, t_1,\ldots, t_{N-1}$ subject to the following relations:
    \begin{enumerate}
        \item\label{daHa_relation_1} $
        x_ix_j=x_jx_i,$
        \item \label{daHa_relation_2} $ t_i^2=1,$
        \item \label{daHa_relation_3} $t_it_j=t_jt_i \text{ if } |i-j|>1,
        $
        \item \label{daHa_relation_4} $ t_i t_{i+1} t_i = t_{i+1} t_i t_{i+1},
        $
        \item \label{daHa_relation_5} $ t_i x_{i+1} - x_it_i=1,
        $
        \item \label{daHa_relation_6} $ t_ix_j =x_j t_i \text{ if } j-i\neq 0,1. 
        $
    \end{enumerate}
\end{definition}

As a vector space $\mathcal H_N = \ck[x_1, \ldots, x_n]\otimes \ck S_N$, where $S_N$ is the symmetric group on $N$ elements.

\begin{definition}
    The \textbf{affine Lie algebra} $\widehat \ssl_p$ over $\C$ is the central extension of the \textbf{loop algebra} $\ssl_p[t,t^{-1}]=\ssl_p\tens{\C} \C[t,t^{-1}]$ by central element $C$ defined via the relations
    $$
    [A\cdot t^k+\alpha\cdot C, B\cdot t^l+\beta\cdot C] = [A,B]\cdot t^{k+l} + \delta_{k+l,0}\cdot k\cdot \langle A,B\rangle \cdot C,
    $$
    where $A,B\in\ssl_p, \alpha,\beta\in \C$ and $\langle -, - \rangle$ is the Killing form on $\ssl_p$.
    \end{definition}

    Recall that $\widehat\ssl_p$ is generated by elements $e_a, f_a$, and $h_a=[e_a, f_a]$ for $a\in \mathbb F_p$ satisfying the Kac-Moody relations:
\begin{enumerate}
    \item $h_a=[e_a, f_a], 2e_a=[h_a, e_a], -2f_a=[h_a, f_a]$,
    \item $[h_a, e_b] = -e_b, [h_a, f_b]=f_b$ if $a-b=\pm 1\in \mathbb F_p$,
    \item $[h_a, e_b]=[h_a, f_b]=0$ if $a-b\not\in\{1,0,-1\}\subset \mathbb F_p$,
    \item $[e_a, f_b] = 0$ if $a\neq b$,
    \item $[e_a,[e_a, e_b]] = 0, [f_a,[f_a, f_b]]=0$ if $a-b=\pm 1\in \mathbb F_p$,
    \item $[e_a, e_b]=[f_a,f_b]=0$ if $a-b\not\in \{-1,0,1\}\subset\mathbb F_p$.
\end{enumerate}

For any representation $W$ of the finite-dimensional Lie algebra $\ssl_p$ one can define the loop module $W^{loop}$ as $ W[t,t^{-1}]$ with the action of $\widehat\ssl_p$ so that $C$ is acting by zero.

The following definition is a slightly modified version of Definition 2.6 in \cite{BLW17} and Definition 5.2 in \cite{L12}.

\begin{definition}
    Let $\cc$ be a $\ck$-linear artinian abelian category. A categorical $\widehat\ssl_p$-action on $\cc$ is the data $(E,F, x, t)$, where $E$ and $F$ are endofunctors of $\cc$, $x\in\En(F)$, and $t\in \En(F^2)$, subject to the following axioms:
   \begin{enumerate}
       \item $E$ and $F$ are biadjoint exact functors.
       \item We have the decomposition $F = \bigoplus_{c\in \mathbb F_p} F_c$,  where $F_c$ is the generalized eigensubfunctor of $F$ with eigenvalue $c$ with respect to $x$. This automatically yields the decomposition $E = \bigoplus_{c\in\mathbb F_p}  E_c$, where $E_c$ is left adjoint to $F_c$.
       \item The endomorphisms
       $$
       x_i = 1^{N-i} x 1^{i-1}, t_k = 1^{N-k-1}t1^{k-1}
       $$
       of $F^N$ satisfy the relations of the degenerate affine Hecke algebra $\mathcal H_N$.
        \end{enumerate}
\end{definition}

Let $\cc$ be a category with a categorical $\widehat\ssl_p$-action.
\begin{itemize}
    \item There is a natural algebra homomorphism $\phi_N:\mathcal H_N\to \En(F^N)$.

    \item The operators
    $$
    f_c = [F_c], e_c=[F_c], \text{ for } c\in\mathbb F_p 
    $$
    acting on the complexified Grothendieck group $[\cc]$ of $\cc$ satisfy Kac-Moody relations for $\widehat\ssl_p$. Thus, they define the action of the affine Lie algebra $\hat \ssl_p$ on $[\cc]$.
\end{itemize}

\begin{ex}\label{cat_act_for_GL_n}
    Let $\cc = \Rep_\ck GL_n$ and let $V$ be the defining $n$-dimensional representation of $GL_n$. The functors $F=V\otimes -$ and $E=V^*\otimes -$, the {tensor Casimir} operator $x\in \En(F)$, and the swap $t = \tau_{V,V}\in \En(F^2)$ define the categorical $\widehat\ssl_p$ action on $\cc$. The complexified Grothendieck group $[\cc]$ admits a basis consisting of classes of Weyl modules $[V_\l], \l\in P^+$. Let $U=\C^p$ be the tautological $p$-dimensional $\ssl_p$-representation. We have an isomorphism of $\widehat\ssl_p$-modules
    $$
    \phi:[\cc]\xrightarrow{\sim} \Lambda^n(U^{loop})
    $$
    defined as follows:

   Let $\l_i-i+1 = a_i + p\cdot s_i$ for some $a_i\in \{0,\ldots, p-1\}$, $s_i\in \Z$, and each $i=1, \ldots, n$. Then 
   \begin{equation}\label{cat_act_for_GL_n_formula}
       \phi([V_\l]) = (v_{a_1}\cdot t^{-s_1})\wedge (v_{a_2}\cdot t^{-s_2})\wedge\ldots\wedge (v_{a_n}\cdot t^{-s_n}), 
   \end{equation}
    where $\{v_0, v_1,\ldots, v_{p-1}\}$ is the standard basis in $U$. 

   Note that injectivity of $\phi$ follows from the condition that $\l_1>\l_2-1>\ldots> \l_n-n+1$, which means that all vectors in the wedge product (\ref{cat_act_for_GL_n_formula}) are distinct.

   For proof of this computation see \cite{L12}.
    \end{ex}

\subsection{Computation for \texorpdfstring{$Ver_p(GL_n)$}{Ver p (GL n)}}

Let us go back to the situation $\cc=Ver_p(GL_n)$. Let $F=V\otimes -, E=V^*\otimes -$ and let $x\in \En(F), t\in \En(F^2)$ be the operators defined in Section \ref{section_transl_fun}. 

\begin{theorem}\label{thm_sl_p_act}
\begin{enumerate}
    \item The data $(E,F, x, t)$ defines the categorical $\widehat\ssl_p$-action on the category  $\cc= Ver_p(GL_n)$. 
    \item With respect to this action, we get an isomorphism of $\widehat\ssl_p$-modules
    $$
   \phi:  [\cc] \xrightarrow{\sim} (\Lambda^n U)^{loop} = (\Lambda^n U)[t,t^{-1}],
    $$
    where $[\cc]$ is the complexified Grothendieck group of $\cc$ and $U=\C^p$ is the tautological $p$-dimensional representation of $\ssl_p$.
    \end{enumerate}
\end{theorem}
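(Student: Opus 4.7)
The proof strategy is to transfer the classical categorical $\widehat{\ssl}_p$-action on $\Rep GL_n$ from Example~\ref{cat_act_for_GL_n} down to $\cc = Ver_p(GL_n)$ via the semisimplification functor $\mathbf{S}: \mathcal{T}(GL_n) \to \cc$, and then to identify the induced representation on the Grothendieck group by an explicit combinatorial bijection onto a standard basis of $(\Lambda^n U)^{loop}$.

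For part (1), all four axioms of a categorical $\widehat{\ssl}_p$-action are inherited from the classical side. Biadjointness of $F = V \otimes -$ and $E = V^* \otimes -$ follows from rigidity of $V$; exactness is automatic because $\cc$ is semisimple; and the decomposition $F = \bigoplus_{c \in \mathbb F_p} F_c$ has already been set up in Section~\ref{section_transl_fun}. For the degenerate affine Hecke algebra relations I would observe that the endomorphisms $x$ (tensor Casimir) and $t$ (the braiding $\tau_{V,V}$) are defined already at the level of $\mathcal{T}(GL_n)$, where they satisfy the dAHA relations by the classical computation recalled in Example~\ref{cat_act_for_GL_n} (cf.~\cite{L12}). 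Since negligible morphisms form a tensor ideal, every polynomial relation among morphisms in $\mathcal{T}(GL_n)$ descends to $\cc = \overline{\mathcal{T}(GL_n)}$, and hence $\phi_N: \mathcal{H}_N \to \En_\cc(F^N)$ remains a well-defined algebra homomorphism.

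For part (2), I would assign to each admissible weight $\l \in C_p$ the data $(a_i, s_i)_{1 \le i \le n}$ via $\l_i - i + 1 = a_i + p s_i$, with $a_i \in \{0,\ldots,p-1\}$ and $s_i \in \Z$. The admissibility condition $\l_1 - \l_n \le p - n$ says precisely that the strictly decreasing integers $b_i := \l_i - i + 1$ lie in a window of $p$ consecutive integers, which forces the residues $a_i$ to be pairwise distinct. One then defines
\[
\phi([V_\l]) \;=\; (v_{a_1} \wedge v_{a_2} \wedge \cdots \wedge v_{a_n}) \otimes t^{-\sum_i s_i}.
\]
I would then check that (i) sliding the window of $p$ consecutive integers containing $\{b_i\}$ yields a bijection between $C_p$ and pairs $(I,m)$, where $I \subset \{0,\ldots,p-1\}$ is a subset of size $n$ and $m \in \Z$, so that $\phi$ sends the standard basis of $[\cc]$ bijectively (up to sign) to the standard basis of $(\Lambda^n U)^{loop}$; and (ii) for each $c \in \mathbb F_p$, the combinatorial rule $F_c(V_\l) = V_{\l+e_i}$ for the unique $i$ with $a_i = c$ (whenever $\l + e_i \in C_p$) is intertwined by $\phi$ with the $f_c$-action on the wedge, which replaces $v_c$ by $v_{c+1}$ in the corresponding slot, with the convention $f_{p-1}(v_{p-1}) = v_0 \otimes t^{-1}$ for the affine Chevalley generator.

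The main technical obstacle is the equivariance check for the affine generator $f_{p-1}$, together with the sign bookkeeping coming from the fact that $\phi$ uses the wedge in the natural order of the index $i$ rather than in the increasing order of the $a_i$. The required $t$-shift in $f_{p-1}$ corresponds exactly to the jump in $s_i$ when $a_i$ wraps from $p-1$ back to $0$; this is the one place where the ``subset part'' and the ``$t$-degree part'' of $\phi$ genuinely interact, and consistency has to be verified carefully. Once this case is handled, equivariance for the other Chevalley generators reduces to the classical mod-$p$ combinatorics of adding and removing boxes of a fixed content, already encoded in Lemma~\ref{lemma_content}.
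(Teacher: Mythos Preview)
Your proposal is correct and, for part (2), follows essentially the same route as the paper: the same map $\phi$ is defined, bijectivity is argued combinatorially, and equivariance is checked generator by generator with special attention to $f_{p-1}$. Your worry about sign bookkeeping is unfounded: since both you and the paper write the wedge in the order of the index $i$ (rather than in increasing order of the $a_i$), the derivation action of $f_c$ simply replaces $v_{a_i}$ by $v_{a_i+1}$ \emph{in place}, and no sign ever appears. The paper also makes explicit the converse check you only allude to: when $a_i+1=a_j$ for some $j$, the resulting wedge vanishes, and this happens exactly when $\l+e_i$ is not admissible.

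For part (1) you take a genuinely different route. The paper verifies relation (5) of the degenerate affine Hecke algebra directly inside $\cc$, using that the $\gl(V)$-action map satisfies the Leibniz identity
\[
a_{V\otimes M} = id_V\otimes ev_V\otimes id_M + (id_V\otimes a_M)\circ \tau_{V\otimes V^*,V},
\]
which yields $x1 = t + t(1x)t$ and hence $t(x1)-(1x)t=1$ by a one-line manipulation. Your approach instead lifts to $\mathcal T(GL_n)$, invokes the classical relations from Example~\ref{cat_act_for_GL_n}, and descends through the monoidal semisimplification functor. Both are valid; the paper's direct argument is more self-contained and portable (indeed it is reused verbatim for $GL(L_m|L_n)$ in Theorem~\ref{tcatact}, where no semisimplification description is available), while yours is quicker once Example~\ref{cat_act_for_GL_n} is taken as given.
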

\begin{proof}
    \begin{enumerate}
        \item It is clear that the operators $t_i$ satisfy the same relations as elementary transpositions $(i~i+1)$ in the symmetric group $S_N$  (i.e. relations (\ref{daHa_relation_2}), (\ref{daHa_relation_3}), and (\ref{daHa_relation_4}) in Definition \ref{def_daHa}). Moreover, operators $x_1,\ldots, x_N$ obviously commute as they act by endomorphisms on different copies of $F$ in $F^N$, thus the relation \ref{def_daHa}(\ref{daHa_relation_1}) is satisfied as well. Similarly, relation \ref{def_daHa}(\ref{daHa_relation_6}) holds because $t_i$ and $x_j$ act on different copies of $F$ in $F^N$ when $j-i\neq 0,1$. The only interesting relation is relation \ref{def_daHa}(\ref{daHa_relation_5}), and it is enough to check it in $\En(F^3)$. We need to prove that
        $$
        t\circ (x1)-(1x)\circ t = 1, 
        $$
        i.e. for any $M\in \cc$ we have
        $$
        (\tau_{V,V}\otimes id_M)\circ x_{V\otimes M} - (id_V\otimes x_M)\circ (\tau_{V,V}\otimes id_M) = id_{V\otimes V\otimes M}.
        $$
        To see this, note that the action map $a_M:V\otimes V^*\otimes M\to M$ satisfies
        $$
        a_{V\otimes M} = id_V\otimes ev_V\otimes id_M+ (id_V\otimes a_M)\circ \tau_{V\otimes V^*, V}: (V\otimes V^*)\otimes (V\otimes M),
        $$
        which implies
        $$
        x_{V\otimes M} = \tau_{V,V}\otimes id_M + (\tau_{V,V}\otimes id_M)\circ (id_V\otimes x_M)\circ (\tau_{V,V}\otimes id_M).
        $$
        Thus
        $$
        x1 = t + t\circ(1x)\circ t,
        $$
        and the statement follows.

        \item Recall that $[\cc]$ has a basis $[V_\l]$, where $\l$ is \textbf{admissible}, i.e. $\l=(\l_1,\ldots, \l_n)\in \Z^n$ is subject to
        $$
        \l_1\ge\ldots\ge \l_n, \text{ and } \l_1-\l_n \le p-n. 
        $$
        Define $a_i\in \{0,1,\ldots, p-1\}, s_i\in \Z$ so that
        $$
        \l_i+1-i = a_i + p\cdot s_i.
        $$
        Put $s = \sum_{i=1}^n s_i$. The $\C$-linear map $\phi: [\cc]\to (\Lambda^n U)[t,t^{-1}]$ is defined on the basis as follows:
        $$
        \phi: [V_\l] \mapsto (v_{a_1}\wedge v_{a_2}\wedge \ldots\wedge v_{a_n})\cdot t^{-s}, 
        $$
        where $\{v_0,\ldots, v_{p-1}\}$ is the standard basis in $U$, so that
        $$
        f_a v_b = \delta_{a,b} v_{a+1}, \text{ if } a\neq p-1,  
        $$
        $$
        e_a v_b = \delta_{a+1, b} v_{a}, \text{ if } a\neq p-1.
        $$
        (Here we consider $e_0, \ldots, e_{p-2}, f_0, \ldots, f_{p-2}$ to be the generators of the finite-dimensional algebra $\ssl_p$.)

        We claim that $\phi$ is an isomorphism of $\widehat\ssl_p$ modules.

        First, note that the proof of Lemma \ref{lemma_content} implies that if $\l$ is admissible, then integers $$\{\l_i+1-i~|~i=1,\ldots, n\}$$ are pairwise distinct modulo $p$, which means that $a_1,\ldots,a_n$ are pairwise distinct elements in $\mathbb F_p$.
        
        Note that for every $n$-tuple $(a_1, \ldots, a_n)\in \mathbb F_p^n$ and an integer $s$, there exists a unique admissible weight  $\l$, such that $\phi([V_\l]) = (v_{a_1}\wedge v_{a_2}\wedge \ldots\wedge v_{a_n})\cdot t^{-s}$. Namely, if $s = n\cdot q + r$, where $0\le r<n$, and we consider $a_i$ as an integer between $0$ and $p-1$, we get
        $$
        \l_i = a_i - 1 + i + p\cdot q, \text{ if } i > r,
        $$
        $$
        \l_i = a_i -1 + i + p\cdot (q+1), \text{ if } i\le r.
        $$
        This shows that the map $\phi$ is bijective. Let us thus denote the basis element $(v_{a_1}\wedge v_{a_2}\wedge \ldots\wedge v_{a_n})\cdot t^{-s}$ by $v_\l$ when $\l$ is the corresponding admissible weight.
        
        Recall that if $\mu = \l_i+e_i$ is admissible then $\mu\in \l+\square_{a_i+p\cdot s_i}$, and
        $$
        f_{a_i} [V_\l] = [V_{\l+e_i}] = [V_\mu].
        $$
        If $a\neq a_i$ for each $i=1,\ldots, n$ or if $a=a_i$ and $\l+e_i$ is not admissible, we have
        $$
        f_{a}[V_\l] = 0.
        $$
        Similarly, as $E_a$ is left adjoint to $F_a$, we get
        $$
        e_{a_i} [V_\mu] = [V_{\mu-e_i}] = [V_\l], \text{ if } \mu-e_i  \text{ is admissible },
        $$
        $$
        e_a [V_\mu] = 0, \text{ otherwise}.
        $$
        (Here $a_i = \l_i+1-i = \mu_i - i$.)

        Note that if $a_i\neq p-1$, we have $\l_i+1 = (a_i+1)+p\cdot s_i$, and thus $$\phi([V_{\l+e_i}]) = (v_{a_1}\wedge\ldots\wedge v_{a_i+1}\wedge\ldots\wedge v_{a_n})\cdot t^{-s}.$$ 
        Whereas if $a_i=p-1$, we get $\l_i+1 =0+ p\cdot (s_i+1)$, so
        $$
        \phi([V_{\l+e_i}] = (v_{a_1}\wedge\ldots\wedge v_{a_i+1}\wedge\ldots\wedge v_{a_n})\cdot t^{-s-1}.
        $$
        
        Let us now describe the action of the generators $e_c, f_c, c\in \mathbb F_p$ on the loop module $(\Lambda^n U)[t,t^{-1}]$.

        We have
        $$
        f_c v_\l= f_c((v_{a_1}\wedge\ldots \wedge v_{a_n})\cdot t^{-s}) \neq 0
        $$
        only if $c= a_i$ for some $i=1,\ldots, n$, and $a_i+1\neq a_{j}$ for all $j=1,\ldots, n$. Let us rephrase this condition in terms of the corresponding admissible weight $\l$. If $a_i+1=a_j$ it means that 
        $$
        \l_i-i+1+1 = \l_j -j+1\mod p. 
        $$
        Note that since $|\l_i-\l_j|\le p-n$ for all $i,j$, the equality above can happen only if $j=i-1$ and $\l_i = \l_{i-1}$, and in this case $\l+e_i$ is not an admissible weight, or if $i=1$ and $j=p-1$, and in this case $\l_1 = \l_n -n + p$, that is $\l+e_1$ is not admissible. Thus $f_c v_\l\neq 0$ if and only if $c=a_i$ and $\l+e_i$ is admissible.

        Further, if $\l+e_i$ is admissible we get
        $$
        f_{a_i}v_\l = (v_{a_1}\wedge\ldots \wedge v_{a_i+1}\wedge\ldots\wedge v_{a_n})\cdot t^{-s}=v_{\l+e_i}, \text{ if } a_i\neq p-1,
        $$
        $$
        f_{p-1}v_\l = (E_{0,p-1}\cdot t^{-1})v_\l = (v_{a_1}\wedge\ldots \wedge v_{a_i+1}\wedge\ldots\wedge v_{a_n})\cdot t^{-s-1}=v_{\l+e_i}, \text{ if } a_i= p-1.
        $$
    (Here we used the explicit formula for the generators corresponding to the affine simple root as operators in $\ssl_p[t,t^{-1}]\subset \En(U)[t,t^{-1}]$:
    $f_{p-1} = E_{0,p-1}\cdot t^{-1}, \\ e_{p-1} = E_{p-1,0} \cdot t$.)
    
    The computation for the action of $e_a, a\in \mathbb F_p$ is completely analogous. 

    Thus we conclude that 
    $$
    f_c (\phi([V_\l]))= f_c v_\l = \phi(f_c [V_\l]), \text{ and }
    $$
    $$
    e_c(\phi([V_\l])= e_c v_\l = \phi(e_c[V_\l]),
    $$
    and thus $\phi$ is an isomorphism of $\widehat\ssl_p$-modules.

     \end{enumerate}
\end{proof}
\section{Representations of \texorpdfstring{$GL(X)$}{GL(X)} in \texorpdfstring{$Ver_p$}{Ver p} for non-simple \texorpdfstring{$X$}{X}} \label{section_simple_reps_of_GL(X)}

In this section we will restate the results of Venkatesh on the classification of simple representations of $GL(X)$ for $X\in Ver_p$. For further details see \cite{V24}. 

\subsection{Representations of \texorpdfstring{$GL(L_n^{\oplus k})$}{GL(L n k)}}
The first case to consider is $X= L_n^{\oplus k}$. Let us choose an ordering of simple summands in $X$, so that
$$
X = \bigoplus_{i=1}^k L_n^{(i)},
$$
where $L_n^{(i)}$ denotes the $i$'th copy of $L_n$ inside $X$. Let $G=GL(X)$. We have
$$
\mathbf{HC}(G) = (GL_k, \gl(X))=(GL_k, \bigoplus_{1\le i,j\le k} L_n^{(i)}\otimes(L_n^{(j)})^*),
$$
where $\mathbf{HC}$ is the Harish-Chandra functor defined in Section \ref{section_HC}.

Recall that the \textbf{maximal torus} in $G$ is the subgroup scheme $T = GL(L_n)^k = \prod_{i=1}^k GL(L_n^{(i)})$ with
$$
\mathbf{HC}(T)=(\mathbb G_m^k, \bigoplus_{i=1}^k \gl(L_n^{(i)})).
$$
The standard Borel subgroup $B\subset G$ is defined via
$$
\mathbf{HC}(B) = (B_k, \mathfrak b(X)) = (B_k, \bigoplus_{1\le i\le j\le k} L_n^{(i)}\otimes (L_n^{(j)})^*),
$$
where $B_k\subset GL_k$ is the subgroup of upper-triangular matrices. 

We have also define the subgroup $N^-$ of ``strictly lower-triangular matrices" in $G$ via
$$
\mathbf{HC}(N^-)=(N_k^-, \bigoplus_{1\le j< i\le k} L_n^{(i)}\otimes (L_n^{(j)})^*),
$$
where $N_k^-$ is the actual subgroup of strictly lower-triangular matrices in $GL_k$.

By Corollary \ref{cor_reps_of_GL(L_n)}, we get the equivalence
$$
\Rep_{Ver_p}(T, \varepsilon) \simeq Ver_p(GL_n)^{\boxtimes k} = \boxtimes_{i=1}^n Ver_p(GL_n)^{(i)}.
$$

Let $\l^{(1)}, \ldots, \l^{(k)}\in \Z^n$ be a sequence of \textbf{admissible weights}. Such tuples are in bijection with isomorphism classes of simple representations of $T$:
$$
(\l^{(1)},\ldots, \l^{(k)})\leftrightarrow V_{\l^{(1)}}\boxtimes\ldots\boxtimes V_{\l^{(k)}}\in Ver_p(GL_n)^{(1)}\boxtimes\ldots\boxtimes Ver_p(GL_n)^{(k)}.
$$
Let us abuse the notation and identify the $k$-tuple of weights with the corresponding representation:
$$
\bl=(\l^{(1)},\ldots, \l^{(k)})\in \Rep_{Ver_p}(T,\varepsilon).
$$

As $T$ is a quotient of $B$, we can pull $\bl$ back to $\Rep_{Ver_p}(B, \varepsilon)$.

\begin{definition}
    The generalized \textbf{Verma module} $M(\bl)$ over the distribution algebra $Dist(G)$  for $\bl\in sOb(\Rep_{Ver_p}(T,\varepsilon))$ is defined as follows:
    $$
    M(\bl) = Dist(G)\tens{Dist(B)} \bl.
    $$
\end{definition}

By Theorem \ref{thm_PBW}, as a left $Dist(N^-)$-module $M(\bl)$ is isomorphic to 
$Dist(N^-)\otimes \bl$. Here we consider $\bl$ an object of $Ver_p$ by applying the fiber functor $F:Ver_p(GL_n)^{\boxtimes k}\to Ver_p$ to $V_{\l^{(1)}}\boxtimes\ldots\boxtimes V_{\l^{(k)}}$.

Let us introduce a partial order on the set of $k$-tuples admissible weights in $\Z^n$ (equivalently, on the set of isomorphism classes of simple objects in $\Rep_{Ver_p}(T, \varepsilon)$). The classical subgroup $T_{0}=\mathbb G_m^k$ acts on every simple $T$-module $\bl$ with the character $(|\l^{(1)}|,\ldots, |\l^{(k)}|).$\footnote{Recall that for a $GL_n$-weight $\l$, we denote by $|\l|$ the sum $\sum_{j=1}^n \l_j$.} We then use the standard dominance order on characters of $T_{0}=\mathbb G_m^k$.

More specifically, we say that $\bl = (\l^{(1)},\ldots, \l^{(k)})$ is \textbf{dominant} if
$$
|\l^{(1)}|\ge \ldots\ge |\l^{(k)}|.
$$
And we write
$$
\bl = (\l^{(1)},\ldots, \l^{(k)}) \preceq \bm = (\mu^{(1)},\ldots, \mu^{(k)})
$$
if  
$$
|\mu^{(1)}|- |\l^{(1)}| \ge \ldots\ge |\mu^{(k)}|-|\l^{(k)}|.
$$

It makes sense now to talk about ``weight decomposition" and ``highest weights" for $G$-modules and $Dist(G)$-modules admitting a compatible action of $T$. The role of weights is played by irreducible representations of $T$. 
\begin{ex}
    Consider the adjoint action of $T$ on $$\mathfrak n^- = Lie(N^-) = \bigoplus_{1\le j<i\le k} L_n^{(i)}\otimes(L_n^{(j)})^*.$$ The component $L_n^{(i)}\otimes(L_n^{(j)})^*$ is a simple $T$-submodule with character $\ba_{i,j}$, such that 
    $$
    \ba_{i,j}\simeq (V^*)^{(j)}\boxtimes V^{(i)} :=
    $$
    $$
    = \on^{(1)}\boxtimes \ldots\boxtimes \on^{(j-1)} \boxtimes (V^*)^{(j)}\boxtimes \on^{(j+1)}\boxtimes\ldots\boxtimes \on^{(i-1)}\boxtimes V^{(i)} \boxtimes \on^{(i+1)}\boxtimes \ldots\boxtimes \on^{(k)}
    $$
    as objects in $\Rep_{Ver_p}(T,\varepsilon) = Ver_p(GL_n)^{(1)}\boxtimes \ldots \boxtimes Ver_p(GL_n)^{(k)}$. The corresponding $T_{0}$-character is
    $$
    \alpha_{i,j} = e_i-e_j := (0,\ldots, 0, \underbrace{-1}_j, 0,\ldots, 0 ,  \underbrace{1}_{i}, 0, \ldots, 0).
    $$
\end{ex}

The highest weight in a module $M$ is a weight $\bl$ (i.e. $\bl\in sOb(\Rep_{Ver_p}(T,\varepsilon))$), such that
$$
\Hom_T(\bl, M) = \ck, 
$$
and for any other weight $\bm$ with $\Hom_T(\bm, M)\neq 0$ we have
$$
\bm \preceq \bl.
$$
A $T$-module possessing the highest weight is called a \textbf{highest weight module}.
\begin{lemma}
    The generalized Verma module $M(\bl)$ is a highest weight module with highest weight $\bl$.
\end{lemma}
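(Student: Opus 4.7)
The plan is to use the triangular decomposition of Theorem \ref{thm_PBW} to identify the $T$-module structure of $M(\bl)$ and then read off both highest-weight conditions from a $T_{0}$-weight count.

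First, I would upgrade the PBW isomorphism $Dist(G)\simeq Dist(N^-)\otimes Dist(B)$ to a $T$-equivariant one. Because $T$ normalizes both $N^-$ and $B$, the isomorphism intertwines left multiplication by $Dist(T)$ on $Dist(G)$ with the diagonal action on the right-hand side: adjoint on $Dist(N^-)$ and left multiplication on $Dist(B)$ (concretely, for group-like $t$ one has $t\cdot n\cdot b=(tnt^{-1})\cdot(tb)$, and the same formula holds for $t\in Dist(T)$ via the coproduct). Tensoring over $Dist(B)$ with $\bl$, this produces a $T$-equivariant isomorphism
$$
M(\bl)\simeq Dist(N^-)\otimes \bl,
$$
with $T$ acting adjointly on $Dist(N^-)$ and via the given action on $\bl$.

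Next, I would decompose $Dist(N^-)$ under the adjoint $T_{0}$-action. The Lie algebra $\n^-$ has $T_{0}$-weights $\alpha_{i,j}=e_i-e_j$ for $1\le j<i\le k$, and the filtration of $Dist(N^-)$ by order of distribution (whose associated graded is a symmetric-type algebra on $\n^-$) shows that every $T_{0}$-weight of $Dist(N^-)$ is a nonnegative integer combination $\gamma=\sum_{j<i}n_{i,j}\alpha_{i,j}$. A short direct calculation gives that the partial sums $\sum_{r\le s}\gamma_{r}$ are all $\le 0$ for $1\le s\le k$, and vanish simultaneously only when every $n_{i,j}=0$; consequently the $T_{0}$-weight-zero piece of $Dist(N^-)$ is exactly the unit $\on$, sitting in degree $0$ of the filtration.

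Combining these ingredients gives both assertions. Any simple $T$-summand $\bm$ of $M(\bl)\simeq Dist(N^-)\otimes\bl$ has $T_{0}$-character $(|\l^{(1)}|,\ldots,|\l^{(k)}|)+\gamma$ for some $\gamma$ as above, so $\bl-\bm=\sum_{j<i}n_{i,j}(e_j-e_i)$ is a nonnegative integer combination of standard positive $GL_k$-roots, yielding $\bm\preceq\bl$ in the dominance order. The $\bl$-isotypic piece of $M(\bl)$ can only come from the $T_{0}$-weight-zero part of $Dist(N^-)$, namely $\on\otimes\bl=\bl$, which gives $\Hom_T(\bl,M(\bl))=\Hom_T(\bl,\bl)=\ck$. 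The main technical point will be verifying the structural claim about $Dist(N^-)$ in the $Ver_p$ setting: one needs the filtration by order of distribution to behave as in the classical case, so that the weight cone is generated by the $\alpha_{i,j}$ and the weight-zero piece is one-dimensional. Once that is in place, the rest is a routine cone-of-weights computation.
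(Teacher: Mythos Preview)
Your proposal is correct and takes essentially the same approach as the paper: the paper's proof is the single sentence ``It follows directly from the decomposition $M(\bl)=Dist(N^-)\otimes \bl$,'' and what you have written is a careful unpacking of exactly that line. Your attention to the $T$-equivariance of the PBW isomorphism and to the weight cone of $Dist(N^-)$ fills in details the paper leaves implicit.
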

\begin{proof}
    It follows directly from the decomposition $M(\bl)=Dist(N^-)\otimes \bl$.
\end{proof}

\begin{theorem}[\cite{V24}, Proposition 5.3]\label{thm_simple_quotient}
    As a $Dist(G)$-module the generalized Verma module $M(\bl)$ has a unique maximal proper submodule $J(\bl)$ (namely the sum of all submodules, not containing the highest weight $\bl$). Consequently, it has a unique simple quotient $$L(\bl)=M(\bl)/J(\bl),$$ which is also a highest weight module with highest weight $\bl$.
\end{theorem}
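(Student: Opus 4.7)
The plan is to follow the classical Verma module argument, adapted to the categorical setting of $\mathrm{Ind}(Ver_p)$. The key point is that $M(\bl)$ is generated as a $Dist(G)$-module by the subobject $1\otimes \bl \subset Dist(N^-)\otimes \bl$, and the sum of all $Dist(G)$-submodules missing this ``highest weight line'' is again proper.

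First, I would use the triangular decomposition of Theorem \ref{thm_PBW} to identify $M(\bl)\simeq Dist(N^-)\otimes \bl$ as a $T$-module, with $T$ acting by conjugation on $Dist(N^-)$ and naturally on $\bl$. The $T_0$-weights appearing in $Dist(N^-)$ are non-negative integer combinations of the characters $e_i-e_j$ for $i>j$, and the trivial character $0\in\Z^k$ occurs with multiplicity one (arising only from the scalar part, since no nontrivial non-negative combination of the $e_i-e_j$ with $i>j$ vanishes in $\Z^k$). Consequently the $T_0$-isotypic component of $M(\bl)$ with character $\chi_\bl=(|\l^{(1)}|,\ldots,|\l^{(k)}|)$ is exactly $1\otimes \bl$, which as a $T$-module is the simple module $\bl$ itself, and all other $T_0$-characters appearing in $M(\bl)$ are strictly lower in the dominance order.

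Next, I would show that any $Dist(G)$-submodule $N\subseteq M(\bl)$ is $T$-stable. Since $Dist(T_0)\subset Dist(G)$ and the $T_0$-action on $M(\bl)$ is integrable over the split torus, $N$ decomposes into $T_0$-weight spaces. Moreover $N$ is closed under the Lie algebra $\mathfrak t = Lie(T)$-action. By the Harish-Chandra description of representations of $T$ in terms of compatible pairs of $T_0$- and $\mathfrak t$-actions, this joint compatibility is equivalent to $T$-stability. Now if $N\cap(1\otimes\bl)\neq 0$ then, being a nonzero $T$-subobject of the simple $T$-module $\bl$, it must equal $1\otimes\bl$; but $1\otimes\bl$ generates $M(\bl)$ over $Dist(G)$ by construction, forcing $N=M(\bl)$. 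Therefore every proper submodule satisfies $N\cap(1\otimes\bl)=0$. Defining $J(\bl)$ as the sum of all such submodules gives $J(\bl)\cap(1\otimes\bl)=0$, so $J(\bl)$ is itself proper, and is evidently the unique maximal proper submodule. The simple quotient $L(\bl):=M(\bl)/J(\bl)$ then has no nontrivial proper submodule, since any such would pull back to a submodule of $M(\bl)$ strictly between $J(\bl)$ and $M(\bl)$; and the image of $1\otimes\bl$ remains a $\bl$-highest weight subobject in $L(\bl)$.

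The main obstacle will be the $T$-stability step: in positive characteristic one cannot simply integrate the $\mathfrak t$-action to a $T$-action on a submodule, so one must separately exploit the integrability of the split torus $T_0$-action (where $Dist(T_0)$-substructures of integrable modules are automatically $T_0$-stable) and the $\mathfrak t$-action, and combine them via the Harish-Chandra equivalence for $T$. The remaining arguments are then formal consequences of the weight decomposition and the fact that $1\otimes\bl$ generates $M(\bl)$ as a $Dist(G)$-module.
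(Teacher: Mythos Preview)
The paper does not actually prove this theorem; it is stated with a citation to \cite{V24}, Proposition 5.3, and no argument is given in the text. Your proposal is the standard Verma-module argument and is correct in outline: the multiplicity-one statement for the $T_0$-weight $\chi_\bl$ follows from the PBW description $M(\bl)\simeq Dist(N^-)\otimes\bl$ (the preceding Lemma already records that $M(\bl)$ is a highest weight module), and the $T$-stability of $Dist(G)$-submodules via the Harish-Chandra correspondence is exactly the right mechanism in this setting. One small point worth tightening: you should observe explicitly that $M(\bl)$ itself carries a $T$-module structure (e.g.\ via the adjoint action of $T$ on $Dist(N^-)$ tensored with the given action on $\bl$, or equivalently because each $T_0$-weight component of $Dist(N^-)\otimes\bl$ is a finite-length object in $\Rep_{Ver_p}(T,\varepsilon)$), so that the Harish-Chandra argument for submodules has an ambient integrable structure to sit inside. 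With that in place, your identification of $J(\bl)$ as the sum of all submodules missing $1\otimes\bl$ goes through verbatim.
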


\begin{theorem}[\cite{V23}, Corollary 1.4, \cite{V24}, Corollary 5.5] \label{thm_integrability} $~$
\begin{enumerate}
    \item For any object $X\in Ver_p$, the action of $Dist(GL(X))$ on a module $M$ integrates to the action of the group $GL(X)$ if and only if the action of $Dist(GL(X)_{0})$ on $M$ integrates to the action of $GL(X)_{0}$.

    \item When $X=L_n^{\oplus k}$, the action of $Dist(GL(X))$ on $L(\bl)$ integrates to the action of the group $GL(X)$ if and only if $\bl$ is dominant.

\end{enumerate}
    \end{theorem}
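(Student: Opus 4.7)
The plan is to treat the two statements separately and reduce part~(2) to part~(1). For part~(1), the forward direction is formal: restricting a $G$-module along the closed embedding $G_0 \hookrightarrow G$ yields a $G_0$-module, with the underlying $Dist(G_0)$-action obtained by restriction along $Dist(G_0) \subset Dist(G)$. For the backward direction, I would invoke the Harish-Chandra equivalence (Theorem~\ref{thm_HC}): giving $M$ a $G$-module structure is the same as giving a representation of the HC pair $(G_0, \g)$. From any $Dist(G)$-module one already reads off a $\g$-action (via $\g \subset Dist(G)^1$) and a $Dist(G_0)$-action (by restriction), and the compatibility axioms of an HC pair representation, namely equivariance of the $\g$-action under $G_0$ and agreement of $\g_0 \subset \g$ with $Lie(G_0)$, are forced by the internal relations of $Dist(G)$. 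Hence once the $Dist(G_0)$-piece lifts to a $G_0$-module, the whole package assembles into an HC pair representation, i.e.\ a $G$-module.

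For part~(2), by part~(1) the question reduces to whether $L(\bl)$ is a rational $G_0 = GL_k$-module. The forward direction is classical: by Theorem~\ref{thm_simple_quotient}, $L(\bl)$ has $|\bl| = (|\l^{(1)}|, \ldots, |\l^{(k)}|)$ as its highest $T_0$-weight with multiplicity one, and any rational $GL_k$-module has an $S_k$-invariant weight set, which forces $|\bl|$ to be dominant, i.e.\ $|\l^{(1)}| \ge \ldots \ge |\l^{(k)}|$.

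The backward direction is where the real content lies. Assuming $|\bl|$ dominant, my plan is to realize $L(\bl)$ as a subobject of a manifestly $G$-integrable module. Pulling $\bl$ back to a $B$-module along $B \twoheadrightarrow T$, I would form the coinduction $H(\bl) = \mathrm{ind}_B^G \bl$, which is a $G$-module by construction. Frobenius reciprocity, built from the PBW-type decomposition of Theorem~\ref{thm_PBW}, yields an embedding $L(\bl) \hookrightarrow H(\bl)$ provided $H(\bl) \ne 0$, and this produces the desired $G$-module structure. The main obstacle I expect is the non-vanishing criterion $H(\bl) \ne 0 \iff |\bl|$ dominant; this should follow from Borel-Weil-type considerations on $G/B$ that reduce, via the HC-pair picture, to the classical dominance criterion for the $G_0 = GL_k$-part. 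A more hands-on alternative, if the coinduction proves technically awkward in this categorical setting, is to argue directly that for dominant $|\bl|$ the simple-root elements of $\gl_k \subset \g$ act locally nilpotently on $L(\bl)$: one starts from the finite-dimensional irreducible $GL_k$-submodule generated by the highest weight vector and propagates rationality across the decomposition $M(\bl) \simeq Dist(N^-) \otimes \bl$ using $G_0$-equivariance of the adjoint action on $\n^-$.
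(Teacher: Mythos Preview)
The paper does not prove this theorem itself: both parts are quoted from Venkatesh's work (\cite{V23}, Corollary~1.4 and \cite{V24}, Corollary~5.5), so there is no in-paper argument to compare against. Your plan for part~(1) is exactly right and matches the content of Venkatesh's result: by the Harish-Chandra equivalence of Theorem~\ref{thm_HC} and its corollary on representations, a $G$-module is the same as a representation of the pair $(G_0,\g)$, and from a $Dist(G)$-module one already has the $\g$-action and the $Dist(G_0)$-action with all compatibilities forced by the relations in $Dist(G)$; the only missing ingredient is the lift of the $Dist(G_0)$-action to $G_0$.

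For part~(2) your forward direction is fine. In the backward direction watch for a circularity in the coinduction approach: writing $\Hom_G(L(\bl), H(\bl)) = \Hom_B(L(\bl), \bl)$ presupposes that $L(\bl)$ is already a $G$-module. The repair is to work from the other side: show $H(\bl)\ne 0$ for dominant $|\bl|$ and that its highest $T$-weight is $\bl$, so that the simple $Dist(G)$-socle of $H(\bl)$ is an integrable module with highest weight $\bl$, hence equals $L(\bl)$ by the uniqueness in Theorem~\ref{thm_reps_of_GL(kL_n)}. Your alternative route is cleaner and closer to how such statements are proved in practice: the non-classical summand of $\n^-$ contains no copy of $\on$ (each $L_n^{(i)}\otimes (L_n^{(j)})^*$ contributes exactly one $\on$, which sits in $\n^-_0$), so $S$ of the non-classical part has finite length and the integrability question over $G_0=GL_k$ reduces to the classical dominance criterion for highest-weight $\gl_k$-modules.
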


 \begin{corollary}
     For any object $X\in Ver_p$ and any $T(X)$-module $M$ of finite length (i.e. $M\in Ver_p$ as opposed to $M\in \Ind(Ver_p)$) if $M$ admits a compatible action of $Dist(GL(X))$ then this action is integrable.
 \end{corollary}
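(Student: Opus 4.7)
The plan is to reduce the statement to a classical integrability criterion for hyperalgebra modules over split reductive groups in characteristic $p$. First, by Theorem \ref{thm_integrability}(1), it suffices to show that the restriction of the $Dist(GL(X))$-action to $Dist(GL(X)_{0})$ integrates to a rational action of $GL(X)_{0} = \prod_{n=1}^{p-1} GL_{k_n}$, where $k_n = [X:L_n]$. Exploiting the semisimplicity of $Ver_p$, I would then write the isotypic decomposition
$$
M = \bigoplus_{n=1}^{p-1} W_n \otimes L_n, \qquad W_n := \Hom_{Ver_p}(L_n, M),
$$
where each $W_n \in Vec$ is finite-dimensional because $M$ has finite length in $Ver_p$.

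Next, since $GL(X)_{0}$ is an ordinary affine group scheme sitting inside $Vec \subset Ver_p$, any action of $GL(X)_{0}$ on $M$ must preserve this isotypic decomposition, and the factor $GL_{k_n}$ acts only on the corresponding multiplicity space $W_n$. Hyperalgebraically, $Dist(GL(X)_{0}) \cong \bigotimes_n Dist(GL_{k_n})$, and the $Dist(GL(X)_{0})$-action on $M$ is entirely determined by a collection of $Dist(GL_{k_n})$-actions on the $W_n$'s. Thus integrability decouples into the question, for each $n$, of whether the $Dist(GL_{k_n})$-action on the finite-dimensional vector space $W_n$ lifts to an algebraic representation of $GL_{k_n}$.

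Finally, the given $T(X)$-action on $M$ restricts to the classical subtorus $T(X)_{0} = \prod_i \mathbb{G}_m$, and under the natural embedding $T(X)_{0} \hookrightarrow GL(X)_{0}$ this is precisely the standard split maximal torus $\prod_n \mathbb{G}_m^{k_n}$. Compatibility of the $T(X)$- and $Dist(GL(X))$-actions guarantees that, for each $n$, the restriction of the $Dist(GL_{k_n})$-action on $W_n$ to the hyperalgebra of $\mathbb{G}_m^{k_n}$ already integrates to an action of the torus. At this point I would invoke the classical fact (see e.g.\ Jantzen, \emph{Representations of Algebraic Groups}, Part I) that a finite-dimensional $Dist(H)$-module for a split reductive group $H$ integrates to $H$ as soon as its restriction to the hyperalgebra of a maximal torus does. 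Applied to each factor $GL_{k_n}$ and reassembled, this yields the desired integration of $Dist(GL(X)_{0})$ to $GL(X)_{0}$, and hence, by Theorem \ref{thm_integrability}(1), of $Dist(GL(X))$ to $GL(X)$.

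The main obstacle is really a verification rather than a genuine hurdle: one must unpack the compatibility between the $T(X)$-action and the $Dist(GL(X))$-action carefully enough to confirm that the induced $T(X)_{0}$-weight decomposition of each $W_n$ truly arises from characters of the standard maximal torus $\mathbb{G}_m^{k_n} \subset GL_{k_n}$ in the way required by the classical criterion. Once this bookkeeping is in place, the rest of the argument is a formal reduction powered by the semisimple splitting of $Ver_p$ and the fact that $GL(X)_{0}$ lives entirely inside $Vec$.
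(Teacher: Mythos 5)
Your argument is correct and is essentially the paper's own (very terse) proof spelled out: reduce via Theorem \ref{thm_integrability}(1) to integrating the $Dist(GL(X)_{0})$-action on the finite-dimensional multiplicity spaces, observe that the compatible $T(X)$-structure already integrates the maximal torus $T(X)_{0}\subset GL(X)_{0}$, and invoke the classical criterion for hyperalgebra modules over split reductive groups. The only imprecision is your claim that each factor $GL_{k_n}$ acts only on $W_n$ --- a priori every factor of $Dist(GL(X)_{0})$ can act on every multiplicity space --- but this is harmless, since the classical criterion applies directly to the product group $\prod_n GL_{k_n}$ with maximal torus $T(X)_{0}$ without any such decoupling.
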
 
\begin{proof}
    This is true about the action of $Dist(GL(X)_{0})$ on $M$.
\end{proof}

\begin{theorem}[\cite{V24}, Corollary 5.6 and Corollary 5.7] \label{thm_reps_of_GL(kL_n)} $~$
\begin{enumerate}
    \item If $L$ is an irreducible representation of $GL(X)$ then $L\simeq L(\bl)$ for some dominant $\bl\in \Rep_{Ver_p}(T,\varepsilon)$.

    \item If $L(\bl)\simeq L(\bl')$ for dominant weights $\bl,\bl'$ then $\bl=\bl'$.
\end{enumerate}
\end{theorem}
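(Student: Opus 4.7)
The plan is to adapt the classical highest weight theorem for reductive groups, using the PBW decomposition (Theorem \ref{thm_PBW}), the existence of the simple quotient $L(\bl)$ of $M(\bl)$ (Theorem \ref{thm_simple_quotient}), and the integrability criterion (Theorem \ref{thm_integrability}). For part (1), let $L$ be a simple $G=GL(X)$-module; since $L\in\Rep_{Ver_p}(G)$, I will treat it as an object of finite length in $Ver_p$. Restricting $L$ to the maximal torus $T=\prod_{i=1}^{k}GL(L_n^{(i)})$ and using that $\Rep_{Ver_p}(T,\varepsilon)\simeq Ver_p(GL_n)^{\boxtimes k}$ is semisimple, I obtain a finite direct sum decomposition $L|_T=\bigoplus_{\bm}L_\bm$ into $T$-isotypic components. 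Pick $\bl$ maximal in this finite set of weights with respect to the partial order $\preceq$.

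Because the $T_0$-weights of $\mathfrak n^+$ are the positive roots $e_i-e_j$ ($i<j$), maximality forces $\mathfrak n^+\cdot L_\bl=0$, so $L_\bl$ is a $B$-subobject of $L$ on which $B$ acts through the weight $\bl$. The resulting nonzero $B$-equivariant embedding $\bl\hookrightarrow L$ corresponds, via the universal property of $M(\bl)=Dist(G)\otimes_{Dist(B)}\bl$, to a nonzero $Dist(G)$-map $M(\bl)\to L$. Simplicity of $L$ makes this map surjective, and Theorem \ref{thm_simple_quotient} identifies $L$ with $L(\bl)$. Since $L$ carries a genuine $G$-module structure, the $Dist(G)$-action on $L(\bl)$ integrates, and Theorem \ref{thm_integrability}(2) then forces $\bl$ to be dominant.

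For part (2), I will exploit the explicit weight structure of $L(\bl)$. By Theorem \ref{thm_PBW}, $M(\bl)\simeq Dist(N^-)\otimes\bl$ as a $T$-module, so every weight of $M(\bl)$, and hence of its quotient $L(\bl)$, is of the form $\bl-\alpha$ for some non-negative integer combination $\alpha$ of positive roots. Consequently $\bl$ is the unique maximal weight of $L(\bl)$ under $\preceq$, and it occurs with multiplicity one. An isomorphism $L(\bl)\simeq L(\bl')$ therefore identifies these unique maximal weights, giving $\bl=\bl'$.

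The main obstacle I foresee is verifying that the partial order $\preceq$ behaves correctly with respect to $\mathfrak n^\pm$: one needs that applying $\mathfrak n^-$ strictly decreases the weight and $\mathfrak n^+$ strictly increases it. This is immediate for the standard dominance order on $T_0=\mathbb G_m^k$-characters, which I interpret $\preceq$ to be. A secondary technical check is that Frobenius reciprocity $\Hom_{Dist(G)}(M(\bl),L)\simeq\Hom_{Dist(B)}(\bl,L)$ holds in this categorical setting with $\mathrm{Ind}(Ver_p)$-coefficients; this should follow directly from the fact that $M(\bl)$ is by definition the induction of $\bl$.
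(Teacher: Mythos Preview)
The paper does not give its own proof of this statement; it is quoted from \cite{V24} (Corollaries 5.6 and 5.7) without argument. Your proposal is the standard highest weight argument and is correct: pick a weight with maximal $T_{0}$-character, observe that the augmentation ideal of $Dist(N^{+})$ strictly raises $T_{0}$-weight and hence annihilates it, apply the tensor--hom adjunction for $Dist(G)\otimes_{Dist(B)}-$, and conclude via Theorems \ref{thm_simple_quotient} and \ref{thm_integrability}.

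One small point worth making precise: the relation $\preceq$ as defined in the paper is only a \emph{preorder} on simple $T$-modules, since distinct $\bl$ can share the same $T_{0}$-character. So in part (2) the phrase ``unique maximal weight'' should be read as ``the unique weight whose $T_{0}$-character is maximal.'' Your argument already delivers exactly this: in $M(\bl)\simeq Dist(N^{-})\otimes\bl$ the $T_{0}$-weight-zero part of $Dist(N^{-})$ under the adjoint $T$-action is just $\on$ (any nontrivial sum of negative roots is nonzero), so $\bl$ is indeed the only weight of $L(\bl)$ with that maximal $T_{0}$-character, and it appears with multiplicity one.
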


\subsection{Representations of \texorpdfstring{$GL(X)$}{GL(X)} for general \texorpdfstring{$X$}{X}}

Consider now any object $X\in Ver_p$ together with a decomposition into the direct sum of irreducibles
$$
X = \bigoplus_{n=1}^{p-1} L_n^{\oplus k_n}.
$$
The group scheme $GL(X)$ is the generalization of the general linear super group $GL(m|n)$. In a sense, we have
$$
GL(X) = GL(k_1|k_2|\ldots|k_{p-1}).
$$

Consider a ``Levi'' subgroup 
$$
M = \prod_{n=1}^{p-1} GL(L_n^{\oplus k_n})\subset GL(X).
$$ 
In the super group case $GL(m|n)$ the role of this subgroup would be played by the even subgroup $GL_m\times GL_n$. Note that in our case $M_{0} = GL(X)_{0}$ as well.

By Theorem \ref{thm_reps_of_GL(kL_n)}, simple $M$-modules are of the form
$$
L(\bl_1)\otimes \ldots \otimes L(\bl_{p-1}),
$$
where $\bl_n$ is a dominant weight in $\Rep_{Ver_p}(T(L_n^{\oplus k_n}), \varepsilon_n)$.

Define $\mathfrak p := \mathfrak b(X) \oplus Lie(M)$. It is clear that $\mathfrak p$ is a Lie subalgebra in $\gl(X)$ with 
$$
\mathfrak p_{0}  = Lie(M)_{0}.
$$
Thus we can define the ``parabolic" subgroup $P\subset G$ with
$$
\mathbf{HC}(P)=(M_{0}, \mathfrak p) = (\prod_{n=1}^{p-1} GL_{k_n}, \mathfrak p) = (GL(X)_{0}, \mathfrak p).
$$

We have
$$
\gl(X) = \mathfrak p \oplus \mathfrak u,
$$
with $\mathfrak u\subset \mathfrak n^-$ satisfying
$$
\mathfrak u_{0} = 0.
$$
Thus, by the PBW theorem and Lemma \ref{lemma_gps_with_no_even_part}, 
$$
Dist(GL(X)) = Dist(P)\otimes U(\mathfrak u),
$$
with $U(\mathfrak u)$ of finite length in $Ver_p$.

The action of $M$ on the simple module $L(\bl_1)\otimes \ldots \otimes L(\bl_{p-1})$ can be extended trivially to $P$. 
\begin{definition}
    Define the ``Kac module'' for $GL(X)$ as the parabolic induction:
    $$
    K(\bl_1,\ldots, \bl_{p-1}) := Dist(GL(X))\tens{Dist(P)} [L(\bl_1)\otimes \ldots \otimes L(\bl_{p-1})].
    $$
\end{definition}

Since $P_{0}=GL(X)_{0}$, the $Dist(GL(X))$ module $$K(\bl_1,\ldots, \bl_{p-1}) \simeq U(\mathfrak u)\otimes [L(\bl_1)\otimes \ldots \otimes L(\bl_{p-1})]$$ has finite length in $Ver_p$ and is integrable by Theorem \ref{thm_integrability}.

Recall that the choice of Borel subgroup $B(X)$ in $GL(X)$ required the ordering of simple components in $X$:
$$
X = X_1\oplus\ldots \oplus X_k,
$$
where $k = k_1+\ldots+ k_{p-1}$ and
$$
\underbrace{X_1\simeq \ldots \simeq X_{k_1}}_{k_1}\simeq L_1,
$$  
$$
\underbrace{X_{k_1+1} \simeq \ldots \simeq  X_{k_1+k_2}}_{k_2} \simeq L_2,
$$
$$
\cdots
$$
$$
\underbrace{X_{k_1+\ldots + k_{p-2}+1}\simeq \ldots \simeq X_k}_{k_{p-1}} \simeq L_{p-1}.
$$

This ordering induces the standard dominance order on characters of $$
T(X)_{0}=(GL(X_1)\times\ldots \times GL(X_k))_{0} = \mathbb G_m^k,
$$
and consequently on simple representations of $T(X)$.

Note that $K(\bl_1,\ldots, \bl_{p-1})$ is a highest weight module with respect to this dominance order with highest weight
$$
\bl_1\boxtimes \ldots\boxtimes \bl_{p-1}.
$$
The classical torus $T(X)_{0} = \mathbb G_m^k$ acts on this highest weight with character
$$
(|\l_1^{(1)}|,\ldots, |\l_1^{(k_1)}|, |\l_2^{(1)}|,\ldots, |\l_2^{(k_2)}|,\ldots, |\l_{p-1}^{(1)}|,\ldots, |\l_{p-1}^{(k_{p-1})}|).
$$

Consequently, the $GL(X)$-module $K(\bl_1,\ldots, \bl_{p-1})$ has a unique simple quotient $L(\bl_1,\ldots, \bl_{p-1})$.

\begin{theorem}[\cite{V24}, Proposition 6.2 and Proposition 6.3] $~$
    \begin{enumerate}
        \item Let $L$ be a simple $GL(X)$-module then 
        $$
        L \simeq L(\bl_1,\ldots, \bl_{p-1})
        $$
        for some dominant $\bl_n\in sOb(Ver_p(GL_n)^{\boxtimes k_n})$ for $n=1,\ldots, p-1$.

        \item If $L(\bl_1,\ldots, \bl_{p-1}) \simeq L(\bm_1,\ldots, \bm_{p-1})$ as $GL(X)$-modules then
        $$
        (\bl_1, \ldots ,  \bl_{p-1}) = (\bm_1, \ldots, \bm_{p-1}).
        $$
        
    \end{enumerate}
\end{theorem}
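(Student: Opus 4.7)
The plan is to follow the standard parabolic induction strategy adapted to the $Ver_p$ setting, viewing the subgroup $M = \prod_{n=1}^{p-1} GL(L_n^{\oplus k_n})$ as a ``Levi'' analogue of the even subgroup inside $GL(m|n)$.

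For part (1), let $L$ be a simple $GL(X)$-module. First, produce a $B(X)$-highest weight. Since $\Rep_{Ver_p}(T(X)) \simeq \boxtimes_{n=1}^{p-1} Ver_p(GL_n)^{\boxtimes k_n}$ is semisimple by Corollary \ref{cor_reps_of_GL(L_n)}, the restriction $L|_{T(X)}$ decomposes into simples; refining by $T(X)_0 = \mathbb G_m^k$-characters gives $L = \bigoplus_\chi L_\chi$. A categorical Engel-type argument, exploiting that $\mathfrak n^+(X)$ has strictly positive $T(X)_0$-roots $e_i - e_j$ for $i<j$ while the characters appearing in the finite-length object $L$ form a finite set, produces a nonzero subobject $L^{\mathfrak n^+(X)} \subset L$. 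Choose any simple $T(X)$-subobject $\bl = \bl_1 \boxtimes \ldots \boxtimes \bl_{p-1} \subset L^{\mathfrak n^+(X)}$. The universal property of the Verma module yields a nonzero $Dist(G)$-map $M(\bl) \to L$, surjective by simplicity of $L$, so $L \simeq L(\bl)$ by Theorem \ref{thm_simple_quotient}. Dominance of each $\bl_n$ follows from integrability: restricting to the classical subgroup $GL(X)_0 = \prod_n GL_{k_n}$, the subobject $\bl$ is annihilated by $\mathfrak n^+(GL(X)_0) \subset \mathfrak n^+(X)$ and lies inside the integrable $GL(X)_0$-module $L$, so its $T(X)_0$-character must be $GL(X)_0$-dominant, which translates exactly to the condition $|\l_n^{(1)}| \ge \ldots \ge |\l_n^{(k_n)}|$ for each $n$.

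To identify $L(\bl)$ with $L(\bl_1, \ldots, \bl_{p-1})$, I would construct a surjection $M(\bl) \twoheadrightarrow K(\bl_1, \ldots, \bl_{p-1})$ extending the $Dist(B)$-equivariant inclusion $\bl \hookrightarrow L(\bl_1) \otimes \ldots \otimes L(\bl_{p-1})$ (the right-hand side is $P$-stable, hence $B$-stable). Surjectivity holds because each $\bl_n$ generates $L(\bl_n)$ as a $Dist(GL(L_n^{\oplus k_n}))$-module, so $\bl$ generates the Kac module as a $Dist(G)$-module; both sides then share the same unique simple quotient. For part (2), any isomorphism $L(\bl_1, \ldots, \bl_{p-1}) \simeq L(\bm_1, \ldots, \bm_{p-1})$ must carry the unique $B(X)$-highest weight on the left to that on the right, forcing $\bl_1 \boxtimes \ldots \boxtimes \bl_{p-1} \simeq \bm_1 \boxtimes \ldots \boxtimes \bm_{p-1}$ in $\boxtimes_n Ver_p(GL_n)^{\boxtimes k_n}$; uniqueness of the factorization of a simple object in a Deligne tensor product then yields $\bl_n = \bm_n$ for every $n$.

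The step I expect to be the main obstacle is the production of a $B(X)$-highest weight inside a simple $GL(X)$-module rigorously in the categorical $Ver_p$ setting, since $M(\bl)$ a priori lives only in $\Ind(Ver_p)$ while $L \simeq L(\bl)$ must lie in $Ver_p$, and one must carefully track finite length and nilpotency in a setting where the classical vector-space arguments (Engel's theorem, weight-space decompositions) require categorical reformulation. Once the highest-weight formalism is in place, together with the universal properties of $M(\bl)$ and $K(\bl_1, \ldots, \bl_{p-1})$, the remaining arguments are parallel to the classical parabolic induction for reductive groups and the Kac-module story for $GL(m|n)$.
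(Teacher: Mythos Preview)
The paper does not supply its own proof of this theorem: it is stated with a citation to \cite{V24}, Propositions 6.2 and 6.3, and the surrounding text merely sets up the Kac module $K(\bl_1,\ldots,\bl_{p-1})$ and observes it is integrable of finite length with the expected highest weight. So there is no in-paper argument to compare against; what you have written is effectively a reconstruction of the proof one expects to find in \cite{V24}.

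Your outline is sound and follows the standard highest-weight/parabolic-induction template, using exactly the ingredients the paper has already assembled (Theorems \ref{thm_simple_quotient}, \ref{thm_integrability}, \ref{thm_reps_of_GL(kL_n)}, and the Kac-module construction). One small sharpening: for the existence of a highest weight you should phrase the vanishing condition in terms of $Dist(N^+)$ (or equivalently the $B(X)$-module structure factoring through $T(X)$) rather than just $\mathfrak n^+$, since in positive characteristic annihilation by the Lie algebra does not automatically give triviality of the unipotent group action; the Harish-Chandra description $\mathbf{HC}(N^+)=(\prod_n N^+_{k_n},\mathfrak n^+)$ and the fact that $N^+_0$ is unipotent with strictly positive $T_0$-roots make the finite-length argument go through for $Dist(N^+)$ directly. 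With that adjustment, the obstacle you flag is not a genuine gap: finite length of $L$ in $Ver_p$ plus strict positivity of the $T(X)_0$-roots on $\mathfrak n^+$ is exactly what is needed, and the paper already uses this style of argument implicitly (e.g.\ in asserting that $M(\bl)$ is a highest weight module and in the proof of Theorem \ref{thm_odd_refl}).
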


\begin{remark}
    Note that we can construct modules $L(\bl_1,\ldots, \bl_{p-1})$ without the use of the parabolic subgroup $P$. First, let us pull back the representation $\bl_1\boxtimes \ldots\boxtimes \bl_{p-1}$ of $T(X)$ to $B(X)$. Then $L(\bl_1,\ldots, \bl_{p-1})$ is the unique simple quotient of the generalized Verma module
    $$
    M(\bl_1,\ldots, \bl_{p-1}):= Dist(GL(X))\tens{Dist(B(X))} [\bl_1\boxtimes \ldots\boxtimes \bl_{p-1}].
    $$
    
\end{remark}

\section{Different Borels}\label{s_8}

The whole construction of simple representations of $GL(X)$ described in Section \ref{section_simple_reps_of_GL(X)} depends heavily on the choice of Borel subgroup $B(X)\subset GL(X)$, that is, on the choice of ordering of simple components of $X$.

\subsection{Permuting simple summands}
Let $X = \bigoplus_{n=1}^{p-1} L_n^{\oplus k_n} = X_1\oplus \ldots \oplus X_k$, and suppose as before that each $X_i$ is simple and
$$
\text{if } X_i\simeq L_a, X_{i+1}\simeq L_b \text{ then } a<b,
 $$
 that is, $k = k_1+\ldots+ k_{p-1}$ and
$$
\underbrace{X_1\simeq \ldots \simeq X_{k_1}}_{k_1}\simeq L_1,
$$  
$$
\underbrace{X_{k_1+1} \simeq \ldots \simeq  X_{k_1+k_2}}_{k_2} \simeq L_2,
$$
$$
\cdots
$$
$$
\underbrace{X_{k_1+\ldots + k_{p-2}+1}\simeq \ldots \simeq X_k}_{k_{p-1}} \simeq L_{p-1}.
$$

For any permutation $w$ in the symmetric group $S_k$ we can define the Borel subalgebra $\mathfrak b_w(X)\subset \gl(X)$ as
$$
\mathfrak b_w(X) = \bigoplus_{1\le w(i)\le w(j)\le k} X_i\otimes X_j^*.
$$
The standard Borel subalgebra $\mathfrak b(X)$ is the same as $\mathfrak b_1(X)$ for the identity element $1\in S_k$.

For each $n=1,\ldots, p-1$ let $w^{(n)}\in S_{k_n}$ be the unique permutation satisfying the following property: for any $1\le i,j \le k_n$ 
$$
w^{(n)}(i) < w^{(n)}(j) \text{ if and only if } w(k_1+\ldots+k_{n-1}+i) < w(k_1+\ldots+k_{n-1}+j). 
$$
That is $w^{(n)}$ is the permutation controlling the total ordering of elements
$$
w(k_1+\ldots+k_{n-1}+1), w(k_1+\ldots+k_{n-1}+2), \ldots, w(k_1+\ldots+k_{n}).
$$
Then $\mathfrak b_w(X)_{0} = \bigoplus_{n=1}^{p-1} \mathfrak b_{w^{(n)}}(\ck^{\oplus k_n}) \subset \gl(X)_{0} = \bigoplus_{n=1}^{p-1} \gl_{k_n},$
where $\mathfrak b_{w^{(n)}}(\ck^{\oplus k_n})$ is the subalgebra of $\gl_{k_n}$ spanned by matrices $E_{i, j}$ with $1\le i,j\le k_n$ and $w^{(n)}(i)\le w^{(n)}(j)$.

Note that in the classical case we have a description of the subalgebra $\mathfrak b_{w^{(n)}}=\mathfrak b_{w^{(n)}}(\ck^{\oplus k_n})$ in terms of the standard Borel subalgebra $\mathfrak b$ of upper-triangular matrices in $\gl_{k_n}$ and the Weyl group $W\simeq S_{k_n}$ of $GL_{k_n}$:
$$
\mathfrak b_{w^{(n)}} = ad_{\widetilde w^{(n)}}^{-1} (\mathfrak b)
$$
for some representative $\widetilde w^{(n)}$ of $w^{(n)}$ in $GL_{k_n}$. 

Let $B_{w^{(n)}}(\ck^{\oplus k_n}) = Ad_{\widetilde w^{(n)}}^{-1}(B(\ck^{\oplus k_n}))$ be the corresponding Borel subgroup in $GL_{k_n}$.

\begin{definition}
    For each $w\in S_k$ define the Borel subgroup $B_w(X)\subset GL(X)$ with
    $$
    \mathbf{HC}(B_w(X)) = (\prod_{n=1}^{p-1} B_{w^{(n)}}(\ck^{\oplus k_n}), ~\mathfrak b_w(X)).
    $$
\end{definition}

\begin{remark}
    Note that by construction, $T(X)$ is a subgroup of $B_w(X)$ for each $w\in S_k$.
\end{remark}

Our goal is to understand what happens to the highest weight theory for $GL(X)$ described in Section \ref{section_simple_reps_of_GL(X)}  when we use $B_w(X)$ instead of $B(X)$.

As in Section \ref{section_simple_reps_of_GL(X)}, we can construct \textbf{the generalized Verma module}
$$
    M_w(\bl) := Dist(GL(X))\tens{Dist(B_w(X))} \bl.
$$

First, let us define a new dominance order induced by $w\in S_k$ on simple representations of $T(X) = \prod_{i=1}^k GL(X_i)$.

Let $\bl = (\l^{(1)},\ldots, \l^{(k)}) = V_{\l^{(1)}}\boxtimes\ldots\boxtimes V_{\l^{(k)}}$ be a simple representation of $T(X)$. Here for each $i\in \{1,\ldots, k\}$, if $X_i\simeq L_a$, we identify $\l^{(i)}\in \Z^a$, an admissible weight for $GL_a$, with the corresponding simple object  $V_{\l^{(i)}}$ in $Ver_p(GL_a)$.

Permutation $w\in S_k$ defines the dominance order on characters of $T(X)_{0}= \mathbb G_m^k$. We call $\alpha\in \Z^k$ $w$-\textbf{dominant} if
$$
\alpha_{w^{-1}(1)}\ge \ldots\ge \alpha_{w^{-1}(k)}.
$$
We say that $\bl=(\l^{(1)},\ldots, \l^{(k)})$ is $w$\textbf{-dominant} if $(|\l^{(1)}|,\ldots,|\l^{(k)}|)$ is $w$-dominant.

For two $k$-tuples of admissible weights $\bl=(\l^{(1)},\ldots,\l^{(k)})$ and $\bm=(\mu^{(1)},\ldots, \mu^{(k)})$ we write
$$
\bm \preceq_w \bl, \text{ if }
$$

$$
(|\l^{(1)}|,\ldots, |\l^{(k)}|) - (|\mu^{(1)}|,\ldots, |\mu^{(k)}|)\in \Z^k \text{ is } w\text{-dominant}.
$$
The standard dominance order $\preceq$ on representations of $T(X)$ introduced in Section \ref{section_simple_reps_of_GL(X)} coincides with $\preceq_1$ for $1\in S_k$.

\begin{definition}
    A weight $\bl$ is called $w$\textbf{-integrable} if the action of $Dist(GL(X))$ on $L_w(\bl)$ integrates to the action of $GL(X)$.
\end{definition}

\begin{ex}\label{ex_std_integrability}
    A weight $\bl=(\l^{(1)},\ldots,\l^{(k)})$ is $1$-integrable if and only if for every $n=1,\ldots, p-1$
    $$
    |\l^{(i)}|\ge |\l^{(j)}|
    $$
    for each $k_1+\ldots+k_{n-1}+1\le i\le j\le k_1+\ldots +k_n$.
\end{ex}

With respect to partial order $\preceq_w$ the generalized Verma module $M_w(\bl)$ is a highest weight module with highest weight $\bl$. It has the unique simple quotient $L_w(\bl)$.
\begin{itemize}
    \item Every simple $Dist(GL(X))$-module admitting a highest weight with respect to the partial order $\preceq_w$, in particular, every simple  $GL(X)$-module  must be isomorphic to $L_w(\bl)$ for some simple $\bl\in \Rep_{Ver_p}(T(X),\varepsilon)$.

    \item Clearly, $L_w(\bl)\simeq L_w(\bm)$ if and only if $\bl\simeq \bm$.

    \item If $\bl$ is $w$-integrable then $L_w(\bl)$ must be isomorphic to $L(\bm)$ for some $1$-integrable weight $\bm$. Let us denote this weight by $\bl^w$.
\end{itemize}
\begin{definition}
    For any $w$-integrable weight $\bl$ the weight $\bl^w$ is defined as a $1$-integrable weight for which
    $$
    L_w(\bl)\simeq L(\bl^w).
    $$
\end{definition}    

In the remainder of this paper we will try to answer the following:

\begin{question}\label{quest_1}
    Given a permutation $w\in S_k$ and a $w$-integrable weight $\bl$, how can we describe $\bl^w$?
\end{question}

\subsection{Conjugate Borels}\label{s_conj_Borels}
\begin{definition}
    For $X=\bigoplus_{n=1}^{p-1} L_n^{\oplus k_n}$ with $\sum_{n=1}^{p-1}k_n = k$, put $W=S_k$, and define the \textbf{classical Weyl group} $W_{0}$ to be the parabolic subgroup $S_{k_1}\times \ldots\times S_{k_n}\subset W=S_k$, i.e. the Weyl group of the classical subgroup $GL(X)_{0}=\prod_{n=1}^{p-1} GL_{k_n}$. 

    One can embed $W_{0}$ into $GL(X)(\ck)=GL(X)_{0}(\ck)$ as permutation matrices.
\end{definition}

The group of $\ck$-points of any affine group scheme $G$ in $Ver_p$ acts on $\co(G)$ by conjugation as follows. Let $g\in G(\ck)$ be a map
$$
g: \co(G)\to \ck,
$$
and let $\Delta: \co(G)\to \co(G)\otimes \co(G)$ be the comultiplication map.
We define
$$
Ad_g = (g^{-1}\otimes id_{\co(G)}\otimes g) \circ  \Delta^2: \co(G)\to \co(G)\otimes \co(G)\otimes \co(G)\to \co(G).
$$

\begin{definition}
    Let $g$ be an element of  $GL(X)(\ck) = G(\ck)$ and let $H\subset G$ be a closed subgroup scheme with
    $$
    \co(H)= \co(G)/J_H
    $$
    for an ideal $J_H\subset \co(G)$. Define
    $$
    H^g = Ad_g(H) \subset GL(X)
    $$
    be the subgroup cut out by the ideal
    $$
    J_H^g=  Ad_g(J_H).
    $$
    \end{definition}

The adjoint action of $G(\ck)$ on $\co(G)$ induces the adjoint action on $\g =Lie(G)$.

When $w\in W_{0}\subset GL(X)(\ck)$, we get
$$
Ad_w(X_i\otimes X_j^*) = X_{w(i)}\otimes X_{w(j)}^*\subset \gl(X),
$$
so
$$
Ad_w^{-1}(\mathfrak b(X))= \mathfrak b_w(X).
$$

Moreover, for $w\in W_{0}$ we have
$$
B_w(X)_{0} = \prod_{n=1}^{p-1} B_{w^{(n)}}(\ck^{\oplus k_n}) = Ad_w^{-1} (\prod_{n=1}^{p-1} B(\ck^{\oplus k_n})) \subset  \prod_{n=1}^{p-1} GL_{k_n},
$$
where $w$ corresponds to $(w^{(1)},\ldots, w^{(p-1)})$ under the isomorphism $S_{k_1}\times \ldots\times S_{k_n}\simeq W_{0}$. 

We get $B_w(X) = Ad_w^{-1}(B(X))$ if $w\in W_{0}\subset GL(X)(\ck)$.

\begin{theorem} \label{thm_conjugate_Borels}
Let $w\in W_{0}$ and let $\bl=(\l^{(1)},\ldots, \l^{(k)})$ be a simple representation of $T(X)$. Then 
$$
w\cdot \bl :=(\l^{(w^{-1}(1))},\ldots, \l^{(w^{-1}(k))})
$$
is also a simple representation of $T(X)$ (it lies in the same category since $w\in W_{0}$, so $X_i\simeq X_{w(i)}$). This defines an action of $W_{0}$ on the set of simple representations of $T(X)$.
\begin{enumerate}
    \item If $\bl$ is $w$-integrable then the set of weights of $L_w(\bl)$ is $W_{0}$-stable.
    \item \label{conjug.Borels} We have
    $$
    \bl^w = w\cdot \bl,
    $$
    that is, $L_w(\bl)\simeq L(w\cdot \bl)$.

    \item Consequently, $\bl$ is $w$-integrable if and only if for each $n=1,\ldots, p-1$
    $$
    |\l^{(w^{-1}(i))}|\ge |\l^{(w^{-1}(j))}|
    $$
    for each $k_1+\ldots+k_{n-1}+1\le i\le j\le k_1+\ldots+k_n$.
\end{enumerate}
    
\end{theorem}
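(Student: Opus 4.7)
My starting point is that $W_{0}\subset GL(X)(\ck)$ sits naturally as permutation matrices inside $GL(X)_{0}=\prod_n GL_{k_n}$, so for any $w\in W_{0}$ the conjugation $Ad_w$ is an automorphism of $GL(X)$ as a group scheme in $Ver_p$. Since $w$ only permutes isomorphic summands of $X$, $Ad_w$ stabilizes $T(X)$ and induces the permutation of factors $GL(X_i)\to GL(X_{w(i)})$. A direct computation on characters of $T(X)_{0}$ shows that pullback along $Ad_w|_{T(X)}$ sends an isomorphism class $\bm$ of simple $T(X)$-module to $w^{-1}\cdot\bm$. Combined with the identity $Ad_w^{-1}(B(X))=B_w(X)$ established just before the theorem, this sets up the dictionary needed for the proof.

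\smallskip

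\noindent\textbf{Part (\ref{conjug.Borels}).} I would produce $L_w(\bl)$ by pulling back. Because $Ad_w$ is a group-scheme automorphism of $GL(X)$, the endofunctor $Ad_w^*$ on $\Rep_{Ver_p}(GL(X),\varepsilon)$ is an autoequivalence preserving simplicity. The simple module $L(w\cdot\bl)$, which is a highest weight module for $B(X)$ with highest weight $w\cdot\bl$, pulls back to a simple $GL(X)$-module whose highest-weight structure is now recorded with respect to $Ad_w^{-1}(B(X))=B_w(X)$; its highest weight relative to $B_w(X)$ is the pullback of $w\cdot\bl$, namely $w^{-1}\cdot(w\cdot\bl)=\bl$. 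Applying the uniqueness of simple quotients (Theorem \ref{thm_simple_quotient}) to the Borel $B_w(X)$ yields $Ad_w^*L(w\cdot\bl)\simeq L_w(\bl)$. In particular $L_w(\bl)$ integrates to a $GL(X)$-module isomorphic to $L(w\cdot\bl)$, proving $\bl^w=w\cdot\bl$.

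\smallskip

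\noindent\textbf{Part (1).} Once $\bl$ is $w$-integrable, $L_w(\bl)$ is a genuine $GL(X)$-module and the finite group $W_{0}\subset GL(X)(\ck)$ acts by natural automorphisms on its underlying object in $Ver_p$. For any $u\in W_{0}$ the automorphism $u_{L_w(\bl)}$ satisfies the intertwining relation $u_{L_w(\bl)}\cdot t = Ad_u(t)\cdot u_{L_w(\bl)}$ for every $t\in T(X)$, so it carries the $\bm$-isotypic $T(X)$-component isomorphically onto the $u\cdot\bm$-isotypic component. Hence the set of $T(X)$-weights of $L_w(\bl)$ is stable under $W_{0}$.

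\smallskip

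\noindent\textbf{Part (3).} Combining part (\ref{conjug.Borels}) with Example \ref{ex_std_integrability}, $\bl$ is $w$-integrable iff $w\cdot\bl$ is $1$-integrable, which by Example \ref{ex_std_integrability} amounts to the condition that within each block of indices $k_1+\ldots+k_{n-1}+1,\ldots,k_1+\ldots+k_n$ the numbers $|(w\cdot\bl)^{(i)}|$ are weakly decreasing. Substituting $(w\cdot\bl)^{(i)}=\l^{(w^{-1}(i))}$ gives exactly the inequality asserted in (3). The only real pitfall throughout is bookkeeping the $w$-versus-$w^{-1}$ convention consistently across the permutation of indices, the induced action on characters, and the definitions of $w\cdot\bl$ and $\preceq_w$; once these conventions are pinned down at the outset, each step above is formal.
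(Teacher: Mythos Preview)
Your Parts (1) and (3) coincide with the paper's proof essentially verbatim. For Part (2) you take a genuinely different route: you transport $L(w\cdot\bl)$ to $L_w(\bl)$ via the autoequivalence $Ad_w^*$, whereas the paper argues directly on weights, observing that $\bm\preceq_w\bl$ is literally the same inequality as $w\cdot\bm\preceq_1 w\cdot\bl$, so the $\preceq_1$-highest weight of $L_w(\bl)$ is $w\cdot\bl$ (using Part (1) to know that $w\cdot\bl$ actually occurs, with the same multiplicity one as $\bl$). The paper's argument is two lines and never leaves the module $L_w(\bl)$ that is already assumed integrable.

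Your approach is sound in principle but has a gap as written. You open Part (2) by treating $L(w\cdot\bl)$ as an object of $\Rep_{Ver_p}(GL(X),\varepsilon)$, i.e.\ as an integrable module; but $1$-integrability of $w\cdot\bl$ is not given---it is part of what has to be shown. If one reads your argument at the level of $Dist(GL(X))$-modules, what you cleanly establish is $Ad_w^* L(w\cdot\bl)\simeq L_w(\bl)$. To finish you still need two facts you do not state: (i) $Ad_w^*$ preserves and reflects integrability (so the hypothesis that $\bl$ is $w$-integrable forces $L(w\cdot\bl)$ to be integrable), and (ii) on integrable modules the inner pullback $Ad_w^*$ is naturally isomorphic to the identity via the action of $w^{-1}\in GL(X)(\ck)$, giving $L_w(\bl)\simeq L(w\cdot\bl)$. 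Both are easy, but your closing sentence ``In particular $L_w(\bl)$ integrates to a $GL(X)$-module isomorphic to $L(w\cdot\bl)$'' reads as though integrability of $L_w(\bl)$ is a \emph{consequence} of your construction rather than the hypothesis being invoked, and the passage from $Ad_w^* L(w\cdot\bl)$ back to $L(w\cdot\bl)$ is left implicit. The paper's order-comparison argument avoids all of this by working only with the module $L_w(\bl)$ that is integrable by assumption.
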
 
\begin{proof}
    \begin{enumerate}
        \item The action of $W_{0}\subset GL(X)(\ck)$ on any $GL(X)$-module $M$ sends a weight $\bl\subset M|_{T(X)}$ to a weight $w\cdot \bl$.

        \item Let $\bm$ be any weight of $L_w(\bl)$ then
        $$
        \bm \preceq_w \bl,
        $$
        i.e.
        $$
        |\l^{(w^{-1}(1))}|-|\mu^{(w^{-1}(1))}|\ge |\l^{(w^{-1}(2))}|-|\mu^{(w^{-1}(2))}|\ge\ldots\ge |\l^{(w^{-1}(k))}|-|\mu^{(w^{-1}(k))}|.
        $$
        This condition is equivalent to saying that
        $$
        w\cdot \bm \preceq_1 w\cdot \bl.
        $$
        Thus $w\cdot \bl$ is the highest weight in $L_w(\bl)$ with respect to the standard dominance order. Thus, $L_w(\bl)\simeq L(w\cdot \bl)$.

        \item This condition follows immediately from part (\ref{conjug.Borels}) and the integrability condition for the standard Borel subgroup (see Example \ref{ex_std_integrability})
    \end{enumerate}
\end{proof}

\subsection{Odd reflections}\label{s_odd_refl}

Similarly to the case of the super group $GL(m|n)$, not all Borel subgroups $B_w(X)$ in $GL(X)$ are conjugate to each other.  

The group $W=S_k$ is generated by $W_{0}=S_{k_1}\times\ldots\times S_{k_{p-1}}$ and \textbf{simple odd reflections} $s_j$ for $j=1,\ldots, p-2$, where $s_j$ is the elementary transposition 
$$
s_j = (a_j,~a_j+1),
$$
where $a_j=k_1+\ldots+k_j$.

That is, $W_{0}$ is responsible for permuting isomorphic simple summands of $X$, while $s_j$ permutes non-isomorphic simple summands.

It is enough, therefore, to answer Question \ref{quest_1} for simple odd reflections, i.e. for $w=s_j$. 

Consider the subgroup 
$$
H_j = GL(X_1)\times\ldots \times GL(X_{a_j-1}) \times [ GL(X_{a_j}\oplus X_{a_j+1}) ] \times GL(X_{a_j+2})\times\ldots\times GL(X_k) \subset GL(X).
$$
Suppose $X_{a_j}\simeq L_m$ and $X_{a_{j}+1}\simeq L_n$. Since $s_j$ is an ``odd" reflection, we know that $m<n$.

Simple representations of $H_j$ are of the form
$$
\l^{(1)}\boxtimes\ldots\boxtimes \l^{(a_j-1)}\boxtimes L(\bm) \boxtimes \l^{(a_j+2)}\boxtimes\ldots \boxtimes \l^{(k)},
$$
where $\l^{(i)}$ denotes the corresponding objects $V_{\l^{(i)}}$ in $Ver_p(GL_a)$ when $X_i\simeq L_a$, and $\bm=(\mu_1, \mu_2)$, where $\mu_1$ and $\mu_2$ are admissible weights for $GL_m$ and $GL_n$ respectively. Representation $L(\bm)$ is the simple representation of $GL(X_{a_j}\oplus X_{a_j+1}) = GL(L_m\oplus L_n)$ constructed in Section \ref{section_simple_reps_of_GL(X)}.

\begin{theorem}\label{thm_odd_refl}
Let $a=k_1+\ldots+k_j$ for some $j=1,\ldots, p-1$, let $s \in S_k$ be the transposition $(a, ~a+1)$.
    Suppose as before that $X_{a}\simeq L_m, X_{a+1}\simeq L_n$ with $m<n$.
    
    Let $\bl = (\l^{(1)},\ldots, \l^{(k)})$ be a simple, $s$-integrable representation of $T(X)$,  and let $\bm
    =  (\l^{(a)}, \l^{(a+ 1)})$. 
    
    Let $\sigma=(1~2)\in S_2$, suppose $\bn = (\nu_1,\nu_2)$ is such that
    $$
    \bm^\sigma = \bn,
 $$
 That is, we have an isomorphism of representations of $GL(L_m\oplus L_n)$:
 $$
 L_\sigma(\bm) \simeq L(\bn).
 $$

Then $$\bl^{s} = (\l^{(1)},\ldots, \l^{(a-1)}, \nu_1, \nu_2, \l^{(a+2)},\ldots \l^{(k)}).$$

Therefore, the description of the bijection $\bl\leftrightarrow \bl^s$ for a simple odd reflection $s$ boils down to the description of the bijection $\bm\leftrightarrow \bm^\sigma$ on weights of $GL(L_m\oplus L_n)$ for $\sigma = (1~2)$.
 
\end{theorem}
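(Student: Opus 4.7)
The plan is to use a parabolic subgroup of $GL(X)$ that contains both Borel subgroups $B(X)$ and $B_s(X)$. Let $H_j$ be the Levi subgroup introduced earlier, $H_j = GL(X_a \oplus X_{a+1}) \times \prod_{i \neq a, a+1} GL(X_i)$, and let $Q \subset GL(X)$ be the subgroup with Harish-Chandra pair $(B(X)_{0}, \mathrm{Lie}(Q))$, where
$$\mathrm{Lie}(Q) = \mathrm{Lie}(H_j) \oplus \bigoplus_{i<j,~\{i,j\} \not\subset \{a,a+1\}} X_i \otimes X_j^*.$$
Since $X_a \not\simeq X_{a+1}$, the fusion rules give $(X_a \otimes X_{a+1}^*)_{0} = 0$, so $\mathrm{Lie}(H_j)_{0}$ coincides with $\mathrm{Lie}(T(X)_{0})$ and $Q$ is a well-defined parabolic with Levi $H_j$ whose unipotent radical $U_Q$ has Lie algebra lying in both $\mathrm{Lie}(B(X))$ and $\mathrm{Lie}(B_s(X))$. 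Moreover, $B(X) \cap H_j$ restricts to the standard Borel on the $GL(L_m \oplus L_n)$-factor of $H_j$, while $B_s(X) \cap H_j$ restricts to the swapped Borel $B_\sigma$ on that factor.

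Next, I would consider $V := L_s(\bl)^{U_Q}$ as a module for the Levi $H_j$. Since $U_Q \subset B_s(X)$, the $B_s(X)$-highest weight subspace $\bl$ lies in $V$. The triangular decomposition $Dist(GL(X)) \simeq Dist(U_Q^-) \otimes Dist(H_j) \otimes Dist(U_Q)$, obtained by iterating Theorem \ref{thm_PBW}, yields $V = Dist(H_j) \cdot \bl$, and the claim is that $V$ is simple as an $H_j$-module. Indeed, for any nonzero $H_j$-submodule $W \subset V$ the triangular decomposition gives $Dist(GL(X)) \cdot W = Dist(U_Q^-) \cdot W$; a weight-space argument using the $T(X)_{0}$-grading shows that the $U_Q$-invariants of this $GL(X)$-submodule coincide with $W$ itself. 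Hence if $W$ were proper in $V$, then $Dist(GL(X)) \cdot W$ would be a proper submodule of the simple module $L_s(\bl)$, a contradiction. As $H_j$ is a direct product, this simple $H_j$-module factors as $V \simeq L_\sigma(\bm) \boxtimes \l^{(1)} \boxtimes \ldots \boxtimes \l^{(a-1)} \boxtimes \l^{(a+2)} \boxtimes \ldots \boxtimes \l^{(k)}$, where $\bm = (\l^{(a)}, \l^{(a+1)})$ is the $B_\sigma$-highest weight of the $GL(L_m \oplus L_n)$-factor.

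By hypothesis $\bm^\sigma = \bn$, so $L_\sigma(\bm) \simeq L(\bn)$, giving $V$ the standard-Borel highest weight $(\l^{(1)}, \ldots, \l^{(a-1)}, \nu_1, \nu_2, \l^{(a+2)}, \ldots, \l^{(k)})$ as an $H_j$-module. To extract $\bl^s$, I observe that the $B(X)$-highest weight vector of $L_s(\bl)$, being annihilated by $\mathfrak{u}^+ = \mathrm{Lie}(U_Q) \oplus X_a \otimes X_{a+1}^*$, automatically lies in $V$ and is a $B(X) \cap H_j$-highest weight vector there. By the uniqueness of the standard-Borel highest weight in the simple $H_j$-module $V$, this forces $\bl^s = (\l^{(1)}, \ldots, \l^{(a-1)}, \nu_1, \nu_2, \l^{(a+2)}, \ldots, \l^{(k)})$. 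The principal obstacle is the second step: establishing the triangular decomposition along the parabolic $Q$ and verifying simplicity of $V$ as an $H_j$-module in the setting of $Ver_p$, which requires care because ``root spaces'' inside $\gl(X)$ are genuine objects of $Ver_p$ rather than one-dimensional lines and the weight-space bookkeeping must be carried out via the $T(X)_{0}$-character grading.
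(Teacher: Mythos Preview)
Your approach is correct and takes a genuinely different route from the paper's. The paper works inside the Verma module $M_s(\bl)$: it identifies the $H_j$-submodule $K_\sigma(\bm)\boxtimes(\text{rest})$ generated by $\bl$, and then uses explicit commutator identities among the root-space objects $\mathbf E_{i,j}$ to show that every $(B_\sigma\cap H_j)$-singular weight in $N_\sigma(\bm)$ is also $B_s$-singular in $M_s(\bl)$ and hence falls into $J_s(\bl)$; passing to the quotient yields $Dist(H_j)\cdot\bl\simeq L_\sigma(\bm)\boxtimes(\text{rest})$ inside $L_s(\bl)$, from which $\bl^s$ is read off. Your parabolic packaging bypasses the commutator bookkeeping entirely by asserting that $V=L_s(\bl)^{U_Q}$ is a simple $H_j$-module. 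One caution: as written your simplicity argument is slightly circular, since once $Dist(G)\cdot W=M$ (forced by simplicity of $M$) the claim $(Dist(G)\cdot W)^{U_Q}=W$ \emph{is} the desired conclusion $V=W$. The honest weight-space argument runs as follows: pick a cocharacter $\ell$ of $T(X)_0$ vanishing on the roots of $H_j$ and strictly negative on those of $U_Q^-$; since any nonzero $v\in V$ generates $M=Dist(U_Q^-)\cdot Dist(H_j)\cdot v$, one gets $\ell_0:=\ell(\bl)\le\ell(v)\le\ell_0$, so $V=M_{\ell=\ell_0}$; then for any nonzero $H_j$-submodule $W\subset V$ one has $M_{\ell=\ell_0}=(Dist(U_Q^-)\cdot W)_{\ell=\ell_0}=W$ because $(Dist(U_Q^-))_{\ell=0}=\ck$, forcing $W=V$. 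With this fix your argument is complete. Your route is more structural and would transport to other parabolic situations with minimal change; the paper's is more elementary and avoids having to set up and verify the parabolic triangular decomposition in $Ver_p$.
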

\begin{proof}
    First, let us note that the generalized Verma module with highest weight $\bm$ for $GL(L_m\oplus L_n)$ coincides with the corresponding Kac module, which we will denote by $K(\bm)$. To avoid confusion, we will reserve the notation $M(\bl)$ for the generalized Verma module for $GL(X)$.

    For each $i=1,\ldots, k$, denote the subobject $X_i\otimes X_{j}^*\subset \gl(X)$ by $\mathbf{E}_{i,j}$. Given any $GL(X)$-module $M$ and any subobject $\mathbf v\subset M$ denote by $\mathbf{E}_{i,j}\cdot \mathbf{v}$ the image of $\mathbf{E}_{i,j}\otimes \mathbf{v}$ under the action map
    $$
    \gl(X)\otimes M\to M.
    $$
    
    Note that $\mathbf{E}_{a,a}, \mathbf{E}_{a,a+1}, \mathbf{E}_{a+1,a}$, and $\mathbf{E}_{a+1,a+1}$ are naturally subobjects of $\gl(X_a\oplus X_{a+1}) = \gl(L_m\oplus L_n)$.
    As a representation of $T(L_m\oplus L_n) = GL(L_m)\times GL(L_n)$ 
    $$
    K_{\sigma}(\bm) \simeq Dist(\mathfrak n_{\sigma}^-(X_a\oplus X_{a+1}))\otimes \bm  = S(\mathbf{E_{a,a+1}})\otimes \bm.
    $$

    By construction, 
$$
\l^{(1)}\boxtimes\ldots\boxtimes \l^{(a_j-1)}\boxtimes K_{\sigma}(\bm) \boxtimes \l^{(a_j+2)}\boxtimes\ldots \boxtimes \l^{(k)}
$$
is a submodule of $\Res_{H_j}^{GL(X)}M_s(\bl)$, containing the $s$-highest weight $\bl$. For any weight $\ba$ of $K_{\sigma}(\bm)$, let us abuse the notation and denote the corresponding weight $$\l^{(1)}\boxtimes\ldots\boxtimes \l^{(a_j-1)}\boxtimes \ba \boxtimes \l^{(a_j+2)}\boxtimes\ldots \boxtimes \l^{(k)}$$
of $M_s(\bl)$ simply by $\ba$.

Note that, being the maximal weight with respect to partial order $\preceq_s$, the subobject $\bl$ of $M_s(\bl)$ satisfies:
$$
\mathbf{E}_{i,i+1}\cdot \bl = 0 \text{ for all } i \neq a-1, a, a+1, 
$$
$$
\mathbf{E}_{a-1,a+1}\cdot \bl = 0,
$$
$$
\mathbf{E}_{a,a+2}\cdot \bl = 0,
$$
$$
\mathbf{E}_{a+1,a}\cdot \bl = 0.
$$

Let $N_\sigma(\bm)$ be the maximal submodule of $K_\sigma(\bm)$. Let $\ba=(\alpha_1,\alpha_2)$ be some maximal weight of $N_\sigma(\bm)$ with respect to partial order $\preceq_{\sigma}$. It means that
$$
\mathbf{E}_{a+1,a}\cdot \ba =0.
$$

Since $\ba\subset \mathbf{E}^k_{a,a+1}\cdot \bl$ for some $k$, we get
$$
\mathbf{E}_{i,i+1}\cdot \ba \subset \mathbf{E}_{i,i+1} \cdot \mathbf{E}^k_{a,a+1} \cdot \bl = \mathbf{E}^k_{a,a+1}\cdot \mathbf{E}_{i,i+1}\cdot \bl = 0, \text{ for all } i\neq a-1,a,a+1.
$$
Moreover,
$$
\mathbf{E}_{a-1,a+1}\cdot \ba\subset \mathbf{E}_{a-1,a+1}\cdot \mathbf{E}^k_{a,a+1}\cdot \bl =  \mathbf{E}^k_{a,a+1}\cdot \mathbf{E}_{a-1,a+1}\cdot \bl =0,
$$
$$
\mathbf{E}_{a,a+2}\cdot \ba\subset \mathbf{E}_{a,a+2}\cdot \mathbf{E}^k_{a,a+1}\cdot \bl = \mathbf{E}_{a,a+1}\cdot \mathbf{E}_{a,a+2}\cdot \bl =0,
$$

Therefore, $\ba\subset M_s(\bl)$ must lie in the maximal submodule $J_s(\bl)$. We deduce that $N_\sigma(\bm)\subset J_s(\bl)$. 

Therefore, the $H_j$-module generated by the highest weight (with respect to $\preceq_s$) subobject $\bl\subset L_s(\bl)$ is isomorphic to
$$
\l^{(1)}\boxtimes\ldots\boxtimes \l^{(a_j-1)}\boxtimes L_{\sigma}(\bm) \boxtimes \l^{(a_j+2)}\boxtimes\ldots \boxtimes \l^{(k)} \simeq \l^{(1)}\boxtimes\ldots\boxtimes \l^{(a_j-1)}\boxtimes L(\bn) \boxtimes \l^{(a_j+2)}\boxtimes\ldots \boxtimes \l^{(k)}.
$$
   Consequently,  $\l^{(1)}\boxtimes\ldots\boxtimes \l^{(a_j-1)}\boxtimes \bn \boxtimes \l^{(a_j+2)}\boxtimes\ldots \boxtimes \l^{(k)}$ is a weight in $L_s(\bl)$. Moreover, it is the highest weight with respect to the standard dominance order $\preceq=\preceq_1$.

   We deduce that $L_s(\bl)\simeq L(\l^{(1)}\boxtimes\ldots\boxtimes \l^{(a_j-1)}\boxtimes \bn \boxtimes \l^{(a_j+2)}\boxtimes\ldots \boxtimes \l^{(k)}).$
\end{proof}

\section{Representations of \texorpdfstring{$GL(L_m|L_n)$}{GL(L m|L n)}} \label{s_9}
In light of Theorem \ref{thm_odd_refl}, we will now focus on studying representations of $GL(L_m\oplus L_r)$ with $1\le m<r\le p-1$. For the rest of this section put $G=GL(L_m\oplus L_r), B=B(L_m\oplus L_r), T=T(L_m\oplus L_r), N^\pm = N^\pm(L_m\oplus L_r)$.

\subsection{Super group scheme convention}\label{sec_super}

Let us adopt a useful convention. Put $n = p-r$ so that
$$
L_n\simeq L_{p-1}\otimes L_r.
$$
We will treat $G=GL(L_m\oplus L_r)$ as a super group scheme $GL(L_m|L_n)$, where $L_n$ is now ``odd". The torus $T=T(L_m\oplus L_r)=GL(L_m)\times GL(L_r)$ is isomorphic to $GL(L_m)\times GL(L_n)$.
\begin{remark}
  Note that even though the group schemes $GL(L_r)$ and $GL(L_n)$ are isomorphic, the natural homomorphisms from $\pi_{Ver_p}$ to them are slightly different. 

    With respect to the natural map $\varepsilon: \pi_{Ver_p}\to  GL(L_n)$ we have
    $$
    \Rep_{Ver_p}(GL_n, \varepsilon)\simeq Ver_p(GL_n).
    $$

    Whereas if we take the map $\varepsilon': \pi_{Ver_p}\to GL(L_r)\xrightarrow{\sim} GL(L_n)$, we get a twisted category: $\Rep_{Ver_p}(GL_n, \varepsilon')$, which is isomorphic to the tensor subcategory of $Ver_p(GL_n)\boxtimes sVec$ generated by $V\boxtimes I$, where $V$ is the $n$-dimensional object in $Ver_p(GL_n)$ coming from the tautological representation of $GL_n$ and $I$ is the $(0|1)$-dimensional super vector space.
  \end{remark}  

Using level-rank duality (\ref{level_rank})
$$
Ver_p(GL_r)\boxtimes sVec \xrightarrow{\sim }Ver_p(GL_n)\boxtimes sVec,
$$
we will regard the highest $GL(L_m|L_n)$-weights as simple objects in the tensor subcategory of $Ver_p(GL_m)\boxtimes Ver_p(GL_n)\boxtimes sVec$ generated by $V^{(m)}\boxtimes \on\boxtimes \on$ and $\on\boxtimes V^{(n)}\boxtimes I$. That is, 
to each pair of admissible weights $\mu\in \Z^m, \nu\in \Z^n$ we assign a Kac module
$$
K(\mu|\nu) := Dist(G)\tens{Dist(B)} V^{(m)}_\mu \boxtimes V^{(n)}_\nu\boxtimes I^{\otimes |\nu|};
$$
and its simple quotient $L(\mu|\nu)$.

\textbf{Notations:}
    \begin{itemize}
        \item  Let us abuse the notation and denote the $T$-module  $V^{(m)}_\mu \boxtimes V^{(n)}_\nu\boxtimes I^{\otimes |\nu|}$ simply by $(\mu|\nu)$.

        \item For an admissible weight $\mu=(\mu_1,\ldots, \mu_m)$ let us denote by $\mu^*$ both the weight $(-\mu_n,\ldots, -\mu_1)$ and the object $(V^{(m)}_\mu)^* = V^{(m)}_{\mu^*}$.

        \item For any partition $\l$ denote by $\l^t$ the transpose of $\l$. 

        \item  When it is suitable, we will treat $(\mu|\nu)$ as a vector $(\mu_1,\ldots,\mu_m|\nu_1,\ldots,\nu_n)$ in $\Z^{m+n}$, so that the sum of two weights is defined.  To distinguish from the classical situation we will denote $\Z^{m+n}$  by $\Z^{m|n}$.

    \item We will henceforth refer to all elements of $\Z^{m|n}$ as weights, and weights $(\mu|\nu)$ with both $\mu$ and $\nu$ admissible will be called admissible weights.

    \item Fix standard bases $\epsilon_1,\ldots, \epsilon_m$ and $\delta_1,\ldots, \delta_n$ of $\Z^m$ and $\Z^n$ correspondingly, so that $\Z^{m|n}= \bigoplus_{i=1}^m\Z\epsilon_i\oplus\bigoplus_{j=1}^n\Z\delta_j$.

    \item Weights $\ba_{i,j} = \epsilon_i - \delta_j$ will be referred to as odd positive roots. We put $\bb = \sum_{i\le m}\sum_{j\le n} \ba_{i,j} = (n,\ldots, n|-m,\ldots, -m)$. 

    \item Define  $$
 2\br^{(m|n)}:= 2(\rho^{(m)}|\rho^{(n)})-\bb=
 $$
 $$
 =(m-1-n, m-3-n,\ldots,1-m-n|n-1+m, n-3+m\ldots, 1-n+m).
 $$
 The weight $2\br^{(m|n)}$ is the sum of all  ``even'' positive roots minus the sum of all odd positive roots.

    \item Define a symmetric form $\langle -,-\rangle $ on $\Z^{m|n}$ via
    $$
    \langle\epsilon_i, \epsilon_j\rangle=\mathrm{\delta}_{i,j}, ~\langle \epsilon_i, \delta_j\rangle = 0,~\langle \delta_i, \delta_j\rangle = -
    \mathrm{\delta}_{i,j}.
    $$
    \end{itemize}

Recall that as a $Dist(N^-)$-$Dist(T)$-bimodule $$K(\mu|\nu)\simeq Dist(N^-)\otimes (\mu|\nu) = S(\mathfrak n^-)\otimes (\mu|\nu)  =
$$
$$
=S((V^{(m)})^*\boxtimes V^{(n)}\boxtimes I)\otimes (\mu|\nu) \simeq \bigoplus_{\l} (\l^*|\l^t)\otimes (\mu|\nu),
$$
where the sum is taken over all partitions $\l=(\l_1
, \ldots, \l_k)$ with $\l_1\ge\ldots\ge\l_k\ge 0$, such that $\l$ is an admissible weight for $GL_m$ and $\l^t$ is an admissible weight for $GL_n$. That is, we ask that
\begin{equation}\label{cond_on_lambda}
    k\le m \text{ and } \l_1\le n.
\end{equation}
Note that our condition that $m<r = p-n$ ensures that $\l$ is an admissible weight for $GL_m$ as $\l_1-\l_m \le \l_1\le n<p-m$; and that $\l^t$ is an admissible weight for $GL_n$ as $\l_1^t-\l_n^t\le \l_1^t = k \le m<p-n$.

This isomorphism induces a natural $\Z$-grading on $K(\mu|\nu)$ with $K(\mu|\nu)^q = S^q(\mathfrak n^-)\otimes (\mu|\nu)$. Moreover, it is easy to deduce from condition (\ref{cond_on_lambda}) that the top nonzero graded component is
$$
K(\mu|\nu) ^{m\cdot n}= S^{m\cdot n}(\mathfrak n^-)\otimes (\mu|\nu) = (det^{-n}\boxtimes det^m)\otimes (\mu|\nu).
$$
So $(\widetilde \mu|\widetilde \nu) := (\mu|\nu)-\bb = (\mu_1-n,\ldots, \mu_m-n|\nu_1+m,\ldots,\nu_n+m)$ is the lowest weight of $K(\mu|\nu)$.

The question of describing the bijection $(\mu|\nu)^\sigma \leftrightarrow (\mu|\nu)$ for $\sigma=(1~2)\in S_2$ is equivalent to the question of determining the lowest weight in $L(\mu|\nu)$. More precisely, if $(\mu'|\nu')$ is the lowest weight in $L(\mu|
\nu)$ with respect to the standard dominance order (i.e. $|\mu'|$ is minimal among $\{|\alpha|,~ \text{with } (\alpha|\beta)$ a weight of $L(\mu|\nu)\}$) then
$$
L_\sigma(\mu'|\nu') \simeq L(\mu|\nu).
$$

By necessity every submodule in $K(\mu|\nu)$ contains the lowest weight subobject $(\widetilde \mu|\widetilde \nu)$. In particular, it must have a simple submodule $L(\alpha, \beta)$ (its socle), whose lowest weight is $(\widetilde\mu, \widetilde\nu)$. Thus,
$$
L_\sigma(\widetilde\mu|\widetilde\nu) \simeq L(\alpha, \beta),
$$
i.e.
$$
(\widetilde\mu|\widetilde\nu)^\sigma   = (\alpha|\beta).
$$

The rest of this section will be dedicated to determining the socle of $K(\mu|\nu)$.

\subsection{Highest weight category} \label{s_hw_cat}
The statements and proofs in this section are almost identical to those made about $\gl(m|n)$-modules in \cite{Z96}.

Let $m+n<p$.

 Consider the locally finite poset $\Lambda$ of all weights $\bl = (\mu|\nu)$ with $\mu=(\mu_1,\ldots, \mu_m)\in \Z^m, \nu=(\nu_1,\ldots, \nu_n)\in \Z^n$ satisfying 
 $$
 \mu_1\ge\ldots\ge\mu_m, ~\nu_1\ge\ldots\ge \nu_m, \text{ and}
 $$
 $$
 \mu_1\le p-m, ~\nu_1\le p-n;
 $$
 with the partial order
 $$
 (\alpha|\beta) \preceq (\mu|\nu) \text{ if } |\alpha|+|\beta| = |\mu|+|\nu| \text{ and } |\alpha|\le|\mu|. 
  $$

\begin{remark}
    We identify elements of $\Lambda$ with simple representations of the torus $T\subset G=GL(L_m|L_n)$.
\end{remark}

The category $\cc = \Rep_{Ver_p}(G, \varepsilon)$ has simple objects $L(\bl)=L(\mu|\nu)$ labeled by elements $\bl=(\mu|\nu)$ of poset $\Lambda$. The Kac modules $K(\bl) = K(\mu|\nu)$ play the role of standard objects in $\cc$. We have for each $\bl\in \Lambda$ a surjection
$$
p_\bl: K(\bl)\to L(\bl)
$$
satisfying $[\Ker p_\bl: L(\ba)]\neq 0$ only if $\ba\prec \bl$. Note that $\Hom_G(K(\bl), K(\ba)) = \Hom_B(\bl, K(\ba))$ and thus it is nonzero only if $\bl\preceq \ba$, whereas $\mathrm{End}_G(K(\bl))=\ck$. 

\begin{theorem}\label{thm_enough_proj}
    The category $\cc$ has enough projectives.
\end{theorem}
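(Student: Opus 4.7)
My plan follows the strategy used for the super group $GL(m|n)$ in \cite{Z96}. The key idea is that projectives in $\cc$ can be constructed by inducing up from the maximal torus $T = GL(L_m) \times GL(L_r)$, where projectives exist trivially because the category is semisimple. Indeed, by Corollary \ref{cor_reps_of_GL(L_n)} applied to each factor, the category $\Rep_{Ver_p}(T, \varepsilon)$ decomposes as a Deligne product of categories of the form $Ver_p(GL_k)$ (up to an $sVec$-twist), all of which are semisimple. In particular, every $T$-module is already projective.

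Next, I would define the left adjoint to the (manifestly exact) restriction functor $\Res_T^G \colon \cc \to \Rep_{Ver_p}(T, \varepsilon)$ by
$$\Ind_T^G V := Dist(G) \otimes_{Dist(T)} V.$$
To verify well-definedness, I would combine Theorem \ref{thm_PBW} with the semidirect structure $B = T \ltimes N^+$, which yields $Dist(B) \cong Dist(N^+) \otimes Dist(T)$ as a right $Dist(T)$-module (with right multiplication on the second factor). Together with $Dist(G) \cong Dist(N^-) \otimes Dist(B)$ this gives
$$\Ind_T^G V \cong Dist(N^-) \otimes Dist(N^+) \otimes V$$
as an object of $Ver_p$. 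Now $\n^+$ and $\n^-$ are abelian Lie subalgebras of $\gl(L_m \oplus L_r)$ (since the composition of two maps of the form $L_m \to L_r$, sending the rest to zero, vanishes), and they are ``purely odd'' in the $sVec$ sense (each contains a factor of $L_{p-1}$). Hence $Dist(N^\pm) = S(\n^\pm)$ reduces to an exterior-type algebra of finite length in $Ver_p$, so $\Ind_T^G V$ has finite length in $Ver_p$ whenever $V$ does. Integrability of the resulting $Dist(G)$-module then follows from Theorem \ref{thm_integrability}, as $G_0 = T_0 = \mathbb{G}_m^2$ acts with integer weights.

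Once the tensor-hom adjunction $\Hom_\cc(\Ind_T^G V, M) = \Hom_T(V, \Res_T^G M)$ is established, the rest is standard category theory: a left adjoint to an exact functor preserves projectives, so $\Ind_T^G V$ is projective in $\cc$ for every $V$. For each $\bl = (\mu|\nu) \in \Lambda$, the adjunction gives $\Hom_\cc(\Ind_T^G(\mu|\nu), L(\bl)) = \Hom_T((\mu|\nu), L(\bl)) \neq 0$ since $(\mu|\nu)$ is the highest $T$-weight of $L(\bl)$, and any nonzero map onto a simple module is surjective. This exhibits $\Ind_T^G(\mu|\nu)$ as a projective module surjecting onto $L(\bl)$, proving that $\cc$ has enough projectives.

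The main obstacle is the careful bookkeeping needed to establish the PBW isomorphism at the level of $Ver_p$-objects together with the correct right $Dist(T)$-module structure on $Dist(G)$. The semidirect nature of $B$ makes the right $Dist(T)$-action somewhat subtle, so one has to use the ordering $Dist(B) \cong Dist(N^+) \otimes Dist(T)$ (rather than the opposite) in order for right multiplication by $Dist(T)$ to act straightforwardly on the right factor without being twisted by the $T$-action on $N^+$.
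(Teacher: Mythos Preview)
Your proposal is correct and follows essentially the same approach as the paper: define $\widetilde P(\bl) = Dist(G)\otimes_{Dist(T)}\bl$, use the triangular decomposition to see it is $S(\n^-)\otimes S(\n^+)\otimes\bl$ of finite length (hence integrable), and conclude projectivity from Frobenius reciprocity together with the semisimplicity of $\Rep_{Ver_p}(T,\varepsilon)$. The paper phrases the finite-length step as ``$\n^\pm$ contains no copy of $\on$'' rather than your ``purely odd'' language, and it obtains the surjection onto $L(\bl)$ via the intermediate surjection $\widetilde P(\bl)\to K(\bl)$, but these are cosmetic differences.
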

\begin{proof}
    For any $\bl\in \Lambda$ define 
    $$
    \widetilde P(\bl) := Dist(G)\tens{Dist(T)} \bl.
    $$
    The triangular decomposition for $G$ yields the following isomorphism of objects in $Ver_p$:
    $$
    \widetilde P(\bl) \simeq Dist(N^-)\otimes Dist(N^+)\otimes \bl\simeq S(\mathfrak n^-)\otimes S(\mathfrak n^+)\otimes \bl.
    $$
    Both $S(\mathfrak n^-)$ and $S(\mathfrak n^+)$ have finite length in $Ver_p$ (since in our case $\mathfrak n^\pm\simeq L_m\otimes L_{p-n}$ does not contain a copy of $\on$), which makes $\widetilde P(\bl)$ an integrable $Dist(G)$-module.

    By Frobenius reciprocity we get
    $$
    \Hom_G(\widetilde P(\bl), -) = \Hom_T(\bl, \Res^G_T(-)).
    $$
    As all representations of $T$ are semisimple, we get that $\Hom_G(\widetilde P(\bl),-)$ is exact, and thus $\widetilde P(\bl)$ is projective.

    Moreover, there is a natural surjective homomorphism $\widetilde\pi_\bl: \widetilde P(\bl)\to K(\bl)$, and thus we get a surjection $\widetilde P(\bl)\to L(\bl)$. 
\end{proof}

Let us denote by $P(\bl)$ the indecomposable summand of $\widetilde P(\bl)$, such that the homomorphism $\widetilde \pi_\bl$ factors through $P(\bl)$:
\[ \xymatrix{ \widetilde P(\bl) \ar[rr]^{~\widetilde\pi_\bl} \ar@{->>}[rd] &&K(\bl)\\ &P(\bl)\ar@{-->}[ur]_{~\pi_\bl}} \]

Clearly, $P(\bl)$ is the projective cover of $L(\bl)$. 

Note that $\dim\Hom_G(\widetilde P(\bl), K(\bl)) = \dim\Hom_T(\bl, \Res^G_T(K(\bl))) = 1$, so we deduce that the multiplicity of $P(\bl)$ in $\widetilde P(\bl)$ is one. We get
\begin{equation}\label{eq_decomp_of_proj}
\widetilde P(\bl) = P(\bl)\oplus\bigoplus_{\ba\neq \bl} m_{\bl,\ba} P(\ba).
\end{equation}

As $Dist(G)$ is a free right $Dist(B)$-module, the induction functor $Dist(G)\tens{Dist(B)} (-) $  is exact. Since, as a $Dist(B)$-module, $Dist(B)\tens{Dist(T)} \bl$ has a filtration by representations $\ba$ of $Dist(B)$ on which $Dist(N^+)$ acts trivially (i.e.  those pulled back from a $T$-module $\ba$), we get that
$$
\widetilde P(\bl) = Dist(G)\tens{Dist (B)} (Dist(B)\tens{Dist(T)} \bl)
$$
has a filtration by Kac modules $K(\ba)$ with $\ba$ a weight of $Dist(B)\tens{Dist(T)} \bl = S(\mathfrak n^+)\otimes \bl$.

By the above, the multiplicity $d_{\bl,\ba} = [\widetilde P(\bl): K(\ba)]$ is equal to the multiplicity of $\ba$ in the (semisimple finite length) $T$-module $S(\mathfrak n^+)\otimes \bl$, i.e.
\begin{equation}\label{eq_mult_in_proj}
[\widetilde P(\bl): K(\ba)] = \dim\Hom_T( S(\mathfrak n^+)\otimes \bl, \ba) = \dim \Hom_T(\bl, S(\mathfrak n^+)^*\otimes \ba)
=\end{equation}
\[
=\dim\Hom_T(\bl, S(\mathfrak n^-)\otimes \ba) = \dim\Hom_T(\bl, \text{Res}^G_T K(\ba)) = \dim \Hom_G(\widetilde P(\bl), K(\ba)).\]

Consequently, $P(\bl)$ has a standard filtration by $K(\ba)$ satisfying
\begin{enumerate}
    \item $[P(\bl):K(\bl)] = 1$,
    \item $[P(\bl):K(\ba)]\neq 0$ only if $\bl\preceq \ba$,
    \item $[P(\bl):K(\ba)] = \dim\Hom_G(P(\bl), K(\ba)).$
\end{enumerate}
Let us prove the last part by induction. We can assume that $\ba$ is a weight of $S(\mathfrak n^+)\otimes \bl$ (which is equivalent to saying that $\bl$ is a weight of $S(\mathfrak n^-)\otimes \ba = \text{Res}^G_T K(\ba)$, since otherwise the statement holds trivially, as
$$
[P(\bl):K(\ba)] = 0 = \dim \Hom_G(P(\bl), K(\ba)) = [K(\ba):L(\bl)].
$$
By (\ref{eq_decomp_of_proj}) and (\ref{eq_mult_in_proj}), we get
\begin{equation}\label{eq_induct_proj}
[\widetilde P(\bl):K(\ba)] = [P(\bl):K(\ba)] + \sum m_{\bl,\bm} [P(\bm): K(\ba)] = 
\end{equation}
$$
=\dim\Hom(P(\bl),K(\ba))+\sum_{\bm\neq \bl} m_{\bl,\bm} \cdot \dim\Hom(P(\bm), K(\ba))
$$
Clearly, $m_{\bl, \bm}=[\widetilde P(\bl):P(\bm)]\le d_{\bl, \bm}=[\widetilde P(\bl):K(\bm)]$ and thus it is nonzero only if $\bm$ is a weight in $S(\mathfrak n^+)\otimes \bl$. So, only the weights $\bm$ in the range $\bl\preceq \bm\preceq \ba$ are contributing non-trivially to the sum above.

Let us fix $\ba$ and assume that 
$$
[P(\bm):K(\ba)] = \dim\Hom_G(P(\bm), K(\ba))
$$
for all weights $\bm$ 
satisfying 
$$
\bl \prec \bm \preceq \ba,
$$
with the base case $\bm = \ba$, for which we have
$$
[P(\ba):K(\ba)] = 1 = \dim\Hom_G(P(\ba), K(\ba)).
$$
We can now prove it for $\bm = \bl$ using equation (\ref{eq_induct_proj}).

\begin{corollary}
    $\cc$ is a \textbf{highest weight category} in the sense of Cline, Parshall, Scott \cite{CPS88}.
\end{corollary}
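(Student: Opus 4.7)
The plan is to check the axioms of Cline, Parshall, and Scott \cite{CPS88} one by one, noting that each has already been established in the preceding discussion. Recall that $\cc$ is a highest weight category with poset $\Lambda$, standards $K(\bl)$, and projective covers $P(\bl)$ provided that: (a) $\Lambda$ is interval finite; (b) each $K(\bl)$ has simple quotient $L(\bl)$ with kernel composed of $L(\ba)$ for $\ba\prec \bl$, and $\En_G(K(\bl))=\ck$; (c) $\cc$ has enough projectives; and (d) each $P(\bl)$ admits a filtration by standard objects $K(\bm)$ in which $K(\bl)$ occurs exactly once as a quotient, and every other $K(\bm)$ in the filtration satisfies $\bm\succ \bl$.

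First I would dispatch (a): the partial order on $\Lambda$ preserves the level $|\mu|+|\nu|$, and for fixed level the condition $|\alpha|\le|\mu|$ cuts out a finite set of admissible weights, so intervals in $\Lambda$ are finite. Axiom (b) is already contained in the paragraph preceding Theorem \ref{thm_enough_proj}: the surjection $p_\bl:K(\bl)\to L(\bl)$ with $[\Ker p_\bl:L(\ba)]\ne 0 \Rightarrow \ba\prec \bl$ is the defining property of $L(\bl)$, and $\En_G(K(\bl))=\ck$ was noted there as well. Axiom (c) is precisely Theorem \ref{thm_enough_proj}, which produces $P(\bl)$ as the indecomposable summand of $\widetilde P(\bl)$ projecting onto $L(\bl)$.

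For (d), I would invoke the two-step induction $\widetilde P(\bl)\simeq Dist(G)\tens{Dist(B)}(Dist(B)\tens{Dist(T)}\bl)$. The inner factor is isomorphic to $S(\mathfrak n^+)\otimes \bl$ as a $T$-module, and every weight $\bm$ appearing in this object satisfies $\bm\succeq \bl$ since the weights of $\mathfrak n^+$ are the odd positive roots $\ba_{i,j}=\epsilon_i-\delta_j$ (which strictly increase the first-block degree $|\mu|$). Exactness of induction from $B$ to $G$ then yields a $K$-filtration of $\widetilde P(\bl)$ by standards $K(\bm)$ with $\bm\succeq \bl$, with $K(\bl)$ occurring with multiplicity one as a quotient. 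Passing to the summand $P(\bl)$, the computations in \ref{eq_decomp_of_proj}--\ref{eq_induct_proj} and the accompanying induction give $[P(\bl):K(\bl)]=1$, $[P(\bl):K(\bm)]\ne 0 \Rightarrow \bm\succeq \bl$, and the BGG-type reciprocity $[P(\bl):K(\ba)]=\dim\Hom_G(P(\bl),K(\ba))$.

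The only point that requires care is verifying that the standard filtration passes from $\widetilde P(\bl)$ to its indecomposable summand $P(\bl)$ with the correct multiplicities; this is the main content of the inductive argument already carried out, which is why the corollary follows essentially by bookkeeping once the ingredients are assembled. $\blacksquare$
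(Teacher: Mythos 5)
Your proposal is correct and follows essentially the same route as the paper: the corollary is drawn immediately from the preceding results (enough projectives via $\widetilde P(\bl)$, the standard filtration of $P(\bl)$ with $[P(\bl):K(\bl)]=1$ and $[P(\bl):K(\ba)]\neq 0$ only for $\ba\succeq\bl$, and the simple quotients $L(\bl)$ with $\En_G(K(\bl))=\ck$), and you have merely made the axiom-by-axiom bookkeeping explicit. The only point worth tightening is interval-finiteness: one should note that both bounds $|\l_\mu|\le|\alpha_\mu|\le|\bm_\mu|$ together with admissibility (which pins $\alpha_1-\alpha_m\le p-m$ at fixed degree) are what make each interval finite, but this is a minor refinement of what you wrote.
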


\begin{corollary}[BGG reciprocity]\label{cor_bgg}
    For any $\bl, \ba\in \Lambda$ we get
    $$
    [P(\bl):K(\ba)] = [K(\ba):L(\bl)].
    $$
\end{corollary}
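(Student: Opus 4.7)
The plan is to combine the third bullet of the standard filtration properties of $P(\bl)$ just established, namely
\[
[P(\bl):K(\ba)] = \dim\Hom_G(P(\bl), K(\ba)),
\]
with the general principle that $\Hom$ out of a projective cover computes composition-factor multiplicities. So the proof is essentially immediate from what has already been proved; the task is just to spell out the $\Hom$-computation on the right-hand side.

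First I would record that since $P(\bl)$ is the projective cover of the simple object $L(\bl)$, we have
\[
\dim\Hom_G(P(\bl), L(\bm)) = \dim\Hom_G(L(\bl), L(\bm)) = \delta_{\bl,\bm}
\]
(using $\En_G(L(\bl))=\ck$, which holds since $L(\bl)$ is a highest-weight simple module in our highest-weight category). Then, because $P(\bl)$ is projective, the functor $\Hom_G(P(\bl), -)$ is exact, so it is additive on short exact sequences in $\cc$.

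Next, I would take any composition series of the Kac module $K(\ba)$ in $\cc$ and apply $\Hom_G(P(\bl), -)$ to it. Additivity of dimensions on the successive subquotients, combined with the previous step, yields
\[
\dim\Hom_G(P(\bl), K(\ba)) \;=\; \sum_{\bm}\,[K(\ba):L(\bm)]\cdot \dim\Hom_G(P(\bl), L(\bm))\;=\; [K(\ba):L(\bl)].
\]
(Here we need $K(\ba)$ to have finite length in $\cc$, which is clear from its description $K(\ba)\simeq S(\mathfrak n^-)\otimes\ba$ together with the fact that $S(\mathfrak n^-)$ has finite length in $Ver_p$, as noted in the proof of Theorem~\ref{thm_enough_proj}.)

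Combining this with the identity $[P(\bl):K(\ba)] = \dim\Hom_G(P(\bl), K(\ba))$ established just before the statement of the corollary gives
\[
[P(\bl):K(\ba)] \;=\; [K(\ba):L(\bl)],
\]
which is precisely BGG reciprocity. There is no real obstacle here; the main content has already been invested in establishing the highest-weight structure and the $\Hom$-formula for multiplicities in the standard filtration, and the remaining step is the standard projective-cover argument.
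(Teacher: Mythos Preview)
Your proof is correct and follows exactly the same approach as the paper: combine the already-established identity $[P(\bl):K(\ba)] = \dim\Hom_G(P(\bl), K(\ba))$ with the standard fact that $\dim\Hom_G(P(\bl), K(\ba)) = [K(\ba):L(\bl)]$ for a projective cover. The paper's proof is just the one-line chain of equalities, and you have simply spelled out the justification for the second equality in more detail.
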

\begin{proof}
    We get
    $$
    [K(\ba):L(\bl)] = \dim\Hom_G(P(\bl),K(\ba)) = [P(\bl):K(\ba)].
    $$
\end{proof}
\begin{corollary}\label{cor_exts_standards}
    We have 
    $$
    Ext^i(K(\bl), K(\bm))\neq 0 \text{ for some } i>0 
    $$
    only if $\bl\prec \bm$.
\end{corollary}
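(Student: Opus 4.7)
The approach follows the standard Ext-vanishing pattern for standard objects in a highest weight category, by induction on $i$. The key input is the kernel $Z(\bl):=\ker(P(\bl)\twoheadrightarrow K(\bl))$: from the standard filtration of $P(\bl)$ computed in the proof of Theorem \ref{thm_enough_proj}, the only $\ba$ which is $\preceq$-equivalent to $\bl$ and which contributes to $[P(\bl):K(\ba)]$ is $\ba=\bl$ itself, with multiplicity one. Hence $Z(\bl)$ inherits a standard filtration whose subquotients $K(\ba)$ all satisfy $\ba\succ\bl$ strictly.

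For the base case $i=1$, I consider a nonsplit extension $0\to K(\bm)\to E\to K(\bl)\to 0$ and study the $\bl$-isotypic subspace $E^\bl=\Hom_T(\bl,E)$. Writing $\bl=(\mu_\bl|\nu_\bl)$ and $\bm=(\mu_\bm|\nu_\bm)$, the weights of $K(\bl)$ and $K(\bm)$ have first $T_{0}$-coordinate bounded by $|\mu_\bl|$ and $|\mu_\bm|$ respectively, since $\n^-$ has $T_{0}$-character $(-1,+1)$. Under $\bl\not\prec\bm$ (either $\bl,\bm$ in different blocks, or in the same block with $|\mu_\bl|\ge|\mu_\bm|$), applying $\n^+$ to $E^\bl$ yields weights of first $T_{0}$-coordinate $|\mu_\bl|+1$, exceeding both bounds, so $\n^+$ annihilates $E^\bl$. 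Frobenius reciprocity then converts a $T$-embedding $\bl\hookrightarrow E$ into a splitting $G$-morphism $K(\bl)\to E$. The degenerate case $\bl=\bm$ is handled similarly by constructing an isomorphism $K(\bl)^{\oplus 2}\xrightarrow{\sim} E$ via comparison of composition multiplicities.

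For the inductive step $i\ge 2$, applying $\Hom(-,K(\bm))$ to $0\to Z(\bl)\to P(\bl)\to K(\bl)\to 0$ and using the projectivity of $P(\bl)$ yields the isomorphism
$$Ext^i(K(\bl),K(\bm))\cong Ext^{i-1}(Z(\bl),K(\bm)).$$
Filtering $Z(\bl)$ by its standard subquotients and iterating the long exact sequence reduces the vanishing of the right-hand side to $Ext^{i-1}(K(\ba),K(\bm))=0$ for $\ba\succ\bl$ strict. The combinatorial key is that $\ba\succ\bl$ together with $\bl\not\prec\bm$ forces $\ba\not\prec\bm$: in the same block $|\mu_\ba|>|\mu_\bl|\ge|\mu_\bm|$ gives $\ba\succeq\bm$, and in different blocks $\ba$ and $\bm$ are incomparable. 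The inductive hypothesis then closes the argument.

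The main subtlety I expect is in the base case when $\bl$ and $\bm$ are distinct but $\preceq$-equivalent. There one must check that the top $T_{0}$-graded component of $K(\bm)\simeq S(\n^-)\otimes\bm$ is the one-dimensional piece $\bm$ itself (since $\n^-$ contributes only to strictly lower $T_{0}$-degrees), so $\bl\neq\bm$ implies $\bl$ is not a $T$-weight of $K(\bm)$ at all and the first-coordinate comparison above proceeds verbatim. Beyond this verification the argument is routine homological algebra in a highest weight category.
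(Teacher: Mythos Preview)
Your argument is correct, but the paper's route is shorter and more uniform. The paper does not separate $i=1$ from $i\ge 2$: it simply iterates your observation about $Z(\bl)$ to build a full projective resolution
$$\cdots\to P_2\to P_1\to P(\bl)\to K(\bl)\to 0$$
in which every $P_i$ with $i\ge 1$ is a direct sum of $P(\ba)$'s with $\ba\succ\bl$ strictly. Then $Ext^i(K(\bl),K(\bm))$ is a subquotient of $\Hom_G(P_i,K(\bm))$, and $\Hom_G(P(\ba),K(\bm))\neq 0$ forces $[K(\bm):L(\ba)]\neq 0$, hence $\ba\preceq\bm$; combined with $\ba\succ\bl$ this gives $\bl\prec\bm$ for every $i\ge 1$ at once.

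In particular, your hands-on weight analysis for the base case $i=1$ (Frobenius reciprocity plus the $T_0$-grading) is correct but unnecessary: the same dimension shift you use for $i\ge 2$ already covers $i=1$, since $Ext^1(K(\bl),K(\bm))$ is a quotient of $\Hom(Z(\bl),K(\bm))$, and any nonzero map $Z(\bl)\to K(\bm)$ restricts (via the standard filtration) to a nonzero $K(\ba)\to K(\bm)$ for some $\ba\succ\bl$, forcing $\ba\preceq\bm$ by the $\Hom$ vanishing already recorded before the corollary. What your direct $Ext^1$ argument buys is a self-contained proof that does not rely on knowing the standard filtration of $Z(\bl)$ is by $K(\ba)$'s with $\ba$ \emph{strictly} above $\bl$; the paper's route uses that strictness (equivalently, that $\bl$ occurs as a $T$-weight of $K(\ba)$ only in the degree-zero piece, so $|\mu_\ba|=|\mu_\bl|$ forces $\ba=\bl$).
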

\begin{proof}
Recall that
    $$
    [P(\bl):K(\bm)]\neq 0 ~\Rightarrow~\bl\preceq \bm,
    $$
    with $[P(\bl):K(\bl)]=1$.
    
The kernel $N_1$ of the projection $P(\bl)\to K(\bl)$ has standard filtration by $K(\ba)$'s with $\ba\in A_1,$ where $A_1$ is some finite multiset of weights satisfying $\ba\succ\bl$ for all $\ba\in A_1$. $N_1$ can then be covered by $P_1=\bigoplus_{\ba\in A_1} P(\ba)$, and the kernel $N_2$ of the map $P_1\to N_1$ has standard filtration by $K(\ba)$'s with $\ba\in A_2$ for some finite multiset of weights $A_2$. For each weight $\ba_2$ in $A_2$ there must exist a weight $\ba_1\in A_1$ so that $\ba_2\succeq \ba_1$, and consequently $\ba_2\succ\bl$. We then put $P_2=\bigoplus_{\ba\in A_2} P(\ba)$, and so on. We obtain a projective resolution 
   $$
    \ldots\to P_3\to P_2\to P_1\to P_0=P(\bl)\to K(\bl)\to 0
    $$
of $K(\bl)$, where the positive degree terms  consist of direct sums of $P(\ba)$'s with $\bl\prec \ba$. The result then follows immediately from the fact that
$$\Hom_G(P(\ba),K(\bm))\neq 0 \text{ only if } [K(\bm):L(\ba)]\neq 0, 
$$ and thus $\bm\succeq \ba\succ \bl$.
\end{proof}

\begin{corollary}\label{cor_irred_kac_is_proj}
    Suppose $K(\bl)$ is irreducible, i.e. $K(\bl)= L(\bl)$. Then it is also projective: $K(\bl)= P(\bl)$.
\end{corollary}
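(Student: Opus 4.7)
The plan is to show that the projection $\pi_\bl\colon P(\bl)\twoheadrightarrow K(\bl)$ is an isomorphism by controlling the standard filtration of $P(\bl)$ via BGG reciprocity. Recall from the proof of Theorem \ref{thm_enough_proj} that $P(\bl)$ admits a filtration by Kac modules with $[P(\bl):K(\bl)]=1$ and $[P(\bl):K(\ba)]\ne 0$ only if $\bl\preceq\ba$; moreover $[P(\bl):K(\ba)] = [K(\ba):L(\bl)]$ by Corollary \ref{cor_bgg}. Since $K(\bl)=L(\bl)$ is simple, $P(\bl)=K(\bl)$ will follow once we know that $[K(\ba):L(\bl)]=0$ for every $\ba\neq\bl$.

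To prove this vanishing, I would exploit the $T$-weight decomposition of $K(\bm) = \bigoplus_\l (\l^*|\l^t)\otimes\bm$ that was established at the start of Section \ref{sec_super}, where $\l$ ranges over partitions in the $m\times n$ rectangle. Define the \emph{even degree} of a weight $(\mu'|\nu')$ to be $|\mu'|$. The key observation is that tensoring by $V^{(m)}_{\l^*}$ in $Ver_p(GL_m)$ shifts the $GL_m$-degree by $|\l^*|=-|\l|$ and tensoring by $V^{(n)}_{\l^t}$ shifts the $GL_n$-degree by $+|\l|$, because the $\mathbb G_m$-center of each factor of $T$ acts by a character. Consequently every $T$-weight appearing in the $\l$-summand of $K(\ba)$ (with $\ba=(\mu|\nu)$) has even degree exactly $|\mu|-|\l|$. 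In particular the even degrees of weights of $K(\ba)$ lie in $\{|\mu|,|\mu|-1,\dots,|\mu|-mn\}$, and the top even-degree piece $|\mu|$ consists solely of $\ba$ (the $\l=\emptyset$ summand), occurring with multiplicity one.

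Now suppose, for contradiction, that $[K(\ba):L(\bl)]>0$ for some $\ba\neq\bl$. Writing $\bl=(\alpha|\beta)$, the weights of $L(\bl)=K(\bl)$ must appear among those of $K(\ba)$, and their even degrees span $\{|\alpha|,|\alpha|-1,\dots,|\alpha|-mn\}$. Comparing the two intervals forces $|\alpha|=|\mu|$. But then the highest weight $\bl$ of $L(\bl)$ sits inside the top even-degree piece of $K(\ba)$, which we just identified as the single weight $\ba$; hence $\bl=\ba$, contradicting $\ba\neq\bl$. Therefore $[K(\ba):L(\bl)]=0$ for all $\ba\neq\bl$, the standard filtration of $P(\bl)$ collapses to a single copy of $K(\bl)$, and the surjection $\pi_\bl$ is an isomorphism.

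The main obstacle is really just the correct bookkeeping of the grading: one has to observe that the $T$-decomposition of $K(\bm)$ naturally carries a $\Z$-grading by even degree in which the top piece is one-dimensional (as a simple $T$-module). Once that grading is in place, the rest of the argument is a straightforward pinching between the two extreme even degrees of $L(\bl)$ and $K(\ba)$, and no further homological input beyond BGG reciprocity is needed.
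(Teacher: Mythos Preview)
Your proof is correct and follows essentially the same route as the paper. Both arguments use BGG reciprocity to reduce to $[K(\ba):L(\bl)]=0$ for $\ba\neq\bl$, and then exploit the $\Z$-grading on Kac modules (your ``even degree'' is exactly the paper's grading $q\mapsto K(\ba)^q$, shifted by $|\mu|$): since $L(\bl)=K(\bl)$ occupies the full $mn+1$ graded pieces while $K(\ba)$ has the same length, the intervals must coincide and the top pieces force $\bl=\ba$. The paper compresses this into a single parenthetical remark about all Kac modules having ``the same size''; your version spells out the interval-pinching explicitly, which is clearer but not different in substance.
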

\begin{proof}
    We have $[P(\bl): K(\ba)] = [K(\ba):L(\bl)]$. Since $L(\bl)=K(\bl)$, the right-hand-side multiplicity is nonzero only if $\ba = \bl$ (because all Kac modules are $\mathbb Z$-graded and have the same size = number of nonzero graded components $ = m\cdot n$). So, $P(\bl)=K(\bl)$.
\end{proof}

The following statement is completely analogous to the case of the super group $GL(m|n)$ (the proof in the case of quasi-reductive super groups can be found in (\cite{S11}, Section 9)).
\begin{theorem}
    Projective objects $P(\bl)$ are also injective. Moreover, $P(\bl)$ is the injective hull of $L(\bl)$.
\end{theorem}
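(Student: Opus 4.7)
The plan is to follow the strategy of Serganova \cite{S11}, Section 9, for quasi-reductive supergroups, adapted to our setting. The argument has three main steps: constructing a contravariant duality on $\cc$, showing that objects with dual-Kac filtrations are injective, and identifying $P(\bl)$ with the injective hull $I(\bl)$ via the Frobenius structure of $G$.

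First, I would construct a contravariant self-duality $\#\colon \cc\to \cc^{\operatorname{op}}$ satisfying (i) $L(\bl)^\#\cong L(\bl)$ for every $\bl\in\Lambda$ and (ii) the ``costandard'' object $\nabla(\bl):=K(\bl)^\#$ has simple socle $L(\bl)$ and the same $T$-character as $K(\bl)$. This is built from a Chevalley-type anti-automorphism $\sigma$ of $G$ coming from the braiding-induced identification $X\otimes X^*\simeq X^*\otimes X\simeq (X\otimes X^*)^*$ for the self-dual object $X=L_m\oplus L_n$ in $Ver_p$; this $\sigma$ preserves $T$ while interchanging $N^+$ and $N^-$. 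Set $M^\#:=(M^*)^\sigma$. Tracking weights through dualization and the action of $\sigma$ shows that $L(\bl)^\#$ is simple with highest weight $\bl$, giving (i), and the same calculation on the Kac-module filtration gives (ii).

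Second, I would prove that any $\nabla$-filtered module is injective in $\cc$, by establishing $\operatorname{Ext}^1_\cc(K(\bl), \nabla(\bm))=0$ for all $\bl,\bm\in\Lambda$ (this is dual to Corollary \ref{cor_exts_standards} and follows by a standard analysis of extensions using the head of $K(\bl)$ and socle of $\nabla(\bm)$). Then applying $\#$ to the $K$-filtration of $P(\bl)$ constructed in the proof of Theorem \ref{thm_enough_proj} yields a $\nabla$-filtration on $P(\bl)^\#$, so $P(\bl)^\#$ is injective. Its socle equals the head of $P(\bl)$, which is $L(\bl)$, so $P(\bl)^\#\cong I(\bl)$, the injective hull of $L(\bl)$.

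Third, I would show $P(\bl)\cong I(\bl)$, which finishes the proof. Consider the ``Levi'' subgroup $M:=GL(L_m)\times GL(L_n)\subset G$, whose classical part coincides with $G_{0}$. By PBW (Theorem \ref{thm_PBW}), $\operatorname{Dist}(G)\simeq \operatorname{Dist}(M)\otimes S(\n^+)\otimes S(\n^-)$, and under the super convention of Section \ref{sec_super} the object $\n^\pm\simeq (V^{(m)})^*\boxtimes V^{(n)}\boxtimes I$ is purely odd. Hence $S(\n^\pm)$ is a super exterior algebra on finitely many odd generators, which is Frobenius with one-dimensional socle $\bigwedge^{\operatorname{top}}\n^\pm$. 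This makes $\operatorname{Dist}(M)\subset\operatorname{Dist}(G)$ a Frobenius extension, so $\operatorname{Ind}^G_M(V)\cong \operatorname{CoInd}^G_M(V\otimes\chi)$ for a one-dimensional character $\chi$ of $M$. Since $M$-representations in $Ver_p$ are semisimple (by Corollary \ref{cor_reps_of_GL(L_n)} applied to each factor), every module induced from $M$ is simultaneously projective and injective in $\cc$. Each $P(\bl)$ arises as an indecomposable direct summand of some $\operatorname{Ind}^G_M(S)$ for a simple $M$-module $S$, and is therefore injective. Being indecomposable with simple head $L(\bl)$, it must equal $I(\bl)$.

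The main obstacle will be the Frobenius-extension step: one must carefully check that $S(\n^\pm)$ is genuinely a Frobenius algebra object in $Ver_p$ (this rests crucially on the super interpretation of $\n^\pm$ as odd, since without it $S(\n^\pm)$ could be infinite-dimensional in $\operatorname{Ind}(Ver_p)$), and track the character twist $\chi$ precisely enough to ensure the identification $P(\bl)=I(\bl)$ rather than $P(\bl)=I(\bl')$ for some shifted weight $\bl'$. An alternative, duality-only finish would instead show $P(\bl)^\#\cong P(\bl)$ directly by matching Jordan--Hölder multiplicities using $\#$ together with BGG reciprocity (Corollary \ref{cor_bgg}) and uniqueness of projective covers.
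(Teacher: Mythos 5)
Your step 3 is, at its core, the paper's own argument: for $G=GL(L_m|L_n)$ your ``Levi'' $M=GL(L_m)\times GL(L_n)$ coincides with the torus $T$, and the paper likewise shows $Dist(G)$ is a Frobenius extension of $Dist(T)$ (using that the top component $S^{2mn}(\n^+\oplus\n^-)\simeq\on$ is a \emph{trivial central} $Dist(T)$-module, so in fact the twist $\chi$ you worry about is trivial), whence induction and coinduction from $T$ coincide and every $\widetilde P(\ba)$ is injective. However, your concluding sentence ``Being indecomposable with simple head $L(\bl)$, it must equal $I(\bl)$'' is a genuine gap, and it is not repaired by tracking $\chi$. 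An indecomposable injective is the injective hull of its \emph{socle}, not of its head; at this point you only know $P(\bl)\cong I(L')$ for $L'=\mathrm{soc}\,P(\bl)$, and nothing yet forces $L'\cong L(\bl)$. The paper spends the entire second half of its proof on precisely this identification: it first compares $T$-characters via
$$
\dim\Hom_T(\ba,L)=[\widetilde P(\ba):P(L)]=[\widetilde P(\ba):I(L')]=\dim\Hom_T(\ba,L'),
$$
deduces $L'=\on$ when $L=\on$, and then uses the tensor trick $\Hom_G(P(L),L')=\Hom_G(P(L)\otimes(L')^*,\on)=\Hom_G(\on,P(L)\otimes(L')^*)=\Hom_G(L',I(L'))=\ck$ to conclude $L=L'$. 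Some such argument must be supplied.

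Your steps 1--2 are a different route from the paper and are not needed for step 3, but note they also do not close the gap by themselves: a duality $\#$ fixing simples gives $P(\bl)^\#\cong I(\bl)$, which is a statement about $I(\bl)$'s costandard filtration, not the assertion $P(\bl)\cong I(\bl)$ (since $\#$ is contravariant, $P(\bl)^\#\cong P(\bl)$ is exactly what needs proving and does not follow from matching multiplicities alone). Moreover, constructing $\#$ in $Ver_p$ so that $L(\bl)^\#\cong L(\bl)$ on the nose requires checking that the composite of dualization with the braiding-induced anti-automorphism acts as the identity on isomorphism classes of simple $T$-modules; since $V_\l^*\simeq V_{\l^*}$ with $\l^*\neq\l$ in general, this is not automatic and would need to be verified. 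I would recommend dropping steps 1--2 and completing step 3 with the socle identification as in the paper.
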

\begin{proof}
Define $\mathrm{Ind} (\bl) = \widetilde P(\bl):=Dist(G)\tens{Dist(T)}\bl$ (the projective module defined in Theorem \ref{thm_enough_proj}) and
$$
\widetilde I(\bl) =\mathrm{Coind} (\bl) := \underline{\Hom}_{T}(Dist(G), \bl),
$$
(where $\underline{\Hom}_{T}$ denotes the internal hom in the category of $Dist(T)$-modules) the right adjoint functor to restriction applied to $\bl$. By definition (and Frobenius reciprocity),
$$
\Hom_G(-,\widetilde I(\bl))=\Hom_T(\Res(-),\bl),
$$
so $\widetilde I(\bl)$ is injective.

Let $I(\bl)$ denote the injective hull of $L(\bl)$. Then $I(\bl)$ is an indecomposable summand of $\widetilde I(\bl)$.

Let us first prove that $\widetilde P(\bl)\simeq \widetilde I(\bl)$.  

 Put  $S=S(\mathfrak n^+\oplus \mathfrak n^-)$, so that $Dist(G)\simeq S\otimes Dist(T)$ as a right $Dist(T)$-module (similarly $Dist(G)\simeq Dist(T)\otimes S$ as a left $Dist(T)$-module). 

We have the highest degree component
$$
S^{2mn}(\mathfrak n^+\oplus \mathfrak n^-)\simeq \on,
$$
which is a trivial $Dist(T)$-module (being trivial also implies that it lies in the centralizer of $Dist(T)$ in $Dist(G)$). Let $\pi: S\to S^{2mn}(\mathfrak n^+\oplus\mathfrak n^-)=\on$ be the natural projection.

We claim that $Dist(G)$ is a Frobenius extension of $Dist(T)$. Indeed $Dist(G)$ is a free finitely generated right $Dist(T)$-module and the map $\pi$ induces a homomorphism
$$
\widetilde \pi=\pi\otimes id_{Dist(T)}:Dist(G)\simeq S\otimes Dist(T)\to Dist(T)
$$
of $Dist(T)$-$Dist(T)$-bimodules (note that it is automatically a homomorphism of right $Dist(T)$-modules; and it commutes with the left multiplication by $Dist(T)$ because  $S^{2mn}(\mathfrak n^+\oplus \mathfrak n^-)$ lies in the centralizer of $Dist(T)$). 

This map defines a nondegenerate\footnote{It comes from the nondegenerate pairing on $S(\mathfrak n^+\oplus \mathfrak n^-)$ defined via $$S^k(\mathfrak n^+\oplus \mathfrak n^-)\otimes S^{2mn-k}(\mathfrak n^+\oplus \mathfrak n^-)\to S^{2mn}(\mathfrak n^+\oplus \mathfrak n^-)=\on$$} pairing  
$$
\langle-,-\rangle: Dist(G)\otimes Dist(G)\xrightarrow{m} Dist(G)\xrightarrow{\widetilde\pi} Dist(T)
$$
on $Dist(G)$ after the composition with the multiplication map $m$. Which, in turn, provides an isomorphism between $Dist(G)$ and $\underline{\Hom}_{T}(Dist(G), Dist(T))$ as $Dist(G)$-$Dist(T)$-bimodules (making $Dist(G)$ a self-injective extension of $Dist(T)$). 

It is then a well-known fact that the induction ($\mathrm{Ind}$) and coinduction ($\mathrm{Coind}$) functors coincide.

Now  $\widetilde P(\ba)=P(\bl_1)^{m_1}\oplus\ldots \oplus P(\bl_k)^{m_k} = I(\bl_1')^{m_1}\oplus\ldots\oplus I(\bl_k')^{m_k}=\widetilde I(\ba)$ with $I(\bl_i')\simeq P(\bl_i)$. Thus for  any simple representation $L$ of $G$ there exists some simple $L'$ with $P(L)\simeq I(L')$. We have for any simple $\ba\in \Rep T$:
$$
\dim \Hom_T(\ba, L)= \dim \Hom_G(\widetilde P(\ba), L) = [\widetilde P(\ba):P(L)] =
$$
$$
=[\widetilde P(\ba): I(L')]= \dim \Hom_G(L', \widetilde P(\ba))  = \dim \Hom_T(L', \ba)=\dim \Hom(\ba, L').
$$
Thus $L$ and $L'$ have the same $T$-character. In particular, when $L=\on$ is the trivial representation of $G$ we get that $L'=L=\on$. So for any projective module $P$,
$$
\dim \Hom_G(P,\on)=\dim \Hom_G(\on, P).
$$

Now for a general simple $G$-module $L$ we get that $P(L)\otimes (L')^* \simeq I(L')\otimes (L')^*$ is projective and 
$$
\dim \Hom_G(P(L), L')  = \dim \Hom_G(P(L)\otimes (L')^*, \on)=
$$
$$
=\dim \Hom_G(\on, P(L)\otimes (L')^*) =\dim \Hom_G(L', I(L'))=1,
$$
so $L=L'$, and thus $P(\bl)\simeq I(\bl)$ for any weight $\bl$.

\end{proof}

\begin{corollary}
    $\cc$ is a Frobenius category.
\end{corollary}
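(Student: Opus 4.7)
The plan is essentially to unpack definitions: a Frobenius category is an abelian (or exact) category with enough projectives and enough injectives in which the two classes coincide. Almost all the work has already been done in the preceding theorem, so the proof should be a short verification.

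First, I would record that $\cc$ has enough projectives, which is Theorem \ref{thm_enough_proj}. For enough injectives, I would invoke the isomorphism $\widetilde P(\bl)\simeq \widetilde I(\bl)$ established in the proof of the previous theorem: since $\widetilde I(\bl)=\mathrm{Coind}(\bl)$ is injective by adjunction, and any object $M\in \cc$ embeds into $\widetilde I(M|_T)$ via the unit of the (restriction, coinduction) adjunction, this gives an injective cogenerator for $\cc$.

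Next, I would show that the classes of (indecomposable) projective and injective objects coincide. The previous theorem proves the inclusion one way: each indecomposable projective $P(\bl)$ equals the injective hull $I(\bl)$. For the reverse inclusion, I would use that $\cc$ is locally finite, so every indecomposable injective is the injective hull of a unique simple $L(\bl)$ and hence equals $I(\bl)=P(\bl)$. Passing to arbitrary (possibly infinite) direct sums, the classes of projective and injective objects coincide.

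There is no real obstacle here; the statement is a formal consequence of the previous theorem together with the standard fact that in a locally finite abelian category with enough projectives, indecomposable injectives are indexed by simples via the injective hull. The only thing worth stating cleanly is the enough-injectives claim, which follows from the natural embedding $M\hookrightarrow \mathrm{Coind}(\mathrm{Res}(M))$ for the Frobenius extension $Dist(G)\supset Dist(T)$ identified in the proof of the preceding theorem.
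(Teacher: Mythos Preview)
Your proposal is correct and matches the paper's approach: the paper states this corollary with no proof, treating it as an immediate consequence of the preceding theorem (that $P(\bl)\simeq I(\bl)$) together with Theorem \ref{thm_enough_proj}. You have simply spelled out the implicit verification that $\cc$ has enough injectives and that the projective and injective classes coincide.
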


\begin{corollary}
    Both socle and cosocle of $P(\bl)$ are isomorphic to $L(\bl)$.
\end{corollary}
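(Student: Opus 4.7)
The plan is to derive both statements directly from the preceding theorem together with the universal properties defining projective covers and injective hulls; no further computation is required.

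For the cosocle, recall that by construction $P(\bl)$ was defined as the (unique up to isomorphism) indecomposable summand of $\widetilde P(\bl)$ through which the surjection $\widetilde\pi_\bl: \widetilde P(\bl)\to K(\bl)\to L(\bl)$ factors. This makes $P(\bl)$ the projective cover of $L(\bl)$, and so the induced map $\pi_\bl : P(\bl) \twoheadrightarrow L(\bl)$ is essential. First I would note that any maximal submodule $N\subset P(\bl)$ gives a simple quotient $P(\bl)/N$, which must be $L(\bl)$: otherwise $N$ would not contain the kernel of $\pi_\bl$, and the composition $P(\bl)/N \oplus L(\bl)$ argument (equivalently, Nakayama/essentiality of the cover) would contradict indecomposability of $P(\bl)$. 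Hence every maximal submodule of $P(\bl)$ contains $\Ker\pi_\bl$, so $\mathrm{cosoc}(P(\bl)) = P(\bl)/\mathrm{rad}(P(\bl)) = L(\bl)$.

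For the socle, the preceding theorem established an isomorphism $P(\bl)\simeq I(\bl)$, where $I(\bl)$ denotes the injective hull of $L(\bl)$. By definition of injective hull, the embedding $L(\bl)\hookrightarrow I(\bl)$ is essential, which means that every nonzero submodule of $I(\bl)$ intersects $L(\bl)$ nontrivially. Since $L(\bl)$ is simple, this forces $L(\bl)\subset \mathrm{soc}(I(\bl))$; conversely, any simple submodule of $I(\bl)$ must intersect $L(\bl)$ nontrivially and hence coincide with it. Therefore $\mathrm{soc}(P(\bl)) = \mathrm{soc}(I(\bl)) = L(\bl)$.

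There is no real obstacle here: the content of the statement is entirely contained in the preceding theorem that $P(\bl)\simeq I(\bl)$. The corollary is essentially a bookkeeping remark recording that an indecomposable object which is simultaneously the projective cover and the injective hull of $L(\bl)$ has both its top and its socle isomorphic to $L(\bl)$.
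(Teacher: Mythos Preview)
Your proof is correct and takes essentially the same approach as the paper: the paper states this corollary without proof, treating it as an immediate consequence of $P(\bl)$ being the projective cover of $L(\bl)$ (by construction) together with the preceding theorem identifying $P(\bl)$ with the injective hull $I(\bl)$. Your write-up simply spells out the standard argument that the paper leaves implicit.
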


\begin{corollary}\label{cor_lowest_weight}
    Suppose $\bm$ is maximal among weights $\ba$, for which $[P(\bl):K(\ba)]\neq 0$. Then such $\bm$ is unique and, moreover, the lowest weight of $L(\bl)$ is 
    $$
    \bm-\bb,
    $$
    (which answers Question \ref{quest_1} in case of $G=GL(L_m|L_n)$ as long as we know $\bm$).

    By necessity, $\bm$ is the highest weight of $P(\bl)$. 
\end{corollary}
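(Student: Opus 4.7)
The central input is the observation that every Kac module $K(\ba)$ has simple socle, equal to the $G$-submodule generated by its one-dimensional lowest-weight $T$-eigenspace at weight $\ba-\bb$. Indeed, viewing $K(\ba)=S(\mathfrak n^-)\otimes\ba$ as a $T$-module, every $T$-weight of $K(\ba)$ has the form $\ba-\sum c_{ij}\ba_{i,j}$ with $c_{ij}\in\{0,1\}$, and any nonzero submodule $N\subset K(\ba)$ must contain the unique lowest-weight piece $S^{mn}(\mathfrak n^-)\otimes\ba$ (a simple $T$-subobject of weight $\ba-\bb$), because iterated application of $\mathfrak n^-$ to any weight vector of $N$ eventually lands in this top graded component.

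Next, let $\{\bm_1,\ldots,\bm_k\}$ be the distinct maximal elements among $\{\ba\mid [P(\bl):K(\ba)]\neq 0\}$, with multiplicities $d_i:=[P(\bl):K(\bm_i)]$. By Corollary~\ref{cor_exts_standards}, $\mathrm{Ext}^1(K(\bm_i),K(\bm_j))=0$ for all $i,j$, since none of the $\bm_i$ is strictly below any other. The standard reordering available in highest weight categories then provides a $\Delta$-filtration of $P(\bl)$ in which the bottom $d:=d_1+\cdots+d_k$ steps realize all copies of the $K(\bm_i)$, and since the successive extensions all split, the bottom piece is a genuine direct summand
$$M:=\bigoplus_{i=1}^k K(\bm_i)^{\oplus d_i}\;\hookrightarrow\;P(\bl).$$

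By the first paragraph, the socle of each $K(\bm_i)$ is a single simple summand, and the inclusion $K(\bm_i)\hookrightarrow P(\bl)$ sends this socle into $\operatorname{soc} P(\bl)=L(\bl)$ (simple, because $P(\bl)$ is the injective hull of $L(\bl)$). Hence $\operatorname{soc} K(\bm_i)\simeq L(\bl)$ for each $i$, and $\operatorname{soc}(M)\simeq L(\bl)^{\oplus d}$ embeds into the simple module $L(\bl)$, forcing $d=1$. This gives $k=1$ and $d_1=1$; writing $\bm:=\bm_1$, we have the asserted uniqueness, multiplicity $[P(\bl):K(\bm)]=1$, and an inclusion $K(\bm)\hookrightarrow P(\bl)$ whose socle is $L(\bl)$. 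Reading off the lowest weight of this socle shows that the lowest weight of $L(\bl)$ equals the lowest weight of $K(\bm)$, namely $\bm-\bb$.

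Finally, that $\bm$ is the highest $T$-weight of $P(\bl)$ is immediate: every $T$-weight of $P(\bl)$ occurs in some $K(\ba)$ of the $\Delta$-filtration, hence is $\preceq\ba\preceq\bm$, while $\bm$ itself appears as the top of the unique copy of $K(\bm)$. The main obstacle in the argument is Step~1, justifying that all maximal Kac modules can simultaneously be realized as an honest direct summand at the bottom of a $\Delta$-filtration; this rests on the $\mathrm{Ext}^1$-vanishing of Corollary~\ref{cor_exts_standards}, without which one would be forced into a more delicate socle-layer analysis.
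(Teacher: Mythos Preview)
Your proof is correct and follows essentially the same route as the paper: maximality of $\bm$ together with the $\mathrm{Ext}^1$-vanishing of Corollary~\ref{cor_exts_standards} forces $K(\bm)$ to sit as a submodule of $P(\bl)$, and since $P(\bl)$ has simple socle $L(\bl)$ (being the injective hull), uniqueness and $\operatorname{soc}K(\bm)=L(\bl)$ follow, giving the lowest weight $\bm-\bb$. One small caveat: your description of the $T$-weights of $K(\ba)$ as ``one-dimensional eigenspaces'' of the form $\ba-\sum c_{ij}\ba_{i,j}$ with $c_{ij}\in\{0,1\}$ imports classical $\mathfrak{gl}(m|n)$ language that is not literally accurate here (the torus $T$ is not classical and its weights are simple $T$-modules, not one-dimensional eigenspaces); the fact you actually need, established at the end of Section~\ref{sec_super}, is that every nonzero submodule of $K(\ba)$ contains the top graded piece $S^{mn}(\mathfrak n^-)\otimes\ba$, a simple $T$-module of weight $\ba-\bb$.
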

\begin{proof}
    By Corollary \ref{cor_exts_standards}, $K(\bm)$ must be a submodule of $P(\bl)$. Since $P(\bl)$ has an irreducible socle, we find that $\bm$ is unique and 
    $$
    soc(K(\bm))=L(\bl).
    $$
    Since the lowest weight of $K(\bm)$ is $\bm-\bb$, we get that 
    $$
    \bm-\bb \text{ is the lowest weight of } L(\bl)
    $$
    (see the discussion at the end of Section \ref{sec_super}).
\end{proof}

We will thus be interested in determining the correspondence $\bl \leftrightarrow \bm$ as above by understanding the multiplicities $[P(\bl):K(\ba)]$ for the standard filtration of $P(\bl)$.

\subsection{Irreducible Kac modules} \label{s_irrKac}

Let us try to understand when the Kac module  $K(\mu|\nu)$ for $GL(L_m|L_n)$ is irreducible.

\subsubsection{The Shapovalov form}


Consider the top nonzero graded component $K(\mu|\nu)^{mn} = S^{mn}(\mathfrak n^-)\otimes (\mu|\nu) = (\widetilde \mu|\widetilde \nu)$ of $K(\mu|\nu)$. The action map restricted to $S^{mn}(\mathfrak n^+)$ sends $K(\mu|\nu)^{mn}$ to the highest weight component $K(\mu|\nu)^0=(\mu|\nu)$. Under the identification $\mathfrak n^+ = (\mathfrak n^-)^*$ we get the dualized map:
$$
(\widetilde \mu, \widetilde \nu) = S^{mn}(\mathfrak n^-)\otimes (\mu|\nu)\to S^{mn}(\mathfrak n^-)\otimes (\mu|\nu) = (\widetilde \mu, \widetilde \nu),
$$
which is $T$-equivariant, and thus must be multiplication by some constant $Sh(\mu|\nu)$. We claim that this constant depends polynomially on the components of $\mu$ and $\nu$.

\begin{remark}\label{rem_Shapovalov}
    The constant $Sh(\mu| \nu)$ is a shadow of the actual Shapovalov form on $K(\mu|\nu)$. Namely, using the same logic, for any graded component $$K(\mu|\nu)^k = S^k(\mathfrak n^-)\otimes(\mu|\nu)$$ we get a $T$-equivariant map
    $$
    Sh^k: K(\mu|\nu)^k\to K(\mu|\nu)^k,
    $$
    obtained from dualizing the action map
    $$
    S^k(\mathfrak n^+)\otimes S^k(\mathfrak n^-)\otimes(\mu|\nu)\to (\mu|\nu).
    $$
    Now for each weight $(\alpha|\beta)$ with $|\alpha|+|\beta|=|\mu|+|\nu|$ and $|\alpha|=|\mu|-k$ we define a form $Sh_{\mu,\nu}^{\alpha,\beta}$ on the finite-dimensional vector space
    $$
    H^{\alpha,\beta}_{\mu,\nu} := \Hom_T((\alpha|\beta), K(\mu|\nu)^k)
    $$
    as follows:
    $$
    Sh_{\mu,\nu}^{\alpha,\beta}(f,g)=(Sh^k \circ f, g),
    $$
    where $(-,-)$ is a scalar product on $H_{\mu,\nu}^{\alpha,\beta}$ defined with respect to any choice of basis.

    The nullity of the determinant of the form $Sh_{\mu,\nu}^{\alpha,\beta}$ determines the multiplicity of $(\alpha|\beta)$ in the maximal proper submodule $J(\mu|\nu)$ of $K(\mu|\nu)$.

    If the multiplicity of weight $(\alpha|\beta)$ in $K(\mu|\nu)$ is $1$ then it lies in the maximal proper submodule $J(\mu|\nu)$ if and only if the map
    $$
    (\alpha|\beta)\hookrightarrow S^k(\mathfrak n^-)\otimes (\mu|\nu) \xrightarrow{Sh^k} (\alpha|\beta)\subset S^k(\mathfrak n^-)\otimes (\mu|\nu)
    $$
    is zero upon restriction to the weight subspace $(\alpha|\beta)$. 
\end{remark}

\subsubsection{The classical case}
In fact, the construction above is defined categorically, so it may be applied to representations of the super group $GL(m|n)$. Namely, given an irreducible representation $L_{ev}(\l)$ of the even part $GL(m|n)_{ev}=GL_m\times GL_n$ over $\ck$,  we can define the Kac module $K(\l)$ as the induced $Dist(GL(m|n))$-module
$$
K(\l) := Dist(GL(m|n))\tens{Dist(GL(m|n)_{ev}} L_{ev}(\l) = U(\gl(m|n))\tens{U(\gl_m\times\gl_n)} L_{ev}(\l)\simeq S(\n^-)\otimes L_{ev}(\l).
$$
The subalgebra $\n^-$ is a purely odd vector space, and its restriction to the even subgroup is 
$$\Res_{GL_m\times GL_n}^{GL(m|n)}\n^-=I\boxtimes (V^{(m)})^*\boxtimes V^{(n)}\subset sVec\boxtimes \Rep_\ck GL_m\boxtimes \Rep_\ck GL_n.$$ 
So $S^k(\n^-)=I^{\otimes k}\boxtimes \Lambda^k((V^{(m)})^*\boxtimes V^{(n)})$.
Thus, Kac module is finite dimensional, integrable  and $\Z$-graded, with the topmost nonzero component of degree $m\cdot n$, which is an irreducible $GL(m|n)_{ev}$-representation, isomorphic to $L_{ev}(\widetilde \l)$ with $\widetilde\l = \l-(n,\ldots, n|-m,\ldots, -m)$ (tensored with the appropriate power of the purely odd one-dimensional space $I$). The action of $S^{mn}(\mathfrak n^+)$ on this component sends it to the degree zero component $L_{ev}(\l)$ in $K(\l)$. Dualizing the action map, we obtain a $Dist(GL(m|n)_{ev})$-equivariant map $L_{ev}(\widetilde \l)\to L_{ev}(\widetilde \l)$, which is given by multiplication by some constant $Sh(\l)$.

The following  theorem was proved by Kac for example in (\cite{K06}, Proposition 2.9) in characteristic zero. His argument  can most likely be generalized directly to  the case of characteristic $p$. However, we will provide an alternative proof here.

\begin{theorem}[\cite{K06}]\label{thm_irrkac} Let $\Delta_1^+$ be the set of positive odd roots for $\gl(m|n)$, i.e. 
$$
\Delta_1^+=\{\epsilon_i-\delta_j~|~1\le i\le m, 1\le j\le n\},
$$
where $\epsilon_1,\ldots, \epsilon_m, \delta_1, \ldots, \delta_n$ are the standard basis vectors in $\Z^{m|n}$ with  a fixed symmetric form $\langle-, -\rangle$, s.t.
$$
\langle \epsilon_i, \epsilon_j\rangle =\delta_{i,j},~\langle\epsilon_i, \delta_j\rangle =0,~\langle\delta_i, \delta_j\rangle = -\delta_{i,j}.
$$
Let $\l=(\mu_1,\ldots,\mu_m, \nu_1,\ldots,\nu_n)\in \Z^{m|n}$ be such that
$$
\mu_i\ge \mu_{i+1}, \nu_j\ge \nu_{j+1},
$$
and let $\rho$ be the half-sum of positive even roots minus the half-sum of positive odd roots:
$$
2\rho = (m-1-n, m-3-n, \ldots, 1-m-n| n-1+m, n-3+m, \ldots, 1-n+m).
$$
Then the constant $Sh(\l)$ is nonzero (and  consequently, $K(\l)$ is irreducible) if and only if $\langle \l+\rho, \alpha\rangle \not\equiv 0\mod p$ for all positive odd roots $\alpha\in \Delta_1^+$.  \end{theorem}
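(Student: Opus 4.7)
The plan is to reduce the irreducibility of $K(\lambda)$ to the non-vanishing of $Sh(\lambda)$ in $\ck$, and then to establish the polynomial identity $Sh(\lambda)=\pm\prod_{\alpha\in\Delta_1^+}\langle\lambda+\rho,\alpha\rangle$ as an equality in $\Z[\mu,\nu]$.

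For the reduction: $K(\lambda)\simeq S(\mathfrak n^-)\otimes L_{ev}(\lambda)$ is $\Z$-graded with irreducible top $K(\lambda)^{mn}\simeq L_{ev}(\widetilde\lambda)$ as a $G_{ev}$-module. Given a nonzero $Dist(GL(m|n))$-submodule $M\subseteq K(\lambda)$, the finite-dimensional abelian odd Lie algebra $\mathfrak n^-$ acts nilpotently on $M$, so the subspace $M^{\mathfrak n^-}$ of vectors annihilated by $\mathfrak n^-$ is nonzero. A short computation shows that the exterior multiplication $\mathfrak n^-\otimes\Lambda^k(\mathfrak n^-)\to\Lambda^{k+1}(\mathfrak n^-)$ has zero joint annihilator on any nonzero $s\in\Lambda^k(\mathfrak n^-)$ for $k<mn$ (since $\bigcap_{f\in\mathfrak n^-}(f\wedge\Lambda^{k-1})=0$ in $\Lambda^k$ for $k<mn$), so any $\mathfrak n^-$-annihilated vector in $K(\lambda)\simeq S(\mathfrak n^-)\otimes L_{ev}(\lambda)$ must live in the top component. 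Hence $M^{\mathfrak n^-}\subseteq K(\lambda)^{mn}$ is a nonzero $G_{ev}$-submodule of the $G_{ev}$-irreducible top, forcing $M\supseteq K(\lambda)^{mn}$. Consequently $K(\lambda)$ is irreducible iff $Dist(G)\cdot K(\lambda)^{mn}=K(\lambda)$; by the grading and Schur's lemma this amounts to the scalar $Sh(\lambda)$ of the $G_{ev}$-equivariant map $S^{mn}(\mathfrak n^+)\otimes K(\lambda)^{mn}\to K(\lambda)^0$ being nonzero, since then its image is all of $L_{ev}(\lambda)\ni v_\lambda$, which generates $K(\lambda)$ under $Dist(\mathfrak n^-)$.

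For the polynomial identification: writing $Sh(\lambda)\,v_\lambda=(\prod_\alpha e_\alpha)(\prod_\alpha f_\alpha)v_\lambda$, a super-PBW expansion in $U(\gl(m|n))$---whose structure constants are integers---realizes $Sh$ as a polynomial in the weight coordinates $(\mu,\nu)$ of total degree $mn$ with integer coefficients. For each positive odd root $\alpha$ and generic $\lambda$ on the hyperplane $\langle\lambda+\rho,\alpha\rangle=0$, Kac's classical singular-vector construction (with integer-polynomial coefficients in $\lambda$) produces a proper $Dist(G)$-submodule of $K(\lambda)$, forcing $Sh(\lambda)=0$ there. Hence $\prod_\alpha\langle\lambda+\rho,\alpha\rangle$ divides $Sh$ in $\Z[\mu,\nu]$, and matching total degrees gives $Sh=c\prod_\alpha\langle\lambda+\rho,\alpha\rangle$ for some $c\in\Z$. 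Comparing the coefficient of $\prod_{i,j}(\mu_i+\nu_j)$---obtained on the left from the identity contraction $\prod[e_{ij},f_{ij}]_+=\prod h_{ij}$ applied to $v_\lambda$, and on the right as the product of leading terms of the factors---yields $c=\pm 1$. Reducing the identity modulo $p$ then proves the theorem.

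The main obstacle is the bookkeeping in the super-PBW computation of the top-degree coefficient: one must isolate the identity contraction in the normal-ordered expansion of $(\prod e_\alpha)(\prod f_\alpha)$ and verify that non-identity contractions (where $[e_{ij},f_{kl}]_+=E_{ik}$ or $E_{m+l,m+j}$ for $(i,j)\neq(k,l)$ sharing exactly one index) produce only strictly lower-degree contributions in $(\mu,\nu)$. The $\rho$-shifts $m+1-i-j$ in the product formula emerge precisely from these subleading terms upon fuller expansion, and crucially the constant $c$ of the polynomial identity remains $\pm 1$ rather than picking up a factor of $p$ because both the singular-vector construction and the identity-contraction calculation use only integer coefficients.
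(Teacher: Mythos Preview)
Your reduction of irreducibility to $Sh(\lambda)\neq 0$ via the simple-socle argument is correct and matches what the paper uses implicitly. The second half of your argument, however, takes a genuinely different route from the paper, and this is where the gap lies.

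The paper does not attempt to identify $Sh$ as a polynomial. Instead it invokes Serganova's algorithm (Lemma~\ref{l_serg_alg}, valid in characteristic $p$ by \cite{BK03}), which computes the $(GL_m\times GL_n)$-equivariant lowest weight $\hat\lambda$ of $L(\lambda)$ by a sequence of conditional subtractions of odd roots $\beta_1,\ldots,\beta_{mn}$. One has $Sh(\lambda)\neq 0$ iff $\hat\lambda=\widetilde\lambda$, which happens iff every step of the algorithm actually subtracts its root, i.e.\ iff $\langle\lambda^{(k-1)},\beta_k\rangle\not\equiv 0\pmod p$ for all $k$. An elementary identity (Lemma~\ref{l_oddroots}) then converts this step-by-step condition into the uniform one $\langle\lambda+\rho,\beta_k\rangle\not\equiv 0$. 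The paper explicitly remarks that Kac's product formula for $Sh$ is not needed.

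Your route requires showing that the integer constant $c$ in $Sh=c\prod_\alpha\langle\lambda+\rho,\alpha\rangle$ is $\pm 1$: if $p\mid c$ the reduction modulo $p$ is vacuous, and the divisibility step (via Kac's characteristic-zero singular vectors) only yields $c\in\Z$. Your leading-coefficient sketch asserts that the identity contraction $\prod_\alpha[e_\alpha,f_\alpha]_+$ is the unique top-degree contributor, but cross-commutators $[e_{ij},f_{kl}]_+$ with one shared index produce even root vectors that must then be commuted further through the remaining $e$'s and $f$'s, and you have not shown that these residual terms all land in strictly lower total degree in $(\mu,\nu)$. That computation is precisely the content of the characteristic-$p$ statement, and you have labeled it the ``main obstacle'' without resolving it.
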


\begin{remark}
     In fact, Kac proves that $Sh(\l)$ is a polynomial in $\mu_1,\ldots, \mu_m, \nu_1, \ldots, \nu_n$ of degree $m\cdot n$ which is proportional to the product 
$$
\prod_{\alpha\in \Delta_1^+} \langle \l+\rho, \alpha\rangle.
$$
We won't need this particular statement, so we will not give a proof for it.
\end{remark}

To prove Theorem \ref{thm_irrkac} we will need some background knowledge about weights of $GL(m|n)$-modules. For each pair of positive odd roots $\epsilon_i-\delta_j, \epsilon_a-\delta_b\in \Delta_1^+$ let us say
$$
\epsilon_i-\delta_j \succeq \epsilon_a - \delta_b,
$$
if their difference is a nonnegative linear combination of positive roots. That is, if $i\le a$ and $j\ge b$.

Let $L(\l)$ be the irreducible quotient of $K(\l)$ and consider its composition series with respect to the even subgroup $GL_m\times GL_n$. The lowest weight vector in $L(\l)$ is contained in some composition factor $L_{ev}(\hat\l)$. The weight $\hat\l$ can be computed using Serganova's algorithm (proved for characteristic $p$ in \cite{BK03}).
\begin{lemma}[\cite{BK03}, Theorem 4.3]\label{l_serg_alg}
    Pick an ordering $\beta_1,\ldots, \beta_{mn}$ of the positive odd roots $\Delta^+_1=\{\epsilon_i - \delta_j~|~1 \le i \le m, 1 \le j \le n\}$ such that $\beta_i\preceq \beta_j$ implies $i\le  j$. Set $\l^{(0)} = \l$, and inductively define 
    $$
    \l^{(i)} = \begin{cases}
       \l^{(i-1)},\text{ if } \langle \l^{(i-1)},\beta_i\rangle \equiv 0\mod p,\\
       \l^{(i-1)} - \beta_i, \text{ if } \langle \l^{(i-1)},\beta_i\rangle \not\equiv  0 \mod p, 
\end{cases}
    $$
    for $i = 1,\ldots,mn$. Then, $\hat\l = \l^{(mn)}$.
\end{lemma}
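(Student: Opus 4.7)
My plan is to prove the algorithm by realizing it as an iterated sequence of odd reflections, each one moving the highest weight of $L(\lambda)$ from one Borel subalgebra to the next, until we arrive at the Borel opposite to $\mathfrak b$ on the odd part — at which point the ``highest weight'' has become the lowest $T$-weight of $L(\lambda)$, and so determines $\hat\lambda$.

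First I would build a chain of Borel subalgebras of $\mathfrak{gl}(m|n)$. For $0\le k\le mn$, let $\mathfrak b^{(k)}$ be the Borel with the same even positive roots as $\mathfrak b$ but with odd positive roots
\[
\{-\beta_1,\ldots,-\beta_k\}\cup\{\beta_{k+1},\ldots,\beta_{mn}\}.
\]
The compatibility assumption $\beta_i\preceq\beta_j\Rightarrow i\le j$ is exactly what is needed to ensure that each $\mathfrak b^{(k)}$ is a Borel (the set of positive roots is closed under addition with even positive roots) and, moreover, that $\beta_{k+1}$ is a \emph{simple} odd positive root of $\mathfrak b^{(k)}$: any smaller positive odd root of $\mathfrak b^{(k)}$ would have to be one of $-\beta_1,\ldots,-\beta_k$ or some $\beta_{k'+1}$ with $k'>k$, both ruled out by the ordering. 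Thus $\mathfrak b^{(0)}=\mathfrak b$, $\mathfrak b^{(mn)}$ is odd-opposite to $\mathfrak b$, and consecutive Borels in the chain differ by a single odd simple reflection.

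The technical heart of the argument is the \emph{single odd reflection lemma}: if $L$ is an irreducible $GL(m|n)$-module whose highest weight with respect to $\mathfrak b^{(k)}$ is $\mu$, then the highest weight of the same module $L$ with respect to $\mathfrak b^{(k+1)}$ is $\mu$ when $\langle\mu,\beta_{k+1}\rangle\equiv 0\pmod p$ and $\mu-\beta_{k+1}$ otherwise. To prove this I would restrict to the rank-one odd subalgebra $\mathfrak{gl}(1|1)_{\beta_{k+1}}\subset \mathfrak{gl}(m|n)$ generated by the root vectors $e_{\pm\beta_{k+1}}$ and their bracket $h=[e_{\beta_{k+1}},e_{-\beta_{k+1}}]$, working with $Dist$ in place of $U$ to stay in characteristic $p$. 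Let $v_\mu$ be a highest weight vector of $L$ with respect to $\mathfrak b^{(k)}$, so $e_{\beta_{k+1}}v_\mu=0$. If $\langle\mu,\beta_{k+1}\rangle\not\equiv 0\pmod p$, then the vector $e_{-\beta_{k+1}}v_\mu$ is nonzero (otherwise the whole $\mathfrak{gl}(1|1)_{\beta_{k+1}}$-submodule it generates would be one-dimensional, forcing $h\cdot v_\mu=0$, i.e.\ $\langle\mu,\beta_{k+1}\rangle\equiv 0$) and it is annihilated by $e_{-\beta_{k+1}}$ since odd root vectors square to zero; hence $e_{-\beta_{k+1}}v_\mu$ is a highest weight vector for $\mathfrak b^{(k+1)}$ of weight $\mu-\beta_{k+1}$. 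In the typical case $\langle\mu,\beta_{k+1}\rangle\equiv 0\pmod p$, the same argument shows that $v_\mu$ itself is annihilated by every new positive odd root operator, so $v_\mu$ remains the highest weight vector with respect to $\mathfrak b^{(k+1)}$.

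Iterating this lemma starting from $\lambda^{(0)}=\lambda$ produces exactly the sequence $\lambda^{(i)}$ in the statement, and $\lambda^{(mn)}$ is the highest weight of $L(\lambda)$ with respect to $\mathfrak b^{(mn)}$, which is odd-opposite to $\mathfrak b$. A weight which is highest with respect to $\mathfrak b^{(mn)}$ is, for the even subgroup $GL_m\times GL_n$, the highest weight of the composition factor that contains the minimal $T$-weight of $L(\lambda)$ — precisely the definition of $\hat\lambda$. The main obstacle is the rigorous handling of the single odd reflection lemma in characteristic $p$: one must replace the universal enveloping algebra by the distribution algebra, verify that $(e_{\pm\beta})^{[2]}=0$ in $Dist$ for odd roots, and control the possible degeneracies when $\langle \mu,\beta_{k+1}\rangle$ is a multiple of $p$; once this atomic step is secured, iteration along the ordering and identification of the terminal weight with $\hat\lambda$ are essentially formal.
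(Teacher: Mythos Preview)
The paper does not prove this lemma; it is quoted from \cite{BK03} and used as a black box. Your sketch via a chain of Borels related by odd reflections is exactly the approach of that reference, and the global architecture (build $\mathfrak b^{(0)},\dots,\mathfrak b^{(mn)}$, verify $\beta_{k+1}$ is simple in $\mathfrak b^{(k)}$, iterate a single-reflection lemma, identify the terminal weight with $\hat\lambda$) is correct.

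There is, however, a genuine gap in your single-reflection step in the case $\langle\mu,\beta_{k+1}\rangle\equiv 0\pmod p$ (which, incidentally, is the \emph{atypical} case, not the typical one). The $\mathfrak{gl}(1|1)$ computation you invoke only gives one implication: if $e_{\beta}v=0$ and $e_{-\beta}v=0$ then $h\cdot v=0$. It does \emph{not} show that $h\cdot v=0$ forces $e_{-\beta}v=0$, so ``the same argument'' does not apply. What is needed here is irreducibility of $L$ itself, not just the rank-one calculation: if $v':=e_{-\beta_{k+1}}v_\mu\neq 0$, then one checks that $e_{\beta_{k+1}}v' = h\cdot v_\mu - e_{-\beta_{k+1}}e_{\beta_{k+1}}v_\mu = 0$, and (by the same commutator bookkeeping you did in the other case) $v'$ is annihilated by all remaining $\mathfrak b^{(k)}$-positive root vectors as well. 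Thus $v'$ would be a second $\mathfrak b^{(k)}$-highest weight vector in the simple module $L$, of weight $\mu-\beta_{k+1}\neq\mu$, a contradiction. This is the point where simplicity of $L$ enters essentially; once it is in place, the rest of your argument goes through.
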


\begin{lemma}\label{l_oddroots}
    Let $\beta_1,\ldots, \beta_{mn}$ be the ordering of positive odd roots as in Lemma \ref{l_serg_alg}. Then for each $1\le k\le mn$
    $$
    \langle \sum_{i=1}^{k-1} \beta_i, \beta_{k}\rangle = -\langle \rho, \beta_{k}\rangle.
    $$
\end{lemma}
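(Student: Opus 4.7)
The plan is to reduce both sides of the identity to explicit integer-valued functions of the indices $(a_k, b_k)$ and compare. Write $\beta_k = \epsilon_a - \delta_b$ throughout. Since positive odd roots are isotropic and the form pairs $\epsilon_i$ with $\delta_j$ trivially, one has $\langle \epsilon_{a_i} - \delta_{b_i}, \epsilon_a - \delta_b\rangle = \delta_{a_i, a} - \delta_{b_i, b}$, so the left-hand side collapses to
$$
\#\{i < k : a_i = a\} - \#\{i < k : b_i = b\}.
$$

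Next, I would analyze the partial order $\preceq$ on the two ``same-coordinate'' subsets $\{\epsilon_a - \delta_j\}_{j=1}^n$ and $\{\epsilon_i - \delta_b\}_{i=1}^m$. Each is a totally ordered chain: one has $\epsilon_a - \delta_j \succeq \epsilon_a - \delta_{j'}$ iff $j \ge j'$, and $\epsilon_i - \delta_b \succeq \epsilon_{i'} - \delta_b$ iff $i \le i'$. Because any linear extension preserves total chains, the roots $\epsilon_a - \delta_1, \ldots, \epsilon_a - \delta_n$ must appear in this order inside $\beta_1, \ldots, \beta_{mn}$, so that precisely those with $j < b$ precede $\beta_k$; likewise the roots $\epsilon_m - \delta_b, \ldots, \epsilon_1 - \delta_b$ must appear in this order, so that precisely those with $i > a$ precede $\beta_k$. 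Hence
$$
\#\{i < k : a_i = a\} = b - 1, \qquad \#\{i < k : b_i = b\} = m - a,
$$
and the left-hand side equals $a + b - m - 1$.

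The right-hand side is a direct computation from the explicit formula for $2\rho$ stated in Theorem \ref{thm_irrkac}. Reading off the $\epsilon_a$- and $\delta_b$-coefficients of $2\rho$ (and keeping in mind that $\langle \delta_j, \delta_j\rangle = -1$), one finds
$$
2\langle \rho, \epsilon_a - \delta_b\rangle = (m+1-2a-n) + (n+1-2b+m) = 2(m + 1 - a - b),
$$
so $-\langle \rho, \beta_k\rangle = a + b - m - 1$, matching the combinatorial count.

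There is no real obstacle here: the substantive content is the observation that the subsets of $\Delta_1^+$ sharing a coordinate with $\beta_k$ form totally ordered chains under $\preceq$, which forces their relative positions in any compatible linear extension. The only point that merits mild care is the sign in $\langle -,-\rangle$ coming from the negative-definiteness on the $\delta$-part; after that, the identity is bookkeeping.
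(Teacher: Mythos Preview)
Your proof is correct and follows essentially the same approach as the paper's. Both identify the predecessors of $\beta_k=\epsilon_a-\delta_b$ that are not orthogonal to it as exactly the $\epsilon_a-\delta_d$ with $d<b$ (each contributing $+1$) and the $\epsilon_c-\delta_b$ with $c>a$ (each contributing $-1$), yielding $(b-1)-(m-a)=a+b-m-1$, and then match this against the direct computation of $-\langle\rho,\epsilon_a-\delta_b\rangle$; your phrasing in terms of linear extensions of the chains is slightly more explicit but not materially different.
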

\begin{proof}
    Let $\beta_{k}=\epsilon_a-\delta_b$ then all positive odd roots preceding $\beta_{k}$ in our order and not orthogonal to it are either of the form
    $$
    \epsilon_a-\delta_d, \text{ with } d<b,
    $$
    or of the form
    $$
    \epsilon_c - \delta_b, \text{ with } c>a.
    $$
    We have
    $$
    \langle \epsilon_a-\delta_d, \epsilon_a-\delta_b \rangle = 1, \text{ and }
    $$
    $$
    \langle \epsilon_c-\delta_b, \epsilon_a-\delta_b\rangle = -1.
    $$
    Thus,
    $$
    \langle \sum_{i=1}^{k-1} \beta_i, \beta_{k}\rangle = (b-1)-(m-a)=-m+a+b-1.
    $$
    On the other hand, 
    $$
    \langle \rho, \beta_{k}\rangle = \langle \rho, \epsilon_a-\delta_b\rangle = \frac{1}{2}(m-2a+1-n)+\frac{1}{2}(n-2b+1+m) = m-a-b+1.
    $$
\end{proof}

\subsubsection*{Proof of Theorem \ref{thm_irrkac}}
First, note that $K(\bl)$ is irreducible if and only if its $(GL_m\times GL_n)$-equivariant lowest weight $\widetilde\l$ is the same as the $(GL_m\times GL_n)$-equivariant lowest weight $\hat\l$ of $L(\l)$. Thus, $Sh(\l)$ is nonzero if and only if $$ \hat\l = \l - \sum_{i=1}^{mn} \beta_i=\widetilde \l$$. 

This can only happen if in Serganova's algorithm (see Lemma \ref{l_serg_alg}) at each step $1\le k\le mn$ we have $\l^{(k)} = \l^{(k-1)}-\beta_{k}$, i.e.
$$
\langle \l^{k-1},\beta_k\rangle  = \langle \l - \sum_{i=1}^{k-1} \beta_i, \beta_k\rangle \not\equiv 0 \mod p.
$$

By Lemma \ref{l_oddroots}, this condition is equivalent to saying that for every $1\le k\le mn$ we have
$$
\langle \l+\rho, \beta_k\rangle\not\equiv 0 \mod p.
$$
  \blacksquare

 \subsubsection{Deducing the $Ver_p$ case from the classical case}
 Let $\mu, \nu$ be admissible weights for $GL_m$ and $GL_n$ correspondingly with $m+n<p$. Let $\l=(\mu, \nu)\in \Z^{m|n}$ and consider the restriction of the $GL(m|n)$-module $K(\l)$ to the even subgroup $GL_m\times GL_n$. Since $\mu, \nu$ are admissible, the simple representation
 $$
L_{ev}(\l)=L(\mu)^{(m)}\boxtimes L(\nu)^{(n)}$$
is a tilting module for $GL(m|n)_{ev}$.
Moreover, the top degree component of $K(\l)$
$$S^{mn}(\mathfrak n^-)\otimes L_{ev}(\l) = I^{\otimes mn}\boxtimes L_{ev}(\widetilde \l) = I^{\otimes mn}\boxtimes (L(\mu)^{(m)}\otimes det^{-n})\boxtimes (L(\nu)^{(n)}\otimes det^m)$$
also lies in the subcategory of tilting modules.

Consider the semisimplification functor
$$
\mathbf{S}: sVec\boxtimes \Rep_\ck(GL_m)\boxtimes \Rep_\ck(GL_n) \to sVec\boxtimes \overline{\Rep_\ck(GL_m)}\boxtimes \overline{\Rep_\ck(GL_n)}.
$$

The image of $S^{mn}(\n^-)\otimes L_{ev}(\l)$ under $\mathbf{S}$ lies in the full subcategory 
$$
sVec\boxtimes Ver_p(GL_m)\boxtimes Ver_p(GL_n)
$$
and is isomorphic to $(\widetilde \mu|\widetilde \nu).$

From the categorical definition of $Sh(\mu|\nu)$ it follows that
$$
Sh(\mu|\nu)=Sh(\l)
$$
(as morphisms map to morphisms under the semisimplification).

\begin{corollary}\label{cor_irr_kac_critereon}
    The Kac module $K(\mu|\nu)$ for $G=GL(L_m|L_n)$ is irreducible if and only if
    $$
    \langle (\mu|\nu)+\br^{(m|n)}, \ba_{i,j}\rangle \not\equiv 0\mod p
    $$
    for all positive odd roots $\ba_{i,j}=\epsilon_i-\delta_j, 1\le i\le m, 1\le j\le n$ (as defined in Section \ref{sec_super}).
\end{corollary}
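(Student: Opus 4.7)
The plan is to deduce Corollary \ref{cor_irr_kac_critereon} from its classical super-group analogue (Theorem \ref{thm_irrkac}) via the semisimplification argument sketched in the paragraphs immediately preceding the statement. Throughout, I set $\lambda=(\mu,\nu)\in\Z^{m|n}$ and consider the classical Kac module $K(\lambda)$ for $GL(m|n)$ over $\ck$ together with its Shapovalov constant $Sh(\lambda)$.

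The first step is to show that $K(\mu|\nu)$ is irreducible precisely when the scalar $Sh(\mu|\nu)\in\ck$ is nonzero. The top $\Z$-graded piece of $K(\mu|\nu)$ is the simple $T$-module $(\widetilde\mu|\widetilde\nu)$, occurring in $K(\mu|\nu)$ with multiplicity one, and every nonzero $Dist(G)$-submodule of $K(\mu|\nu)$ must contain it because each such submodule inherits the $\Z$-grading and therefore has a nontrivial top piece. Thus $K(\mu|\nu)$ is irreducible iff $(\widetilde\mu|\widetilde\nu)$ avoids the maximal proper submodule $J(\mu|\nu)$, and by Remark \ref{rem_Shapovalov}, applied to the multiplicity-one weight $(\widetilde\mu|\widetilde\nu)$, this amounts to $Sh(\mu|\nu)\neq 0$.

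The second step, which is the main technical point, is to identify $Sh(\mu|\nu)=Sh(\lambda)$ in $\ck$. Admissibility of $\mu,\nu$ together with the hypothesis $m+n<p$ implies that the simple modules $L_{ev}(\lambda)=L(\mu)^{(m)}\boxtimes L(\nu)^{(n)}$ and $L_{ev}(\widetilde\lambda)\simeq L_{ev}(\lambda)\otimes(det^{-n}\boxtimes det^m)$ are tilting for $GL_m\times GL_n$. The restrictions of $\mathfrak n^\pm$ are tilting as well, so all symmetric powers $S^{mn}(\mathfrak n^\pm)$ and every intermediate object entering the construction of $Sh(\lambda)$ live in the tilting subcategory. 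Applying the semisimplification functor $\mathbf{S}$ transports the defining morphism $L_{ev}(\widetilde\lambda)\to L_{ev}(\widetilde\lambda)$, which is multiplication by $Sh(\lambda)$, to the corresponding morphism $(\widetilde\mu|\widetilde\nu)\to(\widetilde\mu|\widetilde\nu)$ in $sVec\boxtimes Ver_p(GL_m)\boxtimes Ver_p(GL_n)$ that defines $Sh(\mu|\nu)$. Because $\mathbf{S}$ is symmetric monoidal and $(\widetilde\mu|\widetilde\nu)$ is a simple object with $\mathrm{End}((\widetilde\mu|\widetilde\nu))=\ck$, the scalar is preserved under $\mathbf{S}$.

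Finally, Theorem \ref{thm_irrkac} gives $Sh(\lambda)\neq 0$ iff $\langle\lambda+\rho,\alpha\rangle\not\equiv 0\pmod p$ for every positive odd root $\alpha\in\Delta_1^+$; under the dictionary $\lambda=(\mu|\nu)$, $\rho=\br^{(m|n)}$, and $\alpha=\ba_{i,j}$, this is exactly the criterion of Corollary \ref{cor_irr_kac_critereon}. The main obstacle I anticipate is the second step, where one has to verify carefully that the entire construction of $Sh$ can be performed inside the tilting subcategory and that the semisimplification functor commutes with extracting the scalar; once this bookkeeping is settled, the classical theorem transfers verbatim.
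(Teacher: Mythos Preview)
Your proposal is correct and follows essentially the same route as the paper: the discussion immediately preceding Corollary \ref{cor_irr_kac_critereon} establishes $Sh(\mu|\nu)=Sh(\lambda)$ via semisimplification of the tilting subcategory, and then Theorem \ref{thm_irrkac} gives the nonvanishing criterion. Your first step, making explicit that irreducibility of $K(\mu|\nu)$ is equivalent to $Sh(\mu|\nu)\neq 0$ via the multiplicity-one lowest weight $(\widetilde\mu|\widetilde\nu)$, is left implicit in the paper but is exactly the intended logic.
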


\subsection{Casimir element}

\begin{definition} \label{def_casimir}
    Let $X\in Ver_p$. A \textbf{quadratic Casimir element} $C_X\in \Hom_{Ver_p}(\on,  U(\gl(X)))$ is defined as the composition:
    \[\xymatrixcolsep{1.1pc}
    \xymatrix{
    \on \ar[rr]^-{coev_X} && X\otimes X^*\ar[r]^-{\sim} & X\otimes \on \otimes X^*\ar[r]^-{id_X\otimes coev_{X^*}\otimes id_{X^*}}& X\otimes (X^*\otimes X)\otimes X^*\ar[dl]_-{\sim}\\& && (X\otimes X^*)\otimes (X\otimes X^*)=\gl(X)\otimes \gl(X)\ar[r]^-{m} & U(\gl(X)),
    }
    \]
    where $m$ denotes the multiplication on $U(\gl(X))$.
\end{definition}
For any $GL(X)$-module $M$ define the action of $C_X$ on $M$ as the map
$$
M\xrightarrow{\sim} \on\otimes M\xrightarrow{C_X\otimes id_M} U(\gl(X))\otimes M \to M.
$$

Clearly, $C_X$ is $\gl(X)$-invariant, i.e. $C_X\in \Hom_{\gl(X)}(\on, U(\gl(X))^{ad})$ as all maps in the diagram in Definition \ref{def_casimir} are $GL(X)$-equivariant, and thus $C_X$ acts by scalar multiplication on every simple $GL(X)$-module. 

When $X=L_n$ is simple, we get that $C_X$ acts on the simple object $V_\lambda\in Ver_p(GL_n)= \Rep_{Ver_p}(GL(X),\varepsilon)$ via multiplication by
$$
\langle \l+2\rho^{(n)}, \l\rangle,
$$
where $2\rho^{(n)} = (n-1, n-3, \ldots, 3-n, 1-n)\in \Z^n$ and the form $\langle- ,-\rangle$ is the standard scalar product on $\Z^n$ (this is true because it was so before the semisimplification).

 Let $X = L_m\oplus L_{p-n}$ let $\mu\in \Z^m$ and $\pi\in \Z^{p-n}$ be two admissible weights. Recall that the level-rank duality gives us a bijection $\pi\leftrightarrow D(\pi)$ between admissible weights in $\Z^{p-n}$ and in $\Z^n$. 

 We can thus label Kac modules for $GL(X)$ both by pairs $(\mu, \pi)$ of admissible weights in $\Z^m$ and $\Z^{p-n}$, and by pairs $(\mu|\nu)$ of admissible weights in $\Z^m$ and $\Z^n$ with the convention that
 $$
 K(\mu, \pi) = K(\mu|D(\pi)).
 $$

 \begin{theorem}
 \label{tcasimir}
 \begin{enumerate}
     \item The Casimir element $C_X$ acts on $K(\mu, \pi)$ via multiplication by
 $$
 \langle \mu+2\rho^{(m)},\mu\rangle+\langle \pi+2\rho^{(p-n)},\pi\rangle-n|\mu|-m|\pi|.
 $$
 If we put $2\br^{(m+p-n)}:=(m+p-n-1, m+p-n-3, \ldots, 1-m-p+n)$ then this constant can be rewritten as
 $$
 \langle (\mu,\pi)+2\br^{(m+p-n)}, (\mu,\pi)\rangle
 $$
 for the standard scalar multiplication $\langle-,-\rangle$ on $\Z^{m+p-n}$.
 \item The Casimir element $C_X$ acts on $K(\mu|\nu)$ via multiplication by 
 $$
 \langle \mu+2\rho^{(m)},\mu\rangle - \langle \nu+2\rho^{(n)}, \nu\rangle -n|\mu|-m|\nu|.
 $$
 If we put $$
 2\br^{(m|n)}:= 2(\rho^{(m)}|\rho^{(n)})-\bb=
 $$
 $$
 =(m-1-n, m-3-n,\ldots,1-m-n|n-1+m, n-3+m\ldots, 1-n+m)
 $$
 then the above formula can be rewritten as
 $$
 \langle (\mu|\nu)+2\br^{(m|n)}, (\mu|\nu)\rangle
 $$
 for the symmetric form $\langle -,-\rangle$ on $\Z^{m|n}$ defined in Section \ref{sec_super}.
  \end{enumerate}
\end{theorem}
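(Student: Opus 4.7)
Since $C_X$ is $\gl(X)$-invariant by construction and $K(\mu,\pi)$ is a highest weight module, $C_X$ acts on $K(\mu,\pi)$ by a single scalar, so it suffices to evaluate this scalar on the highest weight subobject $v \simeq (\mu|\pi)$. The plan is to decompose $C_X$ according to the direct sum $X = L_m \oplus L_{p-n}$, reducing the computation to the already-established Casimir eigenvalues on the simple pieces plus an explicit cross-term contribution from the off-diagonal blocks of $\gl(X)$.

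For part (1), the splittings $\coev_X = \coev_{L_m} + \coev_{L_{p-n}}$ and $\coev_{X^*} = \coev_{L_m^*} + \coev_{L_{p-n}^*}$, substituted into the diagram of Definition \ref{def_casimir}, decompose the Casimir as
$$C_X = C_{L_m} + C_{L_{p-n}} + \Omega^+_- + \Omega^-_+,$$
where $\Omega^+_-$ and $\Omega^-_+$ are the images in $U(\gl(X))$ of the cross pieces $\n^+ \otimes \n^-$ and $\n^- \otimes \n^+$ under multiplication. On $v$, the pieces $C_{L_m}$ and $C_{L_{p-n}}$ act by the simple-object eigenvalues $\langle \mu + 2\rho^{(m)}, \mu\rangle$ and $\langle \pi + 2\rho^{(p-n)}, \pi\rangle$, applying the formula stated just before Theorem \ref{tcasimir} to each tensor factor of $(\mu|\pi) = V^{(m)}_\mu \boxtimes V^{(p-n)}_\pi$. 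The term $\Omega^-_+$ vanishes on $v$ because $\n^+ \cdot v = 0$. For $\Omega^+_-$, I reorder using the categorical commutator $\n^+ \otimes \n^- \to [\n^+, \n^-] \subset \gt = \gl(L_m) \oplus \gl(L_{p-n})$, which yields an element $\Theta = (p-n) \cdot \mathbf{1}_{L_m} - m \cdot \mathbf{1}_{L_{p-n}} \in \gt$, where $\mathbf{1}_{L_a}$ denotes the identity of $\gl(L_a)$ and the scalars $p-n = \dim L_{p-n}$ and $m = \dim L_m$ appear as categorical traces of identity maps. Since $\mathbf{1}_{L_m}$ acts on $v$ by $|\mu|$ and $\mathbf{1}_{L_{p-n}}$ by $|\pi|$, the cross contribution is $(p-n)|\mu| - m|\pi|$, which reduces to $-n|\mu| - m|\pi|$ modulo $p$. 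Summing yields the formula of part (1); the reformulation via $2\br^{(m+p-n)}$ is a direct coordinate shift, since the first $m$ entries of $2\br^{(m+p-n)}$ differ from those of $2\rho^{(m)}$ by $p-n$ and the last $p-n$ entries differ from those of $2\rho^{(p-n)}$ by $-m$.

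For part (2), the identification $K(\mu,\pi) = K(\mu|\nu)$ with $\nu = D(\pi)$ (set up in Section \ref{sec_super}) forces the two formulas to yield equal scalars, so it remains to verify the congruence
$$\langle \pi + 2\rho^{(p-n)}, \pi\rangle + \langle \nu + 2\rho^{(n)}, \nu\rangle \equiv m(|\pi| - |\nu|) \pmod{p}$$
for $\nu = D(\pi)$, together with the rewriting in terms of the symmetric form on $\Z^{m|n}$ fixed in Section \ref{sec_super} (whose signature $(m,-n)$ accounts automatically for the minus sign on the $\nu$-term). Writing $\pi$ via its positive and negative partition parts $(\alpha,\beta)$ as in Section \ref{sect_tensor_subcat}, the image $\nu = D(\pi)$ corresponds to $(\alpha^t, \beta^t)$; using $|\alpha| = |\alpha^t|$, $|\beta| = |\beta^t|$, and the closed form $\langle \lambda + 2\rho^{(k)}, \lambda\rangle = \sum_i \lambda_i(\lambda_i + k + 1 - 2i)$ applied to each piece, the congruence follows from $n + (p-n) \equiv 0 \pmod{p}$ after combining terms. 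The main obstacle is the categorical identification of $\Theta$ in step~(1): classically it is the familiar $\sum_{i,j}(E_{ii} - E_{jj})$, but in $Ver_p$ the coefficients $p-n$ and $-m$ must be extracted from the trace identities $\ev_{L_a} \circ \tau_{L_a, L_a^*} \circ \coev_{L_a} = \dim L_a \cdot \mathrm{id}_{\on}$, and one must confirm that no extraneous braiding signs appear; this is handled by unfolding the coevaluation-multiplication diagram of Definition \ref{def_casimir} explicitly on the cross components.
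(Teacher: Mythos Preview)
Your argument for part~(1) is correct and matches the paper's proof essentially line for line: decompose the Casimir along the blocks of $\gl(X)$, kill the $\n^-\cdot\n^+$ term on the highest weight, and read off the diagonal and commutator contributions.

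For part~(2), however, you take the opposite route from the paper, and this is where a gap appears. The paper does \emph{not} deduce part~(2) from part~(1); it reruns the categorical computation with $X=Y\oplus (Z\otimes I)$ where $Z=L_n$ and $I=L_{p-1}$, observing that under the canonical identification $\gl(Z\otimes I)\cong\gl(Z)$ one has $\coev_{Z\otimes I}=-\coev_Z$, which flips the sign on $C_Z$ and on the relevant dimension. The combinatorial identity $\langle \pi+2\rho^{(p-n)},\pi\rangle\equiv-\langle\nu+2\rho^{(n)},\nu\rangle\pmod p$ for $\nu=D(\pi)$ is then recorded \emph{as a corollary} of the theorem, not as an input.

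Your attempt to prove that identity directly relies on the claim that $D(\pi)$ corresponds to $(\alpha^t,\beta^t)$ when $\pi$ corresponds to $(\alpha,\beta)$. But Section~\ref{sect_tensor_subcat} defines $D$ this way \emph{only for degree-zero weights}; for arbitrary degree the extension in Remark~\ref{rem_level_rank_over_sVec} involves tensoring by powers of $\chi$, which is not a determinant twist and does not reduce to partition transposition. Concretely, in the paper's own example ($p=7$, $\nu=(6,5,2)$, $\pi=D(\nu)=(5,4,2,2)$) neither $\alpha^t$ fits into the required number of parts, so your description of $D$ fails. The identity you need is still true in such cases (one checks $60+73=133\equiv 0\pmod 7$ here), but your transposition argument does not establish it. To salvage your route you would have to track how the Casimir eigenvalue behaves under tensoring with $\chi$, which is considerably more work than the paper's two-line sign observation.
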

\begin{proof}
\begin{enumerate}
    \item Let $X= Y\oplus Z$ with $Y=L_n, Z=L_{p-n}$. Recall that we defined subobjects $$
\mathbf{E}_{1,1} = Y\otimes Y^*,
$$
$$
\mathbf{E}_{1,2} = Y\otimes Z^*,
$$
$$
\mathbf{E}_{2,1} = Z\otimes Y^*,
$$
$$
\mathbf{E}_{2,2} = Z\otimes Z^*
$$
of $\gl(X)$.

The map $C_X = m\circ ((coev_X)_{14}\otimes  (coev_{X^*})_{23}):\on\to (X\otimes X^*)\cdot (X\otimes X^*) $ is equal to the composition of the map 
\begin{align*}
    (coev_Y)_{14}\otimes(coev_{Y^*})_{23}+&(coev_Z)_{14}\otimes  (coev_{Z^*})_{23}+ \\
    +(coev_Y)_{14}\otimes &(coev_{Z^*})_{23}+(coev_Z)_{14}\otimes (coev_{Y^*})_{23}
\end{align*}
from $\on$ to the direct sum
\begin{align*}
    \left((Y\otimes Y^*)\otimes (Y\otimes Y^*)\right)\oplus &\left((Z\otimes Z^*)\otimes (Z\otimes Z^*)\right) \oplus \\
    &\oplus ((Y\otimes Z^*)\otimes (Z\otimes Y^*))\oplus ((Z\otimes Y^*)\otimes (Y\otimes Z^*)]=\\
     = \mathbf{E}_{1,1}\otimes &\mathbf{E}_{1,1} \oplus \mathbf E_{2,2}\otimes \mathbf E_{2,2} \oplus \mathbf E_{1,2} \otimes \mathbf E_{2,1} \oplus \mathbf E_{2,1} \otimes \mathbf{E}_{1,2}.
\end{align*}

 with the multiplication map $m$.
 
The term  $\mathbf E_{2,1} \cdot \mathbf{E}_{1,2}$ acts by zero on the highest weight $T$-submodule $(\mu,\pi)$ in $K(\mu,\pi)$. The first two summand give us the action of $C_Y+C_Z$ via multiplication by $(\mu+2\rho^{(m)},\mu)+(\pi+2\rho^{(p-n)},\pi)$. The action of the term $\mathbf E_{1,2} \cdot \mathbf E_{2,1}$ is computed via taking the commutator of $\mathbf E_{1,2}$ and $\mathbf E_{2,1}$ (which includes composing the evaluation map with coevaluation map, which spits out the dimensions of $Y$ and $Z$):
$$
(ev_Z)_{23}\circ ((coev_Y)_{14}\otimes (coev_{Z^*})_{23})= \dim Z\cdot coev_Y = -n\cdot coev_Y: 
$$
$$
\on \to \mathbf E_{1,2}\otimes \mathbf E_{2,1} \to \mathbf E_{1,1},
$$
$$
-(ev_Y)_{23}\circ 
\tau_{\mathbf E_{1,2}, \mathbf E_{2,1}}\circ ((coev_Y)_{14}\otimes (coev_{Z^*})_{23}) = -\dim Y \cdot coev_Z = -m\cdot coev_Y: 
$$
$$
\on\to  \mathbf E_{1,2}\otimes \mathbf E_{2,1} \to  \mathbf E_{2,1}\otimes \mathbf E_{1,2}\to \mathbf E_{2,2} 
$$
Finally, note that for any simple $V_\l\in \Rep_{Ver_p}(GL(L_k), \varepsilon)$ the composition of $coev_{L_k}\otimes id_{V_\l}:\on\otimes V_\l\to L_k\otimes L_k^*\otimes V_\l$ and the action of $\gl(L_k)=L_k\otimes L_k^*$ on $V_\l$ (i.e. the action of ``the identity matrix") is scalar multiplication by $|\l|$.


\item If $X=Y\oplus (Z\otimes I)$, where $I=L_{p-1}$ is the purely odd one-dimensional vector space then we can apply a similar reasoning as in part (1). The only difference now is that $coev_{Z\otimes I} = -coev_{Z}$. 
Thus the map $C_{Z\otimes I}:\on\to \mathbf E_{2,2}\cdot \mathbf E_{2,2}$ is the same as $-C_Z$. Similarly, the composition $ev_{Z\otimes I}\circ \coev_{(Z\otimes I)^*}$ is $-\dim Z  = -n$.
\end{enumerate}
\end{proof}
\begin{corollary}
    When $\nu\in \Z^n, \pi\in \Z^{p-n}$ are admissible weights (i.e. weights lying in the fundamental alcove $C_p$) with $\nu = D(\pi)$, we get 
    \begin{equation}\label{eq_weird_equality}
    \langle \pi+2\rho^{(p-n)},\pi \rangle = -\langle \nu+2\rho^{(n)}, \nu\rangle \mod p.     
    \end{equation}
    In particular, when $\pi=(\pi_1,\ldots, \pi_{p-n})$ satisfies
    $$
    p-n\ge \pi_1\ge \ldots\ge \pi_{p-n-1}\ge \pi_{p-n}\ge 0,
    $$
    we have $\nu = \pi^t$, and equation (\ref{eq_weird_equality}) becomes a combinatorial statement about a partition whose Young diagram fits into the $n$-by-$(p-n)$ box and its transpose. 
\end{corollary}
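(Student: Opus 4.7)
The plan is to deduce the congruence directly by equating the two expressions from Theorem \ref{tcasimir} for the action of the Casimir $C_X$ on the single Kac module which carries both names $K(\mu,\pi)$ and $K(\mu|\nu)$ under the identification $\nu = D(\pi)$ from Section \ref{sec_super}. Because $C_X$ is central in $Dist(GL(X))$, it acts by a single scalar in $\ck$ on this highest weight module, so the two computed scalars must coincide modulo $p$.

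Concretely, I would set the two formulas from Theorem \ref{tcasimir}(1) and (2) equal in $\ck$ and cancel the common $\mu$-dependent term $\langle\mu+2\rho^{(m)},\mu\rangle - n|\mu|$ to obtain
\[
\langle\pi+2\rho^{(p-n)},\pi\rangle - m|\pi| \equiv -\langle\nu+2\rho^{(n)},\nu\rangle - m|\nu| \pmod p.
\]
From here the stated identity follows provided $|\pi|=|\nu|$, so that the $m|\cdot|$ contributions cancel; since $1\le m<p-n$, even the weaker congruence $|\pi|\equiv|\nu|\pmod p$ would suffice, as $m$ is then invertible in $\mathbb{F}_p$.

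The one nontrivial ingredient is therefore that level-rank duality preserves the total degree, $|D(\pi)|=|\pi|$. I would verify this directly from the explicit formula in Remark \ref{rem_level_rank_over_sVec}: writing $\pi = \mu + (\pi_{p-n},\ldots,\pi_{p-n})$ with $\mu$ a polynomial weight, the bijection sends $\pi$ to $\mu^t$ corrected by a power of $\chi = S^{p-n}V^{(n)}$, which has degree $p-n$ in $Ver_p(GL_n)$. A short degree count then gives $|D(\pi)| = |\mu^t| + (p-n)\pi_{p-n} = |\mu| + (p-n)\pi_{p-n} = |\pi|$, using $|\mu^t|=|\mu|$. I do not anticipate any real obstacle; the only point that needs care is bookkeeping of the $sVec$-twists, to confirm that the $I^{\otimes|\nu|}$ factor built into the definition of $K(\mu|\nu)$ lines up with the $sVec$-twist produced by Remark \ref{rem_level_rank_over_sVec}, so that $K(\mu,\pi)$ and $K(\mu|D(\pi))$ genuinely denote the same Kac module on which the Casimir scalar is being compared.
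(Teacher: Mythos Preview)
Your approach is correct and is exactly what the paper intends: the corollary is stated immediately after Theorem \ref{tcasimir} with no proof, because it follows by equating the two Casimir scalars computed in parts (1) and (2) for the single module $K(\mu,\pi)=K(\mu|D(\pi))$. The one extra ingredient you supply---that $|D(\pi)|=|\pi|$, needed to cancel the $m|\pi|$ and $m|\nu|$ terms---is implicit in the paper's construction of $\mathbf{D}$ in Remark \ref{rem_level_rank_over_sVec}, and your degree count verifies it directly.
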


\subsection{Categorical \texorpdfstring{$\widehat{\ssl}_p$-}{ }action}\label{sec_catact}  Let $X\in Ver_p$ and let $\cc = \Rep_{Ver_p}(GL(X), \varepsilon)$. Recall that $[\cc]$ denotes the complexified Grothendieck group of $\cc$.

Consider biadjoint endofunctors $F=X\otimes - , E= X^*\otimes -$ on $\cc$ and the operators $x\in \mathrm{End}(F), t\in \mathrm{End}(F^2)$ defined in the same way as in Section \ref{catact}, i.e.
$$
x_M: X\otimes M\to X\otimes M
$$
is obtained by dualizing the action map
$$
a_M: X\otimes X^*\otimes M\to M,
$$
and
$$
t_M = \tau_{X,X}\otimes id_M: X\otimes X\otimes M\to X\otimes X\otimes M.
$$

It is easy to see that the operator $x$ is the tensor Casimir, i.e.
$$
x_M = \frac{1}{2}(\Delta\circ C_X-C_X\otimes 1 - 1\otimes C_X)|_{M\otimes X} 
$$
where $\Delta: U(\gl(X))\to U(\gl(X))\otimes U(\gl(X))$ is the comultiplication map. This is because 
$$
\frac{1}{2}(\Delta\circ C_X-C_X\otimes 1 - 1\otimes C_X)  = (coev_X)_{14}\otimes (coev_{X^*})_{23}: \on\to \gl(X)\otimes \gl(X).
$$
It acts on $X\otimes M$ via the map
$$
X\otimes M\xrightarrow{\sim} \on \otimes X\otimes M \xrightarrow{(coev_X)_{14}\otimes (coev_{X^*})_{23}} [X\otimes X^*]_1\otimes [X\otimes X^*]_2\otimes X\otimes M \xrightarrow{\tau_{[X\otimes X^*]_2, X}} 
$$
$$
\xrightarrow{\tau_{[X\otimes X^*]_2, X}} ([X\otimes X^*]_1\otimes X)\otimes ([X\otimes X^*]_2\otimes M) \xrightarrow{(ev_X)_{23}} X\otimes (X\otimes X^*\otimes M)\xrightarrow{a_M} X\otimes M.
$$
Using the equality
$$
id_X = (ev_X)_{23}\circ (coev_X\otimes id_X): X\to X\otimes X^*\otimes X \to X,
$$
we simplify the above map to $(id_X\otimes a_M)\circ (\tau_{(X\otimes X^*), X}\otimes id_M)\circ(id_X\otimes coev_{X^*}\otimes id_M)$, which, by definition, is the dualization of the action map $a_M$, i.e. $x_M$:
$$
x_M:X\otimes M\xrightarrow{coev_{X^*}} X\otimes X^*\otimes X\otimes M\xrightarrow{\tau_{(X\otimes X^*), X}} X\otimes (X\otimes X^*\otimes M)\xrightarrow{a_M} X\otimes M.
$$

\begin{theorem}\label{tcatact}
    \begin{enumerate}
        \item The data $(E,F,x,t)$ defines the categorical $\widehat{\ssl}_p$-action on $\cc$.

        \item Let $X=L_m\oplus L_{p-n}$ and let $[\cc]^\Delta$ be the subspace of $[\cc]$ spanned by standard (Kac) modules. Then we have an isomorphism of $\widehat{\ssl}_p$-modules 
        $$
        [\cc]^\Delta \simeq (\Lambda^m U)^{loop}\otimes (\Lambda^{p-n} U)^{loop} \simeq (\Lambda^m U)^{loop} \otimes (\Lambda^n U^*)^{loop},
        $$
        where $U=\mathrm{span}_\C(e_0,\ldots, e_{p-1})$ is the tautological $p$-dimensional $\ssl_p$-module, and the action of $\widehat\ssl_p$ factors through the action of $\ssl_p^{loop}=\ssl_p[t,t^{-1}]$ as follows: the generators $f_i$ acts via $E_{i+1, i}$ (and $e_i$ via $E_{i,i+1}$) for $0\le i\le p-2$, whereas $f_{p-1}$ acts via $E_{0, p-1}\cdot t^{-1}$ and $e_{p-1}$ acts via $E_{p-1,0}\cdot t$.
    \end{enumerate}
\end{theorem}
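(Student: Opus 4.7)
For part (1), the verification is a direct analogue of Theorem \ref{thm_sl_p_act}(1). The relations of the degenerate affine Hecke algebra involving only $t_i$'s (items (2), (3), (4) of Definition \ref{def_daHa}) hold because $t$ is built from the symmetric braiding, while commutation of the $x_i$'s and the cross commutation (items (1), (6)) are automatic since distinct $x_i$'s and $t_j$'s act on different tensor factors of $F^N$. The only substantive relation is $t\,x_{i+1}-x_i\,t=1$, and the reduction to the identity
\begin{equation*}
x_{X\otimes M} \;=\; \tau_{X,X}\otimes \mathrm{id}_M \;+\; (\tau_{X,X}\otimes \mathrm{id}_M)\circ(\mathrm{id}_X\otimes x_M)\circ(\tau_{X,X}\otimes \mathrm{id}_M)
\end{equation*}
carries over verbatim, using only the general formula $a_{X\otimes M}=\mathrm{id}_X\otimes \ev_X\otimes \mathrm{id}_M+(\mathrm{id}_X\otimes a_M)\circ \tau_{X\otimes X^*,X}$ for the $\gl(X)$-action on a tensor product. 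Exactness and biadjointness of $F$ and $E$ are formal in any rigid symmetric tensor category.

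For part (2), I would first identify bases. Pairs of admissible weights $(\mu,\pi)\in C_p^{(m)}\times C_p^{(p-n)}$ index Kac modules $K(\mu,\pi)$, and by Corollary \ref{cor_bgg} their classes form a basis of $[\cc]^\Delta$. Applying Theorem \ref{thm_sl_p_act}(2) to each factor yields bijections $\phi^{(k)}\colon[Ver_p(GL_k)]\xrightarrow{\sim}(\Lambda^k U)^{loop}$, and I take
\begin{equation*}
\Phi\colon [K(\mu,\pi)]\ \longmapsto\ \phi^{(m)}([V_\mu^{(m)}])\otimes \phi^{(p-n)}([V_\pi^{(p-n)}])
\end{equation*}
as the candidate isomorphism.

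The heart of the matter is to show that $\Phi$ is $\widehat\ssl_p$-equivariant. Since $K(\mu,\pi)=Dist(G)\otimes_{Dist(B)} (V_\mu^{(m)}\boxtimes V_\pi^{(p-n)})$ and induction along $Dist(B)\subset Dist(G)$ is exact, the $B$-module short exact sequence $0\to L_m\to X\to L_{p-n}\to 0$ (with $L_m$ a subobject because $\n^+=L_m\otimes L_{p-n}^*$ kills $L_m$ and maps $L_{p-n}$ into $L_m$) combined with the projection formula yields a two-step standard filtration of $FK(\mu,\pi)$ with associated graded
\begin{equation*}
\Bigl(\bigoplus_{i} K(\mu+e_i,\pi)\Bigr)\ \oplus\ \Bigl(\bigoplus_{j} K(\mu,\pi+e_j)\Bigr),
\end{equation*}
the sums being over $i,j$ for which $\mu+e_i$, respectively $\pi+e_j$, is still admissible. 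Refining by $x$-eigenspaces, the argument of Theorem \ref{thm_sl_p_act}(2) applied separately inside each factor shows that the first sum sits in the eigenspace of content $c=\mu_i+1-i\pmod p$ and the second in that of content $c=\pi_j+1-j\pmod p$. This exactly reproduces the Leibniz rule $f_c\otimes 1+1\otimes f_c$ on $(\Lambda^m U)^{loop}\otimes(\Lambda^{p-n} U)^{loop}$; equivariance for $E_c$ then follows by adjunction. The final identification $(\Lambda^{p-n} U)^{loop}\simeq (\Lambda^n U^*)^{loop}$ is the Hodge star together with a twist by the determinantal character and matches on the categorical side the level-rank equivalence $\mathbf{D}$ of (\ref{level_rank}), as summarized in Remark \ref{rem_level_rank_over_sVec}.

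The main obstacle I anticipate is the eigenvalue refinement: verifying that the tensor Casimir $x$ on $X\otimes K(\mu,\pi)$ acts block-diagonally with respect to $X=L_m\oplus L_{p-n}$, so that the content computations within each factor really go through unchanged. Concretely, the ``cross'' terms $\mathbf{E}_{1,2}\cdot \mathbf{E}_{2,1}$ and $\mathbf{E}_{2,1}\cdot \mathbf{E}_{1,2}$ in $U(\gl(X))$ that entered the proof of Theorem \ref{tcasimir} must be shown to contribute only scalar shifts proportional to $\dim L_m$ or $\dim L_{p-n}$ that cancel modulo $p$ in the content comparison; this is a direct refinement of the Theorem \ref{tcasimir} computation at the level of standard subquotients rather than on the whole $K(\mu,\pi)$.
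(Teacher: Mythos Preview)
Your argument for part (1) and the standard-filtration step in part (2) are correct and match the paper. The gap is in the eigenvalue computation for the second factor. You claim that $x$ acts on the subquotient $K(\mu,\pi+e_j)$ with eigenvalue $\pi_j+1-j\pmod p$, obtained by ``applying Theorem \ref{thm_sl_p_act}(2) separately inside each factor.'' But the tensor Casimir for $GL(X)$ is not the sum of the block tensor Casimirs: the cross summand $\mathbf E_{1,2}\cdot\mathbf E_{2,1}$ of $C_X$ contributes. Carrying out the computation you allude to via Theorem \ref{tcasimir}(1) gives
\[
2x\big|_{K(\mu,\pi+e_j)}=\bigl\langle(\mu,\pi+e_j)+2\br^{(m+p-n)},(\mu,\pi+e_j)\bigr\rangle-\bigl\langle\epsilon_1+2\br^{(m+p-n)},\epsilon_1\bigr\rangle-\bigl\langle(\mu,\pi)+2\br^{(m+p-n)},(\mu,\pi)\bigr\rangle=2(\pi_j+1-j-m),
\]
so the content is $\pi_j+1-j-m$, shifted by $-m$ from what you wrote. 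Since $1\le m<p$, this shift does \emph{not} vanish modulo $p$, contrary to your prediction in the last paragraph. Hence your map $\Phi=\phi^{(m)}\otimes\phi^{(p-n)}$ intertwines $[F_c]$ with $f_c\otimes 1+1\otimes f_{c+m}$, not with the Leibniz rule, and is not $\widehat\ssl_p$-equivariant.

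The paper handles this by working in the $(\mu|\nu)$ parametrization and absorbing the shift into the target: it sends $[K(\mu|\nu)]\mapsto v_\mu\otimes v_\nu^*\in(\Lambda^mU)^{loop}\otimes(\Lambda^nU^*)^{loop}$ with $v_\nu^*$ defined using residues $b_j\equiv -m-\nu_j+j$, so that $f_cv_\nu^*=v_{\nu+e_j}^*$ exactly when $c\equiv -m-\nu_j+j-1$, matching the Casimir eigenvalue. Your approach is salvageable, but only after composing $\phi^{(p-n)}$ with the automorphism of $(\Lambda^{p-n}U)^{loop}$ induced by the cyclic relabeling $v_a\mapsto v_{a-m}$ (equivalently, the Dynkin rotation $i\mapsto i-m$ of $\widehat\ssl_p$); this twist is precisely the contribution of the cross terms and must be made explicit.
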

\begin{proof}
    \begin{enumerate}
        \item The proof is identical to the proof of part (1) of Theorem \ref{thm_sl_p_act}.

        \item The $\widehat{\ssl}_p$-modules $(\Lambda^{p-n}U)^{loop}$ and $(\Lambda^{n} U^*)^{loop}$ are clearly isomorphic, so we will only prove the isomorphism
        \begin{equation*}\label{iso_sl_p_act}
         [\cc]^\Delta \simeq (\Lambda^{m} U)^{loop}\otimes(\Lambda^{n} U^*)^{loop}.   
        \end{equation*}

        First recall that we constructed an isomorphism $[Ver_p(GL_m)]\simeq (\Lambda^m U)^{loop}$, which sent $[V_\mu]$ to $v_\mu$ defined as follows:
        $$
        v_\mu = (v_{a_1}\wedge v_{a_2}\wedge \ldots\wedge v_{a_m})\cdot t^{-s},
        $$
        where $\mu_i + 1 -i = a_i+p\cdot s_i$, $0\le a_i\le p-1$, and $s=\sum_{i=1}^m s_i$.

        Let $v_b^*\in U^*$ be such that $v_b^*(v_a) = (-1)^b \delta_{a,b}$. We have
        $$
        f_c v_{b+1}^* = \delta_{b,c} v_b^* \text{ when } c<p-1.
        $$
        Now define (slightly abusing the notation) $v_\nu^*\in (\Lambda^n U^*)[t,t^{-1}]$ via
        $$
        v_\nu^* = (v_{b_1}^*\wedge v_{b_2}^*\wedge\ldots\wedge v_{b_n}^*)\cdot t^{r},
        $$
        where $-m-\nu_j+j = b_j + p\cdot r_j$, $0\le b_j\le p-1$, and $r=\sum_{j=1}^n r_j$. 
        Note that
        $$
        f_c v_\nu^* = \sum_{j=1}^n \delta^{(p)}_{c, b_j-1} v_{\nu+e_j}^* = \sum_{j=1}^n \delta^{(p)}_{c,-m- (\nu_j-j+1)}v_{\nu+e_j}^*,
        $$
        where $\delta^{(p)}_{i,j} = 1$ if $i=j$ modulo $p$ and zero otherwise.

        Recall also our notation:
        $$
        V^{(k)}\otimes V_\l^{(k)} = \bigoplus_{\l'\in \l+\square} V_{\l'}^{(k)},
        $$
        meaning that $\l' = \l+e_i$ for some basis vector $e_i$ and $\l'$ is admissible. 
        Recall also that for the functor $F^{(k)}=V^{(k)}\otimes -$, we have  $V^{(k)}_{\l'} = F^{(k)}_c(V^{(k)}_\l)$ if $\l'\in \l+\square_c$, i.e. $\l'=\l+e_i$ and $c = \l_i+1-i$.

        Consider the linear map 
        $$
        \phi: [\cc]^\Delta\to (\Lambda^{m} U)^{loop}\otimes(\Lambda^{n} U^*)^{loop},
        $$
        sending $[K(\mu|\nu)]$ to $v_\mu\otimes v_\nu^*$. This map is an isomorphism of vector spaces, so it is enough to show that it is $\widehat{\ssl}_p$-invariant.
        
        Note that since
        $$
        \Hom_G(X\otimes K(\mu|\nu),-)=\Hom_G(K(\mu|\nu),X^*\otimes -) = 
        $$
        $$
        =\Hom_B((\mu|\nu), (X^*\otimes -)|_B) = \Hom_B(X|_B\otimes (\mu|\nu), -),
        $$
        the module $X\otimes K(\mu|\nu)$ is isomorphic to $Dist(G)\tens{Dist(B)} (X|_B\otimes (\mu|\nu))$. So, it 
        has filtration by Kac modules, with quotient $\bigoplus_{\nu'\in \nu+\square} K(\mu|\nu')$ and submodule $\bigoplus_{\mu'\in \mu+\square} K(\mu'| \nu)$. This is because $Dist(G)\tens{Dist(B)} -$ is exact and as a $B$-module $X$ is an extension between $V^{(n)}\otimes I$ and $V^{(m)}$. So, $X|_B\otimes (\mu|\nu)$ is the extension:
        $$
        0\to \bigoplus_{\mu'\in\mu+\square} (\mu' | \nu) \to X|_B\otimes (\mu | \nu) \to \bigoplus_{\nu'\in\nu+\square} (\mu | \nu')  \to 0.
        $$

        The operator $2x = (\Delta\circ C_X - C_X\otimes 1 - 1 \otimes C_X)$ acts on the composition factor $K(\mu+e_i| \nu)$  of $X\otimes K(\mu| \nu)$ via multiplication by the constant
        $$
        (C_X|_{K(\mu+e_i| \nu)} - C_X|_X - C_X|_{K(\mu|\nu)}) = 
        $$
        $$
        =\langle (\mu|\nu)+2\br^{(m|n)}+\epsilon_i, (\mu|\nu)+\epsilon_i\rangle - \langle \epsilon_1 + 2\br^{(m|n)}, \epsilon_1\rangle - \langle (\mu|\nu)+2\br^{(m|n)}, (\mu|\nu)\rangle=
        $$
        $$
        =2\langle (\mu|\nu) + \br^{(m|n)} , \epsilon_i\rangle + \langle \epsilon_i,\epsilon_i\rangle - (m-n) = 
        $$        
        $$
         =  2\mu_i + (m-n-2i+1) - (m-n-1) =   2(\mu_i+1-i).
        $$
        Similarly, $2x$ acts on the composition factor $K(\mu| \nu+e_j)$ of $X\otimes K(\mu| \nu)$ via multiplication by
        $$
        \langle (\mu|\nu)+2\br^{(m|n)}+\delta_j, (\mu|\nu)+\delta_j\rangle - \langle (\mu|\nu)+2\br^{(m|n)},(\mu|\nu)\rangle - \langle \epsilon_1 + 2\br^{(m|n)}, \epsilon_1\rangle =   
        $$
        $$
        = 2\langle (\mu|\nu) + \br^{(m|n)}, \delta_j\rangle +\langle\delta_j, \delta_j\rangle  - (m-n) = 
        $$
        $$
        =-2\nu_j - (n+m-2j+1) -1 -(m-n) = 2(-m-\nu_j + j -1 ).
        $$

        Thus, if $K(\mu|\nu+e_j)$ is a composition factor in $F_c(K(\mu|\nu))$ then $$c = -m - (\nu_j-j+1).$$ Note also that for each $j\in \{1,\ldots, n\}$, for which $\nu+e_j$ is admissible, the constants $\nu_j-j+1$ are distinct.

        Let $f_c = [F_c]$ acting on the Grothendieck group $[\cc]$. The computation above shows that
        $$
        f_c[K(\mu| \nu)] = [K(\mu'|\nu)]+[K(\mu| \nu')],
        $$
        where $V_{\mu'}^{(m)} = F_c(V^{(m)}_\mu)$ and $V_{\nu'}^{(n)} = F_{-m-c}(V^{(n)}_\nu)$. (Here we put $[K(\mu'|\nu)]=0$ in case $F_c(V^{(m)}_\mu)=0$, with similar assumption on $[K(\mu| \nu')]$.)

        The formula above is consistent with 
        $$
        f_c(v_\mu\otimes v_\nu^*) = f_c(v_\mu)\otimes v_\nu^* + v_\mu\otimes f_c(v_\nu^*),
        $$
    after noticing that
    $$
    f_c(v_\nu^*) = \sum_{j=1}^n \delta^{(p)}_{\nu_j-j+1, -m-c}v_{\nu+e_j}^*.
    $$

        The argument for the action of $e_c$ is completely analogous.

    \end{enumerate}
\end{proof}

\begin{theorem}\label{thm_projfun}
    Functors $F_i, E_i, i\in \mathbb F_p$ are \textbf{projective}, i.e. for any projective $GL(X)$-module $P$ the modules $F_i(P), E_i(P)$ are also projective. 
\end{theorem}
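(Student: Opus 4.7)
The plan is to deduce this from the standard homological fact that a functor with an exact adjoint preserves projectives, together with the observation that $F_i$ and $E_i$ are direct summands of $F$ and $E$.

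First, I would recall what has already been established in the paper: by the definition of a categorical $\widehat{\ssl}_p$-action (and confirmed in Theorem \ref{tcatact}), the endofunctors $F = X\otimes -$ and $E = X^*\otimes -$ on $\cc$ are exact and mutually biadjoint. In particular, there is a functorial isomorphism
\[
\Hom_G(F(M), N) \simeq \Hom_G(M, E(N))
\]
for all $M,N\in\cc$, and symmetrically with $E$ and $F$ interchanged. Exactness of $F$ and $E$ is automatic because tensoring with a fixed object in a symmetric tensor category is exact.

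Second, let $P\in \cc$ be projective. Then
\[
\Hom_G(F(P), -) \simeq \Hom_G(P, E(-)),
\]
and the right-hand side is the composition of the exact functor $E(-)$ with the exact functor $\Hom_G(P, -)$, hence is exact. Therefore $F(P)$ is projective. An identical argument using the adjunction $\Hom_G(E(P),-)\simeq \Hom_G(P,F(-))$ shows that $E(P)$ is projective as well.

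Finally, from the decompositions $F = \bigoplus_{i\in \mathbb F_p} F_i$ and $E = \bigoplus_{i\in \mathbb F_p} E_i$ introduced in Section \ref{section_transl_fun}, each $F_i(P)$ (resp.\ $E_i(P)$) is a direct summand of the projective object $F(P)$ (resp.\ $E(P)$). Since direct summands of projective objects are projective, the functors $F_i$ and $E_i$ send projectives to projectives, which is the assertion of the theorem. There is no genuine obstacle in this argument; everything follows once biadjointness and exactness of $F$ and $E$ are in hand, which the paper has already set up.
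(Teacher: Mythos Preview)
Your proof is correct and essentially identical to the paper's own argument: the paper also reduces to $F$ and $E$ via the direct-summand observation, and then shows $Y\otimes -$ preserves projectives for any $Y$ using the adjunction $\Hom_{GL(X)}(Y\otimes P,-)=\Hom_{GL(X)}(P,Y^*\otimes -)$ together with exactness of $Y^*\otimes -$.
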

\begin{proof}
    As $F_i, E_i$ are direct summands of $F$ and $E$ respectively, it is enough to check that $F$ and $E$ are projective. 

    It follows from the fact that for any $GL(X)$-module $Y$ of finite length, the functor $Y\otimes - $ is projective, as 
    $$
    \Hom_{GL(X)}(Y\otimes P, -)=\Hom_{GL(X)}(P, Y^*\otimes -).
    $$
\end{proof}

\subsection{Weight diagrams}\label{s_wd}
In Theorem \ref{tcatact}, we assigned to each standard object $K(\mu|\nu)$ a vector $v_{\mu}\otimes v_\nu^*$ in $(\Lambda^m U)[t_1,t_1^{-1}]\otimes (\Lambda^n U^*)[t_2,t_2^{-1}]$, where $U=\C^p=\mathrm{span}_\C(v_0,v_1,\ldots, v_{p-1})$. Namely, put $$a_i = \mu_i-i+1 ~\mathrm{ mod } ~p,$$ 
$$b_j = -m-\nu_j+j~\mathrm{mod}~p,$$
and define $s$ and $r$ so that
$$
\mu_i-i+1 = a_i + p\cdot s_i,~-m-\nu_j+j = b_j+p\cdot r_j,
$$
and $s=\sum_{i=1}^m s_i$,  $r=\sum_{i=1}^n r_i$.
Then $[K(\mu|\nu)]$ corresponds to the vector
$$
(v_{a_1}\wedge v_{a_2}\wedge\ldots\wedge v_{a_m})\cdot t_1^{-s}\otimes (v_{b_1}^*\wedge v_{b_2}^* \wedge \ldots\wedge v_{b_n}^*)\cdot t_2^r.
$$

\begin{remark}\label{rem_ab}
    We have
    $$
    a_i+p\cdot s_i = \langle (\mu|\nu) + \br^{(m|n)}, \epsilon_i\rangle +\frac{n-m-1}{2};
    $$
    $$
    b_j+p\cdot r_i = \langle (\mu|\nu) + \br^{(m|n)}, \delta_j\rangle +\frac{n-m-1}{2}.
    $$
\end{remark}

We will store the information about constants $a_1,\ldots, a_m, b_1, \ldots, b_n\in \{0,\ldots, p-1\}$ and $s,r\in \Z$ graphically in terms of weight diagrams. We will then describe the action of translation functors $F_i, E_i$ on $\cc=\Rep_{Ver_p}(GL(L_m|L_n), \varepsilon)$ in terms of these diagrams.
 
The weight diagram of $(\mu|\nu)$ is defined in the same way as for weights of the Lie superalgebra $\gl(m|n)$ (see, for example, \cite{GS11})\footnote{Our notation is similar to the one commonly used for Lie superalgebras. Initially, though, weight diagrams were intruduced by Brundan and Stroppel in \cite{BS08} (in \cite{BS12} for $\gl(m|n)$ specifically). Their notation is slightly different, using symbols $\circ, \wedge, \vee, \times$ in place of $>, \circ, \times, <$ respectively.} with one main difference. The symbols encoding components of the weight are placed not on the number line but on a circle, so that we only keep track of their remainders modulo $p$.

\begin{definition}
    The \textbf{weight diagram} of a weight $(\mu|\nu)\in \Z^{m|n}$ is an arrangement of symbols $\circ, <, >,$ and $\times$ on the vertices of a regular $p$-gon defined as follows.
    \begin{itemize}
        \item We start with a regular $p$-gon on a circle, whose vertices are labeled in the clock-wise manner by numbers $0,1, \ldots, p-1$:
        
         \begin{tikzpicture}
  \coordinate (O) at (1,2);
  \def\radius{2.5cm}

  \draw (O) circle[radius=\radius];

  \def\x{90};
\dotnode
    
   \def\x{122.73};
      \dotnode
      
    \def\x{155.45};
   \dotnode

    \def\x{188.18};
    \dotnode

    \def\x{220.9};
    \dotnode

    \def\x{253.64};
   \dotnode

    \def\x{286.36};
  \dotnode

    \def\x{319.1};
  \dotnode

    \def\x{351.82};
    \dotnode

    \def\x{24.54};
    \dotnode

    \def\x{57.27};
   \dotnode

    \def\y{0.7em};
    \def\x{90};
  \nodelabel{$0$}
   \def\x{122.73};
   \def\y{1.2em};
   \nodelabel{$p-1$}
    \def\x{155.45};
      \def\y{1.5em};
   \nodelabel{$p-2$}
    \def\x{188.18};
    \def\y{1.7em};
   \nodelabel{$p-3$}
    \def\x{220.9};
      \def\y{1em};
    \nodelabel{$\ldots$}
    \def\x{253.64};
   \nodelabel{$\ldots$}
    \def\x{286.36};
    \def\y{0.7em};
   \nodelabel{$5$}
    \def\x{319.1};
    \def\y{0.7em};
   \nodelabel{$4$}
    \def\x{351.82};
    \def\y{0.7em};
    \nodelabel{$3$}
    \def\x{24.54};
    \def\y{0.8em};
    \nodelabel{$2$}
    \def\x{57.27};
    \def\y{0.8em};
   \nodelabel{$1$}

\end{tikzpicture}

\item For each vertex $V_k$ labeled by $k$ (with $0\le k\le p-1$) choose one of the following four options:
\begin{enumerate}
    \item If there exists $a_i := \mu_i + i - 1$, which is equal to $k$ modulo $p$, and none of the $b_j:= m - (\nu_j-1)$ are equal to $k$ modulo $p$, we place an arrow pointing clockwise $(\cw)$ on $V_k$.
    \item If there exists $b_j\equiv k\mod p$ and none of the $a_i$ are equal to $k$ modulo $p$, we place an arrow pointing counter-clockwise $(\ccw)$ on $V_k$.
    \item If $k\equiv a_i\equiv b_j\mod p$ for some $i$ and $j$, we combine  $\cw$ and $\ccw$ resulting in a cross symbol $(\times)$ placed on $V_k$.
    \item If none of $a_i, b_j$ are equal to $k$ modulo $p$, we place an empty circle $(\circ)$ on $V_k$.
\end{enumerate}

\item The last step consists of placing the label $t_1^{-s} t_2^r$ in the center of the diagram.
\end{itemize}
\end{definition}

Figure \ref{fig:wd} below is an example of a weight diagram for $p=11$, $m = 5, n = 4$.

\begin{figure}[h]
    \centering
    \caption{Example of a weight diagram}
    \label{fig:wd}

       \begin{tikzpicture}
  \coordinate (O) at (1,2);
  \def\radius{2.5cm}

  \draw (O) circle[radius=\radius];

  \def\x{90};
\onode
    
   \def\x{122.73};
      \lenode
      
    \def\x{155.45};
   \xnode

    \def\x{188.18};
    \genode

    \def\x{220.9};
    \genode

    \def\x{253.64};
   \xnode

    \def\x{286.36};
  \onode

    \def\x{319.1};
  \lenode

    \def\x{351.82};
    \onode

    \def\x{24.54};
    \genode

    \def\x{57.27};
   \onode

    \def\y{1.3em};
    \def\x{90};
  \nodelabel{$0$}
   \def\x{122.73};
   \def\y{1.7em};
   \nodelabel{$10$}
    \def\x{155.45};
      \def\y{2em};
   \nodelabel{$9$}
    \def\x{188.18};
    \def\y{2em};
   \nodelabel{$8$}
    \def\x{220.9};
      \def\y{1.7em};
    \nodelabel{$7$}
    \def\x{253.64};
   \nodelabel{$6$}
    \def\x{286.36};
    \def\y{1.5em};
   \nodelabel{$5$}
    \def\x{319.1};
    \def\y{1.5em};
   \nodelabel{$4$}
    \def\x{351.82};
    \def\y{1.5em};
    \nodelabel{$3$}
    \def\x{24.54};
    \def\y{1.6em};
    \nodelabel{$2$}
    \def\x{57.27};
    \def\y{1.5em};
   \nodelabel{$1$}

   \node at (O) {\Large{$t_1^{-3} t_2^{2}$}};
\end{tikzpicture}
\end{figure}
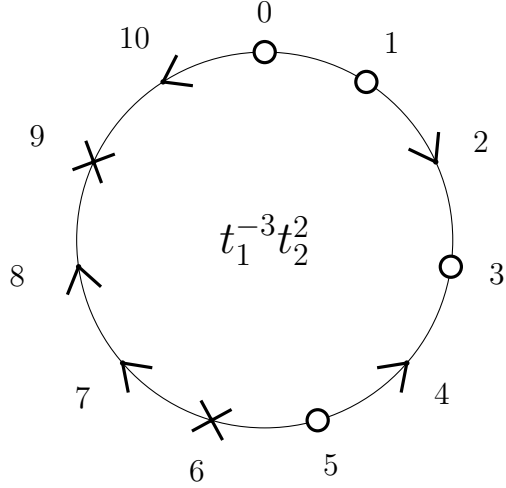

\begin{remark}\label{rem_recover_weight}
    An admissible weight $(\mu|\nu)$ can be recovered uniquely from its diagram:
\begin{itemize}
    \item The total number of vertices is $p =$ char $\ck$;
    \item $m$ is the total number of $\cw$ symbols (including those that are part of a $\times$ symbol);
    \item $n$ is the total number of $\ccw$ symbols (including those that are part of a $\times$ symbol).
    \item We recover $s$ and $r$ from the label $t_1^{-s}t_2^r$.
    \item Let $s=ms'+k, ~r=nr'+l$, where $0\le k\le m-1$ and $0\le l\le n-1$. Then 
    $$
    s_i = \begin{cases}
        s'+1, \text{ if } i\le k,\\
        s', \text{ if } i>k;
    \end{cases}
    $$
    and
    $$
    r_j = \begin{cases}
        r'+1, \text{ if } j> n-l,\\
        r', \text{ if } j\le n - l.
    \end{cases}
    $$
    \item Consider the set $A$ of all vertices labeled by $\cw$  (including those labeled by $\times$), so that $|A|=m$. Order the elements of $A=\{c_1,\ldots, c_m\}$ so that $c_1>c_2>\ldots>c_m$. Then
    $$
    a_i=\begin{cases}
       c_{m-k+i}, \text{ if } i\le k,\\
       c_{i-k}, \text{ if } i>k.
    \end{cases}
    $$
    \item Consider the set $B$ of all vertices labeled by $\ccw$  (including those labeled by $\times$), so that $|B|=n$. Order the elements of $B=\{d_1,\ldots, d_n\}$ so that $d_1<d_2<\ldots<d_n$. Then
    $$
    b_j=\begin{cases}
       d_{l+j}, \text{ if } j\le  n-l,\\
       d_{j+l-n}, \text{ if } j>n-l.
    \end{cases}
    $$
    \item We have 
    $$\mu_i =  i - 1 + (a_i+ p\cdot s_i),$$
    $$
    \nu_j = -m  + j - (b_j+ p\cdot r_j).
    $$
\end{itemize}
\end{remark}

\begin{ex}\label{ex_computing_weight_from_diag}
    In the diagram pictured in Figure \ref{fig:wd}, $m=5, n =4,$ $s=3=k, r=2=l$,
    $$
    A = \{9, 8, 7, 6, 2\},~B=\{4, 6, 9,10\}.
    $$
    We get that
    $$
    (a_1+p\cdot s_1,\ldots, a_5+p\cdot s_5) = (7+11, 6 + 11, 2+11, 9, 8) = (18,17, 13, 9, 8); 
    $$
    $$
    (b_1+p\cdot r_1,\ldots, b_4+p\cdot r_4) = (9, 10, 4+11, 6+11) = (9, 10, 15, 17).
    $$
    Therefore,
    $$
    \mu =(18, 18, 15, 12, 12),
    $$
    $$
    \nu = (-13, -13, -17, -18).
    $$
    
\end{ex}

Let us denote the weight diagram of $\bl := (\mu|\nu)$ by $f_\bl$. 

\begin{definition}
  For a weight diagram $f$, each $i$ and a pair of symbols $x,y\in\{<,>,\circ, \times\}$ denote by $f^{(i)}_{xy}$ the weight diagram obtained from $f$ by replacing the symbol attached to the $i$'th vertex in $f$ with $x$, and the symbol attached to the $(i+1)$'st vertex in $f$ (where $i+1$ is considered modulo $p$) with $y$.
  \end{definition}
  \begin{ex}
      Let $f$ be the weight diagram in Figure \ref{fig:wd}. Then the diagram $f^{(6)}_{\cw\times}$ is shown in Figure \ref{fig:changedwd}.
      \begin{figure}[ht]
      \caption{}
          \label{fig:changedwd}
      \begin{tikzpicture}
 \coordinate (O) at (1,2);
  \def\radius{2.5cm}

  \draw (O) circle[radius=\radius];

  \def\x{90};
\onode
    
   \def\x{122.73};
      \lenode
      
    \def\x{155.45};
   \xnode

    \def\x{188.18};
    \genode

    \def\x{220.9};
    \genode

    \def\x{253.64};
   \xnode

    \def\x{286.36};
  \onode

    \def\x{319.1};
  \lenode

    \def\x{351.82};
    \onode

    \def\x{24.54};
    \genode

    \def\x{57.27};
   \onode

    \def\y{1.3em};
    \def\x{90};
  \nodelabel{$0$}
   \def\x{122.73};
   \def\y{1.7em};
   \nodelabel{$10$}
    \def\x{155.45};
      \def\y{2em};
   \nodelabel{$9$}
    \def\x{188.18};
    \def\y{2em};
   \nodelabel{$8$}
    \def\x{220.9};
      \def\y{1.7em};
    \nodelabel{$7$}
    \def\x{253.64};
   \nodelabel{$6$}
    \def\x{286.36};
    \def\y{1.5em};
   \nodelabel{$5$}
    \def\x{319.1};
    \def\y{1.5em};
   \nodelabel{$4$}
    \def\x{351.82};
    \def\y{1.5em};
    \nodelabel{$3$}
    \def\x{24.54};
    \def\y{1.6em};
    \nodelabel{$2$}
    \def\x{57.27};
    \def\y{1.5em};
   \nodelabel{$1$}

   \node at (O) {\Large{$t_1^{-3} t_2^{2}$}};

   \draw[thick] (4.5,2)--(6.5,2);
    \draw[thick] (6.5,2) -- (6.3, 2.1);
    \draw[thick] (6.5,2) -- (6.3, 1.9);
 \node at (5.4, 2.4) {{ $f\mapsto f^{(6)}_{\cw\times} $ }};
 
  \coordinate (O) at (10,2);
  \def\radius{2.5cm};

  \draw (O) circle[radius=\radius];

  \def\x{90};
\onode
    
   \def\x{122.73};
      \lenode
      
    \def\x{155.45};
   \xnode

    \def\x{188.18};
    \genode

    \def\x{220.9};
    \xnode

    \def\x{253.64};
   \genode

    \def\x{286.36};
  \onode

    \def\x{319.1};
  \lenode

    \def\x{351.82};
    \onode

    \def\x{24.54};
    \genode

    \def\x{57.27};
   \onode

    \def\y{1.3em};
    \def\x{90};
  \nodelabel{$0$}
   \def\x{122.73};
   \def\y{1.7em};
   \nodelabel{$10$}
    \def\x{155.45};
      \def\y{2em};
   \nodelabel{$9$}
    \def\x{188.18};
    \def\y{2em};
   \nodelabel{$8$}
    \def\x{220.9};
      \def\y{1.7em};
    \nodelabel{$7$}
    \def\x{253.64};
   \nodelabel{$6$}
    \def\x{286.36};
    \def\y{1.5em};
   \nodelabel{$5$}
    \def\x{319.1};
    \def\y{1.5em};
   \nodelabel{$4$}
    \def\x{351.82};
    \def\y{1.5em};
    \nodelabel{$3$}
    \def\x{24.54};
    \def\y{1.6em};
    \nodelabel{$2$}
    \def\x{57.27};
    \def\y{1.5em};
   \nodelabel{$1$}

   \node at (O) {\Large{$t_1^{-3} t_2^{2}$}};
\end{tikzpicture}
 \end{figure}
      
  \end{ex}
  \begin{remark}
      Recall that $\cw$ (resp. $\ccw$) denotes the clockwise (resp. counterclockwise) facing arrow. Thus, in our notation, $(xy)$ depicts the view on vertices $i$ and $i+1$ in $f^{(i)}_{xy}$ from the center of the circle.
  \end{remark}

  Identifying $\bl=(\mu|\nu)$ with the vector $v_\mu\otimes v_\nu^*$ in $\widehat{\ssl}_p$-module  $(\Lambda^m U)^{loop}\otimes (\Lambda^nU^*)^{loop}$, we get the action of operators $F_i, E_i$\footnote{The same operators are denoted by $f_i, e_i$ when acting on the vectors of $(\Lambda^m U)^{loop}\otimes (\Lambda^nU^*)^{loop}$. We will use upper-case letters for the action on weight diagrams to avoid confusion between $f_i$ and $f_\bl$.}, $0\le i\le p-1$ on the set of $\C$-linear combinations of weight diagrams $f_\bl$. Let us describe this action.
\begin{theorem}\label{thm_action_on_wd}
     Let $f$ be a weight diagram, let $0\le i\le p-1$ and let $\epsilon_i=\delta_{i, p-1}$. Then
     $F_i(f)$ is equal to 
      \begin{itemize}
          \item $f^{(i)}_{\circ\cw}\cdot t_1^{-\epsilon_i},~ \text{if } f = f^{(i)}_{\cw\circ}$,
          \medskip
          \item $f^{(i)}_{\ccw \circ}\cdot t_2^{-\epsilon_i},~ \text{if } f = f^{(i)}_{\circ \ccw}, $
          \medskip
          \item $f^{(i)}_{\circ\times}\cdot t_1^{-\epsilon_i} +f^{(i)}_{\times\circ}\cdot t_2^{-\epsilon_i},~ \text{if } f = f^{(i)}_{\cw\ccw}, $
          \medskip
         \item $f^{(i)}_{\ccw\times}\cdot t_1^{-\epsilon_i}, ~\text{if } f = f^{(i)}_{\times\ccw}, $
          \medskip
         \item $f^{(i)}_{\times\cw}\cdot t_2^{-\epsilon_i},~ \text{if } f = f^{(i)}_{\cw\times}, $  
          \medskip       
         \item $f^{(i)}_{\ccw \cw}\cdot t_1^{-\epsilon_i},~ \text{if } f = f^{(i)}_{\times \circ}, $
          \medskip
         \item $f^{(i)}_{\ccw\cw}\cdot t_2^{-\epsilon_i},~ \text{if } f = f^{(i)}_{\circ\times} $,
          \medskip
        \item $0$ otherwise.
        \end{itemize}
        
        Similarly, $E_i(f)$ is equal to
        \begin{itemize}
          \item $f^{(i)}_{\circ\ccw}\cdot t_2^{\epsilon_i},~ \text{if } f = f^{(i)}_{\ccw\circ}$,
          \medskip
          \item $f^{(i)}_{\cw \circ}\cdot t_1^{\epsilon_i},~ \text{if } f = f^{(i)}_{\circ \cw}, $
          \medskip
          \item $f^{(i)}_{\times\circ}\cdot t_1^{\epsilon_i}+f^{(i)}_{\circ\times}\cdot t_2^{\epsilon_i},~ \text{if } f = f^{(i)}_{\ccw\cw}, $
          \medskip
         \item $f^{(i)}_{\cw\times}\cdot t_2^{\epsilon_i}, ~\text{if } f = f^{(i)}_{\times\cw}, $
          \medskip
         \item $f^{(i)}_{\times\ccw}\cdot t_1^{\epsilon_i},~ \text{if } f = f^{(i)}_{\ccw\times}, $     
          \medskip    
         \item $f^{(i)}_{\cw \ccw}\cdot t_2^{\epsilon_i},~ \text{if } f = f^{(i)}_{\times \circ}, $
          \medskip
         \item $f^{(i)}_{\cw\ccw}\cdot t_1^{\epsilon_i},~ \text{if } f = f^{(i)}_{\circ\times} $,
          \medskip
        \item $0$ otherwise.
        \end{itemize}
\end{theorem}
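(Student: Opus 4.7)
The plan is to derive the formulas by transporting the action across the isomorphism of Theorem~\ref{tcatact}(2), which identifies $[\mathcal{C}]^\Delta$ with $(\Lambda^m U)^{loop}\otimes (\Lambda^n U^*)^{loop}$ so that $[K(\mu|\nu)] \leftrightarrow v_\mu \otimes v_\nu^*$. First I would unpack the correspondence between weight diagrams and the vectors $v_\mu \otimes v_\nu^*$: the $\cw$ symbols (including those inside a $\times$) record the residues $\{a_i \bmod p\}$ appearing in the wedge $v_{a_1}\wedge\cdots\wedge v_{a_m}$, the $\ccw$ symbols analogously record the $\{b_j \bmod p\}$ in $v_{b_1}^*\wedge\cdots\wedge v_{b_n}^*$, and the central label $t_1^{-s}t_2^r$ records the total $t$-degrees. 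Using Remark~\ref{rem_recover_weight}, this correspondence is a bijection.

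Next, since the coproduct on $\widehat{\ssl}_p$ is primitive, the Leibniz rule gives
\[
F_i(v_\mu \otimes v_\nu^*) \;=\; F_i(v_\mu)\otimes v_\nu^* \;+\; v_\mu \otimes F_i(v_\nu^*).
\]
The first summand is nonzero iff some $a_k \equiv i \pmod p$ and no $a_j \equiv i+1 \pmod p$; in that case it replaces $a_k$ by $a_k+1$ (mod $p$), which at the level of diagrams moves a $\cw$ symbol from vertex $i$ to vertex $i+1$, picking up a factor of $t_1^{-1}$ precisely when $i=p-1$ (because $f_{p-1}=E_{0,p-1}\cdot t^{-1}$ in the loop realization of $\widehat{\ssl}_p$). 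Symmetrically, the second summand is nonzero iff some $b_j\equiv i+1 \pmod p$ and no $b_l\equiv i \pmod p$, and then moves a $\ccw$ symbol from vertex $i+1$ to vertex $i$, contributing $t_2^{-1}$ exactly when $i=p-1$ (this time because the transpose/contragredient action sends $0\mapsto p-1$, which, together with the formula $b_j = -m-\nu_j+j$, decreases the total $r$ by $1$; this sign check is the one subtle bookkeeping point).

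With these two movement rules in hand, I would enumerate the sixteen possibilities for the pair of symbols at vertices $(i,i+1)$, summing the two contributions in each case. This recovers the seven nonzero cases in the statement: for instance $f^{(i)}_{\cw\ccw}$ receives contributions from both summands, yielding $f^{(i)}_{\circ\times}\cdot t_1^{-\epsilon_i} + f^{(i)}_{\times\circ}\cdot t_2^{-\epsilon_i}$; $f^{(i)}_{\times\circ}$ has a $\cw$ at $i$ and no $\cw$ at $i+1$, so only the first summand contributes, giving $f^{(i)}_{\ccw\cw}\cdot t_1^{-\epsilon_i}$; and so on. The nine remaining configurations have either no $\cw$ at $i$ (or a $\cw$ blocked by one already at $i+1$) and simultaneously no $\ccw$ at $i+1$ (or one blocked by one already at $i$), hence give zero. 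The computation for $E_i$ is entirely parallel: it moves $\cw$ from $i+1$ to $i$ on the $v_\mu$ factor and $\ccw$ from $i$ to $i+1$ on the $v_\nu^*$ factor, with analogous $t^{+\epsilon_i}$ factors appearing when $i=p-1$.

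The main obstacle, and really the only one, is the sign/direction bookkeeping for the $t$-factors, in particular making sure that (i) the convention on the contragredient action on $U^*$ produces $t_2^{-1}$ (not $t_2^{+1}$) when a $\ccw$ arrow crosses from $0$ to $p-1$, and (ii) the cross cases $\times\circ$, $\circ\times$, $\times\ccw$, $\cw\times$ are correctly assigned to the $t_1$ or $t_2$ factor according to which component (the $\cw$-part or the $\ccw$-part) is actually moving. This is most cleanly verified by working out two representative cases in detail and then tabulating the rest; once this is done, the theorem follows immediately from Theorem~\ref{tcatact}(2).
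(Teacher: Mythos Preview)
Your proposal is correct and is exactly the approach the paper takes: the theorem is stated without an explicit proof because it is a direct translation of the computation in Theorem~\ref{tcatact}(2) (the Leibniz decomposition $f_c[K(\mu|\nu)]=[K(\mu'|\nu)]+[K(\mu|\nu')]$) into the language of weight diagrams via the dictionary set up at the start of Section~\ref{s_wd}. Your identification of the only subtle point---the $t_1,t_2$ bookkeeping when an arrow crosses between vertices $p-1$ and $0$---matches the paper's own remark following the theorem.
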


\begin{remark}
    The rule of thumb for this action is that both $F_i$ and $E_i$ only affect vertices $i$ and $i+1$ of the weight diagram. While $F_i$ moves the arrows one step in the direction they are facing, $E_i$ moves the arrows in the opposite direction.

    Note that arrows have fermionic behavior: two arrows facing the same direction are not allowed to sit on the same vertex. That is why if, for instance, $f = f^{(i)}_{\cw\cw}$ then $F_i(f) = 0$.

    A factor of $t_1^{-1}$ is added when $\cw$ is moved from $p-1$ to $0$ (this is because the corresponding $s_i$ increases by one). When $\cw$ is moved from $0$ to $p-1$, we add a factor of $t_1$. Similarly, moving $\ccw$ from $p-1$ to $0$ results in  factor of $t_2$, and moving it from $0$ to $p-1$  produces a factor of $t_2^{-1}$.
    
\end{remark}

\begin{definition}\label{def_atypical}
    Let $\bl$ be an admissible weight and $f_\bl$ its weight diagram. The number of $\times$'s in $f_\bl$ is called the \textbf{degree of atypicality} of $\bl$ denoted $\#\bl$.

    If $\#\bl = 0$ we say $\bl$ is \textbf{typical}.
\end{definition}

The results of Theorem \ref{thm_action_on_wd} imply that for any translation functor $$T\in\{F_i, E_i~|~0\le i \le p-1\}$$ and any weight $\bl$, as long as $Tf_\bl\neq 0$, we can define $\#T\bl$ to be the degree of atypicality of all diagrams in $Tf_\bl$ (one can see that $Tf_\bl$ is either a single diagram or a sum of two diagrams of the same degree of atypicality).
 
\begin{theorem}\label{thm_transl_of_kac}
    Let $T\in \{F_i, E_i~|~0\le i\le p-1\}$ be one of the translation functors.
    
    \begin{enumerate}
        \item Suppose
    $$
    Tf_\bl = f_\bm.
    $$
    Then $\#\bl = \#\bm$ and
    $$
    TK(\bl) = K(\bm).
    $$
    \item Suppose 
    $$
    Tf_\bl = f_\bm+f_\bn.
    $$
    Then $\#\bl < \#\bm = \#\bn$, $\bm\prec \bn$ (or the other way around) and $TK(\bl)$ is an extension between $K(\bm)$ and $K(\bn)$:
    $$
    0\to K(\bn)\to TK(\bl)\to K(\bm)\to 0.
    $$
     \end{enumerate}
   
\end{theorem}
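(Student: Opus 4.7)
My plan is to derive both assertions from the short exact sequence
\begin{equation*}
0 \to \bigoplus_{\mu'\in\mu+\square} K(\mu'|\nu) \to X \otimes K(\mu|\nu) \to \bigoplus_{\nu'\in\nu+\square} K(\mu|\nu') \to 0
\end{equation*}
already produced in the proof of Theorem \ref{tcatact} by applying the exact induction functor $Dist(G)\tens{Dist(B)}(-)$ to the $B$-module extension $0\to L_m\otimes(\mu|\nu)\to X|_B\otimes(\mu|\nu)\to L_r\otimes(\mu|\nu)\to 0$. Since $F_i$ is defined as the direct summand of $F=X\otimes-$ on which the tensor Casimir acts by the eigenvalue $i$, it is exact, so applying it to this sequence yields a short exact sequence whose end terms are the corresponding $i$-eigenspaces.

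My next step would be to pin down those eigenspaces. The Casimir computation carried out in the proof of Theorem \ref{tcatact} shows that the summand $K(\mu+e_k|\nu)$ lies in the $i$-eigenspace precisely when $\mu_k+1-k\equiv i\pmod p$, and $K(\mu|\nu+e_l)$ lies in it precisely when $-m-\nu_l+l-1\equiv i\pmod p$; by Lemma \ref{lemma_content} the choice of $k$ (resp.\ $l$) is unique when it exists. Translating into the diagram $f_\bl$, the first condition says there is a $\cw$ at position $i$ and no $\cw$ at position $i+1$, and the second says there is a $\ccw$ at position $i+1$ and no $\ccw$ at position $i$. Running through the possible configurations $(x,y)$ at positions $(i,i+1)$ in $f_\bl$ then exactly reproduces the non-trivial cases of Theorem \ref{thm_action_on_wd}: in all but one of them, one of the two end terms of the short exact sequence vanishes, forcing $F_iK(\bl)$ to be the surviving single Kac module (case (1)), whereas in the configuration $(\cw,\ccw)$ both end terms are nonzero and produce the extension of case (2). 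The ordering claim follows at once from $|\mu+e_k|=|\mu|+1>|\mu|$, which places $\bn=(\mu+e_k|\nu)$ strictly above $\bm=(\mu|\nu+e_l)$ in the dominance order of Section \ref{s_hw_cat} and identifies $K(\bn)$ as the submodule and $K(\bm)$ as the quotient; the atypicality counts $\#\bm,\#\bn$ can be read off the resulting diagrams directly.

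For $T=E_i$ the argument is entirely parallel. Dualizing the $B$-module extension yields $0\to L_r^*\otimes(\mu|\nu)\to X^*|_B\otimes(\mu|\nu)\to L_m^*\otimes(\mu|\nu)\to 0$, which after induction becomes
\begin{equation*}
0\to\bigoplus_{\nu''\in\nu-\square}K(\mu|\nu'')\to X^*\otimes K(\mu|\nu)\to\bigoplus_{\mu''\in\mu-\square}K(\mu''|\nu)\to 0;
\end{equation*}
applying $E_i$ and repeating the eigenvalue/diagram analysis gives the symmetric statements, with the submodule $\bn=(\mu|\nu-e_l)$ again being the one of larger $|\mu|$. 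I expect no conceptual difficulty: the main effort is the case-by-case bookkeeping to check that the admissibility conditions for $\mu\pm e_k$ and $\nu\pm e_l$ translate correctly into the diagram moves listed in Theorem \ref{thm_action_on_wd}.
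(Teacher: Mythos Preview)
Your proposal is correct and follows essentially the same route as the paper: the paper's proof is the single sentence ``It follows directly from the proof of Theorem \ref{tcatact} and from Theorem \ref{thm_action_on_wd},'' and what you have written is precisely the unpacking of that sentence---take the $B$-module filtration of $X|_B\otimes(\mu|\nu)$ (resp.\ $X^*|_B\otimes(\mu|\nu)$), induce to get a short exact sequence of Kac modules, project to the $i$-eigenspace of the tensor Casimir using the eigenvalue computation from Theorem \ref{tcatact}, and then read off which Kac modules survive via the diagram rules of Theorem \ref{thm_action_on_wd}. Your identification of the submodule versus quotient in case (2), and hence of the ordering $\bm\prec\bn$, is also exactly what the paper relies on implicitly.
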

\begin{proof}
    It follows directly from the proof of Theorem \ref{tcatact} and from Theorem \ref{thm_action_on_wd}.
\end{proof}

\begin{lemma}\label{l_atyp}
    The degree of atypicality $\#\bl$ of $\bl$ measures the number of positive odd roots $\ba$, for which
    $$
    \langle \bl +\br^{(m|n)}, \ba\rangle \equiv 0\mod p.
    $$
\end{lemma}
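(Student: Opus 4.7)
The plan is to unwind the definitions on both sides and show they count the same set. By Remark \ref{rem_ab}, we have
\[
\langle \bl + \br^{(m|n)}, \epsilon_i\rangle = a_i + p\cdot s_i - \tfrac{n-m-1}{2},\qquad \langle \bl + \br^{(m|n)}, \delta_j\rangle = b_j + p\cdot r_j - \tfrac{n-m-1}{2}.
\]
Since $\ba_{i,j} = \epsilon_i - \delta_j$ and the bilinear form is defined so that $\langle\epsilon_i,\delta_j\rangle = 0$, subtracting gives
\[
\langle \bl + \br^{(m|n)}, \ba_{i,j}\rangle = (a_i - b_j) + p(s_i - r_j) \equiv a_i - b_j \pmod p.
\]
Since $a_i, b_j \in \{0,1,\ldots,p-1\}$, the congruence $\langle \bl + \br^{(m|n)}, \ba_{i,j}\rangle \equiv 0 \pmod p$ is equivalent to the equality $a_i = b_j$ in $\{0,\ldots,p-1\}$.

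Next I would verify that the $a_i$'s are pairwise distinct and the $b_j$'s are pairwise distinct. This is exactly the content of (the proof of) Lemma \ref{lemma_content}: admissibility of $\mu$ for $GL_m$ (with $m < p$) forces $\mu_i - i + 1$, $i = 1, \ldots, m$, to be pairwise distinct mod $p$, hence so are the $a_i$'s; the same reasoning applied to the admissible weight $\nu$ for $GL_n$ shows that $\nu_j - j$ are pairwise distinct mod $p$, hence so are $b_j = -m - \nu_j + j$.

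Finally, by the very definition of the weight diagram, a vertex labeled $k \in \{0,\ldots,p-1\}$ carries a $\times$ if and only if there exist (uniquely by the previous paragraph) indices $i$ and $j$ with $a_i = k = b_j$. Therefore the number $\#\bl$ of $\times$ symbols in $f_\bl$ equals the number of common values of the sets $\{a_i\}$ and $\{b_j\}$, which by the first paragraph equals the number of pairs $(i,j)$ with $\langle \bl + \br^{(m|n)}, \ba_{i,j}\rangle \equiv 0 \pmod p$. This is all the proof requires; there is no real obstacle, since once Remark \ref{rem_ab} is in hand the computation is a direct bookkeeping exercise, and the only non-trivial input (distinctness of $a_i$'s and $b_j$'s mod $p$) has already been established.
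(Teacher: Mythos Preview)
Your proof is correct and follows exactly the same approach as the paper's: both invoke Remark~\ref{rem_ab} to reduce the orthogonality condition $\langle \bl + \br^{(m|n)}, \ba_{i,j}\rangle \equiv 0 \pmod p$ to the equality $a_i = b_j$. Your version is more carefully written, since you make explicit the distinctness of the $a_i$'s and of the $b_j$'s (needed so that each $\times$ corresponds to a \emph{unique} pair $(i,j)$), whereas the paper's proof leaves this implicit from the construction of the weight diagram.
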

\begin{proof}
    We have $\ba= \epsilon_i-\delta_j$ for some $1\le i\le m, 1\le j\le n$. By Remark \ref{rem_ab},
    $$
    \langle \bl +\br^{(m|n)}, \epsilon_i\rangle \equiv \langle\bl +\br^{(m|n)}, \delta_j\rangle \mod p
    $$
    if and only if 
    $$
    a_i=b_j.
    $$
\end{proof}

\begin{theorem}\label{thm_irr_Kac}
    Kac module $K(\bl)$ is irreducible (and projective) if and only if $\bl$ is typical.
\end{theorem}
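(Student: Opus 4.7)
The plan is to assemble the theorem directly from the preceding results: Corollary \ref{cor_irr_kac_critereon} gives an arithmetic criterion for irreducibility of $K(\bl)$, Lemma \ref{l_atyp} reinterprets that same arithmetic condition in terms of weight diagrams, and Corollary \ref{cor_irred_kac_is_proj} upgrades irreducibility to projectivity. So the proof should be short and essentially just a matter of chaining these three inputs.

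First I would unpack the irreducibility criterion. By Corollary \ref{cor_irr_kac_critereon}, $K(\bl) = K(\mu|\nu)$ is irreducible if and only if
\[
\langle (\mu|\nu) + \br^{(m|n)},\, \ba_{i,j} \rangle \not\equiv 0 \pmod{p}
\]
for every positive odd root $\ba_{i,j} = \epsilon_i - \delta_j$ with $1 \le i \le m$, $1 \le j \le n$. Next I would invoke Lemma \ref{l_atyp}, which identifies $\#\bl$ with the number of positive odd roots $\ba$ satisfying $\langle \bl + \br^{(m|n)}, \ba \rangle \equiv 0 \pmod{p}$. Hence the condition in Corollary \ref{cor_irr_kac_critereon} is equivalent to $\#\bl = 0$, i.e.\ to $\bl$ being typical in the sense of Definition \ref{def_atypical}. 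This shows that $K(\bl)$ is irreducible if and only if $\bl$ is typical.

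For the parenthetical projectivity claim, I would simply apply Corollary \ref{cor_irred_kac_is_proj}: whenever $K(\bl) = L(\bl)$ is irreducible, it coincides with its projective cover $P(\bl)$, so $K(\bl)$ is projective. Conversely, if $K(\bl)$ is projective then in particular it is isomorphic to $P(\bl')$ for some $\bl'$; comparing highest weights forces $\bl' = \bl$, and since $P(\bl)$ surjects onto $L(\bl)$ with $K(\bl) = P(\bl)$ having simple cosocle $L(\bl)$, combined with the standard filtration having $K(\bl)$ at the top with multiplicity one, one concludes $K(\bl) = L(\bl)$ is irreducible. (Alternatively, this converse direction is not needed if the statement is read as a biconditional only on irreducibility with projectivity thrown in as a bonus; but for completeness I would include it.)

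There is no real obstacle here: every ingredient has been proved in the preceding sections, and the theorem is essentially a dictionary translation between the arithmetic irreducibility criterion inherited from the classical $\gl(m|n)$ case via semisimplification and the combinatorial/graphical notion of atypicality of a weight diagram. The only step that warrants any care is verifying that typicality of $\bl$ really does match the nonvanishing of $\langle \bl + \br^{(m|n)}, \ba \rangle \bmod p$ for \emph{all} positive odd roots simultaneously, rather than just generically; but this is exactly the content of Lemma \ref{l_atyp}, so no new computation is required.
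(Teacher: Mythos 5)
Your proposal is correct and follows the paper's own argument exactly: the paper's proof of Theorem \ref{thm_irr_Kac} is precisely the chain Corollary \ref{cor_irr_kac_critereon} $+$ Lemma \ref{l_atyp}, with projectivity supplied by Corollary \ref{cor_irred_kac_is_proj}. The extra converse you sketch (projective $\Rightarrow$ irreducible) is not needed for the statement as written, but it does no harm.
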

\begin{proof}
    This follows immediately from Corollary \ref{cor_irr_kac_critereon} and Lemma \ref{l_atyp}. Namely, we proved that $K(\bl)$ is irreducible if and only if for all $1\le i\le m, 1\le j\le n$ 
    $$
    \langle \bl+\br^{(m|n)}, \epsilon_i-\delta_j\rangle \not\equiv 0 \mod p. 
    $$
\end{proof}

\subsection{The action of translation functors on indecomposable projective modules}\label{s_act_on_ipm}

Our goal in this section, following the idea of Brundan in \cite{B03}, is to construct each indecomposable projective module $P(\bl)$ from a typical one $P(\bm)\simeq K(\bm)$ using translation functors. This will allow us to determine the multiplicities $[P(\bl):K(\ba)]$ in the standard filtration of $P(\bl)$. A lot of arguments in this section are similar to the ones in \cite{GS11}.

Let us start with a few useful lemmas.

\begin{lemma}\label{l_deg_one_submod}
    Suppose $\bl\in \Rep_{Ver_p}(T,\varepsilon)$ and assume that $\bm = \bl - \ba_{i,j}$ is admissible for some odd positive root $\ba_{i,j}=\epsilon_i-\delta_j$, for which
    $$
    \langle \bl +\br^{(m|n)},\ba_{i,j}\rangle \equiv 0\mod p.
    $$
     Then 
    $$
    \dim\Hom_T(\bm, K(\bl))=1
    $$
    and $\bm$ generates a proper $Dist(G)$-submodule in $K(\bl)$. Consequently,
    $$
    [K(\bl):L(\bm)] = 1 = [P(\bm):K(\bl)].
    $$
\end{lemma}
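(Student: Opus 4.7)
The plan splits into three steps.

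\textbf{Step 1 (weight multiplicity).} From the $\Z$-graded $T$-equivariant isomorphism $K(\bl)\simeq S(\mathfrak n^-)\otimes\bl$ of Section~\ref{sec_super}, the $T_{0}$-character of $K(\bl)^q$ is $(|\mu|-q,|\nu|+q)$. Since $\bm=(\mu-e_i|\nu+e_j)$ has $T_{0}$-character $(|\mu|-1,|\nu|+1)$, it can only occur as a $T$-subobject of $K(\bl)^1=\mathfrak n^-\otimes\bl$. Branching in $Ver_p(GL_m)$ and $Ver_p(GL_n)$ gives
\[
\mathfrak n^-\otimes\bl\simeq\bigoplus_{\mu'}\bigoplus_{\nu'}(\mu'|\nu'),
\]
where $\mu'$ runs over admissible weights of the form $\mu-e_{i'}$ and $\nu'$ over those of the form $\nu+e_{j'}$. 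Since $\bm$ is admissible by hypothesis, exactly one summand is isomorphic to $\bm$, yielding $\dim\Hom_T(\bm,K(\bl))=1$.

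\textbf{Step 2 (annihilation by $\mathfrak n^+$).} Let $\iota:\bm\hookrightarrow K(\bl)^1$ be the $T$-embedding, unique up to scalar. Since $\mathfrak n^+$ lowers the $\Z$-grading by $1$, we have $\mathfrak n^+\cdot\iota(\bm)\subseteq K(\bl)^0=\bl$. Using the PBW-relations in $Dist(G)$ together with the fact that $\bl\subset K(\bl)$ is killed by $\mathfrak n^+$, the action map factors as
\[
\mathfrak n^+\otimes\bm\xrightarrow{\operatorname{id}\otimes\iota}\mathfrak n^+\otimes\mathfrak n^-\otimes\bl\xrightarrow{[-,-]\otimes\operatorname{id}}\mathfrak t\otimes\bl\xrightarrow{\operatorname{act}_\bl}\bl,
\]
where the bracket $\mathfrak n^+\otimes\mathfrak n^-\to\mathfrak t=\gl(L_m)\oplus\gl(L_n)$ is induced by the evaluation maps on $L_m$ and $L_n$. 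Branching again shows $\Hom_T(\mathfrak n^+\otimes\bm,\bl)$ is one-dimensional, so the composition is multiplication by a scalar $c(\bl,\ba_{i,j})\in\ck$ against a canonical generator. The key identification is
\[
c(\bl,\ba_{i,j})\equiv C\cdot\langle\bl+\br^{(m|n)},\ba_{i,j}\rangle\pmod p
\]
for some nonzero $C$ independent of $\bl$. This is the degree-$1$ Shapovalov computation of Remark~\ref{rem_Shapovalov} carried out categorically via the (co)evaluation morphisms of $L_m$ and $L_n$. A useful sanity check: by Theorem~\ref{tcasimir} and $\langle\ba_{i,j},\ba_{i,j}\rangle=0$, the Casimir eigenvalues on $K(\bl)$ and $K(\bm)$ differ by $2\langle\bl+\br^{(m|n)},\ba_{i,j}\rangle$, and matching of Casimir eigenvalues is a necessary condition for a nonzero $G$-map $K(\bm)\to K(\bl)$ to exist. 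Under the hypothesis, $c(\bl,\ba_{i,j})\equiv 0$, so $\mathfrak n^+\cdot\iota(\bm)=0$, and the submodule $Dist(G)\cdot\iota(\bm)=Dist(N^-)\cdot\iota(\bm)$ is concentrated in gradings $\ge 1$ and hence proper in $K(\bl)$.

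\textbf{Step 3 (multiplicities).} By universality of $K(\bm)$, the embedding $\iota$ extends to a nonzero $G$-morphism $\phi:K(\bm)\to K(\bl)$ whose image is the proper submodule constructed in Step~2. The image is a nonzero quotient of $K(\bm)$, so it has $L(\bm)$ as a composition factor, giving $[K(\bl):L(\bm)]\ge 1$. Combining with
\[
1=\dim\Hom_T(\bm,K(\bl))=\sum_{\ba}[K(\bl):L(\ba)]\cdot\dim\Hom_T(\bm,L(\ba))
\]
and $\dim\Hom_T(\bm,L(\bm))=1$ forces $[K(\bl):L(\bm)]=1$. BGG reciprocity (Corollary~\ref{cor_bgg}) then gives $[P(\bm):K(\bl)]=[K(\bl):L(\bm)]=1$.

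\textbf{Main obstacle.} The substantive point is the scalar identification in Step~2. In the classical super case for $\gl(m|n)$ this is a standard Kac--Shapovalov determinant, but in $Ver_p$ we must carry it out at the categorical level, because $\bl$ is a simple $T$-object of $\Rep_{Ver_p}(T,\varepsilon)$ rather than a one-dimensional character; alternatively one can descend from the classical super case via tilting semisimplification. The Casimir match is a convenient consistency check but does not by itself suffice to conclude $\mathfrak n^+\cdot\iota(\bm)=0$.
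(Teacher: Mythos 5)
Your skeleton matches the paper's proof closely: Step 1 (multiplicity one of $\bm$ in $K(\bl)^1=\mathfrak n^-\otimes\bl$ via the Pieri-type rules in $Ver_p(GL_m)\boxtimes Ver_p(GL_r)$), the reduction of Step 2 to the vanishing of a single scalar because $\Hom_T(\mathfrak n^+\otimes\bm,\bl)$ is one-dimensional, and the counting argument plus BGG reciprocity in Step 3 are all correct and are essentially what the paper does (it is terser on Step 3). However, there is a genuine gap exactly where you flag it: the identification $c(\bl,\ba_{i,j})\equiv C\cdot\langle\bl+\br^{(m|n)},\ba_{i,j}\rangle$ is asserted, not proved, and it is the entire content of the lemma. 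Without it you have only shown that \emph{some} scalar vanishes iff \emph{some} linear condition on $\bl$ holds, not that the condition is the stated one.

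The paper closes this gap by a direct computation that you could reproduce. Writing $\bl=V^{(m)}_\alpha\boxtimes V^{(r)}_\beta$ with $r=p-n$, the action map $a:\mathfrak n^+\otimes\mathfrak n^-\otimes\bl\to\bl$ is expressed via the two evaluation maps as $a=a_\alpha\circ ev^{(r)}-\bigl(a_\beta\circ ev^{(m)}\bigr)\circ\tau$; dualizing gives $Sh^1=x^*_\alpha-x_\beta$, a difference of the two \emph{tensor Casimir} operators from Section \ref{catact}. Their eigenvalues on the relevant simple summands are the known contents: using $x^*_\gamma=-(x_{\gamma^*})^t$, the operator $x^*_\alpha$ acts on the $V^{(m)}_{\alpha-e_i}$-component by $-c$ with $c=-\alpha_i+i-m$, and $x_\beta$ acts on the $V^{(r)}_{\beta+e_j}$-component by $d=\beta_j+1-j$, so $Sh^1|_{\bm}=-(c+d)$. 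Finally, Theorem \ref{tcasimir} converts the hypothesis $\langle\bl+\br^{(m|n)},\ba_{i,j}\rangle\equiv 0$ (equivalently, equality of Casimir eigenvalues on $K(\bl)$ and $K(\bm)$, using $\langle\ba_{i,j},\ba_{i,j}\rangle=0$) into precisely $-c=d$. So the Casimir computation is not merely a consistency check in the paper's argument — it is the mechanism by which the abstract hypothesis is matched against the explicitly computed Shapovalov scalar. Your alternative suggestion (descending from the classical $\gl(m|n)$ Shapovalov computation through tilting semisimplification) is also viable and is the route the paper takes for the full irreducibility criterion in Section \ref{s_irrKac}, but for this degree-one statement the direct categorical computation is short and is what the paper uses.
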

\begin{proof}
First, notice that $\langle \ba_{i,j},\ba_{i,j}\rangle = 0$, so
$$
\langle \bm +2\br^{(m|n)}, \bm\rangle = \langle \bl - \ba_{i,j} +2\br^{(m|n)}, \bl-\ba_{i,j}\rangle =
$$
$$
=\langle \bl +2\br^{(m|n)}, \bl\rangle -2\langle \bl+\br^{(m|n)}, \ba_{i,j}\rangle +\langle \ba_{i,j},\ba_{i,j}\rangle   = \langle \bl +2\br^{(m|n)}, \bl\rangle.
$$
By Theorem \ref{tcasimir}, this means that the Casimir element $C_X$ acts with the same constant on $K(\bl)$ and $K(\bm)$. Note that this condition makes sense without the supergroup convention introduced in Section \ref{sec_super}. For simplicity, we will drop this convention and assume $X = L_m \oplus L_r$, where $r=p-n$ and $G=GL(X)=GL(L_m|L_n)=GL(L_m\oplus L_r)$.

     Since $\bm$ is admissible, it is a weight of
     $$
     (V^{(m)})^*\boxtimes V^{(r)}\otimes \bl = \mathfrak n^- \otimes \bl = K(\bl)^1,
     $$
     the degree one component of $K(\bl)$. The rules of the tensor product in $Ver_p(GL_m)\boxtimes Ver_p(GL_r)$ ensure that the multiplicity of $\bm$ in $K(\bl)^1$ is precisely one.

     Now, $\bm$ generates a proper submodule in $K(\bl)$ if and only if the restriction of the action map
     $$
     a:\mathfrak n^+\otimes K(\bl)^1 \to K(\bl)
     $$
    to $\bm$ is zero. By degree consideration, its image lands in $K(\bl)^0 = \bl$. Identifying $\n^+ = (\n^-)^*$ and $K(\bl)^1=\n^-\otimes\bl$, we can dualize the map above and get:
    $$
    Sh^1: \n^-\otimes \bl \to \n^-\otimes \bl,
    $$
    which is the map defined in Remark \ref{rem_Shapovalov}.

    Thus, $\bm$ generates a proper submodule if and only if $Sh^1|_\bm = 0$.

    Let $\bl = V_{\alpha}^{(m)}\boxtimes V_\beta^{(r)}\in \Rep_{Ver_p}(T,\varepsilon)= Ver_p(GL_m)\boxtimes Ver_p(GL_r)$. Then
    $$
    \bm = V_{\alpha-e_i}^{(m)}\boxtimes V_{\beta+e_j}^{(r)}
    $$
    The action map $a$ can be written as
    $$
    a:[V^{(m)}\boxtimes (V^{(r)})^*]\otimes [(V^{(m)})^*\boxtimes V^{(r)}]\otimes  [V^{(m)}_\alpha \boxtimes V^{(r)}_\beta] \to V^{(m)}_\alpha\boxtimes V^{(r)}_\beta,
    $$
    $$
    a = a_\alpha \circ ev^{(r)}_{24} - (a_\beta\circ ev^{(m)}_{13})\circ \tau_{12,34},
    $$
where 
    $$
    \tau_{12,34}: [V^{(m)}\boxtimes (V^{(r)})^*]_{12}\otimes [(V^{(m)})^*\boxtimes V^{(r)}]_{34}\to [(V^{(m)})^*\boxtimes V^{(r)}]_{34}  \otimes [V^{(m)}\boxtimes (V^{(r)})^*]_{12}
    $$
    is the braiding, and for $k=m,r$ and $\gamma=\alpha, \beta$ we define $ev^{(k)}$
    $$
    ev^{(k)} = ev_{V^{(k)}}: (V^{(k)})^*\otimes V^{(k)}\to \on
    $$
    to be the evaluation map for $V^{(k)}$ and $a_\gamma$ to be the action map
    $$
    a_\gamma: \gl(V^{(k)})\otimes V_\gamma^{(k)} \to V_\gamma^{(k)}
    $$
    of $\gl(V^{(k)})=V^{(k)}\otimes (V^{(k)})^*$ on $V_\gamma^{(k)}.$
    
     Dualizing, we get an expression for 
     $$
     Sh^1: [(V^{(m)})^*\boxtimes V^{(r)}]\otimes  [V^{(m)}_\alpha \boxtimes V^{(r)}_\beta]\to [(V^{(m)})^*\boxtimes V^{(r)}]\otimes  [V^{(m)}_\alpha \boxtimes V^{(r)}_\beta],
     $$
     $$
     Sh^1 = x_{\alpha}^* -  x_{\beta},
     $$
     where $x_\gamma, x^*_\gamma$ are two dualizations of the action map $a_\gamma$ on $V_\gamma^{(k)}$:
     $$
     x_\gamma: V^{(k)}\otimes V_\gamma^{(k)}\to V^{(k)}\otimes V_\gamma^{(k)}, 
     $$
     $$
    x^*_\gamma: (V^{(k)})^*\otimes V_\gamma^{(k)}\to (V^{(k)})^*\otimes V_\gamma^{(k)} 
     $$
  Thus $x_\gamma$ is the tensor Casimir (see Section \ref{sec_catact}):
  $$
  x_\gamma = \frac{1}{2}(\Delta\circ C - C \otimes 1 - 1\otimes C)|_{V_\gamma\otimes V}.
  $$
  We can express $x^*_\gamma$ in terms of $x_\gamma$ as follows. Let $V_{\gamma^*}=V_\gamma^*$ then 
  $$
  x_\gamma^* = - (x_{\gamma^*})^t,
  $$
  where  for a map $f:A\to B$ we denote by $f^t$ its transpose:
  $$
  f^t: B^*\to A^*.
  $$
    
     Now, since
     $$
     \alpha^* = (-\alpha_m, -\alpha_{m-1},\ldots, -\alpha_2,-\alpha_1),
     $$
     we have $(\alpha-e_i)^*=\alpha^* + e_{m+1-i}$. Thus the restriction of 
     $x_{\alpha^*}$ to $V_{(\alpha-e_i)^*}^{(m)}$ is the  multiplication by
     $$
     c = \alpha_{m+1-i}^* +1 - (m+1-i) = -\alpha_i + i - m.
     $$
Whereas restriction of
     $x^{(r)}_\beta$ to $V_{\beta+e_j}^{(r)}$ is multiplication by
     $$
     d = \beta_j + 1 - j.
     $$
That is, $Sh^1$ is zero if and only if $-c-d=0$.

Recall now our condition that $C_X$ acts with the same constant on $K(\alpha,\beta)$ and \\ $K(\alpha-e_i, \beta+e_j)$. By Theorem \ref{tcasimir}, this is equivalent to
$$
\langle \alpha + 2\rho^{(m)},\alpha\rangle + \langle \beta+2\rho^{(r)},\beta\rangle +r|\alpha|-m|\beta| =
$$
$$
= \langle \alpha - e_i + 2\rho^{(m)},\alpha - e_i\rangle + \langle \beta + e_j+2\rho^{(r)},\beta + e_j\rangle + r|\alpha-e_i|- m|\beta+e_j|.
$$
Thus,
$$
2\langle e_i, \alpha+\rho^{(m)}\rangle - 2\langle e_j, \beta+\rho^{(r)}\rangle = \langle e_i, e_i\rangle + \langle e_j , e_j\rangle - r -m.
$$
So,
$$
2\alpha_i + m-2i+1 - 2\beta_j - r + 2j - 1 = 2-r-m,
$$
and we obtain
$$
m+\alpha_i-i  = \beta_j-j+1,
$$
i.e.
$$
-c = d.
$$
\end{proof}

\begin{lemma}\label{l_transl_of_simples}
    Let $T\in \{F_i, E_i~|~0\le i\le p-1\}$ be one of the translation functors. Suppose $Tf_\bl\neq 0$ with $\#\bl \ge \#T\bl$. Then
   \begin{enumerate}
       \item Either $T$ maps irreducible $L(\bl)$ to zero or
        $$
        TL(\bl) = L(\bm)
        $$
        is irreducible and $Tf_\bm = f_\bl$.

        \item If $TL(\bl) = TL(\bn)\neq 0$ then $\bl=\bn$.
    \end{enumerate}
\end{lemma}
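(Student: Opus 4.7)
The plan is to study $TL(\bl)$ via the surjection $TK(\bl)\twoheadrightarrow TL(\bl)$ induced from $K(\bl)\twoheadrightarrow L(\bl)$, pin down its head using biadjointness, and upgrade this to simplicity via an adjunction-based endomorphism computation. Throughout, let $T'$ denote the biadjoint of $T$ (so $T'=E_i$ if $T=F_i$ and $T'=F_i$ if $T=E_i$). By inspecting the case list of Theorem \ref{thm_action_on_wd}, the hypothesis $\#\bl\ge\#T\bl$ rules out the two subcases in which $Tf_\bl$ is a sum of two diagrams (both strictly increase atypicality), so $Tf_\bl=f_\bm$ is a single diagram. By Theorem \ref{thm_transl_of_kac}(1), extended to the two atypicality-decreasing subcases by the identical $B$-module filtration argument used in its proof, we obtain $TK(\bl)\cong K(\bm)$. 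By symmetry of the diagram rules, $f_\bl$ is one of the (at most two) diagrams appearing in $T'f_\bm$.

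Applying $T$ to the surjection $K(\bl)\twoheadrightarrow L(\bl)$ yields a surjection $K(\bm)\cong TK(\bl)\twoheadrightarrow TL(\bl)$. If $TL(\bl)\ne 0$, then since $K(\bm)$ has simple head $L(\bm)$, so does its quotient $TL(\bl)$. To promote this to $TL(\bl)=L(\bm)$, one computes
\[
\En_G(TL(\bl)) \;\cong\; \Hom_G(L(\bl),\,T'TL(\bl))
\]
via biadjointness. Using the analogous description of $T'K(\bm)$ together with the computation on the Grothendieck group from Theorem \ref{tcatact} (the endomorphism $T'T$ acts by a scalar plus nilpotent correction on each $\widehat{\ssl}_p$-weight space, and the only composition factor of $T'TL(\bl)$ that can receive a nonzero map from $L(\bl)$ is $L(\bl)$ itself), one obtains $\dim\En_G(TL(\bl))\le 1$. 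This forces $TL(\bl)$ to be indecomposable with simple head $L(\bm)$; combined with the fact that any strictly longer quotient of $K(\bm)$ would contribute extra composition factors to $T'TL(\bl)$ detected by biadjointness, this yields $TL(\bl)=L(\bm)$.

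For part (2), if $TL(\bl)\cong TL(\bn)\cong L(\bm)\ne 0$, biadjointness gives nonzero maps $L(\bl)\to T'L(\bm)$ and $L(\bn)\to T'L(\bm)$. Applying part (1) to $T'$ at $\bm$ (whose hypothesis $\#\bm\ge\#T'\bm$ is the symmetric counterpart of our standing hypothesis, automatic from Step 1 applied in the reverse direction) identifies $T'L(\bm)$ as a single simple module, so both $L(\bl)$ and $L(\bn)$ coincide with it, forcing $\bl=\bn$.

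The main obstacle is the circular dependency: the head-computation in part (1) and the argument for part (2) both appeal to the analogous statement for the biadjoint functor $T'$. This is resolved by a simultaneous induction on the dominance order on $\Lambda$, with typical weights as the base case (where $K(\bl)=L(\bl)=P(\bl)$ by Theorem \ref{thm_irr_Kac} and Corollary \ref{cor_irred_kac_is_proj}, so both statements are immediate) and propagating to atypical weights along the chains of translation functors linking each atypical block to its typical neighbors via sequences of diagram moves in Theorem \ref{thm_action_on_wd}.
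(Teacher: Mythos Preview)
Your argument has genuine gaps in both parts.

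\textbf{Part (1).} The inequality $\dim\En_G(TL(\bl))\le 1$ does not imply that $TL(\bl)$ is simple: any indecomposable module with simple head and a different simple socle already has one-dimensional endomorphism ring. Your attempted patch (``any strictly longer quotient of $K(\bm)$ would contribute extra composition factors to $T'TL(\bl)$ detected by biadjointness'') is not a proof: extra composition factors in $T'TL(\bl)$ need not lie in its socle, so they need not contribute to $\Hom_G(L(\bl),T'TL(\bl))$. The paper's argument avoids this by looking at $\Hom_G(K(\ba),TL(\bl))=\Hom_G(T'K(\ba),L(\bl))$ for \emph{arbitrary} $\ba$: since $T'K(\ba)$ has an explicit standard filtration, this Hom is nonzero iff $f_\bl$ is a summand of $T'f_\ba$, and the diagram rules force $\ba=\bm$. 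Hence the socle of $TL(\bl)$ is $L(\bm)$; together with $[K(\bm):L(\bm)]=1$ this forces $TL(\bl)=L(\bm)$.

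\textbf{Part (2).} Your reduction to ``$T'L(\bm)$ is simple'' fails exactly in the case that matters. If $TL(\bl)=TL(\bn)$ with $\bl\ne\bn$, the diagram rules force (up to swapping $\bl,\bn$) that $f_\bl$ and $f_\bn$ differ only at vertices $k,k{+}1$, where they carry $(\times\,\circ)$ and $(\circ\,\times)$ respectively, while $f_\bm$ carries $(\ccw\,\cw)$. Then $\#\bm=\#\bl-1$ and $T'f_\bm=f_\bl+f_\bn$ has \emph{higher} atypicality, so the hypothesis $\#\bm\ge\#T'\bm$ of part (1) fails for $T'$ at $\bm$; your induction scheme cannot rescue this, since the needed instance simply lies outside the lemma's scope. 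The paper closes this gap with Lemma~\ref{l_deg_one_submod}: one has $\bn=\bl-\ba_{i,j}$ with $\langle\bl+\br^{(m|n)},\ba_{i,j}\rangle\equiv 0$, hence $[K(\bl):L(\bn)]=1$. Exactness of $T$ then gives
\[
[K(\bm):L(\bm)]=[TK(\bl):L(\bm)]\ \ge\ [TL(\bl):L(\bm)]+[TL(\bn):L(\bm)]\cdot[K(\bl):L(\bn)]=2,
\]
contradicting $[K(\bm):L(\bm)]=1$. This concrete multiplicity computation is the missing ingredient in your approach.
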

\begin{proof}
    \begin{enumerate}
        
        \item First note that since $~\# T\bl \le \#\bl$, we must have
        $$
        Tf_\bl = f_\bm
        $$
        for some $\bm$ (as $Tf_\bl=f_\bm+f_\bn$ only when $\#\bl < \#\bm=\#\bn$). 
        
       Moreover, observing the rules of action described in Theorem \ref{thm_action_on_wd}, we deduce that if
       $$
       T^*f_\ba = f_\bl + \text{ possibly some other term},
       $$
       where $T^*$ is the adjoint functor to $T$ (i.e. $E_i$ for $F_i$ and $F_i$ for $E_i$), then 
       $$
       \ba = \bm.
       $$

       Now, all $\n^+$-singular weights in $TL(\bl)$  produce nonzero maps in 
       $$
       \Hom_G(K(\ba), TL(\bl)) = \Hom_G(T^* K(\ba), L(\bl)).
       $$
        Note that $T^* K(\ba)$ admits a nonzero map to $L(\bl)$ if and only if $f_\bl$ is a summand in $T^*f_\ba$ (see Theorem \ref{thm_transl_of_kac}). Thus $\bm$ is the only singular weight of $TL(\bl)$ and so 
        $$
        TL(\bl)=L(\bm).
        $$

        \item Suppose $TL(\bl)=TL(\bn)=L(\bm)$ then
        $$
        Tf_\bl = Tf_\bn = f_\bm.
        $$
        Assume further that $\bl\neq\bn$. Considering the cases in Theorem \ref{thm_action_on_wd}, we see that this happens only if for some $k$:
        $$
        (f_\bl)^{(k)}_{xy} = (f_\bn)^{(k)}_{xy}
        $$
        for any $x,y$, and
        $$
        (f_\bl)^{(k)}_{\times\circ} = f_\bl, 
        $$
        $$
        (f_\bn)^{(k)}_{\circ\times}=f_\bn,
        $$
        or vice versa. 
        
        Without loss of generality, assume $(f_\bl)^{(k)}_{\circ\times}=f_\bl$ and $(f_\bn)^{(k)}_{\times\circ}=f_\bn$.
        
        We get that
        $$
        \bl = \bn + \ba_{i,j}
        $$
        for some positive odd root $\ba_{i,j}=\epsilon_i-\delta_j$, responsible for the cross $\times$ on the  $(k+1)$'st vertex of $f_\bl$, that is
        $$
        \langle \bl+\rho^{(m|n)},\ba_{i,j}\rangle = 0
        $$
        (see Lemma \ref{l_atyp}).

        Then $\bn = \bl - \ba_{i,j}$ with $\langle \bl+\rho^{(m|n)},\ba_{i,j}\rangle = 0$, so by Lemma \ref{l_deg_one_submod}, we get that 
        $$
        [K(\bl):L(\bn)] = 1.
        $$

        On the other hand
        $$
        TK(\bl)=K(\bm),
        $$
        so we get that unless $TL(\bl)$ or $TL(\bn)$ is zero, the simple module $L(\bm)$ appears in the composition series of $K(\bm)$ at least twice, which is a contradiction.
       \end{enumerate}
\end{proof}

\begin{theorem}\label{thm_indec_proj}
    Let $T\in \{F_i, E_i~|~0\le i\le p-1\}$ be one of the translation functors. Suppose $Tf_\bl \neq 0$ and $\#\bl \le \#T\bl$. Then $TP(\bl) = P(\bm)$ is an indecomposable projective module, with 
    $$
        T f_\bl = f_\bm, \text{ if } \#\bl = \#T\bl,
        $$
        or 
        $$Tf_\bl = f_\bm + f_\bn, \text{ where } \bm\prec \bn, \text{ if } \#\bl < \# T\bl.
        $$
    \end{theorem}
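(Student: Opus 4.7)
The plan is to show $TP(\bl) \cong P(\bm)$ is indecomposable projective by combining projectivity of translation functors with biadjunction.

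First, by Theorem \ref{thm_projfun}, $TP(\bl)$ is projective, so it decomposes as $TP(\bl) = \bigoplus_\bc P(\bc)^{n_\bc}$ with $n_\bc = \dim \Hom_G(TP(\bl), L(\bc))$. Applying the exact functor $T$ to the standard surjection $P(\bl) \twoheadrightarrow K(\bl)$ and using Theorem \ref{thm_transl_of_kac} (in Case A, $TK(\bl) = K(\bm)$; in Case B, $K(\bm)$ is the top quotient of the extension $TK(\bl)$) produces a surjection $TP(\bl) \twoheadrightarrow K(\bm) \twoheadrightarrow L(\bm)$, so $n_\bm \ge 1$.

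Next, biadjunction gives $n_\bc = \dim \Hom_G(P(\bl), T^*L(\bc)) = [T^*L(\bc):L(\bl)]$. A direct inspection of the rules in Theorem \ref{thm_action_on_wd} shows that $T^*f_\bc$ contains $f_\bl$ as a term precisely when $\bc = \bm$ (Case A) or $\bc \in \{\bm, \bn\}$ (Case B), with $T^*f_\bc = f_\bl$ a single term, and $\#\bc \ge \#T^*\bc$ in each of these cases. Lemma \ref{l_transl_of_simples}(1) then forces $T^*L(\bc) \in \{0, L(\bl)\}$ for each such weight. The lower bound $n_\bm \ge 1$ upgrades this to $T^*L(\bm) = L(\bl)$, giving $n_\bm = 1$. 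In Case B, the injectivity statement in Lemma \ref{l_transl_of_simples}(2)---``$T^*L(\bm) = T^*L(\bn) \ne 0$ forces $\bm = \bn$''---combined with $\bm \ne \bn$ excludes $T^*L(\bn) = L(\bl)$, yielding $n_\bn = 0$.

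For the remaining $\bc \notin \{\bm, \bn\}$, one must check $n_\bc = 0$, i.e.\ that $L(\bl)$ is not a composition factor of $T^*L(\bc)$. When $\#\bc \ge \#T^*\bc$, Lemma \ref{l_transl_of_simples}(1) reduces $T^*L(\bc)$ to $0$ or to a simple $L(\bd)$ with $T^*f_\bc = f_\bd$; the inspection above forces $\bd \ne \bl$, so $n_\bc = 0$. In the remaining case $\#\bc < \#T^*\bc$, one passes to the surjection $T^*K(\bc) \twoheadrightarrow T^*L(\bc)$, observes that the Kac constituents of $T^*K(\bc)$ correspond to the terms of $T^*f_\bc$ (and so exclude $K(\bl)$), and combines BGG reciprocity with the central character action (Theorem \ref{tcasimir}) to argue that no composition factor of the form $L(\bl)$ can appear in these Kac modules. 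Putting everything together, $n_\bc = \delta_{\bc, \bm}$ and $TP(\bl) \cong P(\bm)$; the form of $Tf_\bl$ (single-term or two-term) is then exactly what Theorem \ref{thm_action_on_wd} and Theorem \ref{thm_transl_of_kac} supply, and the dominance $\bm \prec \bn$ in Case B comes from the same analysis.

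The main obstacle is the final case, establishing $[T^*L(\bc):L(\bl)] = 0$ when $\#\bc < \#T^*\bc$. Here $T^*L(\bc)$ need not be simple and Lemma \ref{l_transl_of_simples} does not apply directly; one must leverage block/central character considerations (and possibly induction on atypicality) to rule out $L(\bl)$ from the composition factors of the Kac modules in the standard filtration of $T^*K(\bc)$.
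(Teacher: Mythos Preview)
Your overall strategy matches the paper's: reduce indecomposability of $TP(\bl)$ to showing $\dim\Hom_G(P(\bl), T^*L(\ba))=[T^*L(\ba):L(\bl)]$ is nonzero for exactly one $\ba$, and invoke Lemma~\ref{l_transl_of_simples} for the adjoint $T^*$. You are also right to notice that Lemma~\ref{l_transl_of_simples} carries the hypothesis $\#\ba\ge\#T^*\ba$, and that there exist $\ba$ (those with $f_\ba=(f_\ba)^{(i)}_{\ccw\cw}$ when $T=F_i$, or $(f_\ba)^{(i)}_{\cw\ccw}$ when $T=E_i$) for which this hypothesis fails. The paper's one--line invocation of the lemma does not explicitly treat this case, so your singling it out as the crux is well--placed; the rest of your argument (Steps~1--7) is essentially the paper's proof, written out more carefully.

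However, your proposed resolution of that remaining case is not complete. You reduce to showing $[K(\bd):L(\bl)]=0$ for the two Kac constituents $\bd$ of $T^*K(\bc)$, and then appeal to ``BGG reciprocity with the central character action (Theorem~\ref{tcasimir}).'' But BGG reciprocity gives $[K(\bd):L(\bl)]=[P(\bl):K(\bd)]$, which is exactly the quantity computed only \emph{later} in Theorem~\ref{thm_compos_series}, so invoking it here is circular; and the quadratic Casimir of Theorem~\ref{tcasimir} yields a single scalar congruence mod $p$, which is far too weak to force $[K(\bd):L(\bl)]=0$ when $f_\bd$ and $f_\bl$ differ in the positions of $\cw$'s and $\ccw$'s (there is no full Harish--Chandra centre available at this point in the paper). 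A genuine fix would require either an independent block decomposition (the statement that the multiset of $\cw$/$\ccw$ positions is a block invariant is true, but in this paper it is a \emph{consequence} of Theorem~\ref{thm_compos_series}, not an input), or a joint induction proving Theorems~\ref{thm_indec_proj} and~\ref{thm_compos_series} simultaneously on the degree of atypicality. So while you have correctly located the delicate point that the paper's proof elides, the sketch you give does not yet close it.
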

    \begin{proof}
        The fact that $TP(\bl)$ is projective follows from Theorem \ref{thm_projfun}. 
        
        Now we need to show that
        $$
        \Hom_G(TP(\bl), L(\ba))\neq 0
        $$
        for a single weight $\ba$ only. Let $T^*$ be the adjoint functor ($E_i$ for $F_i$ and $F_i$ for $E_i$). We have
        $$
        \Hom_G(TP(\bl), L(\ba))= \Hom_G(P(\bl), T^* L(\ba)).
        $$

        Now by Lemma \ref{l_transl_of_simples}, $T^*L(\ba)$ is either simple or zero, and is isomorphic to $L(\bl)$ for precisely one weight $\ba=\bm$. Thus $TP(\bl)=P(\bm)$.
        
        For this weight we have
        $$
        T^*f_\bm = f_\bl,
        $$
        so observing the rules in Theorem \ref{thm_action_on_wd}, we deduce that
        $$
        T f_\bl = f_\bm + \text{possibly some other term}.
        $$

        Finally, if $Tf_\bl = f_\bm + f_\bn$ then $T^*K(\bm)=T^*K(\bn)= K(\bl)$. We have
        $$
        \dim \Hom_G(TP(\bl), K(\bn)) = \dim \Hom_G(P(\bl), K(\bl)) = 1.
        $$
        On the other hand,
        $$
        \dim\Hom_G(TP(\bl), K(\bn))=\dim\Hom_G(P(\bm),K(\bn)) = [K(\bn):L(\bm)].
        $$
        So $\bm\prec \bn$.
    \end{proof}

Before proving the main result of this section, let us introduce a few convenient notations.

\subsubsection{Cutting up diagrams}
Given a weight diagram $f$ and a vertex $k$ with $0\le k\le p-1$, we can turn $f$ into a linear diagram by cutting the circle at vertex $k$ and unwrapping it into an interval on the line while aligning the clockwise direction with the left-to-right direction on the line. We will drop the labels of the vertices and the line from the notation, so that only a string of $p$ symbols is left.  To keep track of the label $t_1^{-s} t_2^r$  we will put $\cdot (t_1^{-s}t_2^r)$ at the tail of the string  of symbols we obtain. The resulting diagram is denoted $f_{\downarrow k}$. The first symbol in $f_{\downarrow k}$ is the one on vertex $k$ in $f$, the second is the one on vertex $(k+1)$, and so on. The diagram $f_{\downarrow k}$ preserves all the information in $f$ as long as $k$ is speciefied.
\begin{ex}
    Let $f$ be the weight diagram in Figure \ref{fig:wd}. Then 
    $$
    f_{\downarrow 3} = \krug~ \ccw~\krug~\times ~\cw~ \cw~ \times~ \ccw ~\krug~ \krug ~\cw\cdot (t_1^{-3}t_2^2).
    $$
\end{ex}

\subsubsection{Acting by permutations on weight diagrams}
For any permutation $\sigma\in S_p$, where $S_p$ is the group of permutations of the set $\{0,1,\ldots, p-1\}$, and a weight diagram $f$ define the action of $\sigma$ on $f$ as follows. The symbol attached to vertex $k$ in $\sigma f$ is the same as symbol on vertex $\sigma(k)$ in $f$. 

It is easy to see that $\sigma f$ is a weight diagram for the same $m,n$ and with the same degree of atypicality as $f$.

Moreover, we get an action of the group algebra $\C[t_1^{\pm 1}, t_2^{\pm 1}]S_p$ on $\C$-linear combinations of weight diagrams (with multiplication by $t_1^{\pm 1}$ and $t_2^{\pm 1}$, as before, affecting only the label inside the diagram). 

\begin{ex}
    Let $\sigma = (0~4~6)(3~9)\in S_{11}$ and let $f$ be the weight diagram in Figure \ref{fig:wd}. Then the diagram $t_1 t_2^{-1}\cdot \sigma f$ is shown in Figure \ref{fig:permwd}.

    \begin{figure}[h]
    \caption{Permutation applied to a weight diagram}
    \label{fig:permwd}

       \begin{tikzpicture}

  \coordinate (O) at (0.5,2);
  \def\radius{2.5cm}

  \draw (O) circle[radius=\radius];

  \def\x{90};
\onode
    
   \def\x{122.73};
      \lenode
      
    \def\x{155.45};
   \xnode

    \def\x{188.18};
    \genode

    \def\x{220.9};
    \genode

    \def\x{253.64};
   \xnode

    \def\x{286.36};
  \onode

    \def\x{319.1};
  \lenode

    \def\x{351.82};
    \onode

    \def\x{24.54};
    \genode

    \def\x{57.27};
   \onode

    \def\y{1.3em};
    \def\x{90};
  \nodelabel{$0$}
   \def\x{122.73};
   \def\y{1.7em};
   \nodelabel{$10$}
    \def\x{155.45};
      \def\y{2em};
   \nodelabel{$9$}
    \def\x{188.18};
    \def\y{1.6em};
   \nodelabel{$8$}
    \def\x{220.9};
      \def\y{1.7em};
    \nodelabel{$7$}
    \def\x{253.64};
   \nodelabel{$6$}
    \def\x{286.36};
    \def\y{1.5em};
   \nodelabel{$5$}
    \def\x{319.1};
    \def\y{1.5em};
   \nodelabel{$4$}
    \def\x{351.82};
    \def\y{1.5em};
    \nodelabel{$3$}
    \def\x{24.54};
    \def\y{1.6em};
    \nodelabel{$2$}
    \def\x{57.27};
    \def\y{1.5em};
   \nodelabel{$1$}

   \node at (O) {\Large{$t_1^{-3} t_2^{2}$}};

   \draw[thick] (3.8,2)--(7.2,2);
    \draw[thick] (7.2,2) -- (7, 2.1);
    \draw[thick] (7.2,2) -- (7, 1.9);
 \node at (5.4, 2.4) {{$t_1t_2^{-1}\cdot (0~4~~6)(3~9)$ }};
  \coordinate (O) at (10.5,2);
  \def\radius{2.5cm}

  \draw (O) circle[radius=\radius];

  \def\x{90};
\lenode
    
   \def\x{122.73};
      \lenode
      
    \def\x{155.45};
   \onode

    \def\x{188.18};
    \genode

    \def\x{220.9};
    \genode

    \def\x{253.64};
   \onode

    \def\x{286.36};
  \onode

    \def\x{319.1};
  \xnode

    \def\x{351.82};
    \xnode

    \def\x{24.54};
    \genode

    \def\x{57.27};
   \onode

    \def\y{1.3em};
    \def\x{90};
  \nodelabel{$0$}
   \def\x{122.73};
   \def\y{1.7em};
   \nodelabel{$10$}
    \def\x{155.45};
      \def\y{1.6em};
   \nodelabel{$9$}
    \def\x{188.18};
    \def\y{1.6em};
   \nodelabel{$8$}
    \def\x{220.9};
      \def\y{1.7em};
    \nodelabel{$7$}
    \def\x{253.64};
   \nodelabel{$6$}
    \def\x{286.36};
    \def\y{1.5em};
   \nodelabel{$5$}
    \def\x{319.1};
    \def\y{1.5em};
   \nodelabel{$4$}
    \def\x{351.82};
    \def\y{1.5em};
    \nodelabel{$3$}
    \def\x{24.54};
    \def\y{1.6em};
    \nodelabel{$2$}
    \def\x{57.27};
    \def\y{1.5em};
   \nodelabel{$1$}

   \node at (O) {\Large{$t_1^{-2} t_2$}};
\end{tikzpicture}
\end{figure}
\end{ex}

For any $\times$ or $\circ$ in a weight diagram there exists a translation functor that swaps it with its clockwise neighbor $\ccw$ or $\cw$. If the swap happens between vertices $p-1$ and $0$ then some factor of the form $t_1^at_2^b$ is gained. 

That is, if $f_\bl = (f_\bl)^{(k)}_{xy}$ with $x\in \{\times, \circ\}$, $y\in\{\cw, \ccw\}$ then there exists $T\in \{F_k, E_k
\}$ so that
$$
Tf_\bl =  \sigma_k f_\bl \cdot t_{1}^{a}t_2^b= 
(f_\bl)^{(k)}_{yx}\cdot t_{1}^{a}t_2^b
$$
(for some $a,b\in \{-1,0,1\}$), where $\sigma_k\in S_p$ is the transposition $k\to k+1, k+1\to k$.

By Theorem \ref{thm_indec_proj}, for such $\bl$ we have $P(\bl) = T^*P(T\bl)$ as $\#\bl = \#T\bl$.

Let us denote this transformation $T:(xy)\to (yx)$ and its adjoint by $T^*:(yx)\to (xy)$.

\subsubsection{Cap diagrams} 


Let $\bl\in \Z^{m|n}$ be any admissible weight, and let $f_\bl$ be its weight diagram. By our assumption, $m+n<p$. This implies that the number of $\circ$'s in $f_\bl$ is strictly higher than the number of $\times$'s, and is thus strictly positive. 

\begin{definition}
 Let $f$ be a weight diagram. The \textbf{cap diagram} of $f$ is a decoration of $f$ with \textbf{caps}, i.e. arched oriented intervals on the outside of the circle with endpoints at vertices, defined as follows. 
 \begin{enumerate}
     \item Each cap has a \textbf{source}, which must be labeled by $\times$, and a \textbf{tail}, which must be labeled  by $\circ$.

     \item Every $\times$ in $f$ must be a source of precisely one cap.

     \item We start with a vertex $k$ labeled by $\times$, which is not yet involved in a cap, and put $s = k$. If there are no such vertices the cap diagram is complete.

     \item We move in the clockwise direction until we reach $\times$ or $\circ$ that is not yet involved in any cap.

     \item If we first reach $\times$ at vertex $k'$ then we put $s = k'$ and go back to step $(4)$.

     \item If we reach $\circ$ at vertex $l$, we put $z = l$.

     \item We draw a cap with a source at $s$ and a tail at $z$. Then go back to step $(3)$.
 \end{enumerate}
\end{definition}

It is easy to see from the definition that cap diagrams have the following properties:
\begin{itemize}
\item Each weight diagram $f$ has a unique cap diagram assigned to it. We denote it by $\caap f$.
\item The caps do not intersect. That is, if there are two caps in $\caap f$, one with the source $s_1$ and the tail $z_1$\footnote{We wish we could name the tails $t_1$ and $t_2$, but we already used it for the variables $t_1, t_2$.}, and the other with the source $s_2$ and the tail $z_2$ then clockwise intervals $(s_1,z_1)$ and $(s_2,z_2)$ on a circle are either embedded or disjoint. 

We will call a cap \textbf{inner} if there are no other caps embedded underneath it.

\item As there are strictly more $\circ$'s than $\times$'s in $f$, there must be at least one $\circ$ not involved in any cap. We will call such $\circ$ \textbf{free}.

\item If there is a cap with the source $s$ and tail $z$ then there are no free $\circ$'s 
 in the clockwise interval $(s,z)$.

 \item If vertex $k$ is labeled with a free $\circ$ then the ray connecting the center of the circle with $k$ does not intersect any of the caps. Consequently, we can cut up the diagram $\caap f$ at vertex $k$ (without cutting any caps).
\end{itemize}

\begin{ex}\label{ex_cap}
    Let $f$ be the weight diagram in Figure \ref{fig:wd}. Then
    
    \medskip 
    \[
    \xymatrixcolsep{0.1mm}
    \xymatrixrowsep{1mm}
    \caap f_{\downarrow 3} = \xymatrix{\circ & \ccw&\circ&\times \ar@/^1.9pc/ [rrrrrr]!<4pt,0pc>& \cw & \cw& \times\ar@/^1.1pc/[rr]!<2pt,0pc>& \ccw &\circ& \circ &\cw\cdot (t_1^{-3}t_2^2)}.
    \]

    The cap with the source at $9$ and the tail at $0$ is inner, whereas the cap with the source at $6$ and the tail at $1$ is not.
\end{ex}

     Let $\bl\in \Rep_{Ver_p}(T,\varepsilon)$  be a weight with $\#\bl = r$ and let $\caap f_\bl$ be the corresponding cap diagram. Let $s_1, s_2, \ldots, s_r\in \{0,\ldots, p-1\}$ be  the sources of all the caps in $\caap f_\bl$ (the numbers of vertices with $\times$'s on them) and let $z_1,\ldots, z_r\in \{0,\ldots, p-1\}$ be the corresponding tails. 

     Define
     $$
     \tau_j = t_1^{a_j}t_2^{b_j}\cdot (s_j~z_j)\in \C[t_1^{\pm 1}t_2^{\pm 1}]S_p,
     $$
     where 
     $$
     (a_j,b_j) = \begin{cases}
         (-1,1), \text{ if } s_j > t_j,\\
         (0,0), \text{ if } s_j<t_j.
     \end{cases}
     $$

     \begin{definition}\label{def_plambda}
     The set $\mathscr P(\bl)$ consists of all weights that can be obtained from $\bl$ by swapping some of the $\times$'s in $f_\bl$ with $\circ$'s at the tails of the corresponding caps in $\caap f_\bl$ (with an appropriate factor $t_1^at_2^b$). That is,
     $$
     \mathscr P(\bl) = \{\tau_{i_1}\tau_{i_2}\ldots \tau_{i_l}\bl~|~0\le l \le r=\#\bl, 1\le i_1<i_2<\ldots < i_l\le r\}.
     $$
\end{definition}

\begin{remark}
    The appearance of the factor $t_1^{a_j} t_2^{b_j}$ in $\tau_j$, can be understood using the following intuition. We move $\times$ from $s_j$ to $t_j$ clockwise one step at a time. If $s_j>t_j$ then we must move it from $p-1$ to $0$ at some point. Moving $\cw$ and $\ccw$ that constitute our $\times$ from $p-1$ to $0$ must result in a factor of $t_1^{-1}t_2$.
\end{remark}

By definition, $\mathscr P(\bl)$ has $2^{\#\bl}$ elements.

\begin{ex}
    Let $f$ be the weight diagram in Figure \ref{fig:wd} and let $\bl$ be such that $f=f_\bl$. The cap diagram for $f$ is shown in Example \ref{ex_cap}. Let us order $\times$'s in $f$ so that $s_1 = 9, t_1=0, s_2=6, t_2 = 1$
    
    The set $\mathscr P(\bl)$ consists of four weights with the following weight diagrams:
\[
    \xymatrixcolsep{0.1mm}
    \xymatrixrowsep{1mm}
    \xymatrix{ f_{\downarrow 3} = &  \circ & \ccw&\circ&\times& \cw & \cw& \times& \ccw &\circ& \circ &\cw\cdot (t_1^{-3}t_2^2),\\
\tau_1 f_{\downarrow 3} = & \circ & \ccw&\circ&\times& \cw & \cw& \circ& \ccw &\times& \circ &\cw\cdot (t_1^{-4}t_2^3),\\
     \tau_2 f_{\downarrow 3} = & \circ & \ccw&\circ&\circ& \cw & \cw& \times& \ccw &\circ& \times &\cw\cdot (t_1^{-4}t_2^3),\\
     \tau_1\tau_2 f_{\downarrow 3} = & \circ & \ccw&\circ&\circ& \cw & \cw& \circ& \ccw &\times& \times &\cw\cdot (t_1^{-5}t_2^4).}
    \]

\end{ex}

\subsubsection{The main result}

\begin{theorem}\label{thm_compos_series}
    Let $\bl\in \Rep_{Ver_p}(T,\varepsilon)$ be a weight. Then
    \begin{enumerate}
        \item  There exists a typical weight $\bm$,a sequence of weights 
    $$
    \bm = \bm_0, \bm_1, \ldots, \bm_l = \bl,
    $$
    and translation functors $T_1, \ldots, T_l\in \{F_i, E_i~|~0\le i\le p-1\}$, such that
    $$
    P(\bm_i) = T_{i}P(\bm_{i-1}).
    $$
    In particular, $P(\bl) = T_lT_{l-1}\ldots T_2 T_1K(\bm)$ (as $P(\bm)=K(\bm)$, see Theorem \ref{thm_irr_Kac}).

    \item We have
    $$
    [P(\bl):K(\ba)] = \begin{cases}
        1, \text{ if } \ba\in \mathscr P(\bl),\\
        0, \text{ otherwise }.
    \end{cases}
    $$
    In other words,
    $$
    [P(\bl)] = \sum_{\ba\in\cp(\bl)} [K(\ba)]\in  [\cc].
    $$
    \end{enumerate}
\end{theorem}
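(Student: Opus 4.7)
The plan is to prove both statements simultaneously by induction on $r := \#\bl$. The base case $r=0$ is immediate: a typical weight $\bl$ satisfies $P(\bl) = K(\bl)$ by Theorem \ref{thm_irr_Kac}, and $\cp(\bl) = \{\bl\}$, so both parts hold with $\bm = \bl$ and $l = 0$.

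For the inductive step ($r \ge 1$), I will fix an inner cap $\gamma \in \caap f_\bl$ with source $s$ and tail $z$. Since $\gamma$ is inner, the clockwise arc from $s$ to $z$ contains only $\cw$ and $\ccw$ symbols. I will construct an intermediate weight $\bl^\flat$ with $\#\bl^\flat = r-1$ in two stages: first migrate the $\circ$ at $z$ leftward step-by-step to position $s+1$ by applying $F_i$ or $E_i$ as dictated by the type of the arrow being swapped (each such move is atypicality-preserving per Theorem \ref{thm_action_on_wd}); then apply $F_s$ to uncross $(\times\circ) \to (\ccw\cw)$ at $(s,s+1)$. Direct inspection of the cap-drawing procedure shows $\caap f_{\bl^\flat} = \caap f_\bl \setminus \{\gamma\}$, so the inductive hypothesis applies to $\bl^\flat$. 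To produce $P(\bl)$ from $P(\bl^\flat)$, I will apply the adjoint sequence in reverse: set $T_1 := E_s$ and let $T_2, \ldots, T_k$ be the adjoints of the $\circ$-moving moves in reverse order. By Theorem \ref{thm_indec_proj}, $E_s P(\bl^\flat) = P(\bl^\star)$ where $\bl^\star$ is the $\prec$-smaller summand of $E_s f_{\bl^\flat} = f^{(s)}_{\times\circ} \cdot t_1^{\epsilon_s} + f^{(s)}_{\circ\times} \cdot t_2^{\epsilon_s}$ (the one with $\times$ at $s$, since moving the new $\cw$ from $s{+}1$ back to $s$ lowers $|\mu|$ by one). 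Each subsequent $T_j$ is atypicality-preserving, so Theorem \ref{thm_indec_proj} again delivers an indecomposable projective at every step; tracing the weight diagrams shows that these operators walk the $\circ$ back from $s+1$ to $z$ through the same arrows, ending at $\bl$. Concatenation with the inductive sequence for $\bl^\flat$ then yields part (1).

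For part (2), I will track the standard filtration through the same sequence. By induction, $P(\bl^\flat)$ is filtered by $\{K(\bb) : \bb \in \cp(\bl^\flat)\}$, each factor appearing with multiplicity one. Since the non-$\gamma$ caps all have endpoints outside $\{s,s{+}1\}$, every such $\bb$ has $(\ccw\cw)$ at $(s,s{+}1)$, so by Theorem \ref{thm_transl_of_kac}, $E_s$ turns each $K(\bb)$ into an extension of two Kac modules indexed by the position (at $s$ or $s+1$) of the newly created $\times$, doubling the filtration length to $2 \cdot 2^{r-1}$. Each subsequent atypicality-preserving $T_j$ then sends individual Kac factors to Kac factors. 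The crucial observation is that the same fixed sequence of operators migrates the $\circ$ rightward in one branch while simultaneously migrating the $\times$ rightward in the other: e.g., the $E_i$ that sends $(\circ\cw) \to (\cw\circ)$ also sends $(\times\cw) \to (\cw\times)$, and analogously $F_i$ acts symmetrically on the $\ccw$ pairings. Consequently, the two resulting families of final weights agree with $\bl$ on $[s,z]$ except that one retains the $\gamma$-orientation of $\bl$ (from the $\circ$-migrating branch) while the other swaps the endpoints of $\gamma$ (from the $\times$-migrating branch); outside $[s,z]$ they match $\bb$. The $2^r$ final factors are thus in bijection with $\cp(\bl)$, the set of choices of whether to swap each of the $r$ caps, each element appearing exactly once.

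The main obstacle will be the combinatorial bookkeeping: verifying that the single fixed sequence of operators routes both branches correctly to the tail position $z$ without annihilation, unintended splittings, or changes of atypicality. A case-by-case check against Theorem \ref{thm_action_on_wd} confirms this for every possible local configuration, and the wraparound factors $t_1^{\pm 1}, t_2^{\pm 1}$ recorded in Definition \ref{def_plambda} arise naturally whenever an operator crosses the boundary between vertices $p-1$ and $0$, exactly matching the factors $t_1^{a_j} t_2^{b_j}$ built into the definition of $\tau_j$.
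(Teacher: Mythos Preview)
Your proposal is correct and follows essentially the same strategy as the paper's proof: induction on the degree of atypicality $\#\bl$, in which at each step one removes an inner cap by sliding one of its endpoints through the intervening arrows until it becomes adjacent to the other endpoint, uncrosses via $F_s$ to reach a weight $\bl^\flat$ (the paper's $\bl'$) of atypicality $r-1$, invokes the inductive hypothesis, and then applies the adjoint sequence in reverse to recover $P(\bl)$; for part~(2) one tracks the Kac filtration through these operators, using the fact that the crossing step $E_s$ splits each $K(\bb)$ into two Kac factors while the remaining atypicality-preserving moves send Kac modules to Kac modules.

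The only difference from the paper is a cosmetic choice of direction: you migrate the $\circ$ at the tail $z$ counter-clockwise toward the source $s$, whereas the paper migrates the $\times$ at the source $s$ clockwise toward the tail $z$. Both arrive at a configuration with $(\times\circ)$ on adjacent vertices before uncrossing, and the subsequent adjoint sequence behaves symmetrically---your observation that ``the $E_i$ sending $(\circ\cw)\to(\cw\circ)$ also sends $(\times\cw)\to(\cw\times)$'' (and analogously for $F_i$ and $\ccw$) is exactly the mechanism the paper uses, just read in the opposite direction. The resulting intermediate weights $\bl^\flat$ differ from the paper's $\bl'$, but both satisfy $\caap f_{\bl^\flat}=\caap f_\bl\setminus\{\gamma\}$, so the induction and the bijection with $\cp(\bl)$ go through identically.
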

\begin{proof}
Suppose, as before, that $s_1,\ldots, s_r$ is the list of all the vertices in $f_\bl$ with $\times$'s on them, and suppose $z_1,\ldots, z_r$ are the tails of the corresponding caps in $\caap f_\bl$. We denote these caps  $\xymatrixcolsep{1mm} \xymatrix{._{s_i}\ar `u[r] `[r]!<-12pt,0pt> &._{z_i}}$. Let $\tau_i,~ 1\le i \le r,$ be the corresponding transformation swapping $\times$ at $s_i$ with $\circ$ at $z_i$, so that we have
$$
\cp(\bl)=\{\tau_{i_1}\ldots \tau_{i_l}~|~0\le l\le r, 1\le i_1<i_2<\ldots<i_l\le r\}.
$$

Without loss of generality, let us choose an ordering of $s_i$'s such that the cap $\xymatrixcolsep{1mm} \xymatrix{._{s_i}\ar `u[r] `[r] &._{z_i}}$ is \textbf{inner} modulo all caps $\xymatrixcolsep{1mm} \xymatrix{._{s_j}\ar `u[r] `[r] &{._{z_j}}}$ with $j<i$ (that is, none of the caps starting at $s_j$ with $j>i$ are embedded under it).

We will prove both statements by induction on $\#\bl$. 
\begin{enumerate}
    \item  If $\#\bl = 0$ then we put $\bm = \bl$ and we are done.

    Now, suppose $\#\bl = r>0$. Start with $a=1$.
    
    Put $k=z_a$ and $i = k-s_a\mod p$. For each $k-i< j<k$ (i.e. $j$ in a clockwise open interval from $k-i$ to $k$), let  $y_j\in\{\ccw,\cw\}$ denote the symbol assigned to vertex $j$ in $f_\bl$ (note that $y_j$ cannot be $\times$ or $\circ$, since $\xymatrixcolsep{1mm} \xymatrix{._{s_a}\ar `u[r] `[r]!<-12pt,0pt> &._{z_a}}$ is inner). 

    Applying to $\bl$ transformations $$S_1: (\times~ y_{k-i+1}) \to (y_{k-i+1}~\times),$$ $$S_2:(\times~ y_{k-i+2}) \to (y_{k-i+2}~\times),$$ $$ \ldots $$ $$S_{i-1}:(\times~ y_{k-1})\to (y_{k-1}~ \times)$$ we obtain a weight $\bn$, for which
    $$
    f_\bn = (f_\bn)^{(k-1)}_{\times \circ}.
    $$
    Let $f_{\bl'}=F_{k-1}f_\bn$. Then 
    $$
    f_{\bl'}=(f_\bn)^{(k-1)}_{\ccw\cw} \cdot t_{1}^{-\delta_{k,0}},
    $$
     and $\#\bl' = \#\bn-1 = \#\bl - 1$. We have
    $$
    E_{k-1}P(\bl') = P(\bn)
    $$
    by Theorem \ref{thm_indec_proj}, since $\bn \prec \bn'$, where
    $$
    E_{k-1}f_{\bl'} = f_\bn + f_{\bn'}, 
    $$
    i.e. $f_{\bn'} = t_1^{-\delta_{k,0}}t_2^{\delta_{k,0}} (f_\bn)^{(k-1)}_{\circ \times}$.

    Thus
    $$
    P(\bl) = S_1^*\ldots S_{i-1}^*E_{k-1}P(\bl').
    $$
    
    Note that by construction, the caps in the cap diagram for $\bl'$ are the same as for $\bl$, except without  $\xymatrixcolsep{1mm} \xymatrix{._{s_a}\ar `u[r] `[r]!<-12pt,0pt> &._{z_a}}$, and the cap $\xymatrixcolsep{1mm} \xymatrix{._{s_{a+1}}\ar `u[r] `[r]!<-12pt,0pt> &._{z_{a+1}}}$ is inner.

    Apply the same algorithm to $\bl := \bl'$ starting with $a:=a+1$, until you reach a typical weight $\bm$.

  \item If $\#\bl = 0$ then $P(\bl)= K(\bl)$ and $\mathscr P(\bl) = \{\bl\}$.

  Now suppose $\#\bl = r>0$, Using the notations from the proof of part $(1)$ (we only need the first iteration of the algorithm), assume by induction
  that 
  $$
  [P(\bl')]=\sum_{\ba\in \cp(\bl')} [K(\ba)].
  $$
  Note that by construction of $\bl'$ we have
  $$
  \cp(\bl') = \{\tau_{i_1}\ldots \tau_{i_l}\bl'~|~0\le l\le r-1, 2\le i_1<\ldots<i_l\le r\}.
  $$

  Then for any $\tau_{i_1}\ldots \tau_{i_l}\bl'\in \cp(\bl')$ 
  $$
  E_{k-1}[K(\tau_{i_1}\ldots \tau_{i_l}\bl')] = [K(\tau_{i_1}\ldots \tau_{i_l} \bn)]+[K(\tau_{i_1}\ldots \tau_{i_l} \bn')], 
  $$
  as none of the $\tau_{i_j}$'s affect vertices $k-1$ and $k=z_1$.

  Moreover, $\tau_i$ for $i\ge 2$ does not affect any of the vertices between $k-i=s_1$ and $k=z_1$. Therefore,
  $$
  S_1^*\ldots S_{i-1}^*E_{k-1}[K(\tau_{i_1}\ldots \tau_{i_l}\bl')]=
  $$
  $$
  =[K(\tau_{i_1}\ldots \tau_{i_l}(S_1^*\ldots S_{i-1}^*\bn)]
  +[K(\tau_{i_1}\ldots \tau_{i_l}(S_1^*\ldots S_{i-1}^*\bn')]=
  $$
  $$
  [K(\tau_{i_1}\ldots \tau_{i_l}\bl)] + [K(\tau_{i_1}\ldots \tau_{i_l}(\tau_1\bl))].
  $$
The last equality follows from the fact that applying $S_1^*\ldots S_{i-1}^*$ to $\bn'$ results in a series of transformations:
$$
S_{i-1}^*: (y_{k-i+1}~\krug)\to (\krug~ y_{k-i+1}),
$$
$$
S_{i-2}^*: (y_{k-i+2}~\krug) \to (\krug~ y_{k-i+2}),
$$
$$
\ldots
$$
$$
S_{1}^*: (y_{k-1}~\krug) \to (\krug~ y_{k-1}),
$$
which turns $\bn'$ into $\tau_1\bl$.

Consequently, we get that
$$
[P(\bl)] = S_1^*\ldots S_{i-1}^*E_{k-1} [P(\bl')] =
$$
$$
=S_1^*\ldots S_{i-1}^*E_{k-1}\left(\sum_{l=0}^{r-1} \sum_{2\le i_1<\ldots<i_l\le r-1} [K(\tau_{i_1}\ldots \tau_{i_l} \bl')]\right) =
$$
$$
= \sum_{l=0}^{r-1} \sum_{2\le i_1<\ldots<i_l\le r-1} \left( [K(\tau_{i_1}\ldots \tau_{i_l} \bl)]+[K(\tau_{i_1}\ldots \tau_{i_l} \tau_1\bl)]\right) =
$$
$$
=\sum_{\ba\in \cp(\bl)} [K(\ba)].
$$
  
    \end{enumerate}
\end{proof}

\subsection{Lowest weights}\label{s_lowest}

Theorem \ref{thm_compos_series} gives an explicit description of the highest weight in $P(\bl)$. Namely, Let $\#\bl = r$ and let$\tau_1,\ldots, \tau_r\in \C[t_1^{\pm 1}t_2^{\pm 1}]S_p$ be the swaps of $\times$'s and $\circ$'s in $f_\bl$ along the caps in the corresponding cap diagram (defined right before Definition \ref{def_plambda}). Then
$$
\hat{\bl}:=\tau_1\ldots \tau_r \bl
$$
is the highest weight of $P(\bl)$.

Following Corollary \ref{cor_lowest_weight}, we can now answer Question \ref{quest_1} for $G=GL(L_m|L_n)$.
\begin{theorem}\label{thm_lowest_weight}
    For $G=GL(L_m|L_n)$, $T=GL(L_m)\times GL(L_n)\subset G$ and any weight $\bl\in \Rep_{Ver_p}(T,\varepsilon)$ the lowest weight of the simple module $L(\bl)\in \Rep_{Ver_p}(G,\varepsilon)$ is
    $$
    \hat{\bl} - \bb,
    $$
    where $\bb = (n,\ldots, n|-m,\ldots,-m)$.
\end{theorem}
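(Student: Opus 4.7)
The plan is to combine Corollary \ref{cor_lowest_weight} with the explicit description of the standard filtration of $P(\bl)$ from Theorem \ref{thm_compos_series}. Corollary \ref{cor_lowest_weight} tells us that the lowest weight of $L(\bl)$ is $\bm-\bb$, where $\bm$ is the unique maximal element (with respect to the partial order $\preceq$) among weights $\ba$ with $[P(\bl):K(\ba)]\neq 0$. By Theorem \ref{thm_compos_series}, this set of weights is precisely $\mathscr{P}(\bl)$. Therefore, the whole proof reduces to verifying that $\hat{\bl}:=\tau_1\tau_2\cdots\tau_r\bl$ is the unique maximum of $\mathscr{P}(\bl)$ under $\preceq$.

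Recall that $(\alpha|\beta)\preceq(\mu|\nu)$ when $|\alpha|+|\beta|=|\mu|+|\nu|$ and $|\alpha|\le|\mu|$; since all elements of $\mathscr{P}(\bl)$ sit in the same grading (the $\tau_i$'s, being compositions of translation functors applied on the level of Grothendieck groups, preserve $|\mu|+|\nu|$), the order reduces to comparing the values $|\mu|$. Thus I want to show that each individual swap $\tau_i$ strictly increases $|\mu|$, so that applying all of $\tau_1,\ldots,\tau_r$ produces the unique element of $\mathscr{P}(\bl)$ with the largest $|\mu|$-component.

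Here is the key computation. Using the recipe in Remark \ref{rem_recover_weight}, $|\mu|$ can be expressed in terms of the weight diagram as $\binom{m}{2}+\sum_{\cw\text{-vertices}}k+ps$, where the sum ranges over all vertices labeled $\cw$ or $\times$ and $s$ is read off the central label $t_1^{-s}t_2^r$. The transformation $\tau_i$ moves the $\cw$-component of $\times$ from position $s_i$ clockwise to position $z_i$; if no boundary crossing occurs (i.e.\ $s_i<z_i$) then $\sum_{\cw}k$ grows by $z_i-s_i>0$ and $s$ is unchanged, while if $s_i>z_i$ then $\sum_{\cw}k$ changes by $z_i-s_i$ but $s$ increases by $1$ (this is exactly the $t_1^{-1}$ factor appearing in the definition of $\tau_i$), so the net change is $z_i-s_i+p>0$. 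Either way $|\mu|$ strictly increases, and the analogous computation for $\ccw$ using the $t_2$-factor shows that $|\nu|$ strictly decreases by the same amount, consistent with the grading being preserved. The potentially delicate point — the main obstacle — is bookkeeping that the $\tau_i$'s commute and can be applied in any order on $\mathscr{P}(\bl)$, but this follows from the fact that $\mathscr{P}(\bl)$ is defined with reference to the \emph{original} cap diagram $\caap f_\bl$ and the caps do not cross, so the pairs $(s_i,z_i)$ are pairwise disjoint as pairs of vertices.

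Combining these observations, for any subset $\{i_1,\ldots,i_l\}\subsetneq\{1,\ldots,r\}$ the weight $\tau_{i_1}\cdots\tau_{i_l}\bl$ has strictly smaller $|\mu|$ than $\hat{\bl}$, so $\hat{\bl}$ is the unique maximum of $\mathscr{P}(\bl)$ under $\preceq$. Applying Corollary \ref{cor_lowest_weight} then yields that the lowest weight of $L(\bl)$ is $\hat{\bl}-\bb$, which is exactly the claim.
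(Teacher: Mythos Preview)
Your proposal is correct and follows exactly the approach the paper intends: the paper simply states that $\hat{\bl}$ is the highest weight of $P(\bl)$ (equivalently, the unique $\preceq$-maximal element of $\mathscr{P}(\bl)$) and then invokes Corollary~\ref{cor_lowest_weight}, without spelling out why $\hat{\bl}$ is maximal. Your explicit computation that each $\tau_i$ strictly increases $|\mu|$ (via the formula $|\mu|=\binom{m}{2}+\sum_{\cw\text{-vertices}}k+ps$ from Remark~\ref{rem_recover_weight}, together with the $t_1^{-1}$ factor absorbing the boundary crossing) fills in precisely the detail the paper omits.
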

\begin{remark}
In light of Theorem \ref{thm_odd_refl}, we also get an algorithmic answer for Question \ref{quest_1} in general case $G=GL(X),~X\in Ver_p$.    
\end{remark}

\begin{ex}\label{ex_lowest_w}
    Let $f$ be the weight diagram in Figure \ref{fig:wd}, i.e.
     $$
    f_{\downarrow 3} =  \krug~ \ccw~\krug~\times ~\cw~ \cw~ \times~ \ccw ~\krug~ \krug ~\cw\cdot (t_1^{-3}t_2^2).
    $$
    
    Let $\bl$ be a weight, for which $f=f_\bl$. Recall that 
    $$
    \bl = (18,18,15,12,12|-13,-13,-17,-18)
    $$
    (see Example \ref{ex_computing_weight_from_diag}).

    Then the diagram for ${\hat{\bl}}$ is the following:
     $$
    (f_{\hat{\bl}})_{\downarrow 3} =  \krug~ \ccw~\krug~\krug ~\cw~ \cw~ \krug~ \ccw ~\times~ \times ~\cw\cdot (t_1^{-5}t_2^4).
    $$

    We can compute $\hat{\bl}$ explicitly using Remark \ref{rem_recover_weight}:
    $$
    \hat{\bl} = (19,19,15,15,15|-15,-15,-17,-22).    
    $$

    Thus the lowest weight of $L(\bl)$ is
    $$
    \hat{\bl}-\bb = (15,15,11,11,11|-10,-10,-12,-17).
    $$
\end{ex}

\begin{remark}
    The procedure for computing the lowest weight in $L(\bl)$ turns out to be quite similar to that for supergroup $GL(m|n)$ in characteristic zero, with one principal change: the weight diagrams for $GL(L_m|L_n)$ in $Ver_p$ are \textbf{circular}. It turns out that this change actually affects the answer, and for some weights $\bl$ we might get two different answers: the lowest weight of $L(\bl)\in \Rep_{Ver_p}(GL(L_m|L_n),\varepsilon)$ is not the same as the lowest\footnote{One should actually compute the lowest among highest weights of $GL(m|n)_{ev}=GL_m\times GL_n$-subquotient modules in $L(\l)$.} weight of $L(\l)\in \Rep_\ck (GL(m|n))$ (both when char $\ck = 0$ and when char $\ck = p$) after we identify simple representations of $T$ with integral sequences in $\Z^{m|n}$: $\bl\leftrightarrow \l$.
\end{remark}

\begin{ex}
    Let $\bl$ be as in Example \ref{ex_lowest_w}, and let $\l\in \Z^{m|n}$ be the corresponding integral sequence, i.e.
    $$
    \l=(18,18,15,12,12|-13,-13,-17,18).
    $$
    We can compute the lowest weight in the $GL_m\times GL_n$-equivariant character of $L(\l)$ (i.e. the composition series of $\Res^{GL(m|n)}_{GL_m\times GL_n} L(\l)$) using Serganova's algorithm (see Lemma \ref{l_serg_alg}). It is equal to
    $$
    (15, 15,11, 10, 8| -9, -11,-12,-15).
    $$
    Notice that it is different from  the lowest weight computed in Example \ref{ex_lowest_w}.
\end{ex}

\begin{corollary}
    The dual $L(\bl)^*$ of the irreducible representation $L(\bl)$ of $GL(L_m|L_n)$ is isomorphic to $L(\bb-\hat\bl)$.
\end{corollary}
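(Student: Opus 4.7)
The plan is to combine Theorem \ref{thm_lowest_weight} with the general principle that in a rigid tensor category the highest weight of the dual $L(\bl)^*$ is obtained by dualizing the lowest weight of $L(\bl)$. Since rigid duality on $\cc = \Rep_{Ver_p}(G,\varepsilon)$ is an exact contravariant tensor auto-equivalence that commutes with restriction to $T$, it sends simple objects to simple objects. In particular $L(\bl)^*$ is simple, hence isomorphic to $L(\bm)$ for a unique admissible weight $\bm$, which we aim to identify.

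First I would describe the effect of duality on $T$-weights. A simple $T$-module $\chi = V^{(m)}_\mu \boxtimes V^{(n)}_\nu \boxtimes I^{\otimes|\nu|}$ dualizes to $\chi^* = V^{(m)}_{\mu^*}\boxtimes V^{(n)}_{\nu^*}\boxtimes I^{\otimes -|\nu|}$, where $\mu^* = (-\mu_m,\ldots,-\mu_1)$ and likewise for $\nu^*$, as defined in Section \ref{sec_super}. The multiset of $T$-weights of $L(\bl)^*$ is obtained from that of $L(\bl)$ by applying $\chi \mapsto \chi^*$. On the $T_0$-character level this involution simply negates $(|\mu|,|\nu|)$, and hence reverses the dominance order $\preceq$. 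Consequently the $\preceq$-maximal weight of $L(\bl)^*$, i.e. its highest weight $\bm$, is the $T$-dual of the $\preceq$-minimal weight of $L(\bl)$, i.e. of its lowest weight.

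Second I would invoke Theorem \ref{thm_lowest_weight}, which identifies the lowest weight of $L(\bl)$ as $\hat\bl - \bb$. Taking the $T$-dual gives $(\hat\bl - \bb)^*$, and by the conventions of Section \ref{sec_super} this admissible weight represents the same simple $T$-module as $\bb - \hat\bl$ (indeed, using $\bb = (n,\ldots,n|-m,\ldots,-m)$ and the constancy of its two blocks, the reversal built into $\mu\mapsto\mu^*$ and the negation $\mu\mapsto\bb-\mu$ produce the same admissible representative). We conclude $L(\bl)^* \simeq L(\bb - \hat\bl)$.

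There is no real obstacle here beyond the bookkeeping in the second paragraph, once the earlier machinery — rigidity of $\cc$, the explicit involution on $T$-weights, and Theorem \ref{thm_lowest_weight} — is in hand. The potentially delicate step is verifying the identification $(\hat\bl - \bb)^* \simeq \bb - \hat\bl$ of simple $T$-modules; this is a direct unwinding of the dual/reversal conventions and does not require new input.
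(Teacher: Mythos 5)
Your proof is correct and follows exactly the route the paper intends: the corollary is stated without proof as an immediate consequence of Theorem \ref{thm_lowest_weight}, the point being precisely that rigid duality reverses the dominance order on $T$-weights, so the highest weight of $L(\bl)^*$ is the $T$-dual of the lowest weight $\hat\bl-\bb$ of $L(\bl)$, which (after the decreasing rearrangement built into $\mu\mapsto\mu^*$) is the dominant representative of $\bb-\hat\bl$. Your careful check of the identification $(\hat\bl-\bb)^*\simeq\bb-\hat\bl$ is exactly the bookkeeping the paper leaves implicit.
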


\bibliographystyle{alphaurl} 
\bibliography{refs}

\end{document}